 \def\M{\mathcal M}
\def\G{\mathcal G}
\def\Id{\text{Id}}
\newcommand{\blue}[1]{\textcolor{blue}{#1}}
 \def\epf{ \hskip15cm$\Box$}
 \def\bpf{{\it Proof. }\hskip0.2cm}
 \def\bt{\begin{theorem}}
 	\def\el{\end{lemma}}
 \def\bl{\begin{lemma}}
 	\def\et{\end{theorem}}
 \def\bp{\begin{proposition}}
 	\def\ep{\end{proposition}}
 \def\bd{\begin{definition}}
 	\def\ed{\end{definition}}
 \def\br{\begin{remark}}
 	\def\er{\end{remark}}
 \def\T{\text}
 \def\La{\Lambda}
 \def\supp{\T{supp}}
 \def\T{\text}
 \newcommand{\om}{\omega}
 \newcommand{\bom}{\bar{\omega}}
 \newcommand{\we}{\wedge}
 \newcommand{\no}[1]{\|{#1}\|}
 \def\R{{\mathbb R}}
 \def\I{{\mathcal I}}
 \def\C{\mathbb C}
 \def\N{{\mathcal N}}
 \def\la{\langle}
 \def\ra{\rangle}
 \def\di{\partial}
 \def\dib{\bar\partial}
 \def\label#1{\label{#1}}
 \numberwithin{equation}{section}
 \def\T{\text}
 \theoremstyle{plain}
 \newtheorem{theorem}{Theorem}[section]
 \newtheorem{corollary}[theorem]{Corollary}
 \newtheorem{lemma}[theorem]{Lemma}
 \newtheorem{proposition}[theorem]{Proposition}
 \theoremstyle{definition}
 \newtheorem{definition}[theorem]{Definition}
 \theoremstyle{remark}
 \newtheorem{remark}[theorem]{Remark}
 \newcommand{\dbar}{\bar\partial}
 \newcommand{\dbarb}{\bar\partial_b}
 \newcommand{\Boxb}{\Box_b}
 \newcommand{\vp}{\varphi}
 \newcommand{\atopp}[2]{\genfrac{}{}{0pt}{2}{#1}{#2}}
 \newcommand{\eps}{\epsilon}
 \newcommand{\F}{\mathcal F}
 \DeclareMathOperator{\Tr}{Tr}
 \renewcommand{\H}{\mathcal H}
 \def\L{\mathcal L }
 \def\Re{\text{Re}}
  \def\Im{\text{Im}}
\begin{document}
 	
 	\title{The Kohn-Laplace equation on abstract CR manifolds: local regularity}         
 \author[T.V.~Khanh]{Tran Vu Khanh}
 		\address{Tran Vu Khanh}
 		\address{School of Mathematics and Applied Statistics, University of Wollongong, NSW, Australia,  2522}
 		\email{tkhanh@uow.edu.au}
 	
 		\thanks{Research was supported by the Australian Research Council
 			DE160100173. This work was done in part while the author was  a visiting member at the Vietnam Institute for Advanced Study in Mathematics (VIASM). He would like to thank the institution for its hospitality and support.}

 	\begin{abstract} 	The purpose of this paper is to establish local regularity of the solution operator to the Kohn-Laplace equation, called the complex Green operator, on abstract CR manifolds of hypersurface type. For a cut-off function $\sigma$, we introduce the $\sigma$-superlogarithmic property, a potential theoretical condition on CR manifolds. We prove that if the given datum is smooth on an open set containing the support of $\sigma$ then the  solution is smooth on the interior of $\{x\in M:\sigma(x)=1\}$. Furthermore, we also study the smoothness of the integral kernel of the complex Green operator.
 	\end{abstract}
  	\maketitle
  	\begin{flushright}   {\it	In memory of my teacher Beppe Zampieri} \end{flushright}

 \tableofcontents 	
 	\section{Introduction}
 	\label{s1}
 	\subsection{Introduction and motivation}
 	Let $M$ be an abstract $(2n+1)$-dimensional CR manifold of hypersurface type (or CR manifold for short) equipped with a CR structure $T^{1,0}M$. Let $\theta$ be a contact form of $M$ which is a  nonvanishing purely imaginary $1$-form that annihilates 
	$T^{1,0}M\oplus T^{0,1}M$.  
	Then the Levi form on $M$ with respect to $\theta$ is the Hermitian form on $T^{1,0}M$ given by $d\theta(L\we\bar L')$ where $L,L'\in T^{1,0}M$. We say that $M$ is  pseudoconvex  if  the Levi form $d\theta(L\we\bar L)$ is nonnegative 
	for any $L\in T^{1,0}M$. 
 	For a $C^2$ function $\lambda$ on $M$, the alternating $(1,1)$-form $\L_\lambda:=\frac{1}{2}(\di_b\dib_b-\dib_b\di_b)\lambda$ on $T^{1,0}M\times T^{0,1}M$ is called the Levi form of $\lambda$.

 	For $0\le q\le n$, the bundle of $(0,q)$-forms of smooth coefficients on $M$ is denoted by $C^\infty_{0,q}(M)$. The induced $L^2$-inner product and norm on $C^\infty_{0,q}(M)$ is defined by
 	$$
 	(u,v)_{L^2}=\int_M\la u,  v\ra dV,\qquad \no{u}_{L^2}^2=(u,u)_{L^2}.
 	$$
 	where  $dV$ is the element of volume on $M$ and $\la\cdot,\cdot\ra $ is a Hermitian metric on $T^{1,0}M\oplus T^{0,1}M$.
 	In general, $\no{\cdot}_{H^s}$ denotes the Sobolev $H^s$-norm that $H^0:=L^2$. We also define $L_{0,q}^2(M)$  and $H^s_{0,q}(M)$ to be the Hilbert space obtained by completing $C^\infty_{0,q}(M)$ under the $L^2$ and $H^s$ norms, respectively. The tangential Cauchy-Riemann operator $\dib_b:\, C_{0,q}^\infty(M)\to C_{0,q+1}^\infty(M)$ is the natural restriction of the de-Rham exterior complex of derivatives $d$ to $C_{0,q+1}^\infty(M)$. We denote by $\dib^*_b:\, C_{0,q+1}^\infty(M)\to C_{0,q}^\infty(M)$ the $L^2$-adjoint of $\dib_b$ and define the Kohn-Laplacian by 
 	$$\Box_b:=\dib_b\dib_b^*+\dib_b^*\dib_b:\, C_{0,q}^\infty(M)\to C_{0,q}^\infty(M).$$
We can extend $\dib_b$, $\dib_b^*$ and $\Box_b$ to unbounded operators in $L^2$-spaces with domains consisting of forms in $L^2$-spaces where the results (computed in the sense of distributions) are actually in $L^2$-spaces.
 	Let $\H_{0,q}(M)=\{u\in L^2_{0,q}(M):\Box_b u=0\}$ be the space of harmonic $(0,q)$-forms, that coincides with $\ker(\dib_b)\cap \ker(\dib_b^*)$. 
 	
 	 Given $\varphi\in L^2_{0,q}(M)$ with $\varphi\perp \H_{0,q}(M)$, to solve the Kohn-Laplace equation with datum $\varphi$ consists in finding a $(0,q)$-form $u\in \T{Dom}(\Box_b)$ such that 
 	\begin{eqnarray}
 	\label{KLe}\Box_b u=\varphi.
 	\end{eqnarray}
 	Observe that if a solution of \eqref{KLe} exists then there is a unique solution $u$ of \eqref{KLe} such that $u\perp \H_{0,q}(M)$, we denote the operator that maps $\varphi \mapsto u$ by $G_q$.
 	 The operator $G_q$ is called the  complex Green operator acting $(0,q)$-forms. We extend the operator $G_q$ to a linear operator by setting it to be equal to zero on $\H_{0,q}(M)$. If there exists $c>0$ so that for any 
 	 $\vp\in L^2_{0,q}(M) \cap \H_{0,q}^\perp(M)$, $\|u\|_{L^2} \leq c \|\vp\|_{L^2}$ then $G_q$ is bounded and self adjoint in $L^2_{0,q}(M)$. Furthermore, if $G_q$ exists, then $\dib_b^* G_q$ and  $G_q\dib^*$ are the canonical solution operators to the $\dib_b$-equation on $(0,q)$ and $(0,q+1)$-forms, respectively;  $\dib_b G_q$ and $G_q\dib_b$ are the canonical solution operators to the $\dib_b^*$-equation on $(0,q)$ and $(0,q-1)$-forms, respectively; $I-\dib_b^* \dib_bG_{q}$ and $I-\dib_b^*G_{q}\dib_b$ are the Szeg\"o projections---the projections onto $\ker(\dib_b)$;  and $I-\dib_b \dib_b^*G_{q}$ and $I-\dib_bG_{q}\dib_b^*$ are the anti Szeg\"o projections--- the projections onto $\ker(\dib_b^*)$. The fundamental questions regarding the Kohn-Laplace equation can be asked, at the level of understanding the regularity of these operators, as follows:
 	\begin{enumerate}
 		\item the $L^2$ boundedness;
 		\item the global regularity in $C^\infty$ and $H^s$ spaces;
 		\item the local regularity  in $C^\infty$ and $H^s$ spaces;
 		\item the smoothness of the integral kernels.
 	\end{enumerate}

 	A great deal of work has been done relatively to Question 1  (cf. \cite{Bar12, HaRa11, HaRa15a, Koh86, KoNi06, Nic06, Sha85}) and Question 2 (cf. \cite{RaSt08, Rai10, KhPiZa12a, StZe15, Str12, HaPeRa15}) on some classes of CR manifolds. In the parallel with this work in \cite{KhRa16a}, with Raich we give a positive answer to the first two questions on abstract CR manifolds equipped  with ``good" CR-plurisubharmonic functions, which is a  potential theoretical condition
	that we call a $P$-property. 
	In particular, we prove that if $M$ is a compact, pseudoconvex-oriented, $(2n+1)$-dimensional CR manifold equipped with a strictly CR-plurisubharmonic function on $(0,q_0)$-forms, i.e.,
	 there exist a global function $\lambda$ and constant $\alpha>0$ such that
 	$$\la(\L_\lambda+d\theta)\lrcorner u, u\ra\ge \alpha|u|^2 \quad\T{holds on $M$ for all smooth $(0,q)$-forms $u$,} $$ 
 	then the operators $G_{q}$, $\dib_b^*G_q$, $G_q\dib_b^*$, $\dib_bG_q$ $G_q\dib_b$, $I-\dib_b^*\dib_bG_q$, $I-\dib_b^*G_q\dib_b$, $I-\dib_b\dib_b^*G_q$, $I-\dib_bG_{q}\dib_b^*$, $\dib_bG^2_q\dib_b^*$ and $\dib_b^*G^2_q\dib_b$ are $L^2$-bounded for all  degrees $q_0\le q\le n-q_0$. Here $\lrcorner$ is the contraction operator defined in Section \ref{s2} below. If the manifold also admits a covering that satisfies ``a weak compactness property'' on  $(0,q_0)$-forms, 
	then these operators are both $C^\infty$-globally regular and exactly regular in $H^s$-spaces \cite{KhRa16a}.
	Our ``weak compactness" property acting $(0,q_0)$-forms on a covering  $\{U_\eta\}_\eta$ of $M$ is defined as follows: for any $U_\eta$ and any  $t\ge 1$ there exist a vector field $T_t$ transversal $T^{1,0}M\oplus T^{0,1}M$, a 
	uniformly bounded, smooth function $\lambda_t^\eta$, and a function $f(t)\nearrow \infty$ as $t\nearrow \infty$  such that 
 	\begin{eqnarray}\label{cmptness}
 		\begin{cases}
 			\left\la(\L_{\lambda^\eta_t}+td\theta)\lrcorner u, u\right\ra\ge f(t)|\T{\{\it Lie\}}_{T_t}(\theta)|^2|u|^2,\\
 			0<c_1\le \theta(T_t)\le c_2 \quad \T{uniformly in $t$,}
 		\end{cases} 
 	\end{eqnarray} hold on $U_\eta$ for all smooth $(0,q_0)$-forms $u$.
 	
 	The purpose of this paper is to answer Question 3 and 4. The study of the local $C^\infty$ regularity and the smoothness of the distribution kernels of $G_q$ and the relative operators are preliminary steps for the derivation of $L^p$ and H\"older estimates for these operators (cf. \cite{Chr88, Koe02, FeKo88, FeKo88a, NaRoStWa89, NaSt06, FoSt74e, ChNaSt92, Mac88}).
 	  For the question on local regularity,  
 	we first recall an operator is called locally regular on a given open set $U$
 	 if it preserve $C^\infty$ on $U$. The local regularity of $G_q$ is equivalent  to the local hypoellipticity of $\Box_b$ in the sense that  if the restriction of $\Box_bu$ to  $U$ is in
 	$C_{0,q}^\infty(U)$, then the restriction of $u$ to $U$ is also in $C_{0,q}^\infty(U)$.  

 		It is well known in the general theory of partial differential equations that a subelliptic estimate implies local hypoellipticity \cite{KoNi65}. For the $\dib$-Neumann problem, Kohn \cite{Koh79} gave a sufficient condition for subellipticity over pseudoconvex domains in whose  boundaries are  real analytic and of finite type by introducing a sequence of ideals of subelliptic multipliers. For example,  for the hypersurface in $\C^{n+1}$ defined by
 	\begin{equation}
 	\label{example0}
 	\Im\,{z}_{n+1}=\sum_{k=1}^m|h_k(z_1, \dots,z_n)|^2, 
 	\end{equation}  
 	where $h_k$'s are holomorphic functions and whose  common zeroes is only the origin of $\C^n$, the origin is a point of
 	finite type. Hence, by \cite[Theorem 1.19]{Koh79},  a subelliptic estimate for $\Box$ holds (see \cite{Koh79} for the analysis of this example). 
	In \cite{Cat83, Cat87}, Catlin proved, regardless whether the boundary is real analytic or not, the equivalence of the finite type condition and the subelliptic estimate by establishing that each of these two conditions is equivalent to a third technical condition called the {\it potential-theoretical condition} on domains.
 For a CR manifold which is the boundary of a bounded pseudoconvex domain in $\C^{n+1}$, the subelliptic estimate for $\Box_b$ and $\Box$ are equivalent \cite{Koh02}. Consequently, $\Box_b$ is locally hypoelliptic	 
 on the set of points of finite type. 
   An amazing result by Kohn \cite{Koh02} showed that superlogarithmic estimate for $\Box_b$  still implies local hypoellipticity. It should be noted that superlogarithmicity is a very weak estimate compared with subellipticity and that there are many classes of domains of infinite type (flat boundary) for which the superlogarithmic estimate holds (cf.\cite{KhZa10}). Superlogarithmicity is also a necessary condition for local hypoellipticity of  $\Box_b$ on some hypersurfaces. For example, Christ \cite{Chr02} considered  the model in $\C^2$ defined by
 \begin{eqnarray}
 \Im\,{z_2}=\exp\left(-\frac{1}{|\Re z_1|^\alpha}\right),
 \end{eqnarray} for some $\alpha>0$, and showed that  in a neighborhood of the origin, $\Box_b$ is locally hypoelliptic  if and only if superlogarithmic estimate holds, i.e., $\alpha<1$ (see also \cite{KhZa10, BaKhZa12}). The problem is to find conditions under which $\Box_b$ is locally hypoelliptic but superlogarithmicity fails. 	
 	It is a recent discovery \cite{Koh00, Chr01, BaPiZa15} that there exist many classes of hypersurfaces that superlogarithmicity might not hold but $\Box_b$ is still locally hypoelliptic. 
	For example, $\Box_b$ is locally hypoelliptic on hypersurface models in $\C^{n+1}$ either defined by 
 	\begin{eqnarray}
 \label{model2}
  \Im\, z_{n+1} =\exp\left(-\frac{1}{\left(\sum_{k=1}^m |h_k(z_1,\dots,z_n)|^2\right)^{\beta}}\right),
 	\end{eqnarray}
 	where $h_k$'s are holomorphic functions whose common zeroes is  only at the origin of $\C^n$, and $\beta>0$ (see \cite{Koh00, Chr01}); or
 	\begin{eqnarray}\label{model3}
 	 \Im\, z_{n+1}=\sum_{j=1}^n |\Re z_j|^{2m_j}\exp\left(-\frac{1}{|z_j|^{\beta_j}}\right),
 	\end{eqnarray}
 	where  $m_j\in \mathbb N$ and $\beta_j>0$, $j=1,\dots,n$ (see \cite{BaPiZa15}). The problem of local hypoellipticity of $\Boxb$, raised by Zampieri, remains open for the model, defined  by
 	\begin{eqnarray}\label{model4}
 		 \Im\, z_{n+1}=\sum_{j=1}^n \exp\left(-\frac{1}{|\Re z_j|^{\alpha_j}}\right)\exp\left(-\frac{1}{|z_j|^{\beta_j}}\right)
 	\end{eqnarray}
 	where $0<\alpha_j<1$ and $\beta_j>0$, $j=1,\dots,n$. 
 	
 The problem raised by Zampieri has motivated my work on this paper. 	
 The main goal of this paper consists in establishing the local hypoellipticity of $\Box_b$ and hence the regularity of $G_q$ and related operators. The technique is to establish a 
 sufficient potential theoretic condition on abstract CR manifolds.  Surprisingly, not only
this shows local hypoellipticity of $\Box_b$ on the hypersurface defined by \eqref{model4}, but also we discover a new class of hypersurfaces in $\C^{n+1}$ in which superlogarithmic estimate for $\Box_b$ might not hold but $\Box_b$ is locally hypoelliptic. For example, a generalization of Kohn's example in \cite{Koh79}:
	\begin{equation}
	\label{example1}
	\Im\,{z}_{n+1}=\sum_{j=1}^mH_j(|h_j(z_1, \dots,z_n)|), 
	\end{equation}  
	where $H_j:\R^+\to\R^+$ are increasing convex functions with $H_j(0)=0$ and $h_j$'s are holomorphic functions and have the common zero at only the origin of $\C^n$; or a generalization of Christ's example in \cite{Chr02}:
	\begin{equation}
	\label{example2}
	\Im\,{z}_{2}=\exp\left(-\frac{1}{|\Re\,( z_1^m)|^\alpha}\right)
	\end{equation}  
	where $m\in\mathbb N$  and $0< \alpha<1$.	
	  In particular,   all mentioned examples \eqref{example0}-\eqref{example2} can be unified  to a class of  hypersurfaces in $\C^{n+1}$  having the form
  \begin{eqnarray}\label{model5}
  \Im\, z_{n+1}=\sum_{k=1} H_k\left(\sum_{j}|h_{kj}|^2\right)F_k\left(\sum_{j}|Re\,h_{kj}|^2\right)
  \end{eqnarray}
 where $H_k,F_k:\R^+\to \R^+$  are increasing and convex functions such that  $H_k(0)F_k(0)=0$; and the $h_{kj}$'s are holomorphic functions in $\C^n$ with an isolated zero at the origin. The crucial condition for local hypoellipticity of $\Box_b$  is only that $\lim_{\delta\to 0}\delta \ln\left( F_k(\delta^2)\right)=0$ and no matter how small the value of $H_k$'s are.
 \subsection{The main theorems} Throughout of this paper, we  consider $M$ to be a $(2n+1)$-dimensional, pseudoconvex CR manifold of hypersurface type with a fixed purely imaginary contact form $\theta$ and its dual vector field $T$;  $U$ to be an open set of $M$ on which a smooth basis of $\C TM$ exists. We do not require $M$ to be compact, however, we assume that $\dib_b$ has closed range in $L^2$ for all degree of forms (see \cite{KhRa16a} for discussion on this condition). Let $\sigma$ be a cutoff function, we denote by $V^\sigma_0$  the interior of the set $\{x\in M:\sigma(x)=1\}$. If $\zeta$ is another cutoff function,  then we use notation $\sigma\prec \zeta$ if $\zeta=1$ on the support of $\sigma$.  For two functions $f,g:\R^+\to \R^+$, we write $f\gg g$ if $\lim_{t\to\infty }f(t)/g(t)=\infty$.
 
 Now we introduce a potential theoretic condition  that is a sufficient condition for local hypoellipticity of $\Box_b$. 
 	\begin{definition}\label{def1}
 		Let $\sigma$ be a cutoff function in  $U$. We say that the $\sigma$-superlogarithmic property on $(0,q_0)$-forms holds if there exist a pair of rate functions $f,f_0:\R^+\to\R^+$ with $f,f_0\gg 1$ and $f(t), f_0(t)\le t^{1/2}$ for $t\ge 1$,  and  a family of smooth real-valued function $\{\lambda_t\}_{t\ge1}$ defined on $U$ such that 
 		\begin{eqnarray}\label{definition1}
 		\begin{cases}
 		\left\la(\L_{\lambda_t}+td\theta)\lrcorner u, u\right\ra\ge f^2_0(t)\left(\ln t|\la\L_\sigma\lrcorner u,u\ra|+\ln^2t|\di_b\sigma \lrcorner u|^2\right)+f^2(t)|u|^2,\\
 		|T^m(\lambda_t)|\le c_mt^{\frac{m}{2}}, \T{for all } m\in \mathbb N,
 		\end{cases}
 		\end{eqnarray}
		holds at any $x\in U$ for all smooth $(0,q_0)$-forms $u$. The functions $f$, $f_0$ are called the independent and dependent rate, respectively. 
 		\end{definition}
 		This definition is inspired by the  $P$-property defined by Catlin in \cite{Cat84, Cat87, KhZa10} and Property $(CR\T-P_q)$ defined by Raich in \cite{Rai10, KhPiZa12a, Str12}.
 		The $\sigma$-superlogarithmic property is a combination of the potential theoretical conditions  $(f\T-Id)$, $(f_0\ln\T-\di_b\sigma\we\dib_b\sigma)$ and $(f_0\sqrt{\ln}\T-\L_\sigma)$-properties where the $(f\T-\M)$-property is defined by the first line of \eqref{definition1} replaced by 
 			\begin{eqnarray}\label{definition2}
 			\left\la(\L_{\lambda_t}+td\theta)\lrcorner u, u\right\ra\ge f^2(t)|\la \M\lrcorner u,u\ra|.
 				\end{eqnarray}
Here, $\M$ is a $(1,1)$ form and is called a multiplier of the $f$-property.  It is automatic that $(t^{1/2}\T-d\theta)$-property holds on any CR manifold.  A byproduct from  the proof of the main result in \S \ref{s3.1}, we have a general estimate for $\Box_b$ by the $(f\T-\M)$-property.
 				\begin{theorem}\label{t0}
 					Let $M$ be a $(2n+1)$-dimensional, pseudoconvex CR manifold and $x_o\in M$, let $f:\R^+\to\R^+$, and $\M$ be a nonnegative $(1,1)$-form. Suppose  $(f\T-\M\T)$-property on $(0,q_0)$-forms holds  at in a neighborhood $U$ of  $x_o$. Then, the estimate
 					\begin{eqnarray}
 					\label{fM+}(\M \lrcorner f(\La)u^+,f(\La)u^+)_{L^2}\le c\left(\no{\dib_b u^+}_{L^2}^2+\no{\dib_b^* u^+}_{L^2}^2+\no{u}^2_{L^2}\right)
 					\end{eqnarray}
 					holds for all $u\in C^\infty_{0,q}(M)$ supported in $U$ with $q\ge q_0$; and dually, the estimate
 					\begin{eqnarray}	\label{fM-}\begin{aligned}\no{\sqrt{\Tr(\M)}\times f(\La)u^-}_{L^2}^2&-(\M\lrcorner f(\La)u^-,f(\La)u^-)_{L^2}\\
 					&\le  c\left(\no{\dib_b u^-}_{L^2}^2+\no{\dib_b^* u^-}_{L^2}^2+\no{ u}_{L^2}^2\right)\end{aligned}	\end{eqnarray}
 					holds for all  $u\in C^\infty_{0,q}(M)$ supported in $U$ with $q\le n-q_0$. Here $u^\pm$ are the $\pm$-Kohn's microlocalizations of $u$ and $f(\La)$ is the pseudodifferential operator of symbol $f(\sqrt{1+|\xi|^2})$. 
 				\end{theorem}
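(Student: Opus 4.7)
The plan is to prove Theorem \ref{t0} by Kohn's microlocal method, upgrading the $(f\T-\M)$-property from a family of pointwise inequalities parametrized by $t\ge 1$ to an operator inequality via pseudodifferential quantization of the parameter $t$ as $\La^2$. First I would introduce Kohn's microlocal decomposition $u = u^+ + u^- + u^0$ associated to a conic partition of $T^*M$ adapted to the characteristic variety of $\Boxb$: the pseudodifferential cutoffs have symbols supported, respectively, in the cones where $\pm iT$ has dominant positive principal symbol and in the elliptic cone. The $u^0$ contribution to \eqref{fM+}-\eqref{fM-} is controlled by standard elliptic regularity of $\Boxb$, so the content lies on the $\pm$ cones.

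For $u^+$ I would apply the weighted Kohn-Morrey-H\"ormander identity with weight $e^{-\lambda_t}$: for smooth $v$ supported in $U$,
\[
\no{\dib_b v}^2 + \no{\dib_b^{*} v}^2 + c\no{v}^2 \ge \la (\L_{\lambda_t} + (T\lambda_t)\, d\theta) \lrcorner v, v\ra.
\]
On the $+$ cone the symbol of $iT$ is comparable to $+|\xi|$, so substituting $v = f(\La) u^+$ makes $T\lambda_t$ quantize as $t\, d\theta$ with $t \sim \La^2$ on the frequency support of $v$. The $(f\T-\M)$-property then bounds the right-hand side from below by a constant multiple of $\la \M \lrcorner f(\La)u^+, f(\La)u^+\ra$ up to lower-order errors, which yields \eqref{fM+}. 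The quantization is legitimized by the growth bound $|T^m\lambda_t|\le c_m t^{m/2}$: when $t$ is replaced by $\La^2$, the symbol $\lambda_{|\xi|^2}(x)$ lies in a zero-order pseudodifferential class with the derivative estimates required to run the calculus.

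The dual estimate \eqref{fM-} on $u^-$ follows by the symmetric argument: on this cone $iT$ has negative principal symbol, so $(T\lambda_t)\, d\theta$ quantizes with the opposite sign. Applying the $(f\T-\M)$-property in its dual form on $(0,n-q)$-forms, equivalently using the pointwise trace identity $\la d\theta\lrcorner u, u\ra + \la d\theta\lrcorner *u,*u\ra = \Tr(d\theta)|u|^2$ to flip signs, converts the resulting estimate to the left-hand side of \eqref{fM-}, with the $\Tr(\M)|f(\La)u^-|^2$ compensation term appearing naturally.

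The main obstacle is the quantization step: once $t$ is replaced by $\La^2$ inside $\lambda_t$ and one sets $v=f(\La)u^\pm$, commutators among $\tilde\lambda(x,D)$, $f(\La)$, $\dib_b$, and $\dib_b^*$ arise of positive order and must be absorbed into $\no{u}^2$. This absorption uses the bound $f(t)\le t^{1/2}$, which renders $f(\La)$ of order at most $1/2$ so that error terms are dominated by $\no{\dib_b u}^2+\no{\dib_b^* u}^2+\no{u}^2$, together with the uniform weight control $|T^m\lambda_t|\le c_m t^{m/2}$; the interplay between these two bounds is the delicate point and accounts for the size restrictions on $f,f_0$ in Definition \ref{def1}.
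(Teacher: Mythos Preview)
Your overall architecture is right---microlocalize, apply the weighted Kohn--Morrey--H\"ormander inequality with the weight $\lambda_t$, and dualize via the Hodge-$*$ map---but the quantization step as you describe it does not work under the stated hypotheses, and this is not merely a technical difficulty to be ``absorbed.'' The condition $|T^m(\lambda_t)|\le c_m t^{m/2}$ controls \emph{spatial} derivatives of $\lambda_t$ in the $x_{2n+1}$-direction at each \emph{fixed} $t$; it says nothing whatsoever about regularity of the map $t\mapsto \lambda_t$. There is no assumption that $\lambda_t$ is even continuous in $t$, let alone differentiable, so the putative symbol $\lambda_{|\xi|^2}(x)$ need not lie in any pseudodifferential class: you have no control of $\partial_\xi^\alpha\bigl(\lambda_{|\xi|^2}(x)\bigr)$. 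Your sentence ``the symbol $\lambda_{|\xi|^2}(x)$ lies in a zero-order pseudodifferential class with the derivative estimates required to run the calculus'' is therefore unjustified, and the entire commutator-absorption program downstream of it collapses.

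The paper avoids this by never quantizing $t$ continuously. Instead it introduces a dyadic Littlewood--Paley decomposition $u^+_\zeta=\sum_k u^+_{\zeta,k}$ with $u^+_{\zeta,k}$ frequency-localized to $|\xi_{2n+1}|\sim e^k$, and for each fixed $k$ applies the KMH inequality with the \emph{fixed} smooth weight $\chi(\lambda_{e^k})$ (a mere multiplication operator, no calculus in $t$ required). On that frequency shell the operator $T$ is comparable to $e^k$ via G{\aa}rding's inequality (Proposition~\ref{Local3.2}), which is where the $T^m$-bounds on $\lambda_{e^k}$ are actually used---to control the commutator $[e^{-\lambda/2}\zeta,\tilde\Gamma_k^+]$ in Lemma~\ref{l1}. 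This yields the atom estimate
\[
f(e^k)^2(\M\lrcorner u^+_{\zeta,k},u^+_{\zeta,k})_{L^2}\lesssim \no{\dib_b u^+_{\zeta,k}}^2+\no{\dib_b^* u^+_{\zeta,k}}^2+\no{u^+_{\zeta,k}}^2+e^{-k}\no{\zeta u}^2_{H^{-1}},
\]
and one then sums over $k$: diagonal terms give the main estimate, off-diagonal terms with $|k-\hat k|\ge 3$ have disjoint frequency supports and are harmless, and near-diagonal terms are handled by Cauchy--Schwarz using nonnegativity of $\M$. The dual estimate \eqref{fM-} then follows from the Hodge-$*$ argument as you outlined. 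If you rewrite your proof with this discrete-in-$t$ mechanism replacing the continuous quantization, the rest of your plan goes through.
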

 				We remark that if $\M$ is a positive $(1,1)$-form, then \eqref{fM+}, \eqref{fM-} and the elliptic estimate for the $0$-Kohn's microlocalization $u^0$ imply the full estimate
 				\begin{eqnarray}
 				\label{fM}\no{f(\La)u}^2_{L^2}\le c\left(\no{\dib_b u}_{L^2}^2+\no{\dib_b^* u}_{L^2}^2+\no{u}_{L^2}^2\right)\end{eqnarray}
 				for all $u \in C^\infty_{0,q}(M)$ supported in  $U$ with $q_0\le q\le n-q_0$. In this situation, if  $f(t)=t^{\epsilon}$, \eqref{fM} becomes a subelliptic estimate; if $f\gg\ln$, \eqref{fM} implies the superlogarithmic estimate; and if $f\gg1$, \eqref{fM} implies the (local) compactness estimate. These estimates are well-known in theory of hypoellipticity of $\Box_b$. The compactness estimate for $\Box_b$ obtained by potential-theoretical property has been proved in \cite{Rai10, Str12, KhPiZa12a}. However, this is the first time that estimates for $\Box_b$ by potential-theoretical properties have shown a ``gain" in derivatives.
 				Coming back to Definition~\eqref{def1}, it is obvious  that if the independent rate $f\gg\ln$ then the $\sigma$-superlogarithmic property holds for any cutoff function $\sigma$ in $U$. In fact, in this case the superlogarithmic estimate for $\Box_b$ holds by Theorem~\ref{t0} and hence $\Box_b$ is local hypoellipticity by  \cite[Theorem 1.5]{Koh02}.\\

 	The $\sigma$-superlogarithmic property implies the following results concerning  hypoellipticity of $\Box_b$ and regularity of $G_q$ and its related operators on CR manifolds with dimension at least five.

 		 		\bt
 		 \label{maintheorem1}
 		Let $M$ be a pseudoconvex CR manifold of dimension $(2n+1)$ with $n\ge 2$ 
 		such that  $\dib_b$ has closed range in $L^2$ spaces for all degrees of forms. 
 		  Suppose that  the $\sigma$-superlogarithmic property  on $(0,q_0)$-forms holds. 	Then, for any $q_0\le q\le n-q_0$, if  $u\in L^2_{0,q}(M)$ such that $\Box_b u\in L^2_{0,q}(M)\cap C^\infty_{0,q}(V_1^\sigma)$ then  $u\in C^\infty_{0,q}(V_0^\sigma) $, where $V_0^\sigma$ the interior of $\{x\in M: \sigma(x)=1\}$ and $V_1^\sigma$ be an open set containing $\supp(\sigma)$. 
 		  
 		  Furthermore, for any $s\ge0$, and $\zeta_0\prec\sigma\prec \zeta_1$, the following holds.
 			\begin{enumerate}
 			\item[(i)] If  $\varphi\in L^2_{0,q}(M)\cap H_{0,q}^s(\supp(\zeta_1))$, then 
 			 	\begin{eqnarray}
 			 	\label{main-est1}
 			 	\begin{aligned}
 			 	&\no{f^2(\Lambda)\zeta_0G_q\varphi}_{H^{s}}+\no{f(\Lambda)\zeta_0\dib_bG_q\varphi}_{H^{s}}+\no{f(\Lambda)\zeta_0\dib_b^*G_q\varphi}_{H^{s}}\\
 			 	&+\no{\zeta_0(I-\dib_b\dib_b^*G_q)\varphi}_{H^{s}}+\no{\zeta_0(I-\dib_b^*\dib_bG_q)\varphi}_{H^{s}}
 			 	\le c_{s,\zeta_0,\zeta_1}\left( \no{ \zeta_1 \varphi}_{H^{s}}+\no{\varphi}_{L^2}\right).
 			 	\end{aligned}
 			 	\end{eqnarray}
 				\item[(ii)] If $\varphi\in L^2_{q-1}(M)\cap H^s_{0,q-1}(\supp(\zeta_1))$, then
 				\begin{eqnarray}
 				\label{main-est2}
 				\begin{aligned}
  \no{f^2(\Lambda)\zeta_0\dib_b^*G_q^2\dib_b\varphi}_{H^{s}}+&\no{f(\Lambda)\zeta_0G_{q}\dib_b\varphi}_{H^{s}} 				+\no{\zeta_0(I-\dib_b^*G_{q}\dib_b)\varphi}_{H^{s}}\\
 				&\le c_{s,\zeta_0,\zeta_1}\left( \no{ \zeta_1 \varphi}_{H^{s}}+\no{\varphi}_{L^2}\right).
 				\end{aligned}
 				\end{eqnarray}
 			\item[(iii)] If  $\varphi\in L^2_{0,q+1}(M)\cap H^s_{0,q
 				+1}(\supp(\zeta_1))$, 
 			\begin{eqnarray}
 			\label{main-est3}
 			\begin{aligned}
 		 \no{f^2(\Lambda)\zeta_0\dib_bG_q^2\dib^*_b\varphi}_{H^{s}}+&\no{f(\Lambda)\zeta_0G_{q}\dib_b^*\varphi}_{H^{s}} 				+\no{\zeta_0(I-\dib_bG_{q}\dib_b^*)\varphi}_{H^{s}}\\
 			&\le c_{s,\zeta_0,\zeta_1}\left( \no{ \zeta_1 \varphi}_{H^{s}}+\no{\varphi}_{L^2}\right).
 			\end{aligned}
 			\end{eqnarray}
 			\end{enumerate} 	
 				Here, $f$ is the independent rate of the $\sigma$-superlogarithmic property.	
 		 \et

The proof of Theorem~\ref{maintheorem1} is given in \S\ref{s4.1}. The crucial idea in the proof is to combine the elliptic regularization method and a-priori estimates in the intermediate norm $\no{\cdot}_{f\T- {\mathcal A}^{s\sigma}_\zeta}$ between two classical norms $\no{f(\Lambda)\zeta_0\cdot}_{H^{s}}$ and $\no{f(\Lambda)\zeta_1\cdot}_{H^{s}}$. In particular, we will prove that 
 		\begin{eqnarray*}
 	\begin{aligned}
 	\no{u}_{f\T- {\mathcal A}^{s\sigma}_\zeta}\le c_{s,\sigma,\zeta}\left(\no{\dib_bu}_{{\mathcal A}^{s\sigma}_\zeta}+\no{\dib_b^*u}_{{\mathcal A}^{s\sigma}_\zeta}+\no{u}_{L^2}\right),
 	\end{aligned}
 	\end{eqnarray*}
 	and
 		\begin{eqnarray*}
 			\begin{aligned}
 				\no{u}_{f^2\T- {\mathcal A}^{s\sigma}_\zeta}+&\no{\dib_bu}_{f\T- {\mathcal A}^{s\sigma}_\zeta}+\no{\dib_b^*u}_{f\T- {\mathcal A}^{s\sigma}_\zeta}+\no{\dib_b\dib_b^*u}_{{\mathcal A}^{s\sigma}_\zeta}+\no{\dib_b^*\dib_bu}_{{\mathcal A}^{s\sigma}_\zeta}\\
 				&\le c_{s,\sigma,\zeta}\left( \no{\Box^\delta_b u}_{{\mathcal A}^{s\sigma}_\zeta}+\no{u}_{L^2}\right)
 			\end{aligned}
 		\end{eqnarray*}
 		hold for all   $u\in L^2_{0,q}(M)\cap C_{0,q}^\infty(\supp(\zeta)) $. Here $\Box_b^\delta$ is an elliptic perturbation of $\Box_b$.  It is should be emphasized that with the stronger hypotheses in \cite{Chr01, Chr02, Koh02, Koh00, BaPiZa15} the difficulty of regularization methods can be avoided.
 	
 			\begin{remark}For the top degrees,  it is known that $G_0:=\dib_b^*G_1^2\dib_b$ and   $G_n=\dib_bG_{n-1}^2\dib_b^*$. Thus, if the $\sigma$-superlogarithmic property holds on $(0,1)$-forms then by Theorem~\ref{maintheorem1}
 				(ii)  and  (iii) we get 
 			 				\begin{eqnarray}
 				\label{main-est4}
 				\begin{aligned}
 				\no{f^2(\Lambda)\zeta_0G_{0}\varphi}_{H^{s}}+\no{f(\Lambda)\zeta_0\dib_bG_0\varphi}_{H^{s}}
 				\le c_{s,\zeta_0,\zeta_1}\left( \no{ \zeta_1 \varphi}_{H^{s}}+\no{\varphi}_{L^2}\right).
 				\end{aligned}
 				\end{eqnarray}
 			holds 	for all $\varphi\in L^2(M)\cap H^s(\supp(\zeta_1))$, and 
 				\begin{eqnarray}
 				\label{main-est5}
 				\begin{aligned}
 				\no{f^2(\Lambda)\zeta_0G_{n}\varphi}_{H^{s}}+\no{f(\Lambda)\zeta_0\dib^*_bG_n\varphi}_{H^{s}}\le c_{s,\zeta_0,\zeta_1}\left( \no{ \zeta_1 \varphi}_{H^{s}}+\no{\varphi}_{L^2}\right).
 				\end{aligned}
 				\end{eqnarray}
 					holds 	for all $\varphi\in L_{0,n}^2(M)\cap H_{0,n}^s(\supp(\zeta_1))$. However, this works only for CR manifolds of  dimension at least five, i.e, $n\ge 2$. For the case $n=1$, we add an extra assumption that $G_0$ is $C^\infty$ 
					globally regular.  
 				Here is our result for this case.
 			\end{remark}	
 		\begin{theorem}\label{maintheorem2}
 			Let $M$ be a $3$-dimensional pseudoconvex CR manifold such that $\dib_b$ has closed range on functions and $G_0$ is globally regular. Suppose that 
			the $\sigma$-superlogarithmic property with dependent rate $f$ holds for $(0,1)$-forms. Then, for $q=0,1$, if $u\in L^2_{0,q}(M)$ such that $\Box_bu\in L^2_{0,q}(M)\cap C^\infty_{0,q}(V_1^\sigma)$ then  $u\in C^\infty_{0,q}(V_0^\sigma) $.  Furthermore, \eqref{main-est4} and \eqref{main-est5} hold.		
 		\end{theorem}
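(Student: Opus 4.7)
The strategy is to adapt the a-priori-estimate and elliptic-regularization scheme of \S\ref{s4.1} (the proof of Theorem~\ref{maintheorem1}) to the three-dimensional case $n=q_0=1$, in which the range $q_0\le q\le n-q_0$ is empty so that Theorem~\ref{maintheorem1} does not apply directly.  At the two degrees of interest, Theorem~\ref{t0} with $\M=\Id$ supplies only one of the two microlocalized estimates: \eqref{fM+} at $q=1$ and \eqref{fM-} at $q=0$; the dual $-$-estimate at $q=1$ is vacuous because the coefficient $\T{Tr}(\Id)-q=n-q$ vanishes when $n=q=1$.  The additional hypothesis that $G_0$ is globally $C^\infty$-regular is invoked precisely to fill the missing $-$-piece at the top degree.

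The heart of the argument is $q=1$.  On $(0,1)$-forms one has $\dib_b\equiv 0$ and $\Box_b=\dib_b\dib_b^*$, and the remark preceding the statement gives the identity $G_1=\dib_b G_0^2\dib_b^*$.  Decomposing a test form through Kohn's microlocalization $u=u^++u^0+u^-$, the elliptic piece $u^0$ and the $+$-piece $u^+$ are handled in the intermediate norm $\no{\cdot}_{f\T-\A^{s\sigma}_\zeta}$ by \eqref{fM+} exactly as in \S\ref{s4.1}.  For $u^-$, I would exploit $G_1=\dib_bG_0^2\dib_b^*$ by splitting the source $\dib_b^*\varphi$ via the cut-offs $\zeta_0\prec\sigma\prec\zeta_1$ into a compactly supported piece $\dib_b^*(\zeta_1\varphi)+[\dib_b^*,\zeta_1]\varphi$, whose $G_0^2$-image is globally in $H^{s-1}$ by global regularity of $G_0$, and an off-support piece $(1-\zeta_1)\dib_b^*\varphi$, whose $\zeta_0$-localized $G_0^2$-image is controlled by off-diagonal smoothness of the Schwartz kernel of $G_0$ (itself a consequence of the $H^s$-continuity of $G_0$ for every $s$, combined with the disjointness $\T{supp}(\zeta_0)\cap\T{supp}(1-\zeta_1)=\emptyset$).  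Running this inside the elliptic regularization $\Box_b^\delta=\Box_b+\delta\Box_{\T{ell}}$ and passing to the limit $\delta\to 0$ produces \eqref{main-est5}.

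The estimate \eqref{main-est4} at $q=0$ then follows in parallel: on functions $\dib_b^*\equiv 0$ and $\Box_b=\dib_b^*\dib_b$; the $-$-piece is controlled directly by \eqref{fM-}, and the $+$-piece is reduced to the $+$-estimate at $q=1$ applied to $\dib_bu$, modulo the commutators $[\dib_b,\zeta_i]$.  Local hypoellipticity of $\Box_b$ at $q=0,1$ then follows from \eqref{main-est4} and \eqref{main-est5} by the standard iteration on the Sobolev index $s$, exploiting the $f(\La)$-gain.  The principal obstacle will be the off-support contribution in the $u^-$ bound at $q=1$: splicing the \emph{global} hypothesis on $G_0$ into a \emph{local} intermediate-norm estimate, without sacrificing the $f(\La)$-gain and uniformly in the regularization parameter $\delta$, requires a careful treatment of the commutators $[\dib_b,\zeta_i]$, $[\dib_b^*,\zeta_i]$ and $[\Box_b^\delta,\zeta_i]$ produced by the non-local identity $G_1=\dib_b G_0^2\dib_b^*$, together with an induction on $s$ that propagates regularity level by level once uniform-in-$\delta$ bounds are in hand.
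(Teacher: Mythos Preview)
Your diagnosis of the difficulty is correct: when $n=1$ the range $q_0\le q\le n-q_0$ is empty, and on $(0,1)$-forms the $-$-microlocal estimate is vacuous.  However, your plan to ``run this inside the elliptic regularization $\Box_b^\delta$'' while simultaneously invoking the identity $G_1=\dib_b G_0^2\dib_b^*$ has a genuine gap: that identity is a consequence of $\dib_b\Box_b=\Box_b\dib_b$ and does \emph{not} persist for the perturbed operators---in general $G_1^\delta\neq \dib_b (G_0^\delta)^2\dib_b^*$.  So you cannot both regularize and use the algebraic identity, and without one of them you lack the a~priori membership of $G_1\varphi$ in $\A^{-,s\sigma}_{\zeta}$ needed to feed the intermediate-norm estimates.

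The paper's route (Theorem~\ref{t46}, following \cite[p.~70]{Str10}) sidesteps this entirely.  The key observation is that global regularity of $G_0$ propagates to $G_1$, $\dib_b G_0$, $\dib_b^*G_1$, $I-\dib_b^*G_1\dib_b$, etc.  Hence for $\varphi\in C^\infty(M)$ the form $G_1\varphi$ is already globally smooth, so it lies in every $\A^{\pm,s\sigma}_{\zeta}$ and the a~priori estimates of Theorems~\ref{t41}--\ref{t42} apply \emph{directly, with no elliptic regularization at all}.  This yields \eqref{main-est4}--\eqref{main-est5} for globally smooth data.  The passage to merely locally smooth $\varphi\in H^s(\supp(\zeta_1))\cap L^2(M)$ is then a density argument: write $\varphi=\zeta_1\varphi+(1-\zeta_1)\varphi$; the first summand is in $C^\infty(M)$ so the smooth-data estimate applies; for the second, approximate $\varphi$ in $L^2$ by $\varphi_m\in C^\infty(M)$, note that $\tilde\zeta(1-\zeta_1)=0$ kills the $H^s$ term on the right, and conclude that $\{\zeta_0 G_0(1-\zeta_1)\varphi_m\}$ is Cauchy in $H^s$.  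This replaces both your off-diagonal-kernel argument and the regularization step, and delivers the $f(\La)$-gain without any induction on~$s$.
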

 		
 	Let $\G_q(\hat x,x)$ be the integral kernel of the complex Green operator $G_q$. This means $G_q$ has the integral representation 
 	$$G_q\varphi(x)=\int_M\la\varphi(\hat x),\G_q(\hat x,x)\ra dV(\hat x)=(\varphi,\G_q(\cdot,x)))$$
 	provided the integral exists; this imposes regularity conditions on $\varphi$ and $\G_q$.
 	 Thus, $G$ is a double differential form of degree $(0,q;q,0)$ on the product manifold $M\times M$.   The second goal of this paper is to study the smoothness of $\mathcal G_{q}$. 
Using the  idea of Kerzman \cite{Ker72} for proving smoothness of the Bergman kernel of strongly pseudoconvex domains in $\C^{n+1}$ , the smoothness of the Szeg\"o kernels and the kernel of the complex Green operator have been obtained on some cases of pseudoconvex hypersurfaces in $\C^{n+1}$ of finite type (see \cite{McNSt97, Chr88, NaRoStWa89, PhSt77, Boa87s, NaSt06, Koe02, FeKoMa90}).  However, when $M$ is of infinite type, the only result available is that of Halfpap, Nagel and Wainger \cite{HaNaWa10} who classified  subsets of  $M\times M$ in which the Szeg\"o kernel is smooth by comparing  $\alpha$ with $1$  for the case that $M$ is defined by $\Im\,{z_2}=\exp\left(-\frac{1}{|\Re z_1|^\alpha}\right)$. Halfpap et. al. use a harmonic analysis approach.

It is obvious that the smoothness of $\G_q$ implies the smoothness of the Szeg\"o kernels of $I-\dib_b^*\dib_bG_q$ and $I-\dib_b^*G_q\dib_b$ and also the kernels of all related operators such as $\dib_bG_q$, $G_q\dib_b$,  $\dib_b^*G_q$, $G_q\dib_b^*$, $I-\dib_b\dib_b^*G_q$, and $I-\dib_bG_q\dib_b^*$. So we only need to state the result on the smoothness of $G_q$.
	\begin{theorem}\label{maintheorem3} Let $M$ be a pseudoconvex, CR manifold of dimension $(2n+1)$ such that $\dib_b$ has closed range in $L^2$ spaces for all degrees of forms. Suppose that the $\sigma$-superlogarithmic property on $(0,q_0)$ forms holds .
	\begin{enumerate}
		\item[(i)] If $\hat\sigma$ is another cutoff function  such that $\supp(\sigma)\cap\sup(\hat\sigma)=\emptyset$ and the $\hat\sigma$ -superlogarithmic property on $(0,q_0)$-forms also holds.  Then 
		$$\mathcal G_{q}\in C^\infty_{0q;q,0}\left(\left( V_0^\sigma\times V_0^{\hat\sigma}\right)\cup \left( V_0^{\hat\sigma}\times V_0^{\sigma}\right)\right)\quad\T{	 for all $q_0\le q\le n-q_0$}.$$	If $q_0=1$ and $n\ge 2$, or $q_0=n=1$ with the extra assumption that $G_0$ is globally regular, then this conclusion also holds for $q=0$ and $q=n$. 
			\item[(ii)] If $G_q$ is globally regular for some $q_0\le q\le n-q_0$ then 
			$$\mathcal G_{q}\in C^\infty_{0,q;q,0}\left(\left(V^\sigma_0\times (M\setminus \overline{\supp(\sigma)})\right)\cup\left(  (M\setminus \overline{\supp(\sigma)})\times V_0^\sigma \right)\right).$$	If $q_0=q=1$ (resp. $q_0=1, q=n$), this conclusion also holds for $G_0$ and (resp. $G_n$).
 			\end{enumerate}	
 		\end{theorem}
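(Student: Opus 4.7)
The plan is to invoke the Schwartz kernel theorem: the kernel $\mathcal G_q$ is $C^\infty$ on an open product $U_1\times U_2\subset M\times M$ if and only if, for every pair of cutoffs $\chi_1\in C^\infty_c(U_1)$ and $\chi_2\in C^\infty_c(U_2)$, the localized operator $T:=\chi_1 G_q\chi_2$ extends continuously from $\mathcal E'(M)$ into $C^\infty(M)$. My strategy is to extract two one-sided $L^2$-to-$H^s$ smoothing estimates for $T$ from Theorem \ref{maintheorem1} and then combine them by complex interpolation, using the self-adjointness $G_q=G_q^*$ to swap sides.

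For Part (i), I take $\chi_1\in C^\infty_c(V_0^\sigma)$ and $\chi_2\in C^\infty_c(V_0^{\hat\sigma})$. Since $\supp(\sigma)\cap\supp(\hat\sigma)=\emptyset$, I would choose $\chi_1\prec\zeta_0\prec\sigma\prec\zeta_1$ with $\supp(\zeta_1)\cap\supp(\chi_2)=\emptyset$; then $\zeta_1\chi_2\varphi\equiv 0$ for every $\varphi\in L^2$, and Theorem \ref{maintheorem1}(i)---together with $f\gg 1$, which makes $\no{f^2(\Lambda)\zeta_0\cdot}_{H^s}$ dominate $\no{\chi_1\cdot}_{H^s}$---gives
\[
\no{T\varphi}_{H^s}\le c_s\bigl(\no{\zeta_1\chi_2\varphi}_{H^s}+\no{\chi_2\varphi}_{L^2}\bigr)=c_s\no{\chi_2\varphi}_{L^2}\le c_s\no{\varphi}_{L^2},
\]
so $T\colon L^2\to H^s$ for all $s\ge 0$. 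Self-adjointness gives $T^*=\chi_2 G_q\chi_1$, and the same argument with the $\hat\sigma$-superlogarithmic property yields $T^*\colon L^2\to H^s$ for all $s$, equivalently $T\colon H^{-s}\to L^2$ by duality. Complex interpolation between $T\colon L^2\to H^s$ and $T\colon H^{-s}\to L^2$ produces $T\colon H^{-t_1}\to H^{t_2}$ for all $t_1,t_2\ge 0$; together with the compact support of the kernel of $T$ inside $\supp(\chi_1)\times\supp(\chi_2)$, this forces $T\colon\mathcal E'(M)\to C^\infty(M)$ continuously and so $K_T\in C^\infty$ on the interior of $\supp(\chi_1)\times\supp(\chi_2)$. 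Exhausting $V_0^\sigma\times V_0^{\hat\sigma}$ by such cutoffs and using the symmetry $\mathcal G_q(x,\hat x)=\overline{\mathcal G_q(\hat x,x)}$ yields smoothness on $(V_0^\sigma\times V_0^{\hat\sigma})\cup(V_0^{\hat\sigma}\times V_0^\sigma)$. I would handle the top and bottom degrees through the identities $G_0=\dib_b^*G_1^2\dib_b$ and $G_n=\dib_b G_{n-1}^2\dib_b^*$ for $n\ge 2$, and via Theorem \ref{maintheorem2} when $n=1$.

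For Part (ii) I replace $\chi_2\in C^\infty_c(V_0^{\hat\sigma})$ by $\chi_2\in C^\infty_c(M\setminus\overline{\supp(\sigma)})$; the first estimate $T\colon L^2\to H^s$ still follows from Theorem \ref{maintheorem1} since $\supp(\chi_2)$ can be arranged disjoint from $\supp(\zeta_1)$. For the remaining direction the $\hat\sigma$-property is unavailable, and I would instead use global regularity of $G_q$, which extends by duality to $G_q\colon H^{-s}\to H^{-s}$ for every $s\ge 0$. Then for $\varphi\in H^{-s}$ the form $u:=G_q(\chi_2\varphi)\in H^{-s}$ satisfies $\Box_b u=-P_{\H}(\chi_2\varphi)$ on $V_0^\sigma$ (because $\chi_2\varphi$ vanishes there and global regularity forces $\H_{0,q}\subset C^\infty$). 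Since the $\sigma$-superlogarithmic property implies local hypoellipticity of $\Box_b$ on $V_0^\sigma$ in the distributional sense---obtained by applying the elliptic-regularization scheme $\Box_b^\delta$ of \S\ref{s4.1} to distributional data in place of $L^2$ data---one gets $u\in C^\infty(V_0^\sigma)$, hence $T\colon\mathcal E'(M)\to C^\infty(M)$ and the Schwartz kernel theorem finishes the argument. The hardest step will be this distributional extension of Theorem \ref{maintheorem1}: the Friedrichs mollifiers needed to approximate a distributional $u$ do not commute exactly with the vector fields in the $\sigma$-superlogarithmic inequality, so the commutator errors must be controlled uniformly in the mollification parameter before passing to the limit.
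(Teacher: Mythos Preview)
Your approach to Part~(i) is correct and genuinely different from the paper's. The paper argues pointwise: it feeds the Dirac-type datum $\varphi_{\hat x_0}=(D^{\hat\alpha})^*\delta_{\hat x_0}\,\bar\omega_{\hat I}\in H^{-(n+1+|\hat\alpha|)}$ into $G_q$, identifies the components of $G_q\varphi_{\hat x_0}$ with $\overline{D^{\hat\alpha}_{\hat x_0}\mathcal G_{\hat I,I}(\hat x_0,\cdot)}$, and then bounds $\|\zeta_0 G_q\varphi_{\hat x_0}\|_{H^{n+1+|\alpha|}}$ via Sobolev embedding. The needed Sobolev control comes from the elliptic-regularization Theorem~\ref{t43b}: one shows $\|\zeta_2 G_q^\delta\varphi_{\hat x_0}\|_{L^2}$ is bounded uniformly in $\delta$ by dualizing against $v\in L^2$ and invoking the $\hat\sigma$-estimate for $G_q^\delta(\zeta_2 v)$. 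Your Schwartz-kernel plus interpolation route bypasses both the Dirac testing and the regularized operators $G_q^\delta$; it is shorter and uses only the already-proved $H^s$ estimates of Theorem~\ref{maintheorem1}. The paper's method, on the other hand, stays closer to its microlocal machinery and yields the kernel bounds directly in pointwise form.

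For Part~(ii) your detour through distributional hypoellipticity is unnecessary, and the Friedrichs-mollifier difficulty you flag is a red herring. First, a small correction: global regularity of $G_q$ only gives $\|G_q v\|_{H^s}\le C_s\|v\|_{H^{m(s)}}$ with a possible loss $m(s)\ge s$, hence by duality $G_q\colon H^{-s}\to H^{-m(s)}$, not $H^{-s}\to H^{-s}$. But this is all you need: for any $t>0$ you then have $T=\chi_1 G_q\chi_2\colon H^{-t}\to H^{-m(t)}$, and interpolating this with your first bound $T\colon L^2\to H^s$ (valid for every $s$) already yields $T\colon H^{-t_1}\to H^{t_2}$ for all $t_1,t_2\ge 0$. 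No hypoellipticity statement for distributional data is required, and the Schwartz kernel theorem finishes exactly as in your Part~(i). The paper's own proof of Part~(ii) is closer to this than to your proposed route: it uses global regularity and self-adjointness to place $\zeta_2 G_q\varphi_{\hat x_0}$ in a fixed $H^{-s_o}$, and then applies the a-priori estimates of \S\ref{s3}---which are formulated from the outset for data in $H^{-s_o}$ (see Remark~\ref{1r1} and the definition of the $\mathcal A^{\pm,s\sigma}_\zeta$ spaces)---so no mollification ever enters.
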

 		We prove this theorem in \S\ref{s4.2}. The idea of the proof is based on the estimates of regularity for $G_q\varphi_{\hat x_0}$ and $G_q^\delta\varphi_{\hat x_0}$ (where $G^\delta_q$ is the inverse of $\Box_b^\delta$-- a elliptic perturbation of $\Box_b$ ) where $\varphi_{\hat x_0}$ is a Dirac-delta distribution $(0,q)$-form supported in $\hat x_0$. 
 		
 		We notice that the regularity of $G_q$ and the smoothness of $\G_q$ in Theorem~\ref{maintheorem1}, \ref{maintheorem2} and \ref{maintheorem3} depends on the set  $V_0^\sigma=\{x\in M: \sigma(x)=1\}$ and the support of $\sigma$. To give the precise result for local regularity defined above, we define the point of weak superlogarithmicity as follows.
 	
 	 \begin{definition}\label{d2}
 	 	The point $x_0\in M$ is called a weakly superlogarithmic point if given any  small neighborhood $W$ of  $x_0$ then  there exists a  cutoff function $\sigma$  such that 
 	 	\begin{enumerate}
 	 		\item[(i)] $\supp(\sigma)\subset W$;
 	 		\item[(ii)] $\sigma=1$ in an open set containing $x_o$;
 	 	\item[(iii)]	the $\sigma$-superlogarithmic property  on $(0,1)$-forms holds.
 	 	\end{enumerate}
 	 \end{definition}	
   Denote by $S$ the set of all weakly superlogarithmic points. If $S\subset\subset M$ then $S$ is open. The following corollary follows immediately from
    Theorems~\ref{maintheorem1}, \ref{maintheorem2} and \ref{maintheorem3}.
  \begin{corollary}\label{Cor1}
  	Let $M$ be a pseudoconvex, CR manifold of dimension $(2n+1)$ such that $\dib_b$ has closed range in $L^2$ spaces for all degrees of forms (here $n\ge 2$ or in the case $n = 1$ with the additional hypothesis that $G_0$ is globally regular). Let
  	$S$ be the set of all weakly superlogarithmic points in $M$.  Then,  the following properties hold:
  	\begin{enumerate}
  		\item[(i)] For all $0\le q\le n$, $G_q$ is locally regular in $S$ in the sense that for any open set $V\subset S$, if $\varphi\in L^2_{0,q}(M)\cap C^\infty_{0,q}(V)$ then $G_q\varphi\in C^\infty_{0,q}(V)$. 
  		\item[(ii)] The kernel $\mathcal G_q\in C^\infty_{0,q;q,0}\left(S\times S\setminus \{\T{Diagonal}\}\right)$ for all $0\le q\le n$.
  		\item[(iii)] If $G_q$ is global regular for some $q$, then $\mathcal G_q\in C^\infty_{0,q;q,0}\left((S\times M)\cup(M\times S)\setminus\{\T{Diagonal}\}\right)$. 
  	\end{enumerate}
  \end{corollary}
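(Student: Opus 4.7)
The plan is to derive (i)--(iii) by patching the cutoff-based local statements of Theorems~\ref{maintheorem1}, \ref{maintheorem2} and \ref{maintheorem3} over $S$. For each $x_0\in S$, Definition~\ref{d2} supplies, on every sufficiently small neighborhood $W$, a cutoff $\sigma$ supported in $W$, equal to $1$ on an open set containing $x_0$, and satisfying the $\sigma$-superlogarithmic property on $(0,1)$-forms. This is exactly the hypothesis needed by the three main theorems, so the task reduces to organizing such cutoffs coherently around the relevant points.

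For (i), fix an open $V\subset S$ and $\varphi\in L^2_{0,q}(M)\cap C^\infty_{0,q}(V)$. At each $x_0\in V$ I would choose such a cutoff $\sigma$ with $\supp(\sigma)\subset V$ together with an open $V_1^\sigma$ satisfying $\supp(\sigma)\subset V_1^\sigma\subset V$. Writing $\Box_b(G_q\varphi)=\varphi-H_q\varphi$, where $H_q$ is the $L^2$-projection onto $\mathcal H_{0,q}(M)$, the hypothesis of Theorem~\ref{maintheorem1} (or Theorem~\ref{maintheorem2} when $n=1$) is that this right-hand side lie in $C^\infty_{0,q}(V_1^\sigma)$; $\varphi$ does by assumption. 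To dispose of $H_q\varphi$, I would observe that every harmonic $v\in\mathcal H_{0,q}(M)$ satisfies $\Box_bv=0$, so a further application of the same theorem, with freshly chosen cutoffs at each point of $V_1^\sigma\subset S$, yields $v\in C^\infty_{0,q}(V_1^\sigma)$ and hence $H_q\varphi\in C^\infty_{0,q}(V_1^\sigma)$. The theorem then delivers $G_q\varphi\in C^\infty_{0,q}(V_0^\sigma)$, and as $x_0$ ranges over $V$ one concludes $G_q\varphi\in C^\infty_{0,q}(V)$. For the extremal degrees $q=0,n$, I would invoke the Remark following Theorem~\ref{maintheorem1} together with the identities $G_0=\dib_b^*G_1^2\dib_b$ and $G_n=\dib_bG_{n-1}^2\dib_b^*$ (using the additional assumption that $G_0$ is globally regular when $n=1$).

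For (ii), given any $(x_0,\hat x_0)\in S\times S$ off the diagonal, I would select disjoint neighborhoods of the two points and apply Definition~\ref{d2} twice to produce cutoffs $\sigma,\hat\sigma$ with disjoint supports, each equal to $1$ near its respective point and satisfying the superlogarithmic property. The hypotheses of Theorem~\ref{maintheorem3}(i) are then met, so $\mathcal G_q$ is smooth on $(V_0^\sigma\times V_0^{\hat\sigma})\cup(V_0^{\hat\sigma}\times V_0^\sigma)$, which is a neighborhood of $(x_0,\hat x_0)$; taking the union over such pairs covers $S\times S\setminus\{\T{Diagonal}\}$. Part (iii) is entirely parallel, this time using Theorem~\ref{maintheorem3}(ii) and requiring only a single cutoff at the $S$-side of each pair, since the other factor may be any point outside $\overline{\supp(\sigma)}$.

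The main technical point, though not a deep one, is the preliminary observation that $\mathcal H_{0,q}(M)\subset C^\infty_{0,q}(S)$, without which one cannot replace $\Box_bG_q\varphi$ by $\varphi$ on $V_1^\sigma$ in~(i). Once this harmonic-regularity statement is in hand---itself an immediate consequence of Theorems~\ref{maintheorem1}/\ref{maintheorem2} applied to the trivial equation $\Box_bv=0$---the corollary reduces to the routine patching argument over $S$ outlined above.
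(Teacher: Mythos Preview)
Your proposal is correct and amounts to the intended patching argument; the paper itself gives no proof beyond the remark that the corollary ``follows immediately'' from Theorems~\ref{maintheorem1}, \ref{maintheorem2} and \ref{maintheorem3}. One small simplification: for part~(i) you can bypass the detour through $\Box_b(G_q\varphi)=\varphi-H_q\varphi$ and the auxiliary regularity of harmonic forms by appealing directly to the $H^s$-estimates \eqref{main-est1} (and \eqref{main-est4}, \eqref{main-est5} for the top degrees), which already bound $\no{\zeta_0 G_q\varphi}_{H^s}$ by $\no{\zeta_1\varphi}_{H^s}+\no{\varphi}_{L^2}$ and hence give local smoothness of $G_q\varphi$ straight from that of $\varphi$. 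Your route via hypoellipticity is nonetheless valid, and your treatment of (ii) and (iii) is exactly what the theorems are set up to deliver.
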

As our motivation, we will show any point of the hypersurface defined by \eqref{model5} is a weakly superlogarithmic point and then $\Box_b$ is locally hyopelliptic. 
 	\begin{theorem}\label{t1.9}
 	Let $M^{2n+1}$ be the hypersurface defined by \eqref{model5}. If 
 	$$\lim_{\delta\to 0}\delta \ln\left( F_k(\delta^2)\right)=0\quad  \T{for all  }k,$$ then any point $x\in M$ is a weakly superlogarithmic point. Consequently, $\Box_b$'s defined on forms of any degree are local hypoelliptic.  
 	\end{theorem}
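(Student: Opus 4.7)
The task is to verify, at every $x_0 \in M$, the conditions of Definition~\ref{d2} for $(0,1)$-forms; Corollary~\ref{Cor1} then delivers local hypoellipticity of $\Box_b$ on all form degrees. Since the defining equation of $M$ is independent of $\Re z_{n+1}$, the manifold is rigid and we take $T=\di/\di\Re z_{n+1}$ as transversal vector field, so any $\lambda_t$ depending only on $z=(z_1,\dots,z_n)$ satisfies $T^m\lambda_t=0$ for $m\ge 1$. First I would dispose of the points $x_0\in M$ where some $h_{kj}$ is nonzero and some $\tilde s_k:=\sum_j|\Re h_{kj}|^2$ is nonzero nearby: in $\di_b\dib_b\phi=\sum_k\di_b\dib_b[H_k(s_k)F_k(\tilde s_k)]$ the term $H_k'(s_k)F_k(\tilde s_k)\sum_j\di_b h_{kj}\we\dib_b\bar h_{kj}$ is strictly positive, so the Levi form of $M$ has a strictly positive eigenvalue, and Theorem~\ref{t0} with $\lambda_t\equiv 0$ combined with the intrinsic $(t^{1/2}\T-d\theta)$-property already yields a subelliptic (hence $\sigma$-superlogarithmic) estimate. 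The essential case is $x_0=(0,\dots,0,i\Im w_0)$, the unique common zero of all the $h_{kj}$'s in a fiber, where the $t\,d\theta$ contribution vanishes to high order.

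At $x_0$, fix an arbitrarily small neighborhood $W$ and choose a cutoff $\sigma\prec W$ with $\sigma\equiv 1$ on a smaller neighborhood $V_0^\sigma\ni x_0$. Write $s_k(z)=\sum_j|h_{kj}(z)|^2$ and $\tilde s_k(z)=\sum_j|\Re h_{kj}(z)|^2$. For each $k$, select scales $\delta_k=\delta_k(t)$ and $\tilde\delta_k=\tilde\delta_k(t)$, both $\searrow 0$ as $t\to\infty$, implicitly through
\begin{equation*}
\delta_k^{-2}\asymp f(t)^2,\qquad \tilde\delta_k^{-2}\asymp f_0(t)^2\ln^2 t,
\end{equation*}
where $f,f_0\gg 1$, $f,f_0\le t^{1/2}$ are slow rate functions to be fixed. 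Following a Catlin-type bump construction, define
\begin{equation*}
\lambda_t(z)=\sum_k\bigl[g_{k,t}(s_k(z))+\tilde g_{k,t}(\tilde s_k(z))\bigr],
\end{equation*}
where $g_{k,t},\tilde g_{k,t}:\R^+\to\R^+$ are smooth, increasing, and uniformly bounded in $t$, with Hessians of size $\delta_k^{-2}$ and $\tilde\delta_k^{-2}$ on the respective scales $\{s_k\le\delta_k^2\}$ and $\{\tilde s_k\le\tilde\delta_k^2\}$; the plurisubharmonicity of the $\tilde g_{k,t}$-term follows from the identity $2\di_b\dib_b(\Re h_{kj})^2=\di_b h_{kj}\we\dib_b\bar h_{kj}$. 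By rigidity $T^m\lambda_t=0$ for $m\ge 1$, and uniform boundedness controls the $m=0$ case. By the isolated-zero hypothesis the $h_{kj}$'s span the holomorphic tangent space away from $0$, so $\la\L_{\lambda_t}\lrcorner u,u\ra\gtrsim f(t)^2|u|^2$ on $V_0^\sigma$ for large $t$.

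The crucial step, and main obstacle, is absorbing the $\L_\sigma$ and $|\di_b\sigma\lrcorner u|^2$ terms supported in the annulus $\supp(d\sigma)\subset W\setminus V_0^\sigma$, where $s_k$ is bounded below by a fixed $\eta^2>0$ and the Hessian of $g_{k,t}(s_k)$ has decayed. On the substratum $\{\tilde s_k\ge\tilde\delta_k^2\}$, $H_k'(s_k)F_k(\tilde s_k)$ is uniformly bounded below so $t\,d\theta\ge c\,t\,|u|^2\gg f_0(t)^2\ln^2 t\,|u|^2$; on the complementary substratum $\{\tilde s_k\le\tilde\delta_k^2\}$, the $\tilde g_{k,t}$-Hessian contributes $\gtrsim\tilde\delta_k^{-2}|u|^2=f_0(t)^2\ln^2 t\,|u|^2$ directly. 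Translating the implicit definition of $\tilde\delta_k^{-2}$ into a consistent choice of slow rates requires matching $-\ln F_k(\tilde\delta_k^2)$ against $\ln t$, and the hypothesis $\lim_{\delta\to 0}\delta\ln F_k(\delta^2)=0$, equivalent to $-\ln F_k(\delta^2)=o(\delta^{-1})$, is exactly what permits the simultaneous smallness of $\tilde\delta_k(t)$ and slowness of $f_0(t)$ with $f_0(t)\ln t\le t^{1/2}$. The arbitrary flatness of $H_k$ is swept into $H_k'(s_k)$ at scale $s_k\asymp 1$ where absorption occurs and so plays no role, matching the announcement in the theorem. Verifying that the implicitly-defined $\tilde\delta_k(t)$ admits slow-enough rates $f,f_0\gg 1$ is precisely the quantitative balance that the hypothesis on $F_k$ supplies at its sharpest.
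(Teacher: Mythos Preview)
Your approach has two genuine gaps, one of which is fatal as stated.

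\textbf{The spanning claim is false.} You write that ``by the isolated-zero hypothesis the $h_{kj}$'s span the holomorphic tangent space away from $0$,'' and use this to conclude $\langle\L_{\lambda_t}\lrcorner u,u\rangle\gtrsim f(t)^2|u|^2$ on $V_0^\sigma$. But an isolated common zero of the $h_{kj}$ does \emph{not} force the differentials $\di_b h_{kj}$ to span $T^{1,0}$ pointwise away from the origin. Take $n=2$, $h_1=z_1^2$, $h_2=z_2$: the common zero is the origin, yet along $\{z_1=0,\ z_2\neq 0\}$ the forms $\di h_1=2z_1\,dz_1$ and $\di h_2=dz_2$ fail to span. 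Your bumps $g_{k,t}(s_k)$ have Levi forms supported in the span of the $\di_b h_{kj}$, so on such a stratum you get no control of $|u|^2$ at all. The paper handles exactly this difficulty through Kohn's multiplier-ideal machinery (Proposition~5.3): starting from the $(f_3\text{-}\L_{|h_l|^2})$-properties one iterates the ideals $\mathcal J_p$ of subelliptic multipliers, using the Nullstellensatz (via the isolated-zero hypothesis) to reach $1\in\mathcal J_p$ for some $p$, and hence the $(\sqrt[m]{f_3}\text{-}\Id)$-property. This iteration, not a direct spanning argument, is what converts the isolated-zero hypothesis into control of the full identity form.

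\textbf{Closed range and compactification.} You invoke Corollary~\ref{Cor1}, but that corollary requires $\dib_b$ to have closed range in $L^2$ on all degrees, and for $n=1$ additionally that $G_0$ be globally regular. The hypersurface $M$ in \eqref{model5} is non-compact, and you give no argument for closed range. The paper closes this gap by compactifying: one embeds a given neighborhood $V\subset M$ into a smooth compact pseudoconvex hypersurface $\tilde M\subset\C^{n+1}$, strongly pseudoconvex off $M$, so that closed range and global regularity of $G_{q,\tilde M}$ hold by the compact theory; since $\Box_{b,M}=\Box_{b,\tilde M}$ on $V$, local hypoellipticity transfers.

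A secondary point: the paper does not work with an arbitrary cutoff $\sigma$ but constructs the specific $\sigma_\eps=\prod_{l=0}^N\chi_\eps(|h_l|^2)$ (with $h_0=\tau$), precisely so that $\L_{\sigma_\eps}$ and $\di_b\sigma_\eps$ are expressible in terms of the $\L_{|h_l|^2}$ already controlled by the $(f_2\text{-}\M_2)$-property. Your absorption argument on the annulus may be salvageable for a generic $\sigma$, but the paper's choice makes the bookkeeping transparent.
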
  
 The hypoellipticity of $\Box_b$ acting on functions implies the hypoellipticity of the sub-laplacian $\Delta_b:=\Re(\Box_b)$. The study of $\Delta_b$ is related to the work on degenerate second-order operators. By the celebrated theorem of H\"ormander \cite{Hor67} the finite type 
 condition of $\{X_0,\dots, X_r\}$ is sufficient for  hypoellipticity of  $\sum_{j=1}^rX^2_1+X_0+c$ in $\R^N$ (see also in \cite{FoKo72}, \cite{RoSt76}). However, the finite type  is not a necessary condition for local hypoellipticity (see \cite{Fed71}, \cite{Koh98}, \cite{BeMo95}, \cite{KuSt87}, \cite{Mor87}, \cite{Him95}).  For example, Fedii \cite{Fed71} showed that  the operator $\frac{\di^2}{\di x^2}+b(x)\frac{\di^2}{\di t^2}$ in $\R^2$ with $b\in C^\infty$ is hypoelliptic if $b(x)>0$ when $x\not=0$; or the example of Kusuoka-Stroock \cite{KuSt87} illustrates that  the operator $\frac{\di^2}{\di x^2}+\frac{\di^2}{\di y^2}+a^2(x)\frac{\di^2}{\di t^2}$ in $\R^3$ is hypoelliptic if and only if $\lim_{x\to 0}x\ln a(x)=0$, where $a\in C^\infty$, $a(x)\not=0$ when $x\not=0$. Applying Theorem~\ref{t1.9}  to the model $\Im  z_2=H(|z_1|^2)F(|\Re\,z_1|^2)$ in $\C^2$, we obtain a generalized version of Fedii and Kusuoka-Stroockand's examples. In fact, we can conclude that the operator $\frac{\di^2}{\di x^2}+\frac{\di^2}{\di y^2}+a^2(x)b(x^2+y^2)\frac{\di^2}{\di t^2}$ in $\R^3$ is hypoelliptic if $\lim_{x\to 0}x\ln a(x)=0$.\\

 The rest of the paper is organized as follows. The establishment of a basic estimate and the development of the microlocal are given in the first and second parts of Section \ref{s2}. The remainder of the section introduces a special norm between the classical $H^s$-norms. In Section~\ref{s3}, we establish the a-priori estimates for the system $(\dib_b,\dib_b^*)$, the Kohn-Laplacian $\Box_b$ and the elliptic perturbation $\Box_b^\delta$. We prove Theorems \ref{maintheorem1}, \ref{maintheorem2} and \ref{maintheorem3} in Section \ref{s4}. In Section~\ref{s5}, we analyze the hypersurface model \ref{model5} and prove Theorem~\ref{t1.9}.

 \subsection*{Acknowledgements} I am grateful to Joseph J. Kohn for his scientific discoveries. This research is inspired from his work. 
I wish to express my thanks to Luca Baracco and Stefano Pinton, communication with them
 led to the research presented here. I also thank Andy Raich 
 for going through the original manuscript and suggesting several improvements
 and clarifications.
	\section{Technical preliminaries}
 		\label{s2}
 		In this section we prepare some inequalities which are needed for 
		the {\it a-priori} estimates that prove Theorems~\ref{maintheorem1} and ~\ref{maintheorem2}.  The key technical tool of our discussion 
		is the Kohn-Morrey-H\"ormander inequality on abstract CR manifolds, the Kohn-microlocalizations and a special norm between $\no{\zeta_0\cdot}^2_{H^s}$ and $\no{\zeta_1\cdot}^2_{H^s}$.
 		\subsection{The Kohn-Morrey-H\"ormander inequality on CR manifolds}\label{s2.1}
 		Let $M$ be a smooth CR manifold of dimension $(2n+1)$ with the CR structure $T^{1,0}M$ and contact form $\theta$. Let $U$ be an open set of $M$ so that we can choose a $C^\infty(U)$ local basis $\{L_1,\dots,L_n,\bar L_1,\dots,\bar L_n,T\}$ of $\C TM$. Here,  $\{L_1, \dots, L_n\}$ is a basis of $T^{1,0}M$, $\{\bar L_1, \dots, \bar L_n\}$ is the basis of the conjugate of $T^{1,0}M$, denoted by $T^{0,1}M$. The vector field
		 $T$ is dual to $\theta$. Then the coefficients of the Levi form can be written as $c_{ij}=d\theta(L_i\we\bar L_j)$, and by the Cartan formula it is identified with the $T$-component of $[L_i, \bar L_j]$. 
 		For a smooth function $\lambda$, the matrix $(\lambda_{ij})$ of the Hermitian form $\L_\lambda:=\frac{1}{2}\big(\di_b\dib_b-\dib_b\di_b\big)\lambda$ is expressed by 
 		\begin{equation}\label{phiij}
 			\begin{aligned}
 				\lambda_{ij}=\bar L_jL_i(\lambda)+\frac12c_{ij}T(\lambda)+\sum_kc_{ij}^kL_k(\lambda).
 			\end{aligned}
 		\end{equation}
 		where $c^k_{ij}$ is the $L_k$-component of $[L_i,\bar L_j]$ (see \cite{KhRa16a} for more details).
 		
 		To express a form in  local coordinates, we let
 		$\I_q = \{(j_1,\dots,j_q) \in \mathcal N^q : 1 \leq j_1 < \cdots < j_q \leq n\}$, and for $J\in \I_q$, $I\in\I_{q-1}$, and $j\in\mathbb N$, $\eps^{jI}_J$ be the sign of the permutation $\{j,I\}\to J$
 		if $\{j\} \cup I = J$ as sets, and $0$ otherwise.
 		If $u\in C_{0,q}^\infty(M)$, then $u$ is locally expressed as a combination
 		\begin{eqnarray}
 			\label{form:u}
 			u= \sum_{J\in\I_q} u_J\, \bar\omega_J,
 		\end{eqnarray}
 		of basis forms $\bar\omega_J=\bar\omega_1\wedge...\wedge\bar\omega_{j_k}$ for ordered indices $j_1<...<j_k$ with $C^\infty$-coefficients $u_J$.
 		If $\lambda=\sum_{j}\lambda_j\om_j$ is a $(1,0)$ form defined in $U$, the contraction operator $\lrcorner$ acting $(0,q)$-form $u$ as in \eqref{form:u} is defined by
 		\begin{eqnarray}
 			\label{contraction01}\lambda\lrcorner u=\sum_{I\in\I_{q-1}} \left(\sum_{j=1}^n \lambda_j u_{jI}\right)\bom_I,		
 		\end{eqnarray}
 		and consequently
 		 	\begin{eqnarray}
 	\no{\lambda\lrcorner u}^2=\sum_{I\in\I_{q-1}} \no{\sum_{j=1}^n \lambda_j u_{jI}}^2,		
 		 	\end{eqnarray}
 			If $\lambda=\sum_{ij}\lambda_{ij}\om_i\we \bom_j$ is a $(1,1)$ form in $U$, the contraction operator $\lrcorner$ acting a $(0,q)$-form $u$ as in \eqref{form:u} is defined by
 			\begin{eqnarray}
 			\label{contraction11}\lambda\lrcorner u=\sum_{I\in\I_{q-1}} \left(\sum_{i,j=1}^n \lambda_{ij} u_{iI}\right)\bom_j\we \bom_I.	\end{eqnarray}
 		Thus, the Levi forms $d\theta$ and $\L_\lambda$ acting on $(0,q)$-forms $u,v$ defined in $U$ can be expressed as
 		$$\la d\theta\lrcorner u,v\ra =\sum_{I\in\I_{q-1}}\sum_{i,j=1}^nc_{ij}u_{iI}\overline{v_{jI}}\quad \T{and } \la\L_\lambda\lrcorner u,v\ra=\sum_{I\in\I_{q-1}}\sum_{i,j=1}^n\lambda_{ij}u_{iI}\overline{v_{jI}}.$$
We can also express the operator  $\dib_b : C^\infty_{0,q}(M)\to  C^\infty_{0,q+1}(M)$ and $\dib^*_b :  C^\infty_{0,q}(M)\to  C^\infty_{0,q-1}(M)$ in the local basis as follows:
 		\begin{align}\label{dib}
 			\dib_b u&=  \sum_{\atopp{J\in\I_q}{K\in\I_{q+1}}} \sum_{k=1}^n \eps^{kJ}_K \bar L_k u_J\, \bom_K + \sum_{\atopp{J\in\I_q}{K\in\I_{q+1}}} b_{JK} u_J\, \bom_K 
 		\end{align}
 		and
 		\begin{eqnarray}\label{dib*}
 			\dib_b^*u=- \sum_{J\in\I_{q-1}}\left(\sum_{j=1}^n L_ju_{jJ}+ \sum_{K\in\I_{q}}a_{JK}u_K\right)\bom_J
 		\end{eqnarray}
 		where $a^J_{ijI}, a_{JK}\in C^\infty(M)$ and $u$ is as \eqref{form:u}.
 		
 		We will use the the Kohn-Morrey-H\"ormander  inequality for CR manifolds  with the weighted norm $\no{\cdot}_{L^2_\lambda}$ defined by
 		$$\no{u}_{L^2_\lambda}^2=\int_M\la u, u\ra e^{-\lambda}\,dV.$$
 		Let $\dib^*_{b,\lambda}$ be the $L^2_\lambda$-adjoint  of $\dib_b$. It is easy to see that in the local basis
 		\begin{equation}\label{dib*phi}
 			\dib_{b,\lambda}^*u=- \sum_{J\in\I_{q-1}}\left(\sum_{j=1}^n L_j^\lambda u_{jK}+ \sum_{K\in\I_{q}}a_{JK}u_K\right)\bom_J
 		\end{equation}
 		where $L^\lambda_j \varphi:=e^\lambda L_j(e^{-\lambda}\varphi)$ and $u$ as in \eqref{form:u}. For such $u$, 
 		$$
 		\di_b(\lambda)\lrcorner u= - [\dib_b^*,\lambda]u=[\dbarb,\lambda]^*u=\sum_{I \in \I_{q-1}} \sum_{j=1}^n L_j(\lambda)u_{jI}\bom_I,
 		$$
 		and hence
 		\begin{equation}
 			\label{dbar*}
 			\dib_{b,\lambda}^*u=\dib^*_bu-\di_b\lambda\lrcorner u.
 		\end{equation}
 		Now we can state the Kohn-Morrey-H\"ormander inequality type for CR manifold which has been proven  in \cite{KhRa16a}.
 		\begin{proposition}\label{kmh} Let $\lambda$ be a real-valued $C^2$ function defined on $U$. Then there exists a constant $c$  independent  of $\lambda$ such that 
 			
 			\begin{eqnarray}\label{KMH1}
 				\begin{aligned}
 					\no{\bar{\partial}_b u}^2_{L^2_\lambda}+\no{\bar{\partial}^*_{b,\lambda} u}^2_{L^2_\lambda}+&c\no{u}^2_{L^2_\lambda} \geq \frac{1}{2}\sum^{n}_{j=1}\no{\bar{L}_ju}^2_{L^2_\lambda}\\
 					&
 					+ (\L_\lambda\lrcorner u,u)_{L^2_\lambda}+\Re\left\{(d\theta  \lrcorner Tu,u)_{L^2_\lambda}\right\} 
 				\end{aligned}
 			\end{eqnarray}
 			holds for all $u\in C^{\infty}_{0,q}(M)$ supported in $U$.
 		
 		\end{proposition}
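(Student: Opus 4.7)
The strategy is the standard integration-by-parts derivation of the Kohn--Morrey--H\"ormander inequality, adapted to the abstract CR setting and carried out in the local basis $\{L_1,\dots,L_n,\bar L_1,\dots,\bar L_n,T\}$ on $U$. Write $u=\sumJ u_J\,\bom_J$ and compute the two norms on the left from the local expressions \eqref{dib} and \eqref{dib*phi}. Expanding the square $\no{\dib_b u}^2_{L^2_\lambda}$ and using the standard ``missing terms'' argument based on the alternation $\epsilon^{kJ}_K$ yields
$$
\no{\dib_b u}^2_{L^2_\lambda}=\sumJ\sum_{j=1}^n\no{\bar L_j u_J}^2_{L^2_\lambda}-\sum_{I\in\I_{q-1}}\sum_{j,k}(\bar L_j u_{kI},\bar L_k u_{jI})_{L^2_\lambda}+R_1,
$$
while expanding $\no{\dib^*_{b,\lambda}u}^2_{L^2_\lambda}$ from \eqref{dib*phi} gives
$$
\no{\dib^*_{b,\lambda}u}^2_{L^2_\lambda}=\sum_{I\in\I_{q-1}}\sum_{j,k}(L^\lambda_j u_{jI},L^\lambda_k u_{kI})_{L^2_\lambda}+R_2,
$$
where $R_1,R_2$ collect terms in which at most one factor carries a derivative (coming from the lower-order coefficients $b_{JK}$, $a_{JK}$ and from the Christoffel-type pieces generated by reorganizing indices).

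The next step is to move one derivative across in the second identity: integrating $L^\lambda_j$ by parts in $L^2_\lambda$ against $L^\lambda_k u_{kI}$ produces
$$
(L^\lambda_j u_{jI},L^\lambda_k u_{kI})_{L^2_\lambda}=-(u_{jI},\bar L_j L^\lambda_k u_{kI})_{L^2_\lambda}+R_3=(\bar L_k u_{jI},L^\lambda_k u_{kI})_{L^2_\lambda}\text{-type rearrangements}+R_3,
$$
but the cleaner bookkeeping is to sum the two displayed formulas and commute $L^\lambda_k$ past $\bar L_j$. The resulting commutator $[L^\lambda_k,\bar L_j]$ splits into $[L_k,\bar L_j]$ plus the contribution of the weight, and by the Cartan formula its $T$-component equals $c_{kj}$ and its total first-derivative action on $\lambda$ is exactly the matrix coefficient $\lambda_{kj}$ displayed in \eqref{phiij}. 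After this rearrangement the quadratic form in the $u_{kI}$'s recombines into $(\L_\lambda\lrcorner u,u)_{L^2_\lambda}+\Re(d\theta\lrcorner Tu,u)_{L^2_\lambda}$ via the contraction formula \eqref{contraction11}, while the cross-derivative terms cancel against the off-diagonal piece left over from $\no{\dib_b u}^2_{L^2_\lambda}$.

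To finish, the diagonal piece $\sum_{jJ}\no{\bar L_j u_J}^2_{L^2_\lambda}$ is kept on the right, and the two cross-term collections are absorbed by Cauchy--Schwarz at cost of losing a factor of $\frac12$ in front of $\sum\no{\bar L_j u_J}^2_{L^2_\lambda}$, which is exactly the constant that appears in \eqref{KMH1}. The remainder terms $R_1+R_2+R_3$ involve at most one derivative of $u$ paired with bounded coefficients $a_{JK},b_{JK},c^k_{ij},\bar L_j(\lambda)$-independent structure functions, so a small-constant/large-constant Cauchy--Schwarz absorbs the single-derivative factors into $\frac12\sum\no{\bar L_j u_J}^2_{L^2_\lambda}$ and bounds the rest by $c\no{u}^2_{L^2_\lambda}$, where $c$ depends only on the structure functions on $U$ and \emph{not} on $\lambda$ (this is the crucial point, and is possible because every occurrence of $\lambda$ in the computation has been packaged into $\L_\lambda$ via $[L^\lambda_k,\bar L_j]$ rather than left as $\bar L_j(\lambda)$ or $T(\lambda)$ times a derivative of $u$). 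The main obstacle is precisely this last bookkeeping: one must verify that all the $\lambda$-dependent terms produced by the weighted adjoint identity \eqref{dbar*} either reassemble into the Hermitian form $\L_\lambda$ via the identity \eqref{phiij}, or occur in structurally cancelling pairs, so that no uncontrolled factor of $|\nabla\lambda|$ leaks into the constant $c$.
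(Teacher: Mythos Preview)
Your approach is the standard Kohn--Morrey--H\"ormander integration-by-parts computation and is correct. The paper does not actually prove this proposition in the text; it is quoted from the companion paper \cite{KhRa16a}, where the argument proceeds along exactly the lines you describe.

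One refinement to your commutator bookkeeping is worth making explicit, since it is precisely the ``main obstacle'' you flag. The direct zeroth-order $\lambda$-contribution of $[L_k^\lambda,\bar L_j]=[L_k,\bar L_j]+\bar L_j(L_k\lambda)$ is only $\bar L_j L_k(\lambda)$, not the full coefficient $\lambda_{kj}$ of \eqref{phiij}. The missing piece $\sum_l c^l_{kj}L_l(\lambda)$ appears when you rewrite the $L_l$-components of $[L_k,\bar L_j]$ as $L_l^\lambda+L_l(\lambda)$; the resulting first-order terms $c^l_{kj}(L_l^\lambda u_{kI},u_{jI})_{L^2_\lambda}$ are then integrated by parts once more (using that $(L_l^\lambda)^{*}$ in $L^2_\lambda$ equals $-\bar L_l$ plus a $\lambda$-independent divergence term) and absorbed into $\tfrac12\sum_l\no{\bar L_l u}^2_{L^2_\lambda}+c\no{u}^2_{L^2_\lambda}$ by Cauchy--Schwarz. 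The remaining discrepancy $\tfrac12 c_{kj}T(\lambda)$ between $\lambda_{kj}$ and what the commutator produces is purely imaginary (since $T$ is purely imaginary, $\lambda$ is real, and the Levi matrix $(c_{kj})$ is Hermitian), so it disappears when you take the real part that appears in \eqref{KMH1}. With these two observations your sketch is complete, and the constant $c$ is genuinely $\lambda$-independent as claimed.
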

 		To estimate the the Levi form containing $T$ in  the inequality \eqref{KMH1}, we use Kohn's microlocalizations and G{\aa}rding's inequality. G{\aa}rding's inequality is formulated as follows (see \cite{Nic06}).

 		\begin{lemma}\label{Garding} Let $R$ be a first-order pseudodifferential operator such that the symbol $\mathcal S(R)\ge 0$  and let $\lambda$ be a nonnegative $(1,1)$-form. Then there exists a constant $c$ such that 
 			$$ \Re\left\{\left(\lambda\lrcorner R u,u\right)_{L^2}\right\} \ge -c\no{u}_{L^2}^2$$
 			for any $u\in C^\infty_{0,q}(M)$.
 		\end{lemma}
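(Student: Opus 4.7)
The plan is to reduce this to the matrix-valued sharp Gårding inequality, by rewriting the inner product in a manifestly symmetric form whose principal symbol is a nonnegative Hermitian matrix.

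First I would introduce the bundle endomorphism $A$ on $(0,q)$-forms defined pointwise by $Au := \lambda \lrcorner u$. Because $\lambda$ is a smooth nonnegative $(1,1)$-form, $A$ is a smooth, self-adjoint, pointwise nonnegative zero-order multiplier on the $(0,q)$-form bundle; in a local orthonormal basis its matrix is Hermitian with nonnegative eigenvalues. Since $R$ is a scalar first-order pseudodifferential operator (it acts componentwise on the coefficients of $u$), the composition $AR$ is a matrix-valued pseudodifferential operator of order $1$.

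Next I would write
\begin{equation*}
\Re\bigl(\lambda\lrcorner Ru,u\bigr)_{L^2} \;=\; \Re\bigl(AR u,u\bigr)_{L^2} \;=\; \tfrac12\bigl((AR+R^{*}A)u,u\bigr)_{L^2}.
\end{equation*}
The operator $AR+R^{*}A$ is a matrix-valued classical pseudodifferential operator of order $1$, and its principal symbol is
\begin{equation*}
\mathcal S(AR+R^{*}A)(x,\xi) \;=\; A(x)\,\mathcal S(R)(x,\xi) + \mathcal S(R)(x,\xi)\,A(x) \;=\; 2\,\mathcal S(R)(x,\xi)\,A(x),
\end{equation*}
where I have used that $A=A^{*}$ and that $\mathcal S(R)$ is a real scalar. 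This matrix-valued symbol is Hermitian and pointwise nonnegative, because it is the product of the nonnegative scalar $\mathcal S(R)(x,\xi)$ with the nonnegative Hermitian matrix $A(x)$.

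Finally, the matrix-valued sharp Gårding inequality applied to $AR+R^{*}A$ gives
\begin{equation*}
\bigl((AR+R^{*}A)u,u\bigr)_{L^2} \;\ge\; -c\,\|u\|_{L^2}^{2},
\end{equation*}
which is precisely the desired bound. The one genuine subtlety is quoting sharp Gårding in its matrix-valued form; if one prefers to avoid this, an alternative is to smoothly diagonalize $A$ locally (using that $A$ is Hermitian with smooth entries, at least microlocally or on small sets where the eigenvalue structure is regular), reducing the estimate to a finite sum of scalar Gårding estimates, and then patching by a partition of unity at the cost of a harmless lower-order commutator contribution absorbed into the $\|u\|_{L^2}^{2}$ term. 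This patching is the only step where one must be slightly careful, but it is entirely standard.
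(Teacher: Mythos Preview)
The paper does not supply its own proof of this lemma; it simply states the result and cites \cite{Nic06}. Your argument is a correct and standard route: identifying $\lambda\lrcorner$ with the nonnegative self-adjoint zeroth-order multiplier $A$, symmetrizing $AR$ to $\tfrac{1}{2}(AR+R^{*}A)$, checking that the principal symbol $2\,\mathcal S(R)(x,\xi)\,A(x)$ is Hermitian nonnegative (a scalar $\ge 0$ times a nonnegative Hermitian matrix), and then invoking the matrix-valued sharp G{\aa}rding inequality for first-order systems gives exactly the $L^2$ lower bound claimed.

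One small caveat: the alternative you sketch at the end---smoothly diagonalizing $A$ to reduce to scalar sharp G{\aa}rding---is genuinely delicate, since smooth eigenvector frames need not exist near points where eigenvalues of $A$ coalesce, and there is no reason the nonnegative form $\lambda$ should have constant rank. So that fallback is not really available in general; the direct appeal to the Lax--Nirenberg matrix form is the right argument, and your main line already uses it.
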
	
 		\subsection{Microlocalization}\label{s2.2}
 		We now recall the microlocal framework developed
 		by Kohn \cite{Koh86, Koh02} and present its main consequences for our work.
 		
 		We choose real coordinates $x=(x',x_{2n+1})=(x_1,...x_{2n},x_{2n+1})$ with the origin some point $x_o\in U$ such that $T=-\sqrt{-1}\frac{\di}{\di x_{2n+1}}$ and if we set $z_j=x_j+\sqrt{-1}x_{j+n}$ for $j=1,\dots,n$, then we have $L_j|_{x_o}=\frac{\di}{\di z_j}$ for $j=1,\dots,n$.  Let $\xi=$ $(\xi_1,...,\xi_{2n+1})$ $=(\xi',\xi_{2n+1})$ be the dual coordinates to the $(x_1,...,x_{2n+1})$'s. Let $\psi$ be a smooth function in the unit sphere $\{|\xi|=1\}$ with range in $[0,1]$, such that 
 		$$ \psi=1 \T{~~in~~ } \{\xi \big |\xi_{2n+1}> \dfrac{1}{3} |\xi'| \}\quad\T{ and}\quad \supp(\psi)\subset \subset \{\zeta_{2n+1}\ge \frac{1}{4}|\xi'|\}.$$ 
 		We extend this function to $\R^{2n+1}$ by homogeneity. Set
		$\psi(\xi):=\psi(\frac{\xi}{|\xi|})$ if $|\xi|\ge 2$ and  $\psi(\xi)=0$ if $|\xi|\le 1$. Set $\psi^+(\xi):=\psi(\xi)$; $\psi^-(\xi):=\psi(-\xi)$ and $\psi^0(\xi):=1-\psi^+(\xi)-\psi^-(\xi)$. Define $\tilde\psi^0$ so that  
 		$\tilde\psi^0=1$ on a neighborhood of $\T{supp}\psi^0\cup \T{supp}(d\psi^+) \cup \T{supp}(d\psi^-)$ and $\supp(\tilde{\psi}^0)\subset \mathcal C^0:=\{\zeta_{2n+1}\le \frac{3}{4}|\zeta'|\}\cup \{|\xi|<2\}$.\\

 		Associated to a function $\psi$ is a pseudodifferential operator $\Psi$ with symbol $\mathcal S(\Psi)=\psi$ whose action on a function $v$ supported in $U$ is defined by
 		$$\widehat{\Psi v}(\xi)=\psi(\xi)\hat{v}(\xi),$$
 		where $\hat{}$ denotes the Fourier transform. 
 		We say that a pseudodifferential operator $\Psi$ is dominated by a pseudodifferential operator $\tilde\Psi$ and denote by $\Psi\prec\tilde\Psi$ if  $\mathcal S(\Psi)\prec \mathcal S(\tilde \Psi)$.
 		The operators $\Psi^+$, $\Psi^-$, $\Psi^0$ and $\tilde\Psi^0$ are defined as above with symbols $\psi^+$, $\psi^-$, $\psi^0$ and $\tilde\psi^0$, respectively. 	
 		By the definition of $\psi^+, \psi^-, \psi^0$ and $\tilde\psi^0$, it follows that
 		$\Psi^++\Psi^-+\Psi^0=I$, and both operators
 		$[\Psi^{\underset{0}{\pm}},\alpha]$ and $\Psi^0$  are dominated by $\tilde\Psi^0$, where $\alpha\in C^\infty(M)$.

	Let $\gamma:\R\to [0,1]$ be a cutoff function such that 
	$$\gamma(t)=\begin{cases}
	1& \T{if  } |t|\in[e^{2},e^3],\\
	0& \T{if  } |t|\in[0,e]\cup[e^4,+\infty).
	\end{cases}$$
	Set $\gamma_k(\xi):=\gamma(e^{-k}|\xi_{2n+1}|)$ for $k=1,2,\cdots$, and also let $\gamma_0(\zeta)=1$ if $|\zeta_{2n+1}|\le e$ and is zero if $|\zeta_{2n+1}|\ge e^2$. Furthermore, we can choose $\gamma$ and $\gamma_0$ such that $\sum_{k=0}^\infty\gamma_k(\xi) =1$ for any $\xi\in\R^{2n+1}$.
	 For $k=0,1,2,\dots$, denote by $\Gamma_k$ the pseudodifferential operator with symbol $\gamma_k$. Let $\zeta$ be a cutoff function in $U$ and set 
	$$u^+_{\zeta,k}:=\zeta \Gamma_k\Psi^+\zeta u, \quad\T{and}\quad u^-_{\zeta,k}:=\zeta \Gamma_k\Psi^-\zeta u,$$
	where $u$ is a distribution function on $M$.
	\begin{remark}\label{1r1}
For $u\in H^{-s_o}(M)$, Plancherel's theorem gives us
	\[	\begin{aligned}\no{u^{\pm}_{\zeta,k}}^2_{H^s}\le& c\int_{\xi\in\R^{2n+1}}(1+|\xi|^2)^s\left(\gamma(e^{-k}|\xi_{2n+1}|)\psi^\pm(\xi)\right)^2|\widehat{\zeta u}(\xi)|^2 d\xi\\
	\le &c_{k,s,r}\int_{\xi\in\R^{2n+1}}(1+|\xi|^2)^{-s_o}|\widehat{\zeta u}(\xi)|^2 d\xi
	\\
	=&c_{k,s,r}\no{\zeta u}^2_{H^{-r}}\le c_{k,s,s_o}\no{ u}^2_{H^{-s_o}},
	\end{aligned}\]
	 for all $s,r\in \R$ with $r\ge s_o$. Consequently, by the Sobolev lemma, the functions $u_{\zeta,k}^\pm\in C^\infty(M)$ if $u\in H^{-s_o}(M)$. By this remark we will not worry about the regularity of $u^\pm_{\zeta,k}$.
	\end{remark}
We finish this subsection by an estimate for the Levi form involving $T$  on the positive microlocalizations $u^+_{\zeta,k}$. The estimate on the negative microlocalizations $u^-_{\zeta,k}$ will be obtained by a modified Hodge-* constructed in \S\ref{s3} below.
\begin{proposition} 
	\label{Local3.2} Let  $\lambda:=\lambda_{k,s}$ be a $C^\infty$ real-valued  function defined on $U$ such that $|\lambda|\le c_{0,s}k$ and $|T^m(\lambda)|\le c_{m,s}e^{\frac{km}{2}}$ for all $m=1,2,\dots$. Then,  for $u\in H^{-s_o}_{0,q}(M)$, $r\ge s_0$ and $k\ge 1$, we have
	\begin{eqnarray}\label{Mplus}
	\begin{aligned}
	\Re\left(d\theta\lrcorner Tu^+_{\zeta,k},u^+_{\zeta,k}\right)_{L^2_\lambda}
	\ge \frac{1}{2}e^k(d\theta\lrcorner u^+_{\zeta,k},u^+_{\zeta,k})_{L^2_\lambda} -c\no{u^+_{\zeta,k}}^2_{L^2_\lambda}-c_{s,r}e^{-k}\no{\zeta u}_{H^{-r}}^2.
	\end{aligned}
	\end{eqnarray}
	where $c$ and $c_{s,r}$ are independent of $k$.
\end{proposition}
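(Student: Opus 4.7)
The plan is to exploit that $u^+_{\zeta,k}=\zeta v$ with $v:=\Gamma_k\Psi^+(\zeta u)$ having Fourier transform supported in $\supp(\gamma_k\psi^+)\subset\{\xi_{2n+1}\ge e^{k+1}\}$, so on this frequency region the symbol $\xi_{2n+1}$ of $T$ exceeds $\tfrac12 e^k$ by at least $(e-\tfrac12)e^k$. To formalize this I would introduce a cutoff $\tilde\chi_k(\xi_{2n+1})$ depending \emph{only} on $\xi_{2n+1}$ that equals $1$ on $\supp(\gamma_k\psi^+)$ and is supported in $\{\xi_{2n+1}\ge e^k\}$, and define $Q_k$ as the pseudodifferential operator with symbol $q_k(\xi):=(\xi_{2n+1}-\tfrac12 e^k)\tilde\chi_k(\xi_{2n+1})\ge 0$. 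Then $q_k$ is a nonnegative symbol of order one with seminorms uniform in $k$ (after rescaling $\xi_{2n+1}\mapsto e^{-k}\xi_{2n+1}$), and $Q_k v=(T-\tfrac12 e^k)v$ exactly. The dependence of $q_k$ only on $\xi_{2n+1}$ will be crucial later.

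Substituting $Tv=\tfrac12 e^k v+Q_k v$ together with $T(\zeta v)=\zeta Tv+[T,\zeta]v$ splits the left-hand side as
\begin{equation*}
\tfrac12 e^k(d\theta\lrcorner u^+_{\zeta,k},u^+_{\zeta,k})_{L^2_\lambda}+(d\theta\lrcorner Q_k v,\zeta^2 v)_{L^2_\lambda}+\mathcal E,
\end{equation*}
where $\mathcal E$ collects the contributions of $[T,\zeta]v$ and $[Q_k,\zeta^2]v$. Both commutators reduce to bounded multiplications (since $\partial_{\xi_{2n+1}}q_k=O(1)$ uniformly in $k$), so Cauchy--Schwarz controls the part supported on $\supp(\zeta)$ by $O(\no{u^+_{\zeta,k}}^2_{L^2_\lambda})$. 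The off-support tail $(1-\zeta)\Gamma_k\Psi^+(\zeta u)$ is dispatched by repeated integration by parts in $x_{2n+1}$: disjointness of $\supp(1-\zeta)$ and $\supp(\zeta)$ combined with $|\xi_{2n+1}|\ge e^{k+1}$ on the symbol support gives one factor of $e^{-k}$ per IBP, yielding the $c_{s,r}e^{-k}\no{\zeta u}^2_{H^{-r}}$ error.

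For the remaining term $\Re(d\theta\lrcorner Q_k v,\zeta^2 v)_{L^2_\lambda}$ I would apply G{\aa}rding's inequality (Lemma~\ref{Garding}): $d\theta\lrcorner Q_k$ is a first-order pseudodifferential operator with nonnegative principal symbol $c_{ij}(x)q_k(\xi)$, the product of the nonneg Levi form with the nonneg scalar $q_k$; this gives $\Re(d\theta\lrcorner Q_k v,v)_{L^2}\ge -c\no{v}^2_{L^2}$ with $c$ uniform in $k$. To transfer this to $L^2_\lambda$, I would set $v=e^{\lambda/2}w$ and expand $e^{-\lambda/2}Q_k e^{\lambda/2}=Q_k+E_k$ symbolically; because $q_k$ depends only on $\xi_{2n+1}$, every iterated Poisson bracket takes the form $\partial_{\xi_{2n+1}}^mq_k\cdot T^m\lambda$, so the hypotheses $|T^m\lambda|\le c_m e^{km/2}$ and $|\partial_{\xi_{2n+1}}^mq_k|\les e^{-(m-1)k}$ render the $m$-th correction of size $e^{-k(m-2)/2}$, summable for $m\ge 2$. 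The first-order correction of size $e^{k/2}$ is absorbed either by iterating the same decomposition on the resulting order-zero term, or equivalently by sharp G{\aa}rding applied to a refined nonneg symbol built from $q_k$ and $T\lambda$, keeping the final constant $k$-uniform.

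The hard part will be this $L^2\to L^2_\lambda$ transfer: naive conjugation by $e^{-\lambda/2}$ produces errors scaling as $e^{k/2}$ per derivative of $\lambda$, and only $T$-derivatives of $\lambda$ are bounded by hypothesis. The rescue is the careful choice of $q_k$ depending only on $\xi_{2n+1}$, which ensures that every Poisson bracket with the weight involves only $T^m\lambda$, precisely the quantities controlled by $|T^m\lambda|\le c_m e^{km/2}$.
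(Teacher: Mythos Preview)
Your overall strategy---use the spectral support of $\Gamma_k\Psi^+$ to split $T=\text{const}+\text{(nonneg symbol)}$ and apply G\aa rding (Lemma~\ref{Garding}) to the remainder---is the same as the paper's. The difference is in how the weight $e^{-\lambda}$ is handled, and here the paper is considerably simpler.

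The paper never conjugates a pseudodifferential operator by $e^{\pm\lambda/2}$. Instead it observes at the outset that $T$ is purely imaginary and $\lambda$ is real, so $T(\lambda)$ is purely imaginary, while $(d\theta\lrcorner u,u)_{L^2_\lambda}$ is real; hence $\Re\{(T(\lambda)\,d\theta\lrcorner u^+_{\zeta,k},u^+_{\zeta,k})_{L^2_\lambda}\}=0$. This lets one slide $e^{-\lambda/2}$ through $T$ for free:
\[
\Re(d\theta\lrcorner T u^+_{\zeta,k},u^+_{\zeta,k})_{L^2_\lambda}=\Re(d\theta\lrcorner T(e^{-\lambda/2}u^+_{\zeta,k}),e^{-\lambda/2}u^+_{\zeta,k})_{L^2},
\]
reducing to unweighted $L^2$ \emph{before} any pseudodifferential decomposition. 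Only then does the paper write $T=e^k+(T-e^k)$, insert a cutoff $\tilde\Gamma_k^+$ with symbol supported in $\{\xi_{2n+1}\ge e^k\}$, and apply G\aa rding in $L^2$. The single residual commutator $w=(T-e^k)[e^{-\lambda/2}\zeta,\tilde\Gamma_k^+]\Gamma_k\Psi^+\zeta u$ is then disposed of by an appendix lemma (Taylor-expanding $\tilde\gamma_k^+$ to high order and using $|T^m\lambda|\le c_m e^{km/2}$), which is where those growth hypotheses actually enter.

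Your proposed resolution of what you call ``the hard part'' does not work as written. The first-order correction from $e^{-\lambda/2}Q_ke^{\lambda/2}$ has principal symbol proportional to $\tfrac1i\,\partial_{\xi_{2n+1}}q_k\cdot\partial_{x_{2n+1}}\lambda$, which is \emph{purely imaginary}; thus $T\lambda$ cannot be used to ``build a refined nonneg symbol'' for sharp G\aa rding, and the vague ``iteration'' suggestion does not reduce the $e^{k/2}$ size. What actually saves you is the same mechanism the paper exploits: because this correction is purely imaginary at principal level and $d\theta$ is Hermitian, its contribution to the real part is one order lower, i.e.\ $O(1)$ rather than $O(e^{k/2})$. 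Recognizing this, your route becomes viable---but it is then just a more laborious repackaging of the paper's one-line observation, now requiring a full symbol expansion and control of an order $-1$ remainder rather than a trivial multiplication-operator commutator.
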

\begin{proof} 
	Since  $T$ is a purely imaginary operator and $\lambda$ is a real-valued function, it follows $ \Re\left\{\left(T(\lambda)d\theta\lrcorner u^+_{\zeta,k},u^+_{\zeta,k}\right)_{L^2_\lambda}\right\}=0$.
	Hence, we can bring $e^{-\frac{\lambda}{2}}$ inside $T$ and rewrite the LHS of \eqref{Mplus} as follows
	\begin{eqnarray}
	\label{m1}\begin{aligned}
	\Re\left\{\left(d\theta\lrcorner Tu^+_{\zeta,k},u^+_{\zeta,k}\right)_{L^2_\lambda}\right\}=&\Re\left\{\left(d\theta\lrcorner Te^{-\frac{\lambda}{2}}u^+_{\zeta,k},e^{-\frac{\lambda}{2}}u^+_{\zeta,k}\right)_{L^2}\right\}\\
	=&\Re\left\{\left(d\theta\lrcorner (T-e^k)e^{-\frac{\lambda}{2}}u^+_{\zeta,k},e^{-\frac{\lambda}{2}}u^+_{\zeta,k}\right)_{L^2}\right\}+e^k(d\theta\lrcorner u^+_{\zeta,k},u^+_{\zeta,k})_{L^2_\lambda}\\
	\end{aligned}	\end{eqnarray}
	To estimate the first term in the second line of \eqref{m1}, we define $\tilde \gamma^+$ a cutoff function  on the real line such that $$\tilde \gamma^+(t)=\begin{cases}1& \T{if } t\in [e,e^{4}],\\0&\T{if } t\in(-\infty,1]\cup[e^5,+\infty).\end{cases}$$  We define $\tilde \gamma_k^+(\xi)=\tilde \gamma^+(e^{-k}\xi_{2n+1})$ then $\tilde\gamma_k^+=1$ on $\T{supp}(\gamma_k\psi^+)$ and denote by $\tilde \Gamma_k^+$ the pseudodifferential operator of symbol $\tilde \gamma_k^+$.  Then we have 
	\begin{eqnarray}
	\begin{aligned}
	(T-e^k)e^{-\frac{\lambda}{2}}u^+_{\zeta,k}=&(T-e^k)e^{-\frac{\lambda}{2}}\zeta \Gamma_k\Psi^+\zeta u\\
	=&(T-e^k)e^{-\frac{\lambda}{2}}\zeta \tilde{\Gamma}_k^+\Gamma_k\Psi^+\zeta u\\
	=&(T-e^k)\tilde{\Gamma}_k^+ e^{-\frac{\lambda}{2}}\zeta \Gamma_k\Psi^+\zeta u+(T-e^k)[e^{-\frac{\lambda}{2}}\zeta,\tilde{\Gamma}_k^+] \Gamma_k\Psi^+\zeta u\\
	=&(T-e^k)\tilde \Gamma_k^+e^{-\frac{\lambda}{2}}u^+_{\zeta,k}+w,\\
	\end{aligned}
	\end{eqnarray}
	where $w:=(T-e^k)[e^{-\frac{\lambda}{2}}\zeta,\tilde{\Gamma_k^+}] \Gamma_k\Psi^+\zeta u$;
	and hence
	\begin{eqnarray}
\begin{aligned}\label{d1}
&	\left(d\theta\lrcorner (T-e^k)e^{-\frac{\lambda}{2}}u^+_{\zeta,k},e^{-\frac{\lambda}{2}}u^+_{\zeta,k}\right)_{L^2}\\&=\left(d\theta\lrcorner(T-e^k)\tilde\Gamma_k^+ e^{-\frac{\lambda}{2}}u^+_{\zeta,k},e^{-\frac{\lambda}{2}}u^+_{\zeta,k}\right)_{L^2}+\left(d\theta\lrcorner w,e^{-\frac{\lambda}{2}}u^+_{\zeta,k}\right)_{L^2}.
	\end{aligned}
\end{eqnarray}
	Since the symbol $\mathcal S\left((T-e^k)\tilde\Gamma_k^+\right)\ge 0$, we apply Lemma~\ref{Garding} for the first Levi form in the second line of \eqref{d1} and get
	\begin{eqnarray}\label{a2}
\Re	\left(d\theta\lrcorner(T-e^k)\tilde\Gamma_k^+ e^{-\frac{\lambda}{2}}u^+_{\zeta,k},e^{-\frac{\lambda}{2}}u^+_{\zeta,k}\right)_{L^2}\ge -c\no{e^{-\frac{\lambda}{2}}u^+_{\zeta,k}}^2_{L^2}=-c\no{u^+_{\zeta,k}}^2_{L^2_\lambda}.
	\end{eqnarray}
Since $d\theta$ is a nonnegative $(1,1)$-form,	we can use the Cauchy-Schwarz inequality for the second Levi form in the second line of \eqref{d1} and get
	\begin{eqnarray}\label{a3}
	\begin{aligned}\left|\Re\left(d\theta\lrcorner w,e^{-\frac{\lambda}{2}}u^+_{\zeta,k}\right)_{L^2}\right|\le &\sqrt{(d\theta\lrcorner w,w)(d\theta\lrcorner e^{-\frac{\lambda}{2}}u^+_{\zeta,k},e^{-\frac{\lambda}{2}}u^+_{\zeta,k})}\\
	\le &\frac{1}{2}e^{-k}\left(d\theta\lrcorner w,w\right)_{L^2}+\frac{1}{2}e^k\left(d\theta\lrcorner e^{-\frac{\lambda}{2}}u^+_{\zeta,k},e^{-\frac{\lambda}{2}}u^+_{\zeta,k}\right)_{L^2}\\
	\le &ce^{-k}\no{w}_{L^2}^2+\frac{1}{2}e^k( d\theta\lrcorner u^+_{\zeta,k},u^+_{\zeta,k})_{L^2_\lambda} \end{aligned}	\end{eqnarray}
	where $c$ is independent of $k$ and $\lambda$.\\
	
	From  \eqref{m1}, \eqref{d1},\eqref{a2} and \eqref{a3}, we obtain  
	\begin{align*}
			\Re\left\{\left(d\theta\lrcorner Tu^+_{\zeta,k},u^+_{\zeta,k}\right)_{L^2_\lambda}\right\}\ge \frac{1}{2}e^k\left(d\theta\lrcorner u^+_{\zeta,k},u^+_{\zeta,k}\right)_{L^2_\lambda}-c\no{u^+_{\zeta,k}}^2_{L^2_\lambda}-ce^{-k}\no{w}^2_{L^2}.
	\end{align*}
	The proof is complete by using Lemma~\ref{l1} in Appendix  to estimate $e^{-k}\no{w}^2$. Indeed,
	\begin{align*}
	e^{-k}\no{w}^2_{L^2}&=\no{(T-e^k)[e^{-\frac{\lambda}{2}}\zeta,\tilde{\Gamma}_k^+] \Gamma_k\Psi^+\zeta u}_{L^2}\\
	&\le 2e^{-k}\no{T[e^{-\frac{\lambda}{2}}\zeta,\tilde{\Gamma}k^+] \Gamma_k\Psi^+\zeta u}^2_{L^2}+2e^k\no{[e^{-\frac{\lambda}{2}}\zeta,\tilde{\Gamma}_k^+] \Gamma_k\Psi^+\zeta u}^2_{L^2} \\
	&\underset{Lemma~\ref{l1}}{\le} c_{s, r}e^{-k}\no{\zeta u}^2_{H^{-r}}.
	\end{align*}
\end{proof}

\subsection{A special norm between $\no{\zeta_0\cdot}_{H^s}$ and $\no{\zeta_1\cdot}_{H^s}$}\label{s2.3}
 Let $\sigma,\zeta$ be  cutoff functions in $U$ such that  $\sigma\prec \zeta$. Recall that  $u^\pm_{\zeta,k}=\zeta\Gamma_k\Psi^\pm\zeta u$. Fix $s_o\ge 0$. 
Let $s\ge0$ and $g:\R^+\to\R^+$ such that $g(t)t^{s_o+2}$ is decreasing and $g(t)t^{r}$ is increasing for some $r\ge s_o+2$. We write the next several results for a generic $g$ but we will use $g(t)=t^{-r}$ or $g(t)=t^{-r}f(t)$ in \S\ref{s3} to establish the a-priori estimates. We define the following spaces and norms on $(0,q)$-forms
$$g\T-{\mathcal A}^{\pm,s\sigma}_{\zeta; 0,q}=\left\{u\in H_{0,q}^{-s_o}(M): \no{u}^2_{g\T-{\mathcal A}_\zeta^{\pm,s\sigma}}:=\sum_{k=1}^\infty g^2(e^k)\no{e^{ks\sigma}u^\pm_{\zeta,k}}^2_{L^2}<\infty\right\}.$$
Be Remark~\ref{1r1}, each atom $g^2(e^k)\no{e^{ks\sigma}u^\pm_{\zeta,k} }^2_{L^2}\le c_{k,s}$, however, the finite of the full norm $\no{u}^2_{g\T-{\mathcal A}_\zeta^{\pm,s\sigma}}$ determines  
the regularity of $u$ in the $H^s$-spaces. In particular,
by  $e^{ks\sigma}\zeta\le e^{ks}$ and $
\sum_{k=0}^\infty \gamma_k\equiv 1$, it follows
$$\no{u}^2_{g\T-{\mathcal A}_\zeta^{\pm,s\sigma}}\le\sum_{k=0}^\infty g^2(e^k)e^{2ks}\no{\Gamma_k\Psi^\pm\zeta u}^2_{L^2}\le c_{s,g} \no{\La^{s}g(\La)\Psi^\pm\zeta u}^2_{L^2}\le c_{s,g}  \no{g(\La)\zeta u}^2_{H^{s}},$$
where the second inequality follows by Plancherel's theorem. 
  			 	 
On the other hand, if $\zeta_0\prec \sigma$  then 
$$
\no{g(\La)\zeta_0 u}^2_{H^{s}}\le c_{s,g} \left( \no{u}^2_{g\T-{\mathcal A}_\zeta^{+,s\sigma}}+\no{u}^2_{g\T-{\mathcal A}_\zeta^{-,s\sigma}}+\no{g(\La)\Psi^0\zeta u}_{H^{s}}^2\right).
$$
Thus, the norm defined by $$\no{\cdot}^2_{g\T-{\mathcal A}_\zeta^{s\sigma}}:=\no{\cdot}^2_{g\T-{\mathcal A}_\zeta^{+,s\sigma}}+\no{\cdot}^2_{g\T-{\mathcal A}_\zeta^{-,s\sigma}}+\no{g(\La)\Psi^0\zeta \cdot}_{H^{s}}^2$$ is an intermediate norm between  $\no{g(\La)\zeta_0 \cdot}^2_{H^{s}}$ and $ \no{g(\La)\zeta \cdot}^2_{H^{s}}$ for any $\zeta_0\prec\sigma\prec\zeta$. Instead of working on the norms of $H^s$ we will work on $\no{\cdot}^2_{g\T-{\mathcal A}_\zeta^{\pm,s\sigma}}$. 
For convenience, we also use the following notation\blue{:}
$$\no{u}^2_{g\T-\di_b\sigma\lrcorner {\mathcal A}_{\zeta}^{+,s\sigma}}:=\sum_{k=1}^\infty  g^2(e^k)\no{ e^{ks\sigma}\di_b\sigma\lrcorner u^+_{\zeta,k}}^2_{L^2},$$		
$$\no{u}^2_{g\T-\dib_b\sigma\we {\mathcal A}_{\zeta}^{-,s\sigma}}:=\sum_{k=1}^\infty  g^2(e^k)\no{e^{k(s\sigma)}\dib_b\sigma\we u^-_{\zeta,k}}^2_{L^2},$$
and 
$$(u,v)_{g\T-{\mathcal A}_\zeta^{\pm,s\sigma}}:=\sum_{k=1}^\infty g^2(e^k)(e^{2ks\sigma}u^\pm_{\zeta,k},v^\pm_{\zeta,k})_{L^2},$$
for $u,v\in g\T-{\mathcal A}^{\pm,s\sigma}_{\zeta;0,q}$.  We also define the error space whose norm appears when we estimate  commutators. Set
$$g\T-{\mathcal E}^{s,b}_{\zeta,\tilde \zeta;0,q}:=\{u\in H^{-s_o}(M): \no{u}^2_{g\T-{\mathcal E}^{s,b}_{\zeta,\tilde\zeta}}:=\no{g(\La)\tilde\zeta u}^2_{H^{b}}+\no{g(\La)\tilde \Psi^0\zeta u}^2_{H^{s+b}}<\infty\},$$
where  $\tilde \zeta\succ \zeta$ and the support of $\mathcal S(\tilde{\Psi}^0)$ belongs to $\mathcal C^0$. If $g(t)=t^a\tilde g(t)$, we write $g\T-{\mathcal A}^{\pm,s\sigma}_{\zeta;0,q}=\tilde g\T-{\mathcal A}^{\pm,s\sigma+a}_{\zeta;0,q}$ and $g\T-{\mathcal E}^{s,b}_{\zeta,\tilde \zeta;0,q}=\tilde g\T-{\mathcal E}^{s,a+b}_{\zeta,\tilde \zeta;0,q}$. 
The error norm appears when we estimate the commutators in the following lemma.
\begin{lemma}\label{com1}
s2	Let $X$ be a differential operator of the first order. For $u\in (g\T-{\mathcal A}^{+,s\sigma}_{\zeta;0,q})\cap (g\T-{\mathcal E}^{s,0}_{\zeta,\tilde \zeta;0,q})$, we have 
	$$\sum_{k=1}^n g^2(e^k)\no{e^{ks\sigma}[\zeta\Gamma_k\Psi^\pm\zeta,X]u}_{L^2}^2\le c_{s,g} \left(\no{u}^2_{g\T-{\mathcal A}^{+,s\sigma}_{\zeta}}+\no{u}^2_{g\T-{\mathcal E}^{s,0}_{\zeta,\tilde\zeta}}\right).$$
\end{lemma}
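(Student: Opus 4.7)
The plan is to expand the commutator via the Leibniz rule and then analyze the resulting pseudodifferential commutator $[\Gamma_k\Psi^\pm, X]$ by tracking where the $\xi$-derivatives land. Throughout I treat the $+$ case; the $-$ case is identical up to the replacement of $\Psi^+$ by $\Psi^-$.

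First, write
$$[\zeta\Gamma_k\Psi^\pm\zeta, X]\;=\;\zeta[\Gamma_k\Psi^\pm, X]\zeta \;+\; [\zeta, X]\,\Gamma_k\Psi^\pm\zeta \;+\; \zeta\,\Gamma_k\Psi^\pm[\zeta, X].$$
Since $X$ is first-order differential and $\zeta\in C^\infty_c(U)$, each $[\zeta, X]$ is multiplication by a smooth compactly supported function. Hence the last two pieces are pointwise bounded, up to a constant $c_{X,\zeta}$, by $|u^\pm_{\zeta,k}|$ (after reinserting $\zeta$ into the common support), and after weighting by $e^{ks\sigma}$, squaring, multiplying by $g^2(e^k)$, and summing in $k$, they are dominated by $\no{u}^2_{g\T-{\mathcal A}_{\zeta}^{+,s\sigma}}$.

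For the principal term $\zeta[\Gamma_k\Psi^\pm, X]\zeta$, write $X = \sum_j a_j(x)\di_j + a_0(x)$. The asymptotic pseudodifferential calculus shows that this is of order zero with leading symbol $\frac{1}{i}\sum_j \di_{\xi_j}(\gamma_k\psi^\pm)\,\di_{x_j}\sigma(X)$, which decomposes by Leibniz in $\xi$ into two pieces. \emph{Piece (A):} the $\xi$-derivative lands on $\gamma_k$, producing a factor $\gamma'(e^{-k}|\xi_{2n+1}|)\,e^{-k}\sgn(\xi_{2n+1})\,\psi^\pm$, supported in $\{|\xi_{2n+1}|\sim e^k\}\cap\supp(\psi^\pm)$ with amplitude $O(e^{-k})$. \emph{Piece (B):} the $\xi$-derivative lands on $\psi^\pm$, supported in $\supp(d\psi^\pm)\subset \mathcal{C}^0$, where $\tilde\psi^0\equiv 1$. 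Piece (A) yields
$$g^2(e^k)\no{e^{ks\sigma}\zeta\,\T{Op(A)}\,\zeta u}_{L^2}^2 \;\le\; c\,e^{-2k}\,g^2(e^k)\,\no{e^{ks\sigma}\widetilde u^\pm_{\zeta,k}}_{L^2}^2,$$
where $\widetilde u^\pm_{\zeta,k}$ involves at most the dyadic neighbours $\gamma_{k-1},\gamma_k,\gamma_{k+1}$ to absorb the mild frequency spreading; the $e^{-2k}$ gain beats any loss in $k$, so summing absorbs into $\no{u}^2_{g\T-{\mathcal A}_{\zeta}^{+,s\sigma}}$. Piece (B) is handled by inserting $\tilde\Psi^0$ to the right of $\T{Op(B)}$ (legal since $\tilde\psi^0\equiv 1$ on $\supp(d\psi^\pm)$), yielding a bound by $\no{g(\La)\tilde\Psi^0\zeta u}^2_{H^s}$, which is the $\tilde\Psi^0$-part of $\no{u}^2_{g\T-\mathcal E^{s,0}_{\zeta,\tilde\zeta}}$. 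The smoothing remainders from the asymptotic expansion are controlled by Lemma~\ref{l1} and absorb into $\no{g(\La)\tilde\zeta u}^2_{L^2}$, the other part of the error norm.

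The main obstacle is the bookkeeping for the weight $e^{ks\sigma}$: since $\sigma$ depends on $x$, $e^{ks\sigma}$ does not commute with the frequency localizers, and its $T$-derivatives carry polynomial-in-$k$ growth $k^m e^{ks}$. These growths must be played off against the $e^{-mk}$ gains produced by $m$-fold $\xi$-differentiation of $\gamma_k$ in higher-order terms of the asymptotic expansion. The monotonicity hypotheses on $g$ (namely $g(t)t^{s_o+2}$ decreasing and $g(t)t^r$ increasing for some $r\ge s_o+2$) are what allow a clean telescoping across the dyadic scales and ensure that the geometric $e^{2ks}$ factor from the weight is controlled uniformly in $k\ge 1$.
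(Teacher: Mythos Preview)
Your decomposition is the same as the paper's (the paper simply cites the per-$k$ estimate of Lemma~\ref{l2}(i) in the Appendix, whose proof uses exactly your Leibniz splitting), but two of your intermediate claims are false.

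First, the $[\zeta,X]$ pieces are not pointwise bounded by $|u^\pm_{\zeta,k}|$. Since $[\zeta,X]=-X(\zeta)$ is supported on $\supp(d\zeta)$, where $\zeta$ itself varies between $0$ and $1$, there is no way to ``reinsert $\zeta$'' and recover the outer cutoff in $u^\pm_{\zeta,k}=\zeta\Gamma_k\Psi^\pm\zeta u$; likewise you cannot replace the inner $X(\zeta)$ by $\zeta$ in the companion term $\zeta\Gamma_k\Psi^\pm[\zeta,X]u$. The correct mechanism is the standing hypothesis $\sigma\prec\zeta$: it gives $\supp(d\zeta)\cap\supp(\sigma)=\emptyset$, so the weight $e^{ks\sigma}$ collapses to $1$ where it matters, and these pieces land in the \emph{error} norm $\no{u}^2_{g\T-\mathcal E^{s,0}_{\zeta,\tilde\zeta}}$, not in $\no{u}^2_{g\T-\mathcal A^{+,s\sigma}_\zeta}$.

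Second, Piece~(A) has amplitude $O(1)$, not $O(e^{-k})$. You correctly record the factor $e^{-k}\gamma'(e^{-k}|\xi_{2n+1}|)\psi^\pm$ coming from $\partial_{\xi_{2n+1}}\gamma_k$, but you drop the accompanying factor $\partial_{x_{2n+1}}$ applied to the symbol of $X$, namely $i\sum_l(\partial_{x_{2n+1}}a_l)\xi_l+\partial_{x_{2n+1}}a_0$, which is of size $|\xi|\sim e^k$ on $\supp(\dot\gamma_k)\cap\supp(\psi^\pm)$; the product is $O(1)$ and there is no $e^{-2k}$ gain. The paper does not seek any gain here: it inserts $\Gamma_{k-1}+\Gamma_{k+1}\equiv 1$ on $\supp(\dot\gamma_k)$ to write $\zeta[X,\Gamma_k]\Psi^\pm\zeta=\zeta[X,\Gamma_k]\tilde\Psi^\pm(\Gamma_{k-1}+\Gamma_{k+1})\Psi^\pm\zeta$, then commutes $e^{ks\sigma}\zeta$ past the uniformly bounded order-zero operator $[X,\Gamma_k]\tilde\Psi^\pm$ (that commutator is controlled by $c_sk^2e^{-2k}\le c_s$), producing the neighbouring atoms $\no{e^{(k\pm1)s\sigma}u^\pm_{\zeta,k\pm1}}_{L^2}^2$; multiplying by $g^2(e^k)\approx g^2(e^{k\pm1})$ and summing gives $\no{u}^2_{g\T-\mathcal A^{+,s\sigma}_\zeta}$.
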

\begin{proof}
	Multiplying with $g^2(e^k)$ to both sides of the estimate in Lemma~\ref{l2}(i) in Appendix, and then taking the summation over $k$, we get the desired estimate.
\end{proof}
	The combination of Lemma~\ref{com1} and \ref{l2}(ii) in Appendix gives us  
	\[\begin{aligned}
	\no{Xu}^2_{g\T-{\mathcal A}_{\zeta}^{\pm,s\sigma}}\le c_{s,g}  \left( \no{u}^2_{g\T-{\mathcal A}_{\zeta}^{\pm,s\sigma+1}}+\no{u}^2_{g\T-{\mathcal E}_{\zeta,\tilde\zeta}^{s,1}}\right),
	\end{aligned}\]
	for  $u\in (g\T-{\mathcal A}^{\pm,s\sigma+1}_{\zeta;0,q})\cap (g\T-{\mathcal E}^{s,1}_{\zeta,\tilde \zeta;0,q})$. Consequently, we have the following corollary.
	\begin{corollary}\begin{enumerate}
			\item[(i)] 	If $u\in (g\T-{\mathcal A}^{\pm,s\sigma+1}_{\zeta;0,q})\cap (g\T-{\mathcal E}^{s,1}_{\zeta,\tilde \zeta;0,q})$ then  $\dib_bu, \dib_b^*u\in (g\T-{\mathcal A}^{\pm,s\sigma}_{\zeta;0,q})\cap (g\T-{\mathcal E}^{s,0}_{\zeta,\tilde \zeta;0,q})$ .		
			\item[(ii)] If $u\in (g\T-{\mathcal A}^{\pm,s\sigma+2}_{\zeta;0,q})\cap (g\T-{\mathcal E}^{s,2}_{\zeta,\tilde \zeta;0,q})$ then  $\dib_b^*\dib_bu,\dib_b\dib_b^*u, \Box_bu, \Box_b^\delta u\in (g\T-{\mathcal A}^{\pm,s\sigma}_{\zeta;0,q})\cap (g\T-{\mathcal E}^{s,0}_{\zeta,\tilde \zeta;0,q})$, where $\Box_b^\delta$ is an elliptic pertubation of $\Box_b$ defined in \S\ref{s3.3}.
		\end{enumerate}
	
	\end{corollary}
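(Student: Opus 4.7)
The plan is to deduce the corollary by iterating the estimate established in the display immediately preceding it, namely
\[
\no{Xu}^2_{g\T-{\mathcal A}_{\zeta}^{\pm,s\sigma}}\le c_{s,g}\bigl( \no{u}^2_{g\T-{\mathcal A}_{\zeta}^{\pm,s\sigma+1}}+\no{u}^2_{g\T-{\mathcal E}_{\zeta,\tilde\zeta}^{s,1}}\bigr),
\]
valid for any first-order differential operator $X$ and any $u$ in the intersection of the two spaces on the right.

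For part (i), since $\dib_b$ and $\dib_b^*$ are first-order, the displayed estimate immediately shows that $\dib_b u$ and $\dib_b^* u$ lie in $g\T-{\mathcal A}^{\pm,s\sigma}_{\zeta}$ whenever $u$ belongs to the hypothesized intersection. To verify membership in $g\T-{\mathcal E}^{s,0}_{\zeta,\tilde\zeta}$, I would bound the two constituent pieces of the error norm separately. For $\no{g(\La)\tilde\zeta \dib_b u}_{L^2}$, I commute $\tilde\zeta$ past $\dib_b$: the principal piece $\dib_b(\tilde\zeta u)$ is bounded by $c\no{g(\La)\tilde\zeta u}_{H^1}$ because $\dib_b$ is first order, while the commutator $[\tilde\zeta,\dib_b]$ is multiplication by a smooth function supported in $\supp(\tilde\zeta)$ and contributes a term of the same or lower order. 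For $\no{g(\La)\tilde\Psi^0\zeta \dib_b u}_{H^s}$, I proceed analogously, using that $[\tilde\Psi^0\zeta,\dib_b]$ is a zeroth-order pseudodifferential operator, so after commutation the estimate reduces to $c\no{g(\La)\tilde\Psi^0\zeta u}_{H^{s+1}}$ plus lower-order terms. The sum of the two contributions is controlled by $\no{u}^2_{g\T-{\mathcal E}^{s,1}_{\zeta,\tilde\zeta}}$, which is finite by hypothesis.

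For part (ii), the operators $\dib_b^*\dib_b$, $\dib_b\dib_b^*$, $\Box_b$ and $\Box_b^\delta$ are all of second order. The plan is to apply part (i) twice. Given $u \in (g\T-{\mathcal A}^{\pm,s\sigma+2}_{\zeta;0,q})\cap (g\T-{\mathcal E}^{s,2}_{\zeta,\tilde\zeta;0,q})$, a first application of (i) (with indices shifted by one) places $\dib_b u$ and $\dib_b^* u$ in $(g\T-{\mathcal A}^{\pm,s\sigma+1}_{\zeta})\cap (g\T-{\mathcal E}^{s,1}_{\zeta,\tilde\zeta})$, and a second application then yields $\dib_b^*\dib_b u$ and $\dib_b\dib_b^* u$ in $(g\T-{\mathcal A}^{\pm,s\sigma}_{\zeta;0,q})\cap (g\T-{\mathcal E}^{s,0}_{\zeta,\tilde\zeta;0,q})$. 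Linearity gives the same membership for $\Box_b u$; and since $\Box_b^\delta$ differs from $\Box_b$ by an elliptic second-order term that factors as a product of two first-order pieces of the same cutoff character, the double-application argument applies to $\Box_b^\delta u$ as well.

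The principal bookkeeping obstacle I anticipate is that each application of (i) strictly speaking produces commutators whose support slightly enlarges, so the relevant auxiliary cutoff must be taken a little larger at each stage. I would address this by fixing in advance a nested chain $\zeta \prec \tilde\zeta \prec \tilde{\tilde\zeta}$ together with nested microlocalizations $\tilde\Psi^0 \prec \tilde{\tilde\Psi}^0$, and then by applying (i) first with the enlarged cutoff $\tilde{\tilde\zeta}$ in the error norm on the output, and next with $\tilde\zeta$ on the original input. Since the corollary is stated relative to a fixed but sufficiently large $\tilde\zeta$, this enlargement is routine and leaves the structural content of the argument unchanged.
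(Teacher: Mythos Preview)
Your proposal is correct and follows the route the paper intends: the paper simply writes ``Consequently, we have the following corollary'' immediately after the displayed estimate for $\no{Xu}^2_{g\T-{\mathcal A}_{\zeta}^{\pm,s\sigma}}$, giving no further detail. Your argument supplies exactly the details the paper omits---handling the ${\mathcal E}$-norm by commuting the cutoffs past the first-order operator, iterating for the second-order operators in (ii), and tracking the harmless enlargement of cutoffs---so there is nothing to add.
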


\noindent{\bf Note on notation.} In what follows we use notation $A\lesssim B$ for $A\le cB$ where $c$ is a constant depending on indices and cutoff functions of the space $\no{\cdot}_{g\T-{\mathcal A}^{\pm,s\sigma+a}_{\zeta;0,q}}$ and $\no{\cdot}_{g\T-{\mathcal E}^{s\sigma, a}_{\zeta, \tilde{\zeta};0,q}}$  such as  $s$, $s_o$, $a$, $g$, $\zeta$, $\sigma$ but not on $k$.


 			\section{A-priori estimates} \label{s3}
 			In this section we establish the {\it a-priori} estimates for the system $(\dib_b,\dib_b^*)$, $\Box_b$ and $\Box_b^\delta$ which are needed in the proofs of the main theorems.

 			\subsection{A-priori estimates for the system $(\dib_b,\dib_b^*)$}\label{s3.1}
 			The central of this subsection is to prove the following {\it a-priori} estimates for the system $(\dib_b,\dib_b^*)$ in the special norm  $\no{\cdot }_{g\T-{\mathcal A}^{\pm,s\sigma}_\zeta}$. The proof of Theorem~\ref{t0} is also given at the end of this subsection. 
 			\begin{theorem}\label{t41} Suppose that the $\sigma$-superlogarithmic property  on  $(0,q_0)$-forms holds with the pair of rates $(f,f_0)$.  Then the following holds:
 				\begin{enumerate}
 					\item[(i)] For $q\ge q_0$, we have 
 					\begin{eqnarray}\label{est:t41a}
 					\no{u}^2_{gf\T-{\mathcal A}^{+,s\sigma}_\zeta}+\no{u}^2_{gf_0\ln\T-\di_b\sigma\lrcorner {\mathcal A}^{+,s\sigma}_\zeta}\le  c\left(\no{\dib_bu}^2_{g\T-{\mathcal A}^{+,s\sigma}_\zeta}+\no{\dib_b^*u}^2_{g\T-{\mathcal A}^{+,s\sigma}_\zeta}\right)+c_{s,g}\no{u}^2_{g\T-{\mathcal E}^{s,0}_{\zeta,\tilde\zeta}}.
 					\end{eqnarray}    
 					for all $u\in (g\T-{\mathcal A}^{+,s\sigma+1}_{\zeta;0,q})\cap (g\T-{\mathcal E}^{s,1}_{\zeta,\tilde \zeta;0,q})$.
 			\item[(ii)]
 				For  $q\le n-q_0$, we have 
 				\begin{eqnarray}\label{est:t41b}
 				\no{u}^2_{gf\T-{\mathcal A}^{-,s\sigma}_\zeta}+\no{u}^2_{gf_0\ln\T-\dib_b\sigma\we {\mathcal A}^{-,s\sigma}_\zeta}\le c\left( \no{\dib_bu}^2_{g\T-{\mathcal A}^{-,s\sigma}_\zeta}+\no{\dib_b^*u}^2_{g\T-{\mathcal A}^{-,s\sigma}_\zeta}\right)+c_{s,g}\no{u}^2_{g\T-{\mathcal E}^{s,0}_{\zeta,\tilde\zeta}}.
 				\end{eqnarray}  
 			for all $u\in(g\T-{\mathcal A}^{-,s\sigma+1}_{\zeta;0,q})\cap (g\T-{\mathcal E}^{s,1}_{\zeta,\tilde \zeta;0,q})$.
 				\end{enumerate}       
 			\end{theorem}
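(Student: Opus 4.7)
My strategy is to apply the weighted Kohn-Morrey-H\"ormander inequality (Proposition~\ref{kmh}) to each positive microlocal piece $v:=u^+_{\zeta,k}$ with the weight
$$
\lambda^{(k)} := \lambda_{t_k}-2ks\sigma, \qquad t_k:=e^k/2,
$$
so that $\no{\cdot}_{L^2_{\lambda^{(k)}}}$ and $\no{e^{ks\sigma}\cdot}_{L^2}$ are comparable uniformly in $k$, and then to sum the resulting inequalities over $k$ with weight $g^2(e^k)$. The bounds $|T^m\lambda_{t_k}|\le c_m t_k^{m/2}$ and $|T^m(2ks\sigma)|\le c_{m,s}e^{km/2}$ make $\lambda^{(k)}$ admissible in Proposition~\ref{Local3.2}. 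Combining Proposition~\ref{kmh} with Proposition~\ref{Local3.2} applied to the $d\theta\lrcorner T$ term yields
$$
\no{\dib_b v}^2_{L^2_{\lambda^{(k)}}}+\no{\dib^*_{b,\lambda^{(k)}} v}^2_{L^2_{\lambda^{(k)}}}+R_k \;\gtrsim\; \big((\L_{\lambda_{t_k}}+t_k d\theta)\lrcorner v,v\big)_{L^2_{\lambda^{(k)}}}-2ks\,\big|(\L_\sigma\lrcorner v,v)_{L^2_{\lambda^{(k)}}}\big|,
$$
where $R_k$ collects the $c\no{v}^2_{L^2_{\lambda^{(k)}}}$ and $c_{s,r}e^{-k}\no{\zeta u}^2_{H^{-r}}$ residuals produced by the two propositions.

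I then invoke the $\sigma$-superlogarithmic property at $t=t_k$ on $(\L_{\lambda_{t_k}}+t_k d\theta)\lrcorner v$: because $\ln t_k\sim k$ and $f_0\gg 1$, for $k$ sufficiently large the negative contribution $-2ks\L_\sigma$ is swallowed by the positive $f_0^2(t_k)\ln t_k|\L_\sigma\lrcorner v|$ term, producing the clean lower bound $f^2(e^k)\no{v}^2_{L^2_{\lambda^{(k)}}}+f_0^2(e^k)k^2\no{\di_b\sigma\lrcorner v}^2_{L^2_{\lambda^{(k)}}}$. Next I convert $\dib^*_{b,\lambda^{(k)}}$ to $\dib_b^*$ via \eqref{dbar*}; the adjoint correction $\di_b\lambda^{(k)}\lrcorner v$ splits into the $\di_b\lambda_{t_k}$ piece (absorbed by $f^2\no{v}^2$ under a suitable choice of the family $\{\lambda_t\}$) and a $2ks\,\di_b\sigma\lrcorner v$ piece (absorbed by $f_0^2k^2\no{\di_b\sigma\lrcorner v}^2$). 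Multiplying by $g^2(e^k)$ and summing over $k$ then reproduces exactly the left-hand side of \eqref{est:t41a}. For the right-hand side I write $\dib_b v=(\dib_b u)^+_{\zeta,k}+[\zeta\Gamma_k\Psi^+\zeta,\dib_b]u$ and analogously for $\dib_b^*$; Lemma~\ref{com1} bounds the summed commutators by $\no{u}^2_{g\T-\mathcal A^{+,s\sigma}_\zeta}+\no{u}^2_{g\T-\mathcal E^{s,0}_{\zeta,\tilde\zeta}}$, the first summand being reabsorbed on the left by $\no{u}^2_{gf\T-\mathcal A^{+,s\sigma}_\zeta}$ at high $k$ (using $f\gg 1$), while the finitely many low-$k$ terms together with $R_k$ are funneled into $\no{u}^2_{g\T-\mathcal E^{s,0}_{\zeta,\tilde\zeta}}$.

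Part (ii) is not immediately accessible: the symbol $\mathcal S(T-e^k)$ fails to be nonnegative on $\supp(\Psi^-)$, so Proposition~\ref{Local3.2} does not apply to $u^-_{\zeta,k}$. The remedy, to be constructed in the remainder of \S\ref{s3}, is a modified Hodge-$*$ operator intertwining $(0,q)$- and $(0,n-q)$-forms and converting $\Psi^-$-microlocalizations into $\Psi^+$-microlocalizations modulo lower-order commutators; since $q\le n-q_0$ is equivalent to $n-q\ge q_0$, the positive estimate of (i) applies to $*u$ and is translated back to the desired bound for $u$. The main obstacle is the delicate absorption balance: because the gains $f$ and $f_0\ln$ are slow, the $-2ks\L_\sigma$ term and the commutator errors are only dominated once $k$ exceeds an $s$-dependent threshold, so all low-$k$ residuals, together with uniform control of $\di_b\lambda_{t_k}$ and of the Proposition~\ref{Local3.2} remainder, must be routed precisely into the error space $g\T-\mathcal E^{s,0}_{\zeta,\tilde\zeta}$ for the estimate to close.
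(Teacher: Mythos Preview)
Your overall architecture is exactly that of the paper: apply the weighted Kohn--Morrey--H\"ormander inequality and Proposition~\ref{Local3.2} to each atom $u^+_{\zeta,k}$, use the $\sigma$-superlogarithmic hypothesis to produce the $f^2$ and $f_0^2k^2$ gains, then multiply by $g^2(e^k)$, sum, and control the commutators via Lemma~\ref{com1}; for part (ii) you correctly identify that one passes through a conjugate-linear Hodge-$*$ in complementary degree (this is Proposition~\ref{p3.7} in the paper).

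There is, however, one genuine gap. You take the weight $\lambda^{(k)}=\lambda_{t_k}-2ks\sigma$ without any convexification, and then write that the adjoint correction piece $\di_b\lambda_{t_k}\lrcorner v$ is ``absorbed by $f^2\no{v}^2$ under a suitable choice of the family $\{\lambda_t\}$.'' But the family $\{\lambda_t\}$ is \emph{given} by the hypothesis, and Definition~\ref{def1} imposes bounds only on $T^m(\lambda_t)$, not on $L_j(\lambda_t)$. In the applications of \S\ref{s5} the tangential gradient $|\di_b\lambda_t|$ is of size $\sim f_2(t)$ or larger, so there is no reason $\no{\di_b\lambda_{t_k}\lrcorner v}^2$ can be controlled by $f^2(e^k)\no{v}^2$. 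The paper handles this by working instead with $\lambda=\chi(\lambda_{e^k})-2ks\sigma$ where $\chi(a)=\tfrac13 e^a$: since $\L_\lambda$ then contains the extra positive term $\ddot\chi\,\di_b\lambda_{e^k}\wedge\dib_b\lambda_{e^k}$ and the adjoint correction produces $\dot\chi^2|\di_b\lambda_{e^k}\lrcorner v|^2$, the choice guarantees $\ddot\chi\ge 3\dot\chi^2$ and the bad term is absorbed on the Levi-form side, with $\dot\chi\gtrsim 1$ preserving the $\L_{\lambda_{e^k}}$ gain. Once you insert this convexification your argument goes through exactly as in the paper; without it the estimate does not close.
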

 				The proof of this theorem is divided into four steps:
 				\begin{enumerate}
 					\item[Step 1.] We point out that if the $\sigma$-superlogarithmic property holds on $(0,q_0)$ forms then it still holds on $(0,q)$-forms with $q_0\le q\le n$.
 					\item[Step 2.] Using the Kohn-Morrey-H\"ormander inequality 
					 in Proposition~\ref{kmh} and the estimate of the Levi form $(d\theta\lrcorner T\cdot,\cdot)_{L^2_\lambda}$ in Proposition~\ref{Local3.2}, we obtain the estimate for each atom  $\no{e^{ks\sigma}u^\pm_{\zeta,k}}_{L^2}$ of the full norm $\no{\cdot }_{g\T-{\mathcal A}^{\pm,s\sigma}_\zeta}$. Indeed,  	\begin{eqnarray}\begin{aligned}\label{est:t31a}
 					f^2(e^k)\no{e^{ks\sigma}u^+_{\zeta,k}}^2_{L^2}+&k^2f_0^2(e^k)\no{e^{ks\sigma}\di_b\sigma \lrcorner u^+_{\zeta,k}}^2_{L^2}\\
 					&\le c\left(\no{e^{ks\sigma}\dib_bu^+_{\zeta,k}}^2_{L^2}+\no{e^{ks\sigma}\dib_b^*u^+_{\zeta,k}}^2_{L^2}\right) +c_{s,r}e^{-k}\no{\zeta u}^2_{H^{-r}} 				\end{aligned}	\end{eqnarray}      
 				for all $u\in H^{-s_o}_{0,q}(M)$, $s,r\in \R^+$ with $r\ge s_o$ and $q\ge q_0$.
 					\item[Step 3.] A similar estimate for negative microlocalization is  obtained by the aid of Hodge-$*$ on complementary degree. In particular, \eqref{est:t31a} is equivalent to 
 					\begin{eqnarray}\begin{aligned}\label{est:t31b}
 					f^2(e^k)	\no{e^{ks\sigma}u^-_{\zeta,k}}^2_{L^2}+&	k^2f_0^2(e^k)\no{e^{ks\sigma}\dib_b\we  u^-_{\zeta,k}}^2_{L^2}\\
 					\le &c\left(\no{e^{ks\sigma}\dib_bu^-_{\zeta,k}}^2_{L^2}+\no{e^{ks\sigma}\dib_b^*u^-_{\zeta,k}}^2_{L^2}\right) +c_{s,r}e^{-k}\no{\zeta u}^2_{H^{-r}}			\end{aligned}		\end{eqnarray}      
 					for all $u\in H^{-s_o}_{0,q}(M)$, $s,r\in \R^+$ with $r\ge s_o$ and $q\le n-q_0$.
 					\item[Step 4.] 	Multiplying with $g^2(e^k)$ to both sides  of \eqref{est:t31a} and \eqref{est:t31b}, and then taking summation over $k$, we get the desired estimates \eqref{est:t41a} and \eqref{est:t41b}, respectively. 
 				\end{enumerate}

 					The proof of Step 1 immediately follows by following proposition.
 					\begin{proposition}\label{p3.6}
 				If the $\sigma$-superlogarithmic property holds on $(0,q)$-forms then it still holds on $(0,q+1)$-forms with the same rates.
 					\end{proposition}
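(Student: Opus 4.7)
The plan is to deduce the $(0,q+1)$-form inequality from its $(0,q)$-form analogue by applying the latter pointwise to the $(0,q)$-form slices $u^{j} := \omega_{j}\lrcorner v$ of a given $(0,q+1)$-form $v$, where $\{\omega_j\}$ is the $(1,0)$-coframe dual to $\{L_j\}$. I would use the \emph{same} family $\{\lambda_t\}$ supplied by the $(0,q)$-hypothesis, so the derivative bound $|T^m(\lambda_t)|\le c_m t^{m/2}$ is inherited automatically and only the first line of \eqref{definition1} needs to be verified.

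The heart of the argument is an algebraic identity at a point. Fixing an orthonormal $(1,0)$-frame that diagonalizes a Hermitian $(1,1)$-form $\Theta$ with eigenvalues $\mu_1,\dots,\mu_n$, I would verify
$$\sum_{j=1}^{n}\langle \Theta\lrcorner u^{j}, u^{j}\rangle \;=\; q\,\langle \Theta\lrcorner v, v\rangle$$
by expanding both sides: each eigenvalue $\mu_i$ with $i\in K\in\I_{q+1}$ appears on the left with multiplicity $q$, once for every $j\in K\setminus\{i\}$. The same counting yields $\sum_j|\omega_j\lrcorner w|^2 = q|w|^2$ for any $(0,q)$-form $w$, which specializes to $\sum_{j}|u^j|^2=(q+1)|v|^2$; and, since $\di_b\sigma\lrcorner$ anticommutes with each $\omega_j\lrcorner$, applying the previous identity to the $(0,q)$-form $\di_b\sigma\lrcorner v$ gives $\sum_j|\di_b\sigma\lrcorner u^j|^2 = q|\di_b\sigma\lrcorner v|^2$.

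I would then apply the $(0,q)$-superlogarithmic inequality to each $u^j$ and sum over $j$. The master identity with $\Theta=\L_{\lambda_t}+td\theta$ turns the left-hand side into $q\langle(\L_{\lambda_t}+td\theta)\lrcorner v, v\rangle$. For the Levi-of-$\sigma$ term, the master identity with $\Theta=\L_\sigma$ together with the triangle inequality gives $\sum_j|\langle\L_\sigma\lrcorner u^j, u^j\rangle|\ge q|\langle\L_\sigma\lrcorner v, v\rangle|$, where the realness of $\langle\L_\sigma\lrcorner u^j, u^j\rangle$ (from $\sigma$ real, hence $\L_\sigma$ Hermitian via \eqref{phiij}) is what makes the absolute values cooperate. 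The two remaining terms contribute exactly $q|\di_b\sigma\lrcorner v|^2$ and $(q+1)|v|^2$. Dividing the resulting inequality by $q$ yields \eqref{definition1} for $v$, with the same rates $f,f_0$ and coefficient $\tfrac{q+1}{q}f^2(t)\ge f^2(t)$ on $|v|^2$. The only point requiring care is sign/anticommutation bookkeeping for interior products, which is routine; once the master identity is in hand the proof is pointwise and purely linear-algebraic, with no analytic input.
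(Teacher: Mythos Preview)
Your proposal is correct and follows essentially the same route as the paper: the paper also slices a $(0,q+1)$-form $u$ into the $(0,q)$-forms $v_l=\sum_{J\in\I_q}u_{lJ}\bar\omega_J$ (which is precisely your $\omega_l\lrcorner u$), verifies the same three pointwise identities $\sum_l|v_l|^2=(q+1)|u|^2$, $\sum_l|\lambda\lrcorner v_l|^2=q|\lambda\lrcorner u|^2$, $\sum_l\langle\Theta\lrcorner v_l,v_l\rangle=q\langle\Theta\lrcorner u,u\rangle$, applies the $(0,q)$-hypothesis to each $v_l$, and sums. Your use of the triangle inequality for the $|\langle\L_\sigma\lrcorner\cdot,\cdot\rangle|$ term is in fact slightly more careful than the paper, which writes an equality there when only the inequality $\sum_l|\langle\L_\sigma\lrcorner v_l,v_l\rangle|\ge q|\langle\L_\sigma\lrcorner u,u\rangle|$ is actually true (and is all that is needed).
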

 				\begin{remark}\label{3r3} Let $\M$ be a nonnegative $(1,1)$-form.  
 				It is obvious that the  $(f\T-\M)$-property on $(0,q)$-forms implies the  $(f\T-\M)$-property on $(0,q+1)$-forms. However, in the definition of the $\sigma$-superlogarithmic property, the Levi form $\L_\sigma$  is not a nonnegative $(1,1)$-form in general. So we have to give a direct proof for Proposition~\ref{p3.6}.
 				\end{remark}
 					\begin{proof} We assume that there exist functions $f,f_0:\R^+\to\R^+$ with $f,f_0\gg 1$ and  a family of smooth real-valued functions $\{\lambda_t\}_{t\ge1}$ defined on $U$ such that 
 						\begin{eqnarray}\label{1p3.6}
 						\begin{cases}
 						\left\la(\L_{\lambda_t}+td\theta)\lrcorner v, v\right\ra\ge f^2_0(t)\left(\ln t|\la\L_\sigma\lrcorner v,v\ra|+\ln^2t|\di_b\sigma \lrcorner v|^2\right)+f^2(t)|v|^2,\\
 						|T^m(\lambda_t)|\le c_mt^{\frac{m}{2}}, \T{for all } m\in \mathbb N,
 						\end{cases}
 						\end{eqnarray}
 						holds at any $x\in U$ for all smooth $(0,q)$-forms $v$. 
 						Let 
 						$u=\underset{L\in \I_{q+1}}{{\sum}}u_L\bar \om_L$ is a $(0,q+1)$-form defined in $U$.
 						We rewrite $u$ as a non-ordered sum
 						$$u=\frac{1}{(q+1)!}\underset{|L|=q+1}{{\sum}}u_L\bar \om_L=\frac{(-1)^q}{q+1}\sum_{l=1}^{n}\Big(\frac{1}{q!}\underset{|J|=q}{{\sum}}u_{lJ}\bom_J \Big)\we\bom_l.$$
 						For $l=1,\dots, n$, we define a set of $(0,q)$-forms $v_l$ by $$v_l:=\frac{1}{q!}\underset{|J|=q}{\sum}u_{lJ}\bar\om_{J}=\underset{J\in \I_{q}}{\sum}u_{lJ}\bar\om_{J}.$$ It is easy to check that $$\begin{cases*}
 						\overset{n}{\underset{l=1}{\sum}} |v_l|^2=(q+1)|u|^2;\\					\overset{n}{\underset{l=1}{\sum}}|\lambda\lrcorner v_l|^2=q|\lambda\lrcorner u|^2 \T{  if $\lambda$ is a $(1,0)$-form};\\
 						 \overset{n}{\underset{l=1}{\sum}}\la \lambda\lrcorner v_l,v_l\ra=q \la \lambda\lrcorner u,u\ra\T{   if $\lambda$ is a $(1,1)$-form}.
 						\end{cases*}$$  Thus, 
 						\begin{eqnarray}\label{p33b}\begin{aligned}
 						&\sum_{l=1}^n\Big(f^2_0(t)\left(\ln t|\la\L_\sigma\lrcorner v_l,v_l\ra|+\ln^2t|\di_b\sigma \lrcorner v_l|^2\right)+f^2(t)|v_l|^2\Big )\\
 						&= qf^2_0(t)\left(\ln t|\la\L_\sigma\lrcorner u,u\ra|+\ln^2t|\di_b\sigma \lrcorner u|^2\right)+(q+1)f^2(t)|u|^2.
 						\end{aligned}
 						\end{eqnarray}
 						Substituting $v=v_l$ into \eqref{1p3.6}  and taking summation over $l$, to get
 						\begin{eqnarray}\label{p33a}
 						\begin{aligned}
 						&	\sum_{l=1}^n\Big(f^2_0(t)\left(\ln t|\la\L_\sigma\lrcorner v_l,v_l\ra|+\ln^2t|\di_b\sigma \lrcorner v_l|^2\right)+f^2(t)|v_l|^2\Big )\\
 						\le& \sum_{l=1}^n\Big(	\left\la(\L_{\lambda_t}+td\theta)\lrcorner v_l, v_l\right\ra\Big)					=q\left\la(\L_{\lambda_t}+td\theta)\lrcorner u, u\right\ra.
 						\end{aligned}
 						\end{eqnarray}
 					From \eqref{p33b} and \eqref{p33a}, the $\sigma$-superlogarithmic property on $(0,q+1)$-forms  holds. 
 						
 					\end{proof}
 				The second step in the proof of Theorem~\ref{t41} is contained in the following proposition.
 				\begin{proposition}\label{p31} Suppose that the $\sigma$-superlogarithmic property with the pair of rates $(f,f_0)$  holds on $(0,q)$-forms.  Then
 					\begin{eqnarray}\label{est:p31}\begin{aligned}
 				 f^2(e^k)\no{e^{ks\sigma}u^+_{\zeta,k}}_{L^2}^2+&k^2f_0^2(e^k)\no{\di_b\sigma\lrcorner e^{ks\sigma}u^+_{\zeta,k}}_{L^2}^2\\
 				 	\le &c\left(\no{e^{ks\sigma}\dib_bu^+_{\zeta,k}}_{L^2}^2+\no{e^{ks\sigma}\dib_b^*u^+_{\zeta,k}}_{L^2}^2\right)+c_{s,r}e^{-k}\no{\zeta u}^2_{H^{-r}},
 					\end{aligned}				\end{eqnarray}
 					 for all $u\in H^{-s_o}_{0,q}(M)$,  $k=1,2\dots$, and $s,r\in \R$ with $r\ge s_o$, where $c$ and $c_{s,r}$ are independent of $k$.
 				\end{proposition}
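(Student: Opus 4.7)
The strategy is to apply the weighted Kohn-Morrey-H\"ormander inequality (Proposition~\ref{kmh}) to the smooth microlocalization $u^+_{\zeta,k}$ (smoothness guaranteed by Remark~\ref{1r1}) in the weighted space $L^2_\lambda$ with the weight
$$\lambda := \lambda_{e^k} - 2ks\sigma,$$
where $\lambda_{e^k}$ is the auxiliary function from Definition~\ref{def1} at $t = e^k$. Under the standing uniform boundedness of the family $\{\lambda_t\}_{t\ge 1}$, the weighted norm $\no{v}_{L^2_\lambda}$ is comparable to $\no{e^{ks\sigma} v}_{L^2}$ with constants independent of $k$. The hypothesis $|T^m\lambda| \le c_{m,s} e^{km/2}$ required by Proposition~\ref{Local3.2} is met because $\sigma$ is smooth (so $T^m\sigma$ is bounded) and $|T^m\lambda_{e^k}| \le c_m e^{km/2}$ by Definition~\ref{def1}.

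Combining Proposition~\ref{kmh} and Proposition~\ref{Local3.2} (the latter producing the desired error term $c_{s,r} e^{-k}\no{\zeta u}_{H^{-r}}^2$ as well as the crucial gain $\tfrac{1}{2}e^k$ in front of $d\theta$), we obtain
$$\no{\dib_b u^+_{\zeta,k}}_{L^2_\lambda}^2 + \no{\dib^*_{b,\lambda} u^+_{\zeta,k}}_{L^2_\lambda}^2 + c\no{u^+_{\zeta,k}}_{L^2_\lambda}^2 + c_{s,r} e^{-k}\no{\zeta u}_{H^{-r}}^2 \ge \big((\L_{\lambda_{e^k}} + \tfrac{1}{2}e^k d\theta)\lrcorner u^+_{\zeta,k}, u^+_{\zeta,k}\big)_{L^2_\lambda} - 2ks\big(\L_\sigma \lrcorner u^+_{\zeta,k}, u^+_{\zeta,k}\big)_{L^2_\lambda},$$
where we used $\L_\lambda = \L_{\lambda_{e^k}} - 2ks\L_\sigma$. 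Applying the $\sigma$-superlogarithmic property at parameter $t = e^k/2$ (the factor of $2$ is absorbed into constants since $f,f_0\gg 1$), the first bracket is bounded below by a constant multiple of $f^2(e^k)|u^+|^2 + f_0^2(e^k) k|\la\L_\sigma \lrcorner u^+, u^+\ra| + f_0^2(e^k) k^2 |\di_b\sigma \lrcorner u^+|^2$. The remaining negative contribution $-2ks\la\L_\sigma\lrcorner u^+, u^+\ra$ is absorbed into the $f_0^2(e^k) k$-term because $f_0^2(e^k) \ge 4s$ for $k$ sufficiently large, and the range of small $k$ is handled by adjusting the global constants.

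The last step is to convert $\dib^*_{b,\lambda}$ to $\dib^*_b$ via \eqref{dbar*}: $\no{\dib^*_{b,\lambda} u^+_{\zeta,k}}_{L^2_\lambda}^2 \le 2\no{\dib^*_b u^+_{\zeta,k}}_{L^2_\lambda}^2 + 2\no{\di_b\lambda \lrcorner u^+_{\zeta,k}}_{L^2_\lambda}^2$, with $\di_b\lambda = \di_b\lambda_{e^k} - 2ks\di_b\sigma$. The $\di_b\sigma$-part of the error is absorbed by the $k^2 f_0^2(e^k)\no{\di_b\sigma \lrcorner u^+}^2$ term (again using $f_0^2\gg 1$), and the $\di_b\lambda_{e^k}$-part by $f^2(e^k)\no{u^+}^2$, using the standing spatial bounds on $\lambda_{e^k}$. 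Passing from the $L^2_\lambda$ norm back to the $L^2$ norm weighted by $e^{ks\sigma}$ then yields \eqref{est:p31}. The main obstacle is the delicate double absorption in Step 3 and Step 4: the exact linear-in-$k$ matching of the $-2ks\L_\sigma$ contribution with the $f_0^2(e^k)\ln(e^k) = f_0^2(e^k) k$ gain, and the absorption of $\di_b\lambda_{e^k}\lrcorner u^+$ into $f^2(e^k)|u^+|^2$, both of which depend critically on the margin provided by $f,f_0 \gg 1$ and on the uniform boundedness (and spatial derivative bounds) of the auxiliary functions $\lambda_t$.
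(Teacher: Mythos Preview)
Your overall architecture is right --- apply Proposition~\ref{kmh} to $u^+_{\zeta,k}$ with a weight of the form $\lambda_{e^k}-2ks\sigma$, invoke Proposition~\ref{Local3.2} for the $T$-term, and then use the $\sigma$-superlogarithmic property at $t\sim e^k$ to dominate the $\L_\sigma$ and $\di_b\sigma$ contributions. But there is a genuine gap in your last absorption step.

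You write that the $\di_b\lambda_{e^k}$-part of $\di_b\lambda\lrcorner u^+_{\zeta,k}$ is absorbed by $f^2(e^k)\no{u^+}^2$ ``using the standing spatial bounds on $\lambda_{e^k}$.'' No such bounds are available. Definition~\ref{def1} only controls $|T^m\lambda_t|\le c_m t^{m/2}$; it says nothing about $L_j\lambda_t$, so $|\di_b\lambda_{e^k}|$ may blow up in $k$ in an uncontrolled way. Consequently the term $\no{\di_b\lambda_{e^k}\lrcorner u^+_{\zeta,k}}^2_{L^2_\lambda}$ arising from \eqref{dbar*} cannot be absorbed by $f^2(e^k)\no{u^+_{\zeta,k}}^2$, and your argument breaks here.

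The paper closes exactly this gap by precomposing with a convex function: it takes
\[
\lambda \;=\; \chi(\lambda_{e^k}) - 2ks\sigma,\qquad \chi(a)=\tfrac{1}{3}e^{a},
\]
so that $\L_\lambda = \dot\chi\,\L_{\lambda_{e^k}} + \ddot\chi\,\di_b\lambda_{e^k}\wedge\dib_b\lambda_{e^k} - 2ks\,\L_\sigma$. The extra \emph{positive} term $\ddot\chi\,|\di_b\lambda_{e^k}\lrcorner u^+|^2$ now appears on the good side of the Kohn--Morrey--H\"ormander inequality. On the other side, $\dib^*_{b,\lambda}u^+=\dib^*_bu^+-\dot\chi\,\di_b\lambda_{e^k}\lrcorner u^+ +2ks\,\di_b\sigma\lrcorner u^+$, so the dangerous contribution is $3\dot\chi^{2}\,\no{\di_b\lambda_{e^k}\lrcorner u^+}^2$. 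The choice of $\chi$ gives $\ddot\chi\ge 3\dot\chi^{2}$, and this term is absorbed \emph{structurally}, with no need for any spatial control of $\lambda_{e^k}$. Since $-1\le\lambda_{e^k}\le 0$ (after normalization), one also has $e^{-\chi}\approx 1$ and $\dot\chi\gtrsim 1$, so the comparison $\no{\cdot}_{L^2_\lambda}\approx\no{e^{ks\sigma}\cdot}_{L^2}$ and the lower bound $\dot\chi\L_{\lambda_{e^k}}\gtrsim\L_{\lambda_{e^k}}$ go through. Once you insert this convexification, the rest of your outline (the $\L_\sigma$ absorption via $f_0^2(e^k)k$, the $\di_b\sigma$ absorption via $f_0^2(e^k)k^2$, and the handling of small $k$ by the error term) is correct.
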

 				\begin{proof}Assume that for any $t\ge1$ there exists a real-valued  function  $\lambda_t$ smooth in a neighborhood of $\supp(\sigma)$ such that 
 						\begin{eqnarray}\label{d3}
 					\begin{cases}
 					\left\la(\L_{\lambda_t}+td\theta)\lrcorner u, u\right\ra\ge f^2_0(t)\left(\ln t|\la\L_\sigma\lrcorner u,u\ra|+\ln^2t|\di_b\sigma \lrcorner u|^2\right)+f^2(t)|u|^2,\\
 					|T^m(\lambda_t)|\le c_mt^{\frac{m}{2}} \T{~~for all } m\in \mathbb N,
 					\end{cases}
 					\end{eqnarray}
 					for all $u\in C^\infty_{0,q}(M)$. We are going to apply Proposition~\ref{kmh} and ~\ref{Local3.2} for 
 					 $$\lambda:=\lambda_{k,s}:=\chi(\lambda_{e^k})-2ks\sigma$$
 					 where $\chi:\R^+\to\R^+$ is a smooth function and will be chosen later. 
 					By the definition of $\L_\lambda$, we have  
 					$$\L_\lambda=\dot\chi\L_{\lambda_{e^k}}+\ddot\chi \di_b(\lambda_{e^k})\we \dib_b(\lambda_{e^k})-2sk\L_\sigma, $$
 					 and hence 
 				\begin{equation}
 				\label{est1}\la\L_{\lambda}\lrcorner u,u\ra=\dot\chi \la\L_{\lambda_{e^k}}\lrcorner u,u\ra+\ddot{\chi}|\di_b \lambda_{e^k}\lrcorner u|^2-2sk \la\L_\sigma\lrcorner u,u\ra.
 				\end{equation}
 					On other hand, it follows from \eqref{dbar*}, 
 				\begin{equation}
 				\label{est2}\dib^*_{b,\lambda}u=\dib^*_bu-\di_b\lambda\lrcorner u=\dib^*_bu-\dot\chi\di_b\lambda_{e^k}\lrcorner u+2sk\di_b\sigma\lrcorner u.	\end{equation}
 	Thus we get from Proposition~\ref{kmh} applied to $u^+_{\zeta,k}\in C^\infty_{0,q}(M)$ supported in $U$ (by Remark \ref{1r1}), under the choice of the weight $\lambda :=\lambda_{k,s}$ and
taking into account \eqref{est1} and \eqref{est2},			\begin{eqnarray}
 					\begin{aligned}
 					\no{e^{-\frac{\lambda}{2}}\dib_b u^+_{\zeta,k}}^2_{L^2}+&3\no{e^{-\frac{\lambda}{2}}\dib_{b}^* u^+_{\zeta,k}}^2_{L^2}+c\no{e^{-\frac{\lambda}{2}} u^+_{\zeta,k}}^2_{L^2}\\
 					\ge&
 				\Re	\int_M \left[\dot\chi \la\L_{\lambda_{e^k}}\lrcorner u^+_{\zeta,k},u^+_{\zeta,k}\ra+\ddot{\chi}|\di_b \lambda_{e^k}\lrcorner u^+_{\zeta,k}|^2+\la d\theta \lrcorner Tu^+_{\zeta,k},u^+_{\zeta,k}\ra\right]e^{-\lambda}dV\\
 					&-\int_M \left[2sk \la \L_\sigma\lrcorner u^+_{\zeta,k},u^+_{\zeta,k}\ra+3\dot\chi^2|\di_b\lambda_{e^k}\lrcorner u^+_{\zeta,k}|^2+12s^2k^2|\di_b\sigma\lrcorner u^+_{\zeta,k}|^2\right]e^{-\lambda}dS.
 					\end{aligned}
 					\end{eqnarray}
 					Since $\lambda_{e^k}$ is uniformly bounded, we can assume that $-1\le \lambda_{e^k}\le 0$. We choose $\chi(a)=\frac{1}{3}e^{a}$. If follows:(i) $\ddot\chi\ge 3\dot{\chi^2}$ so we can remove the term involving  $|\di_b\lambda_{e^k}\lrcorner u^+_{\zeta,k}|^2$; (ii) by the second line of \ref{d3} it follows
 					$$|\lambda|\le c_{0,s}k\quad \T {and }\quad |T^m(\lambda)|\le c_{m,s}e^{\frac{mk}{2}},$$
 					 so we can use Proposition~\ref{Local3.2} for to estimate the Levi form $\la d\theta\lrcorner T\cdot, \cdot\ra$ as
 					 $$\Re \int_M\la d\theta\lrcorner Tu^+_{\zeta,k},u^+_{\zeta,k}\ra e^{-\lambda}dV\ge \frac{1}{2}e^{k}\int_M\la d\theta\lrcorner u^+_{\zeta,k},u^+_{\zeta,k}\ra e^{-\lambda}dV-c\no{e^{\frac{\lambda}{2}}u^+_{\zeta,k}}^2-c_{r,s}e^{-k}\no{\zeta u}^2_{H^{-r}};$$ (iii) $e^{-\chi}\approx 1$ and $\dot\chi\gtrsim 1$, so we remove the factor $e^{-\chi}$ in $e^{-\lambda}=e^{-\chi+2ks\sigma}$. We can conclude that   
 					\begin{eqnarray}
 					\begin{aligned}
 					\no{e^{sk\sigma}\dib_b u^+_{\zeta,k}}^2_{L^2}+&\no{e^{sk\sigma}\dib^*_{b,} u^+_{\zeta,k}}^2_{L^2}+c_{s,r}e^{-k}\no{\zeta u}_{H^{-r}}^2\\
 					\ge&  c\int_M\left[\la\L_{\lambda_{e^k}}\lrcorner u^+_{\zeta,k},u^+_{\zeta,k}\ra +\frac{1}{2}e^k\la d\theta\lrcorner u^+_{\zeta,k},u^+_{\zeta,k}\ra\right]e^{2sk\sigma}dV\\
 					&-C\int_M\left[|u^+_{\zeta,k}|^2+12s^2k^2|\di_b\sigma\lrcorner u^+_{\zeta,k}|^2+ sk \la \L_\sigma\lrcorner u^+_{\zeta,k},u^+_{\zeta,k}\ra\right]e^{2sk\sigma}dV\\
 					\ge & c'\left(f^2(e^k)\no{e^{ks\sigma}u^+_{\zeta,k}}_{L^2}^2+k^2f^2_0(e^k)\no{\di_b\sigma\lrcorner e^{ks\sigma}u^+_{\zeta,k}}_{L^2}^2+kf_0^2(e^k)\int_M|\la\L_\sigma\lrcorner u^+_{\zeta,k},u^+_{\zeta,k}\ra| e^{2ks\sigma}dV\right)\\
 						&-C\int_M\left[|u^+_{\zeta,k}|^2+12s^2k^2|\di_b\sigma\lrcorner u^+_{\zeta,k}|^2+ sk \la \L_\sigma\lrcorner u^+_{\zeta,k},u^+_{\zeta,k}\ra\right]e^{2sk\sigma}dV.
 					\end{aligned}
 					\end{eqnarray}
 				where the last inequality follows the first line of \eqref{d3} for $t=e^k$.
 					We obtain the desired inequality since the last line is absorbed by the line before for large  $k$ since both $f, f_0\gg 1$, otherwise it is absorbed by $c_{s,r}e^{-k}\no{\zeta u}^2_{H^{-r}}$.	\end{proof}

 			The estimates for positive and negative microlocalizations are related, in complementary degree, by the aid of the Hodge-$*$ theory.
 				\bp
 				\label{p3.7} Fix $1\le q\le n$ and $r\ge s_o$. The estimates
 				\begin{eqnarray}\label{est:p38}\begin{aligned}
 			c_{s,r}e^{-k}\no{\zeta u}^2_{H^{-r}}	+&c\left(\no{e^{ks\sigma}\dib_bu^+_{\zeta,k}}^2_{L^2}+\no{e^{ks\sigma}\dib_b^*u^+_{\zeta,k}}^2_{L^2}\right)\\
 			&	\ge f^2(e^k)\no{e^{ks\sigma}u^+_{\zeta,k}}^2_{L^2}+f_0^2(e^k)k^2\no{\di_b\sigma\lrcorner e^{ks\sigma}u^+_{\zeta,k}}^2_{L^2}
 			\end{aligned}	\end{eqnarray}
 				for all  $u\in H^{-s_o}_{0,q}(M)$  and 
 				\begin{eqnarray}\label{est:p39}\begin{aligned}
 			c_{s,r}e^{-k}\no{\zeta u}^2_{H^{-r}}+&	c\left(\no{e^{ks\sigma}\dib_bu^-_{\zeta,k}}^2_{L^2}+\no{e^{ks\sigma}\dib_b^*u^-_{\zeta,k}}^2_{L^2}\right)\\
 	&	\ge f^2(e^k)\no{e^{ks\sigma}u^-_{\zeta,k}}^2_{L^2}+f_0^2(e^k)k^2\no{\dib_b\sigma\we e^{ks\sigma}u^-_{\zeta,k}}^2_{L^2}
 				\end{aligned}		\end{eqnarray}
 				for all  $u\in  H^{-s_o}_{0,n-q}(M)$ are equivalent.
 				\ep
 				\bpf
 				We only need to prove $\eqref{est:p38}\Longrightarrow\eqref{est:p39}$ since the reversed direction follows analogously. 
 				We define the local conjugate-linear duality map $F^{n-q} : C_{0,n-q}
 				^\infty(M)\to C_{0,q}^\infty(M)$ as follows. If $u=\underset{|J|=n-q}{\sum'} u_J\bom_J$ then 
 				$$F^{n-q}u=\sum \epsilon^{\{J ,J'\}}_{\{1,..., n\}} \bar u_J\bom_{J'} , $$
 				where $J'$ denotes the strictly increasing $q$-tuple consisting of all integers in $[1,n]$ which do not belong to $J$ and $\epsilon^{J,J'}_{\{1,n\}}$ is the sign of the permutation $\{J, J'\}\sim \{1,\dots, n\}$. \\
 				
 				By this definition, $F^{q}F^{n-q}u=u$, $\no{e^{ks\sigma}F^{n-q}u}=\no{e^{ks\sigma}u}$ and furthermore 
 				$$\begin{cases} 
 				\dib_b F^{n-q} u=F^{n-q-1}\left(\dib^*_b u +\cdots\right),\\\ 
 				\dib^*_b F^{n-q} u=F^{n-q+1}\left(\dib_b u +\cdots\right),\\
 				\di_b\sigma \lrcorner F^{n-q}u=F^{n-q+1}(\dib_b\sigma\we u),
 				\end{cases}$$
 				for any $(0,n-q)$-form $u$ supported in $U$, where dots refers the term in which $u$ is not differentiated. It follows
 				\begin{eqnarray}\label{a1} 
 				\begin{cases}\no{e^{ks\sigma}\dib_b F^{n-q}u}^2+\no{e^{ks\sigma}\dib_b^* F^{n-q} u}^2\le 2(\no{e^{ks\sigma}\dib_b  u}^2+\no{e^{ks\sigma}\dib_b^*  u}^2)+C\no{e^{ks\sigma}u}^2\\ 
 				\no{e^{ks\sigma}F^{n-q}u}+k^2\no{e^{ks\sigma}\di_b\sigma \lrcorner F^{n-q}u}=\no{e^{ks\sigma}u}+\no{e^{ks\sigma}\dib_b\sigma\we u}^2.
 				
 				\end{cases}
 				\end{eqnarray}
 				On the other hand for each coefficient $u^-_{k,J}$ of $u^-_{\zeta,k}$. we have
 				\begin{eqnarray}
 				\begin{aligned}
 				\overline{u_{k,J}^-(x)}=&\overline{(\zeta\Gamma_k^-\Psi_k^-\zeta u_J)(x)}\\
 				=&(2\pi)^{2n+1}\zeta(x)\overline{\int_{\R^{2n+1}_\xi}e^{ix\xi}\gamma_k(\xi) \psi^-(\xi) \int_{\R^{2n+1}_y}e^{-iy\xi}(\zeta u_J)(y)dyd\xi}\\
 				=&(2\pi)^{2n+1}\zeta(x)\int_{\R^{2n+1}_\xi}e^{-ix\xi}\gamma(-e^{k}\xi_{2n+1}) \psi(-\xi) \int_{\R^{2n+1}_y}e^{iy\xi}\overline{(\zeta u_J)(y)}dyd\xi\\
 				\overset{\xi:=-\xi}=&-(-2\pi)^{2n+1}\zeta(x)\int_{\R^{2n+1}_\xi}e^{ix\xi} \gamma(e^k\xi_{2n+1})\psi(\xi) \int_{\R^{2n+1}_y}e^{-iy\xi}\zeta(y)\overline{u_J(y)}dyd\xi\\
 				=&-(-2\pi)^{2n+1}\zeta(x)\int_{\R^{2n+1}_\xi}e^{ix\xi} \gamma_k(\xi_{2n+1})\psi^+(\xi) \int_{\R^{2n+1}_y}e^{-iy\xi}(\zeta\bar{u}_J)(y)dyd\xi\\
 				=&-(\zeta\Gamma_k^+\Psi^+\zeta \bar v)(x)
 				=-\bar {u}^+_{k,J}(x),
 				\end{aligned}
 				\end{eqnarray}
 				and hence $F^{n-q}(u^-_{\zeta,k})=-(\overline{F^{n-q}u})^+_k$ is a positive microlocalized $(0,q)$-form supported on $U$. So we apply \eqref{est:p38} for $F^{n-q}(u^-_{\zeta,k})$ together with \eqref{a1} for $u$ replaced by $u^-_{\zeta,k}$, we get \eqref{est:p39}. The proof of Proposition \ref{p3.7} is complete.
 		
 				\epf
 				
 				Now we are ready to get the complete the proof of Theorem~\ref{t41}.
 				\begin{proof}[End proof of Theorem~\ref{t41}]
 					Using the estimate \eqref{est:t31a} for $k=1,2,\dots$ with the choice of $r$ such that $t^{-r}\le g(t)$ for all $t$, we multiply by $g^2(e^k)$ and sum over $k$ to establish
 					
 					\begin{eqnarray}\label{est:dib+dib*}
 					\begin{aligned}
 					&	\no{u}^2_{gf\T-{\mathcal A}^{+,s\sigma}_\zeta}+\no{u}^2_{gf_0\ln\T-\di_b\sigma\lrcorner {\mathcal A}^{+,s\sigma}_\zeta}\\
 					=&\sum_{k=1}^ng^2(e^k)\Big(	f^2(e^k)\no{e^{ks\sigma}u^+_{\zeta,k}}^2_{L^2}+f_0^2(e^k)k^2\no{\di\sigma\lrcorner e^{ks\sigma}u^+_{\zeta,k}}^2_{L^2}\Big)	\\
 					\le &c\sum_{k=1}^ng^2(e^k)\Big(	\no{e^{ks\sigma}  \dib_bu^+_{\zeta,k}}^2_{L^2}+\no{e^{ks\sigma}\dib_b^* u^+_{\zeta,k}}^2_{L^2}+c_{s,g}e^{-k}\no{g(\La)\zeta u}^2_{L^2}\Big)\\
 					\le &c\sum_{k=1}^ng^2(e^k)\Big(	\no{e^{ks\sigma}( \dib_b  u)^+_{\zeta,k}}^2_{L^2}+\no{e^{ks\sigma} ( \dib_b^*  u)^+_{\zeta,k}}^2_{L^2}\\
 					&	+\no{e^{ks\sigma}[\zeta\Gamma_k\Psi^+\zeta,\dib_b] u}_{L^2}^2+\no{e^{ks\sigma}[\zeta\Gamma_k\Psi^+\zeta,\dib_b^*] u}_{L^2}^2
 					+c_{s,g}e^{-k}\no{g(\La)\zeta u}^2_{L^2}\Big)\\
 					\le& c\left( \no{\dib_bu}^2_{g\T-{\mathcal A}^{+,s\sigma}_\zeta}+\no{\dib_b^*u}^2_{g\T-{\mathcal A}^{+,s\sigma}_\zeta}\right)+c_{s,g}\left(\no{u}^2_{g\T-{\mathcal A}^{+,s\sigma}_\zeta}+\no{u}^2_{g\T-{\mathcal E}_{\zeta,\tilde\zeta}^{s,0}}\right),
 					\end{aligned}
 					\end{eqnarray}
 					where the last inequality follows by Lemma~\ref{com1}.	
 					Since $f\gg 1$, the term $c_{s,g}\no{u}^2_{g\T-{\mathcal A}^{+,s\sigma}_\zeta}$ can be absorbed by $\no{u}^2_{gf\T-{\mathcal A}^{+,s\sigma}_\zeta}$ in the first line of \eqref{est:dib+dib*}. Then the desired estimate \eqref{est:t31a} follows. The negative microlocal estimate \eqref{est:t31b} is proved analogously.
 				\end{proof}	 	
 			
 			We also have a proof of Theorem~\ref{t0} as a consequence of Remark~\ref{3r3} and Propositions~\ref{p31} and \ref{p3.7} as follows.
 			\begin{proof}[Proof of Theorem~\ref{t0}]By Remark~\ref{3r3} the $(f\T-\M)$ property holds on $(0,q)$-forms with $q\ge q_0$. Similar to the argument in Proposition~\ref{p31} but with simpler calculation by the choice of weight 
			$\lambda=\chi(\lambda_{e^k})$, we get the $(f\T-\M)$-estimate for each $u_{\zeta,k}^+$ as 
 				\begin{equation}
 				\label{u+zeta}
 				f(e^k)^2(\M\lrcorner u^+_{\zeta,k},u^+_{\zeta,k})_{L^2}\lesssim \no{\dib_b u^+_{\zeta,k}}^2_{L^2}+\no{\dib_b^*u^+_{\zeta,k}}^2_{L^2}+\no{u^+_{\zeta,k}}^2_{L^2}+e^{-k}\no{\zeta u}^2_{H^{-1}}.
 				\end{equation}
 				for $u\in C^\infty_{0,q}(M)$.
 				In order to get the estimate for full $u^+_\zeta:=\zeta\Psi^+\zeta u$, we first decompose $u^+_{\zeta}=\sum_{k=0}^\infty u_{\zeta,k}^+$ and hence 
 			\begin{equation} 		\begin{aligned}
 			(\M\lrcorner f(\La)u^+_{\zeta},f(\La)u^+_{\zeta})_{L^2}=	\sum_{k,\hat k}(\M\lrcorner f(\La)u^+_{\zeta,k},f(\La)u^+_{\zeta,\hat k})_{L^2}.
 			\end{aligned}\end{equation} We first consider the case $|k-\hat k|\ge3$. In this case we have $\supp(\gamma_k)\cap \supp(\gamma_{\hat k})=\emptyset$  and hence, by the argument in Lemma~\ref{l1} below we can get 
 				$$\left|(\M\lrcorner f(\La)u^+_{\zeta,k},f(\La)u^+_{\zeta,\hat k})_{L^2}\right|\lesssim e^{-(k+\hat k)}\no{\zeta u}^2_{H^{-1}}\quad\T{for all }|k-\hat k|\ge 3.$$
 				Otherwise for $|k-\hat k|\le 3$, we use the Cauchy-Schwarz inequality since the Hermitian form $(\M\lrcorner \cdot,\cdot)$ is nonnegative. Therefore we obtain 
 					\begin{equation} 		\label{fM1}		\begin{aligned}
 					(\M\lrcorner f(\La)u^+_{\zeta},f(\La)u^+_{\zeta})_{L^2}\lesssim& 	\sum_{k=1}^\infty\Big((\M\lrcorner f(\La)u^+_{\zeta,k},f(\La)u^+_{\zeta,k})_{L^2}+e^{-k}\no{\zeta u}^2_{H^{-1}}\Big)\\
 				\underset{\T{G{\aa}rding's inequality}}{	\lesssim}& \sum_{k=1}^\infty\Big( f^2(e^k)(\M\lrcorner u^+_{\zeta,k},u^+_{\zeta,k})_{L^2}+\no{u^+_{\zeta,k}}^2_{L^2}+e^{-k}\no{\zeta u}^2_{H^{-1}}\Big)\\
 					\underset{\T{by \eqref{u+zeta}}}{\lesssim}& \sum_{k=1}^\infty\Big(\no{\dib_b u^+_{\zeta,k}}^2_{L^2}+\no{\dib_b^*u^+_{\zeta,k}}^2_{L^2}+\no{u^+_{\zeta,k}}^2_{L^2}+e^{-k}\no{\zeta u}^2_{-1}\Big)\\
 						\underset{\T{by Lemma~\ref{com1}}}{\lesssim}& \no{\dib_b u^+_{\zeta}}^2_{L^2}+\no{\dib_b^*u^+_{\zeta}}^2_{L^2}+\no{\tilde \zeta u}^2_{L^2}
 					\end{aligned}\end{equation}			
 				for all $u\in C^\infty_{0,q}(M)$, where $\tilde \zeta\succ\zeta$. By \cite[Theorem 5]{Kha16b} (or analogous proof of Proposition~\ref{p3.7}), \eqref{fM1} is equivalent to 
 				\begin{equation} 			\begin{aligned}
 			\no{\sqrt{\Tr(\M)}\times f(\La)u^-_{\zeta}}^2_{L^2}-(\M\lrcorner f(\La)u^-_{\zeta},f(\La)u^-_{\zeta})_{L^2}\lesssim&  \no{\dib_b u^-_{\zeta}}^2_{L^2}+\no{\dib_b^*u^-_{\zeta}}^2_{L^2}+\no{\tilde \zeta u}^2_{L^2}.
 				\end{aligned}\end{equation}	
 				for all $u\in C^{\infty}_{0,q}(M)$ with $q\le n-q_0$.			
 				This is complete the proof of Theorem~\ref{t0}.
 			\end{proof}	
 				\subsection{A-priori estimates for $\Box_b$}\label{3.2}

 			We need  technique lemmas before going estimates for $\Box_b$ in our norms.
 				\begin{lemma}\label{l41}
 					Let $a\ge 0$ and $\eps>0$.  Then, there exists $c_\eps$ such that the following holds.  	
 					\begin{enumerate}
 						\item[(i)] 	for $v\in \left(g\T-{\mathcal A}_{\zeta;0,q-1}^{+,s\sigma+a}\right)\cap\left(g\T-{\mathcal E}_{\zeta,\tilde\zeta;0,q-1}^{s,a}\right) $ and $w\in \left(g\T-{\mathcal A}_{\zeta;0,q}^{+,s\sigma+1-a}\right)\cap\left( g\T-{\mathcal E}_{\zeta,\tilde\zeta;0,q}^{s,-a}\right)$, we have 
 							\begin{eqnarray}\label{est:l42a}
 					\begin{aligned}
 					\left|\Re\left\{(v,\dib^*_bw)_{g\T-{\mathcal A}_\zeta^{+,s\sigma}}-(\dib_bv,w)_{g\T-{\mathcal A}_\zeta^{+,s\sigma}}\right\}\right|\le
 					\eps\left(\no{v}^2_{g\T-{\mathcal A}_\zeta^{+,s\sigma+a}}+\no{v}^2_{g\T-{\mathcal E}^{s,a}_{\zeta,\tilde{\zeta}}}\right)\\
 				+	c_\eps\left(\no{w}^2_{g\T-{\mathcal A}_\zeta^{+,s\sigma-a}}+\no{w}^2_{g\ln\T-\di_b\sigma\lrcorner {\mathcal A}_\zeta^{+,s\sigma-a}}+\no{w}^2_{g\T-{\mathcal E}^{s,-a}_{\zeta,\tilde \zeta}}\right)
 					\end{aligned}
 					\end{eqnarray}
 				\item[(ii)] For  $v\in \left(g\T-{\mathcal A}_{\zeta;0,q-1}^{-,s\sigma+a}\right)\cap\left(g\T-{\mathcal E}_{\zeta,\tilde\zeta;0,q-1}^{s,a}\right) $ and $w\in \left(g\T-{\mathcal A}_{\zeta;0,q}^{-,s\sigma+1-a}\right)\cap\left( g\T-{\mathcal E}_{\zeta,\tilde\zeta;0,q}^{s,-a}\right)$, we have
\begin{eqnarray}\label{est:l42b}
 					\begin{aligned}
 					\left|\Re\left\{(v,\dib_bw)_{g\T-{\mathcal A}_\zeta^{-,s\sigma}}-(\dib_b^*v,w)_{g\T-{\mathcal A}_\zeta^{-,s\sigma}}\right\}\right|\le 	\eps\left(\no{v}^2_{g\T-{\mathcal A}_\zeta^{-,s\sigma+a}}+\no{v}^2_{g\T-{\mathcal E}^{s,a}_{\zeta,\tilde{\zeta}}}\right)\\
 					+c_\eps\left(\no{w}^2_{g\T-{\mathcal A}_\zeta^{-,s\sigma-a}}+\no{w}^2_{g\ln\T-\dib_b\sigma\we {\mathcal A}_\zeta^{-,s\sigma-a}}+\no{w}^2_{g\T-{\mathcal E}^{s,-a}_{\zeta,\tilde{\zeta}}}\right).
 				\end{aligned}
 				\end{eqnarray}
 			
 					\end{enumerate}	
 				
 				\end{lemma}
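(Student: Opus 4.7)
The approach is to expand each weighted pairing as a sum over $k$ of $L^2$-inner products of atoms, use $L^2$-adjointness of $\dib_b$ and $\dib_b^*$ on each atom, and then account for two sources of discrepancy: the weight commutator $[\dib_b,e^{2ks\sigma}]=2ks\,e^{2ks\sigma}\dib_b\sigma\wedge\cdot$ and the microlocal commutators with $P_k^+:=\zeta\Gamma_k\Psi^+\zeta$. Writing $P_k^+\dib_b^*=\dib_b^*P_k^++[P_k^+,\dib_b^*]$ and $P_k^+\dib_b=\dib_b P_k^++[P_k^+,\dib_b]$, the atomic difference at level $k$ splits as a principal term plus two microlocal-commutator remainders, and it is the principal term that will generate the characteristic $\ln$ factor on the right-hand side of \eqref{est:l42a}.

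The principal term is computed directly by applying the Leibniz rule to $\dib_b$ acting on $e^{2ks\sigma}v^+_{\zeta,k}$, $L^2$-adjointness of $(\dib_b,\dib_b^*)$, and the wedge/contraction adjointness $(\dib_b\sigma\wedge v,w)_{L^2}=(v,\di_b\sigma\lrcorner w)_{L^2}$; this produces the identity
\[
(e^{2ks\sigma}v^+_{\zeta,k},\dib_b^* w^+_{\zeta,k})_{L^2}-(e^{2ks\sigma}\dib_b v^+_{\zeta,k},w^+_{\zeta,k})_{L^2}=2ks\,(e^{ks\sigma}v^+_{\zeta,k},e^{ks\sigma}\di_b\sigma\lrcorner w^+_{\zeta,k})_{L^2}.
\]
Inserting the weight split $1=e^{ka}\cdot e^{-ka}$ and applying Cauchy--Schwarz bounds the absolute value of this expression by $\eps\,e^{2ka}\no{e^{ks\sigma}v^+_{\zeta,k}}^2_{L^2}+c_\eps\,k^2\,e^{-2ka}\no{e^{ks\sigma}\di_b\sigma\lrcorner w^+_{\zeta,k}}^2_{L^2}$. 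Multiplying by $g^2(e^k)$ and summing in $k$, the first piece assembles to $\eps\no{v}^2_{g\T-\mathcal A^{+,s\sigma+a}_\zeta}$; using $k=\ln(e^k)$ so $k^2g^2(e^k)=(g\ln)^2(e^k)$, the second becomes $c_\eps\no{w}^2_{g\ln\T-\di_b\sigma\lrcorner\mathcal A^{+,s\sigma-a}_\zeta}$, which are precisely the two principal terms on the right-hand side of \eqref{est:l42a}.

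For the two microlocal-commutator remainders $(e^{2ks\sigma}v^+_{\zeta,k},[P_k^+,\dib_b^*]w)_{L^2}$ and $(e^{2ks\sigma}[P_k^+,\dib_b]v,w^+_{\zeta,k})_{L^2}$, I apply the same $e^{\pm ka}$-Cauchy--Schwarz split and then invoke Lemma~\ref{com1}, together with its Appendix precursor Lemma~\ref{l2}, to bound the weighted $L^2$-norm of the commutators (after summing against $g^2(e^k)$) by a combination of the $\mathcal A$-norm at the conjugate weight and the error norm $\no{\cdot}^2_{g\T-\mathcal E^{s,0}_{\zeta,\tilde\zeta}}$. Distributing the factors $e^{\pm ka}$ places these contributions into $\eps\no{v}^2_{g\T-\mathcal E^{s,a}_{\zeta,\tilde\zeta}}$ on the $v$-side and $c_\eps\bigl(\no{w}^2_{g\T-\mathcal A^{+,s\sigma-a}_\zeta}+\no{w}^2_{g\T-\mathcal E^{s,-a}_{\zeta,\tilde\zeta}}\bigr)$ on the $w$-side, which together with the principal estimate complete (i).

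Part (ii) is entirely analogous on the negative cone: the Leibniz rule for $\dib_b^*$ acting on a weighted $(0,q)$-atom now brings down a contraction $\di_b\sigma\lrcorner$, and dualizing via the adjointness produces a $\dib_b\sigma\wedge$ acting on $w^-_{\zeta,k}$ --- the quantity measured by $\no{w}_{g\ln\T-\dib_b\sigma\wedge\mathcal A^{-,s\sigma-a}_\zeta}$ --- so the same split yields \eqref{est:l42b}. I expect the main obstacle to be bookkeeping rather than analysis: to make the $e^{\pm ka}$ split land each term into the precise error norm stipulated in the statement, one must carefully track how the microlocal-commutator estimates of Lemmas~\ref{com1} and~\ref{l2} interact with the $a$-shift in the weight. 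Once the special-norm hierarchy of Section~\ref{s2.3} is available, the only real ingredient beyond notation is the single $L^2$-integration by parts that produces the $\dib_b\sigma\wedge$ factor with the explicit coefficient $2ks$.
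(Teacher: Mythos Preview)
Your approach is correct and essentially identical to the paper's: both work atom-by-atom, decompose the difference at level $k$ into the weight-commutator term $(v^+_{\zeta,k},[e^{2ks\sigma},\dib_b^*]w^+_{\zeta,k})_{L^2}$ (which is your principal term $2ks(e^{ks\sigma}v^+_{\zeta,k},e^{ks\sigma}\di_b\sigma\lrcorner w^+_{\zeta,k})_{L^2}$) plus the two microlocal commutators with $P_k^+=\zeta\Gamma_k\Psi^+\zeta$, apply Cauchy--Schwarz with the $e^{\pm ka}$ split, and then sum in $k$ and invoke Lemma~\ref{com1}. The only slip is a bookkeeping one: when you say the $v$-side commutator lands only in $\eps\no{v}^2_{g\text{-}\mathcal E^{s,a}_{\zeta,\tilde\zeta}}$, Lemma~\ref{com1} actually produces both $\no{v}^2_{g\text{-}\mathcal A^{+,s\sigma+a}_\zeta}$ and $\no{v}^2_{g\text{-}\mathcal E^{s,a}_{\zeta,\tilde\zeta}}$, but since both already appear on the right of \eqref{est:l42a} this changes nothing.
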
	
 				\begin{proof}		
 					
 					We first work on each atom of the inner product $(\cdot,\cdot)_{g\T-{\mathcal A}^{+,s\sigma}_\zeta}$,
 					\begin{eqnarray}
 					\label{est:dib*}\begin{aligned}
 				&\left|	(e^{2ks\sigma}v^+_{\zeta,k},(\dib_b^*w)^+_{\zeta,+})_{L^2}
 					-(e^{2ks\sigma}( \dib_bv)^+_{\zeta,k},w^+_{\zeta,k})_{L^2}\right|\\
 					=&\left|(e^{2ks\sigma}[\dib_b,\zeta\Gamma_k\Psi^+\zeta ]v,w_{\zeta,k}^+)_{L^2}
 					+(v_{\zeta,k}^+,[e^{2ks},\dib_b^*]   w_{\zeta,k}^+)_{L^2}+(e^{2ks\sigma} v_{\zeta,k}^+,[\zeta\Gamma_k\Psi^+\zeta ,\dib_b^*]w)_{L^2}\right|\\
 					\le&\eps\left(\no{e^{k(s\sigma+a)}[\dib_b,\zeta\Gamma_k\Psi^+\zeta ]v}^2_{L^2}+ \no{e^{k(s\sigma+a)} v^+_{\zeta,+}}^2_{L^2}\right)\\
 					&+c_\eps\left(\no{e^{k(s\sigma-a)} w^+_{\zeta,k}}^2_{L^2}+k^2\no{e^{k(s\sigma-a}\di_b\sigma\lrcorner w_{\zeta,k}^+}^2_{L^2}+\no{e^{k(s\sigma-a)}[\zeta\Gamma_k\Psi^+\zeta ,\dib_b^*]w}^2_{L^2}\right).
 					\end{aligned}\end{eqnarray}
 					Now multiply  both sides  \eqref{est:dib*} by $g^2(e^k)$ and sum over $k$. The desired estimate \eqref{est:l42a} follows by the estimates of commutator in Lemma~\ref{com1}. The desired estimate \eqref{est:l42b} is proved analogously.
 				\end{proof}		
 		
 		\begin{lemma}\label{l42}
 		 Suppose that  the $\sigma$-superlogarithmic property with the pair of rates $(f,f_0)$ holds on $(0,q)$-forms. Then the following holds:
 		 \begin{enumerate}
 		 	\item[(i)] For $q\ge q_0$, we have
 		 \[\begin{aligned}
 		 \no{u}^2_{gf\T-{\mathcal A}^{+,s\sigma}_\zeta}+&\no{u}^2_{gf_0\ln\T-\di_b\sigma\lrcorner {\mathcal A}^{+,s\sigma}_\zeta}+\no{\dib_b^*u}^2_{g\T-{\mathcal A}^{+,s\sigma}_\zeta}\\
 		 \lesssim&\Re\, (\dib_b\dib_b^* u,u)_{g\T-{\mathcal A}^{+,s\sigma}_\zeta}+\no{\dib_bu}^2_{g\T-{\mathcal A}^{+,s\sigma}_\zeta}+\no{u}^2_{g\T-{\mathcal E}^{s,1}_{\zeta,\tilde{\zeta}}},
 		 \end{aligned}\]
 		 for all $u\in \left(g\T-{\mathcal A}_{\zeta;0,q}^{+,s\sigma+1}\right)\cap\left(g\T-{\mathcal E}_{\zeta,\tilde\zeta;0,q}^{s,1}\right)$.
 		 
\item[(ii)] For $q\le n-q_0$, we have
 		 \[\begin{aligned}
 		 \no{u}^2_{gf\T-{\mathcal A}^{-,s\sigma}_\zeta}+&\no{u}^2_{gf_0\ln\T-\dib_b\sigma\we {\mathcal A}^{,s\sigma}_\zeta}+\no{\dib_bu}^2_{g\T-{\mathcal A}^{-,s\sigma}_\zeta}\\
 		 \lesssim&\Re\, (\dib_b^*\dib_b u,u)_{g\T-{\mathcal A}^{-,s\sigma}_\zeta}+\no{\dib_bu^*}^2_{g\T-{\mathcal A}^{-,s\sigma}_\zeta}+\no{u}^2_{g\T-{\mathcal E}_{\zeta,\tilde{\zeta}}^{s,1}},
 		 \end{aligned}\]
 		 for all $u\in\left(g\T-{\mathcal A}_{\zeta;0,q}^{-,s\sigma+1}\right)\cap\left(g\T-{\mathcal E}_{\zeta,\tilde\zeta;0,q}^{s,1}\right)$ with $q\le n-q_0$.
 		  		 \end{enumerate}
 		\end{lemma}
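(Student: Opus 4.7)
The plan is to derive the claimed $\Box_b$-type estimate by combining the system estimate for $(\dib_b,\dib_b^*)$ from Theorem~\ref{t41} with the integration-by-parts identity of Lemma~\ref{l41}, which is tailored precisely to convert $\no{\dib_b^*u}^2_{g\T-{\mathcal A}^{+,s\sigma}_\zeta}=\Re\,(\dib_b^*u,\dib_b^*u)_{g\T-{\mathcal A}^{+,s\sigma}_\zeta}$ into the duality pairing $\Re\,(\dib_b\dib_b^*u,u)_{g\T-{\mathcal A}^{+,s\sigma}_\zeta}$ modulo commutator errors. The real content of the argument lies not in this identity but in checking that the resulting errors fit under the left-hand side of Theorem~\ref{t41} through the gain $f_0\gg 1$ furnished by the $\sigma$-superlogarithmic property.

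For part (i), I would first invoke Theorem~\ref{t41}(i) on $u\in(g\T-{\mathcal A}^{+,s\sigma+1}_{\zeta;0,q})\cap(g\T-{\mathcal E}^{s,1}_{\zeta,\tilde\zeta;0,q})$ to obtain
\begin{equation*}
\no{u}^2_{gf\T-{\mathcal A}^{+,s\sigma}_\zeta}+\no{u}^2_{gf_0\ln\T-\di_b\sigma\lrcorner{\mathcal A}^{+,s\sigma}_\zeta}\lesssim\no{\dib_bu}^2_{g\T-{\mathcal A}^{+,s\sigma}_\zeta}+\no{\dib_b^*u}^2_{g\T-{\mathcal A}^{+,s\sigma}_\zeta}+\no{u}^2_{g\T-{\mathcal E}^{s,0}_{\zeta,\tilde\zeta}},
\end{equation*}
and then apply Lemma~\ref{l41}(i) with $v:=\dib_b^*u$, $w:=u$, and $a=0$; the necessary space memberships of $v$ follow from the corollary after Lemma~\ref{com1}. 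Since $(\dib_b^*u,\dib_b^*u)_{g\T-{\mathcal A}^{+,s\sigma}_\zeta}$ is real, the output is
\begin{equation*}
\no{\dib_b^*u}^2_{g\T-{\mathcal A}^{+,s\sigma}_\zeta}\le\Re\,(\dib_b\dib_b^*u,u)_{g\T-{\mathcal A}^{+,s\sigma}_\zeta}+\eps\,\no{\dib_b^*u}^2_{g\T-{\mathcal A}^{+,s\sigma}_\zeta}+c_\eps R(u),
\end{equation*}
where $R(u)=\no{u}^2_{g\T-{\mathcal A}^{+,s\sigma}_\zeta}+\no{u}^2_{g\ln\T-\di_b\sigma\lrcorner{\mathcal A}^{+,s\sigma}_\zeta}+\no{u}^2_{g\T-{\mathcal E}^{s,1}_{\zeta,\tilde\zeta}}$, after noting that the $\no{\dib_b^*u}^2_{g\T-{\mathcal E}^{s,0}_{\zeta,\tilde\zeta}}$ error produced by Lemma~\ref{l41} is controlled by $\no{u}^2_{g\T-{\mathcal E}^{s,1}_{\zeta,\tilde\zeta}}$ because $\dib_b^*$ is first-order. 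Choosing $\eps$ small absorbs the $\eps$-term on the left, and summing with the previous bound gives the desired inequality modulo the two surplus terms $\no{u}^2_{g\T-{\mathcal A}^{+,s\sigma}_\zeta}$ and $\no{u}^2_{g\ln\T-\di_b\sigma\lrcorner{\mathcal A}^{+,s\sigma}_\zeta}$ on the right.

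These surplus terms are disposed of using $f,f_0\gg 1$: fixing $k_0$ with $f(e^k)^2\ge 2c_\eps$ and $f_0(e^k)^2\ge 2c_\eps$ for $k\ge k_0$, the tails $k\ge k_0$ of both surplus norms are absorbed into the corresponding left-hand norms, while the finite head $1\le k<k_0$ is dominated by a constant times $\no{g(\La)\tilde\zeta u}^2_{L^2}\le\no{u}^2_{g\T-{\mathcal E}^{s,1}_{\zeta,\tilde\zeta}}$. Part (ii) follows by an identical recipe applied to the negative-microlocal analogues: Theorem~\ref{t41}(ii) and Lemma~\ref{l41}(ii) with $v:=\dib_bu$, $w:=u$, $a=0$, trading $\no{\dib_bu}^2$ against $\Re\,(\dib_b^*\dib_bu,u)$. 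The principal obstacle is structurally strict rather than computationally hard: one must verify that the commutators produced when converting $L^2$ adjoints into the weighted microlocal pairings generate errors no stronger than the tangential Levi quantity $\no{u}^2_{g\ln\T-\di_b\sigma\lrcorner{\mathcal A}^{+,s\sigma}_\zeta}$ already present in Theorem~\ref{t41}. This is exactly what Lemma~\ref{l41} with $a=0$ delivers and is why $f_0\gg 1$—rather than a mere $f_0\ge 1$—is indispensable for closing the absorption step.
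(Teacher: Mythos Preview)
Your proof is correct and follows essentially the same approach as the paper: apply Lemma~\ref{l41}(i) with $v=\dib_b^*u$, $w=u$, $a=0$ to trade $\no{\dib_b^*u}^2_{g\T-{\mathcal A}^{+,s\sigma}_\zeta}$ for $\Re\,(\dib_b\dib_b^*u,u)_{g\T-{\mathcal A}^{+,s\sigma}_\zeta}$ plus error, absorb the $\eps$-term, add $\no{\dib_bu}^2_{g\T-{\mathcal A}^{+,s\sigma}_\zeta}$ and invoke Theorem~\ref{t41}, then use $f,f_0\gg 1$ to swallow the surplus $\no{u}^2_{g\T-{\mathcal A}^{+,s\sigma}_\zeta}+\no{u}^2_{g\ln\T-\di_b\sigma\lrcorner{\mathcal A}^{+,s\sigma}_\zeta}$ into the left-hand side while the error-norm terms merge into $\no{u}^2_{g\T-{\mathcal E}^{s,1}_{\zeta,\tilde\zeta}}$. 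The paper's only cosmetic difference is the order of presentation (it starts from Lemma~\ref{l41} and then brings in Theorem~\ref{t41}), but the logical content is identical.
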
		
 				\begin{proof}
 					 Using Lemma~\ref{l41} for $v=\dib^*u$ and $w=u$ with the choice $a=0$, we get	 
 					 \begin{eqnarray}\label{new41a}
 					 \begin{aligned}
 					 \no{\dib_b^*u}^2_{g\T-{\mathcal A}^{+,s\sigma}_\zeta}=&\Re\left(\dib_b^*u,\dib_b^*u\right)_{g\T-{\mathcal A}^{+,s\sigma}_\zeta}\\
 					 \le&\Re\left(\dib_b\dib_b^*u,u\right)_{g\T-{\mathcal A}^{+,s\sigma}_\zeta}+\left|\Re\left(\dib_b^*u,\dib_b^*u\right)_{g\T-{\mathcal A}^{+,s\sigma}_\zeta}-\Re\left(\dib_b\dib_b^*u,u\right)_{g\T-{\mathcal A}^{+,s\sigma}_\zeta}\right|\\
 					 \le&\Re\left(\dib_b\dib_b^*u,u\right)_{g\T-{\mathcal A}^{+,s\sigma}_\zeta}+\eps\left(\no{\dib^*_bu}^2_{g\T-{\mathcal A}^{+,s\sigma}_\zeta}+\no{\dib^*_bu}^2_{g\T-{\mathcal E}^{s,0}_{\zeta,\tilde\zeta}}\right)\\
 					 &+c_\eps\left(\no{u}^2_{g\T-{\mathcal A}^{+,s\sigma}_\zeta}+\no{u}^2_{g\ln\T-\di_b\sigma\lrcorner {\mathcal A}^{+,s\sigma}_\zeta}+\no{u}^2_{g\T-{\mathcal E}^{s,0}_{\zeta,\tilde\zeta}}
 					 \right).
 					 \end{aligned}
 					 \end{eqnarray}
 					First, we see that for small $\eps>0$, the term $\no{\dib^*_bu}^2_{g\T-{\mathcal A}^{+,s\sigma}_\zeta}$ in the third line of \eqref{new41a} is absorbed. Then, adding $\no{\dib_bu}^2_{g\T-{\mathcal A}^{+,s\sigma}_\zeta}$ to both sides 
					of \eqref{new41a} and applying Theorem~\ref{t41}, we absorb $c_\eps\left(\no{u}^2_{g\T-{\mathcal A}^{+,s\sigma}_\zeta}+\no{u}^2_{g\ln\T-\di_b\sigma\lrcorner {\mathcal A}^{+,s\sigma}_\zeta}\right)$ by  $\no{u}^2_{gf\T-{\mathcal A}^{+,s\sigma}_\zeta}+\no{u}^2_{gf_0\ln\T-\di_b\sigma\lrcorner {\mathcal A}^{+,s\sigma}_\zeta}$ since $f, f_0\gg1$. Therefore, the remainder terms,  after absorbing,  are $\Re\, (\dib_b\dib_b^* u,u)_{g\T-{\mathcal A}^{+,s\sigma}_\zeta}+\no{\dib_bu}^2_{g\T-{\mathcal A}^{+,s\sigma}_\zeta}$ and terms in the error norm $\no{\cdot}^2_{g\T-{\mathcal E}^{s,0}_{\zeta,\tilde\zeta}}$. Finally, we notice that all terms in the error norm are bounded by $\no{u}^2_{g\T-{\mathcal E}^{s,1}_{\zeta,\tilde{\zeta}}}$. This proves the first estimate in Lemma~\ref{l41}. The proof of the second follows analogously.
 				\end{proof}
 			The main theorem in this subsection is the {\it a-priori} estimates for $\Box_b$ on $\pm$-microlocalizations of $(0,q)$-forms in the norm $\no{\cdot}_{g\T-{\mathcal A}^{\pm,s\sigma}_{\zeta}}$.
 				\begin{theorem}\label{t42} Suppose that  the $\sigma$-superlogarithmic property on $(0,q_o)$-forms with the pair of rates $(f,f_0)$ holds. Then, for $s,r\in \R^+$ with $r\ge s_o+2$, the following holds.
 				\begin{enumerate}
 					\item[(i)] 
 					For  $u\in {\mathcal A}^{+,s\sigma+2-r}_{\zeta;0,q}\cap {\mathcal E}^{s,2-r}_{\zeta,\tilde\zeta;0,q}$ with $q\ge q_0$, we have
 					\[\begin{aligned}
 					\no{u}^2_{f^2\T-{\mathcal A}^{+,s\sigma-r}_\zeta}+\no{\dib_bu}^2_{f\T-{\mathcal A}^{+,s\sigma-r}_\zeta}+\no{\dib_b^*u}^2_{f\T-{\mathcal A}^{+,s\sigma-r}_\zeta}
 					+&\no{\dib_b^*\dib_bu}^2_{{\mathcal A}^{+,s\sigma-r}_\zeta}+\no{\dib_b\dib_b^*u}^2_{{\mathcal A}^{+,s\sigma-r}_\zeta}\\
 				\no{u}^2_{ff_0\ln\T-\di_b\sigma\lrcorner {\mathcal A}^{+,s\sigma-r}_\zeta}+\no{\dib_bu}^2_{f_0\ln\T-\di_b\sigma\lrcorner {\mathcal A}^{+,s\sigma-r}_\zeta}	\lesssim&\no{\Box_b u}^2_{{\mathcal A}^{+,s\sigma-r}_\zeta}+\no{u}^2_{{\mathcal E}^{s,2-r}_{\zeta,\tilde{\zeta}}}.
 					\end{aligned}\]
 					\item[(ii)] For $u\in {\mathcal A}^{-,s\sigma+2-r}_{\zeta;0,q}\cap {\mathcal E}^{s,2-r}_{\zeta,\tilde\zeta;0,q}$ with $q\le n-q_0$, we have
 					\[\begin{aligned}
 					\no{u}^2_{f^2\T-{\mathcal A}^{-,s\sigma-r}_\zeta}+\no{\dib_bu}^2_{f\T-{\mathcal A}^{-,s\sigma-r}_\zeta}+\no{\dib_b^*u}^2_{f\T-{\mathcal A}^{-,s\sigma-r}_\zeta}
 					+&\no{\dib_b^*\dib_bu}^2_{{\mathcal A}^{-,s\sigma-r}_\zeta}+\no{\dib_b\dib_b^*u}^2_{{\mathcal A}^{-,s\sigma-r}_\zeta}\\
 					\no{u}^2_{ff_0\ln\T-\dib_b\sigma\we {\mathcal A}^{-,s\sigma-r}_\zeta}+\no{\dib_b^*u}^2_{f_0\ln\T-\dib_b\sigma\we {\mathcal A}^{-,s\sigma-r}_\zeta}	\lesssim&\no{\Box_b u}^2_{{\mathcal A}^{-,s\sigma-r}_\zeta}+\no{u}^2_{{\mathcal E}^{s,2-r}_{\zeta,\tilde\zeta }}.
 					\end{aligned}\]
 			\end{enumerate}	\end{theorem}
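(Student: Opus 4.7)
The plan is to bootstrap the $(\dib_b,\dib_b^*)$-estimates of Lemma~\ref{l42} into estimates for $\Box_b$ by invoking Lemma~\ref{l42} twice---once on $u$ and once on $\dib_b u$---and combining them through the identity $\Box_b=\dib_b\dib_b^*+\dib_b^*\dib_b$, the commutation $[\Box_b,\dib_b]=0$, and the relation $\dib_b^2=0$. I describe only the positive microlocal case (i); case (ii) goes through line by line using the negative part of Lemma~\ref{l42} and the symmetric substitutions $\dib_b\dib_b^*\leftrightarrow\dib_b^*\dib_b$ and $\di_b\sigma\lrcorner\leftrightarrow\dib_b\sigma\we$.

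\textbf{Execution.} First, apply Lemma~\ref{l42}(i) to $u$ with $g=f\cdot t^{-r}$; using the convention $g\T-{\mathcal A}^{+,s\sigma}_\zeta=\tilde g\T-{\mathcal A}^{+,s\sigma+a}_\zeta$ when $g(t)=t^a\tilde g(t)$, the factor $t^{-r}$ is absorbed into the weight and we obtain bounds on $\no{u}^2_{f^2\T-{\mathcal A}^{+,s\sigma-r}_\zeta}$, $\no{u}^2_{ff_0\ln\T-\di_b\sigma\lrcorner {\mathcal A}^{+,s\sigma-r}_\zeta}$ and $\no{\dib_b^* u}^2_{f\T-{\mathcal A}^{+,s\sigma-r}_\zeta}$ in terms of
\begin{equation*}
\Re(\dib_b\dib_b^* u,u)_{f\T-{\mathcal A}^{+,s\sigma-r}_\zeta}+\no{\dib_b u}^2_{f\T-{\mathcal A}^{+,s\sigma-r}_\zeta}+\no{u}^2_{\mathcal E^{s,2-r}_{\zeta,\tilde\zeta}}.
\end{equation*}
Second, apply Lemma~\ref{l42}(i) to the $(0,q+1)$-form $\dib_b u$ with $g=t^{-r}$; since $\dib_b^2 u=0$ and $\dib_b\dib_b^*\dib_b u=\Box_b\dib_b u=\dib_b\Box_b u$, this bounds $\no{\dib_b u}^2_{f\T-{\mathcal A}^{+,s\sigma-r}_\zeta}+\no{\dib_b u}^2_{f_0\ln\T-\di_b\sigma\lrcorner {\mathcal A}^{+,s\sigma-r}_\zeta}+\no{\dib_b^*\dib_b u}^2_{{\mathcal A}^{+,s\sigma-r}_\zeta}$ by $\Re(\dib_b\Box_b u,\dib_b u)_{{\mathcal A}^{+,s\sigma-r}_\zeta}$ plus errors; Lemma~\ref{l41} (with $v=\Box_b u$, $w=\dib_b u$, $a=0$) rewrites the inner product as $\Re(\Box_b u,\dib_b^*\dib_b u)_{{\mathcal A}^{+,s\sigma-r}_\zeta}$ modulo a tolerance of the shape $\epsilon\no{\Box_b u}^2_{{\mathcal A}^{+,s\sigma-r}_\zeta}+c_\epsilon(\no{\dib_b u}^2_{{\mathcal A}^{+,s\sigma-r}_\zeta}+\no{\dib_b u}^2_{\ln\T-\di_b\sigma\lrcorner {\mathcal A}^{+,s\sigma-r}_\zeta})$, and a Cauchy--Schwarz absorbs $\no{\dib_b^*\dib_b u}^2_{{\mathcal A}^{+,s\sigma-r}_\zeta}$ on the left while the two unweighted $\dib_b u$-terms are swallowed by their $f$- and $f_0\ln$-weighted analogues since $f,f_0\gg 1$. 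Third, feed the resulting estimate on $\no{\dib_b u}^2_{f\T-{\mathcal A}^{+,s\sigma-r}_\zeta}$ back into Step~1 and split the remaining term $\Re(\dib_b\dib_b^* u,u)_{f\T-{\mathcal A}^{+,s\sigma-r}_\zeta}$ by Cauchy--Schwarz with $g_1=t^{-r}$, $g_2=t^{-r}f^2$ (so $g_1g_2=g^2$) to obtain $\epsilon\no{u}^2_{f^2\T-{\mathcal A}^{+,s\sigma-r}_\zeta}+c_\epsilon\no{\dib_b\dib_b^* u}^2_{{\mathcal A}^{+,s\sigma-r}_\zeta}$, the first summand absorbed. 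Fourth, bound the leftover $\no{\dib_b\dib_b^* u}^2_{{\mathcal A}^{+,s\sigma-r}_\zeta}$ via
\begin{equation*}
\no{\dib_b\dib_b^* u}^2_{{\mathcal A}^{+,s\sigma-r}_\zeta}=(\dib_b\dib_b^* u,\Box_b u)_{{\mathcal A}^{+,s\sigma-r}_\zeta}-(\dib_b\dib_b^* u,\dib_b^*\dib_b u)_{{\mathcal A}^{+,s\sigma-r}_\zeta};
\end{equation*}
Cauchy--Schwarz handles the first term, while for the cross term Lemma~\ref{l41} with $v=\dib_b\dib_b^* u$ (so that $\dib_b v=0$) and $w=\dib_b u$ delivers a bound of the form $\epsilon\no{\dib_b\dib_b^* u}^2_{{\mathcal A}^{+,s\sigma-r}_\zeta}+c_\epsilon(\no{\dib_b u}^2_{{\mathcal A}^{+,s\sigma-r}_\zeta}+\no{\dib_b u}^2_{\ln\T-\di_b\sigma\lrcorner {\mathcal A}^{+,s\sigma-r}_\zeta})$ plus errors, all controlled by Step~2 or absorbed.

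\textbf{Main obstacle.} The difficulty is not the algebraic identities but the bookkeeping of error terms: each invocation of Lemmas~\ref{l41}--\ref{l42} produces a norm of the form $\no{\cdot}^2_{g\T-\mathcal E^{s,b}_{\zeta,\tilde\zeta}}$ with $g\in\{t^{-r},ft^{-r},f^2t^{-r}\}$ and $b\in\{0,1,2\}$, and for the conclusion to close up each such term must be dominated by $\no{u}^2_{\mathcal E^{s,2-r}_{\zeta,\tilde\zeta}}$. This is precisely where the standing hypotheses $r\ge s_o+2$ and $f(t),f_0(t)\le t^{1/2}$ enter: together with the derivative-shift rules following Lemma~\ref{com1}, they guarantee the monotonicity $g(t)t^{s_o+2}\searrow$ and $g(t)t^r\nearrow$ for every intermediate $g$, so that every intermediate error norm is dominated by the target error norm, and the starting hypothesis $u\in \mathcal A^{+,s\sigma+2-r}_{\zeta;0,q}\cap \mathcal E^{s,2-r}_{\zeta,\tilde\zeta;0,q}$ is exactly what is needed to make every intermediate norm a priori finite. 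Once these verifications are made, assembling the four steps yields Theorem~\ref{t42}(i), and the same recipe with the symmetric substitutions described above gives~(ii).
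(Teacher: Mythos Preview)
Your proposal is correct and follows essentially the same route as the paper. The only organizational difference is that the paper applies Theorem~\ref{t41} (rather than Lemma~\ref{l42}) to $\dib_b u$ and then bounds $\no{\dib_b^*\dib_b u}^2+\no{\dib_b\dib_b^* u}^2$ simultaneously via the identity $\no{\dib_b^*\dib_b u}^2+\no{\dib_b\dib_b^* u}^2=\no{\Box_b u}^2-2\Re(\dib_b\dib_b^* u,\dib_b^*\dib_b u)$, whereas you obtain $\no{\dib_b^*\dib_b u}^2$ already in Step~2 and treat $\no{\dib_b\dib_b^* u}^2$ separately in Step~4; both routes close with the same Lemma~\ref{l41} estimate on the cross term $(\dib_b\dib_b^* u,\dib_b^*\dib_b u)$ using $\dib_b(\dib_b\dib_b^* u)=0$.
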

 			
 \begin{proof}
 		We only need to prove the positive microlocalization since the negative microlocalization follows analogously.
 	Combining the estimate \eqref{est:t41a} in  Theorem~\ref{t41} with $\dib_b u$ substituted in for ``u" and
	Lemma~\ref{l42}(i) with the choice $g(t)=t^{-r}f(t)$ for some $r\ge s_o+2$, we get 
 		\begin{eqnarray}\label{4a1}\begin{aligned}
\no{u}^2_{f^2\T-{\mathcal A}^{+,s\sigma-r}_\zeta}+&\no{\dib^*_bu}^2_{f\T-{\mathcal A}^{+,s\sigma-r}_\zeta}+ \no{\dib_bu}^2_{f\T-{\mathcal A}^{+,s\sigma-r}_\zeta}+\no{\dib_bu}^2_{f_0\ln\T-\di_b\sigma\lrcorner {\mathcal A}^{+,s\sigma-r}_\zeta}\\+&\no{u}^2_{ff_0\ln\T-\di_b\sigma\lrcorner {\mathcal A}^{+,s\sigma-r}_\zeta}+\no{\dib_bu}^2_{f_0\ln\T-\di_b\sigma\lrcorner {\mathcal A}^{+,s\sigma-r}_\zeta}\\
&\lesssim\Re\, (\dib_b\dib_b^* u,u)_{f\T-{\mathcal A}^{+,s\sigma-r}_\zeta}+\no{\dib^*_b\dib_bu}^2_{{\mathcal A}_\zeta^{+,s\sigma-r}}+\no{u}^2_{{\mathcal E}^{s,2-r}_{\zeta,\tilde\zeta }}.
 		 		\end{aligned}\end{eqnarray}
 		 		Using the Cauchy-Schwarz inequality, 
 		 		$$\left|\Re\, (\dib_b\dib_b^* u,u)_{f\T-{\mathcal A}^{+,s\sigma-r}_\zeta}\right|\le \eps \no{u}^2_{f^2\T-{\mathcal A}^{+,s\sigma-r}_\zeta}+c_\eps \no{\dib_b\dib_b^*u}^2_{{\mathcal A}_\zeta^{+,s\sigma-r}},$$
 		 		 and absorbing $\no{u}^2_{f^2\T-{\mathcal A}^{+,s\sigma-r}_\zeta}$, we only need to bound $\no{\dib^*_b\dib_bu}^2_{{\mathcal A}_\zeta^{+,s\sigma-r}}+\no{\dib_b\dib_b^*u}^2_{{\mathcal A}_\zeta^{+,s\sigma-r}}$.
 		By the definition of $\Box_b$, it is easy to see that
 		\begin{eqnarray}\label{4a2}\begin{aligned}
 		\no{\dib_b^*\dib_b u}^2_{{\mathcal A}_\zeta^{+,s\sigma-r}}+\no{\dib_b\dib_b^*u}^2_{{\mathcal A}_\zeta^{+,s\sigma-r}}
 		=\no{\Box_b u}^2_{{\mathcal A}_\zeta^{+,s\sigma-r}}-2\Re(\dib_b\dib_b^* u,\dib_b^*\dib_b u)_{{\mathcal A}_\zeta^{+,s\sigma-r}}
 		\end{aligned}\end{eqnarray}
 		For the last term, we apply again Lemma~\ref{l42}(i) for $v=\dib_b\dib_b^*u$ and $w=\dib_b u$ with the choice $g(t)=t^{-r}$ and $a=0$ , since $\dib_bv=\dib_b\dib_b\dib_b^*u=0$ it follows
 		\begin{eqnarray}\label{4a3}\begin{aligned}
 		\left|\Re(\dib_b\dib_b^* u,\dib_b^*\dib_b u)_{{\mathcal A}_\zeta^{+,s\sigma-r}}\right|
&\le
\eps\left(\no{\dib_b\dib_b^* u}^2_{{\mathcal A}_\zeta^{+,s\sigma-r}}+\no{\dib_b\dib_b^* u}^2_{{\mathcal E}^{s,-r}_{\zeta,\tilde\zeta }}\right)\\
&+c_\eps\left(\no{\dib_b u}^2_{{\mathcal A}_\zeta^{+,s\sigma-r}}+\no{\dib_b u}^2_{\ln\T-\di_b\sigma\lrcorner {\mathcal A}^{+,s\sigma-r}_\zeta}+\no{\dib_bu}^2_{{\mathcal E}^{s,-r}_{\zeta,\tilde\zeta }}\right).
\end{aligned}\end{eqnarray}	
 		From \eqref{4a1}, \eqref{4a2}, \eqref{4a3}, and all terms in the error norms are bounded by $\no{u}_{{\mathcal E}^{s,2-r}_{\zeta,\tilde{\zeta}}}$, we obtain the desired inequality. It completes the proof of Theorem~\ref{t42}. 
 		\end{proof}

 \subsection{A-priori estimates for $\Box_b^\delta$}\label{s3.3}
For $\delta>0$, we set $$\Box_b^\delta:=\Box_b+\delta\left(\Box_d+I\right),$$
 where $\Box_d:=d^*d+dd^*$ is the elliptic operator associated to the de Rham exterior derivative $d$. We will give more discussion on $\Box_b^\delta$ in
 the next section. By the ellipticity of $\Box_d$, we have the following equivalences
 \begin{eqnarray}\label{000}
 \begin{aligned}
 \no{u}^2_{{\mathcal A}_\zeta^{+,s\sigma+2-r}}+  \no{u}^2_{{\mathcal E}^{s,2-r}_{\zeta,\tilde\zeta }}\approx & \no{du}^2_{{\mathcal A}_\zeta^{+,s\sigma+1-r}}+\no{d^*u}^2_{{\mathcal A}_\zeta^{+,s\sigma+1-r}}+\no{u}^2_{{\mathcal A}_\zeta^{+,s\sigma+1-r}}+\no{u}^2_{{\mathcal E}^{s,2-r}_{\zeta,\tilde\zeta }}\\
 \approx& ((\Box_d+I)u,u)^2_{{\mathcal A}_\zeta^{+,s\sigma+1-r}}+\no{u}^2_{{\mathcal E}^{s,2-r}_{\zeta,\tilde\zeta }}\\
 \approx& \no{ ((\Box_d+I)u}^2_{{\mathcal A}_\zeta^{+,s\sigma-r}}+\no{u}^2_{{\mathcal E}^{s,2-r}_{\zeta,\tilde\zeta }}
 \end{aligned}
 \end{eqnarray}
 for any $u\in {\mathcal A}^{+,s\sigma+2-r}_{\zeta;0,q}\cap {\mathcal E}^{s,2-r}_{\zeta,\tilde\zeta;0,q}$ and $0\le q\le n$. The boundedness of $\Box_b$ by $\Box_b^\delta$ is given in the following lemma.
 		\begin{lemma}\label{l44} Suppose that the $\sigma$-superlogarithmic property on $(0,q_0)$-forms  holds.   Then the following holds uniformly in $\delta$.
 			 
 			\begin{enumerate}
 				\item[(i)] 
 				For $u\in {\mathcal A}^{+,s\sigma+2-r}_{\zeta;0,q}\cap {\mathcal E}^{s,2-r}_{\zeta,\tilde\zeta;0,q}$ with $q\ge q_0$, we have
 			$$\no{\Box_bu}^2_{{\mathcal A}_\zeta^{+,s\sigma-r}}\lesssim\no{\Box_b^\delta u}^2_{{\mathcal A}_\zeta^{+,s\sigma-r}}+\no{\dib_b u}^2_{ {\mathcal A}_\zeta^{+,s\sigma-r}}+\no{\dib_b u}^2_{\ln\T-\di_b\sigma\lrcorner {\mathcal A}_\zeta^{+,s\sigma-r}}+\no{u}^2_{{\mathcal E}^{s,2-r}_{\zeta,\tilde{\zeta}}}.$$
 				\item[ii.] 
 			For $u\in {\mathcal A}^{-,s\sigma+2-r}_{\zeta;0,q}\cap {\mathcal E}^{s,2-r}_{\zeta,\tilde\zeta;0,q}$ with $q\le n-q_0$, we have
 			$$\no{\Box_bu}^2_{{\mathcal A}_\zeta^{-,s\sigma-r}}\lesssim\no{\Box_b^\delta u}^2_{{\mathcal A}_\zeta^{-,s\sigma-r}}+\no{\dib_b ^*u}^2_{ {\mathcal A}_\zeta^{-,s\sigma-r}}+\no{\dib_b^* u}^2_{\ln\T-\dib_b\sigma\we {\mathcal A}_\zeta^{-,s\sigma-r}}+\no{u}^2_{{\mathcal E}^{s,2-r}_{\zeta,\tilde{\zeta}}}.$$
 		\end{enumerate}
 		\end{lemma}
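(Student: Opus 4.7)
The plan is to prove (i); part (ii) follows by the symmetric argument, using Lemma~\ref{l41}(ii) and the negative microlocalizations in place of positive ones. Substituting $\Box_b u = \Box_b^\delta u - \delta(\Box_d+I)u$ into one copy of $\Box_b u$ inside the weighted inner product, I first write
\[
\no{\Box_b u}^2_{{\mathcal A}_\zeta^{+,s\sigma-r}}
= (\Box_b u,\Box_b^\delta u)_{{\mathcal A}_\zeta^{+,s\sigma-r}}
-\delta\,(\Box_b u,(\Box_d+I)u)_{{\mathcal A}_\zeta^{+,s\sigma-r}}
\]
and bound the first pairing by $\eps$--Cauchy--Schwarz, absorbing $\eps\no{\Box_b u}^2_{{\mathcal A}_\zeta^{+,s\sigma-r}}$ on the left and leaving $c_\eps\no{\Box_b^\delta u}^2_{{\mathcal A}_\zeta^{+,s\sigma-r}}$.

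For the cross term I expand $\Box_b u = \dib_b\dib_b^* u + \dib_b^*\dib_b u$ and apply Lemma~\ref{l41}(i) to both summands -- in its direct form on $\delta(\dib_b\dib_b^*u,(\Box_d+I)u)_{{\mathcal A}}$ and in its adjoint (swapped) form on $\delta(\dib_b^*\dib_b u,(\Box_d+I)u)_{{\mathcal A}}$. The swapped application, taking $v=\dib_b u$ and $w=(\Box_d+I)u$ with $a=0$, produces precisely the error terms $\no{\dib_b u}^2_{{\mathcal A}_\zeta^{+,s\sigma-r}}$ and $\no{\dib_b u}^2_{\ln\T-\di_b\sigma\lrcorner{\mathcal A}_\zeta^{+,s\sigma-r}}$ that appear on the RHS of the lemma. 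The direct application instead generates an unwanted $\eps\no{\dib_b^* u}^2_{{\mathcal A}_\zeta^{+,s\sigma-r}}$, which I absorb by invoking Lemma~\ref{l42}(i) to bound $\no{\dib_b^* u}^2_{{\mathcal A}_\zeta^{+,s\sigma-r}}$ by $\Re(\dib_b\dib_b^* u,u)_{{\mathcal A}_\zeta^{+,s\sigma-r}}+\no{\dib_b u}^2_{{\mathcal A}_\zeta^{+,s\sigma-r}}+\no{u}^2_{{\mathcal E}^{s,2-r}_{\zeta,\tilde\zeta}}$, hence by an absorbable $\eps'\no{\Box_b u}^2_{\mathcal A}$ plus allowed RHS terms.

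The integrations by parts leave the main inner products $\delta(\dib_b^* u,\dib_b^*(\Box_d+I)u)_{\mathcal A}$ and $\delta(\dib_b u,\dib_b(\Box_d+I)u)_{\mathcal A}$. Commuting $\dib_b^*$ and $\dib_b$ past $\Box_d+I$, the principal pieces $-\delta((\Box_d+I)\dib_b^* u,\dib_b^* u)_{\mathcal A}$ and $-\delta((\Box_d+I)\dib_b u,\dib_b u)_{\mathcal A}$ are non--positive, up to harmless weight commutators, since $\Box_d+I\ge I$ is a positive elliptic operator; they therefore drop from the upper bound. The remaining commutator pieces $\delta(\dib_b^* u,[\dib_b^*,\Box_d]u)_{\mathcal A}$ and $\delta(\dib_b u,[\dib_b,\Box_d]u)_{\mathcal A}$ are a priori second order in $u$; I handle them by Cauchy--Schwarz together with the ellipticity equivalence \eqref{000} and the algebraic identity $\delta(\Box_d+I)u=\Box_b^\delta u-\Box_b u$, which trades the second--order factor for $\no{\Box_b^\delta u}_{\mathcal A}$ plus an absorbable $\no{\Box_b u}_{\mathcal A}$ and pieces living in $\no{u}^2_{{\mathcal E}^{s,2-r}_{\zeta,\tilde\zeta}}$.

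The main obstacle I foresee is the bookkeeping of $\delta$--powers in this last step: each second--order commutator must carry enough factors of $\delta$ to be traded against $\Box_b^\delta u-\Box_b u$ without blow--up as $\delta\to 0$, and this is precisely what gives the estimate its uniformity in $\delta$. After all absorptions are performed, the right--hand side consists only of $\no{\Box_b^\delta u}^2_{{\mathcal A}_\zeta^{+,s\sigma-r}}$, $\no{\dib_b u}^2_{{\mathcal A}_\zeta^{+,s\sigma-r}}$, $\no{\dib_b u}^2_{\ln\T-\di_b\sigma\lrcorner{\mathcal A}_\zeta^{+,s\sigma-r}}$ and $\no{u}^2_{{\mathcal E}^{s,2-r}_{\zeta,\tilde\zeta}}$, establishing (i).
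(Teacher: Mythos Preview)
Your approach differs from the paper's and, as written, has a genuine $\delta$-bookkeeping gap---exactly the obstacle you flag at the end, but it bites earlier than you say. In the ``direct'' application of Lemma~\ref{l41}(i) to $\delta(\dib_b\dib_b^*u,(\Box_d+I)u)_{\mathcal A}$ with $v=\dib_b^*u$ and $w=(\Box_d+I)u$, the $\ln$-weighted error in that lemma \emph{always} lands on the $w$-side (it comes from the commutator $[e^{2ks\sigma},\dib_b^*]$, which acts on $w_{\zeta,k}^+$ in the proof). Hence, regardless of which Cauchy--Schwarz split you choose, you are left with either $\delta\,c_\eps\bigl(\no{(\Box_d+I)u}^2_{{\mathcal A}^{+,s\sigma-r}}+\no{(\Box_d+I)u}^2_{\ln\text{-}\di_b\sigma\lrcorner{\mathcal A}^{+,s\sigma-r}}\bigr)$ or else $\delta\,c_\eps\no{\dib_b^*u}^2_{{\mathcal A}^{+,s\sigma-r}}$. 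The first carries only a single $\delta$, so converting via $\delta(\Box_d+I)u=\Box_b^\delta u-\Box_bu$ costs a factor $\delta^{-1}$ and the estimate is not uniform; the second is not among the RHS terms of the lemma, and trying to bound it via Lemma~\ref{l42}(i) introduces $(\Box_bu,u)_{\mathcal A}$ with a large constant that you cannot absorb into $\no{\Box_bu}^2_{\mathcal A}$.

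The paper avoids this by a different route: it works one derivative higher, at the quadratic-form level $(\Box_b^\delta u,u)_{{\mathcal A}^{+,s\sigma-r+1}}$. First Lemma~\ref{l42}(i) with $g(t)=t^{-r+1}$ controls $\no{\dib_b^*u}^2_{{\mathcal A}^{+,s\sigma-r+1}}$; then Lemma~\ref{l41}(i) is applied with $v=u$, $w=\dib_bu$, $a=1$, and---crucially---$\eps$ replaced by $\eps\delta$, so that the small side is $\eps\delta\no{u}^2_{{\mathcal A}^{+,s\sigma-r+2}}$ while the large side $c_{\eps}\delta^{-1}\no{\dib_bu}^2_{\ln\text{-}\di_b\sigma\lrcorner{\mathcal A}^{+,s\sigma-r}}$ already sits at the target level. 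Adding $\delta((\Box_d+I)u,u)_{{\mathcal A}^{+,s\sigma-r+1}}$ to both sides, applying Cauchy--Schwarz to $(\Box_b^\delta u,u)_{{\mathcal A}^{+,s\sigma-r+1}}$, and multiplying by $\delta$ produces on the left $\delta^2((\Box_d+I)u,u)_{{\mathcal A}^{+,s\sigma-r+1}}$, which by the elliptic equivalence \eqref{000} dominates $\delta^2\no{(\Box_d+I)u}^2_{{\mathcal A}^{+,s\sigma-r}}$ and absorbs the $\eps\delta^2\no{u}^2_{{\mathcal A}^{+,s\sigma-r+2}}$ term. The conclusion follows from the triangle inequality $\no{\Box_bu}^2\le 2\delta^2\no{(\Box_d+I)u}^2+2\no{\Box_b^\delta u}^2$. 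The point is that the paper never pairs $(\Box_d+I)u$ against a first-order piece of $\Box_bu$; instead it arranges the $\delta$-scaling through the $\eps:=\eps\delta$ choice and the shift of one derivative, so that the elliptic term appears with exactly the $\delta^2$ it needs.
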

 		\begin{proof}
 Now, we use Lemma~\ref{l42}.(i) for  the choice  $g(t)=t^{-r+1}$ to get 
 		\begin{eqnarray}\label{est:l41m}\begin{aligned}
 		\no{\dib_b^*u}^2_{{\mathcal A}^{+,s\sigma-r+1}_\zeta}
 		\le\Re\, (\dib_b\dib_b^* u,u)_{{\mathcal A}^{+,s\sigma-r+1}_\zeta}+\no{\dib_bu}^2_{{\mathcal A}^{+,s\sigma-r+1}_\zeta}+\no{u}^2_{{\mathcal E}^{s,2-r}_{\zeta,\tilde{\zeta}}}
 		\end{aligned}\end{eqnarray}
To estimate  $\no{\dib_bu}^2_{{\mathcal A}^{+,s\sigma-r+1}_\zeta}$, we apply Lemma~\ref{l41}.(i) for $v=u$ and $w=\dib_bu$ with the choice $g(t)=t^{-r+1}$ and $a=1$, $\eps:=\eps\delta$ to obtain
 			\begin{eqnarray}\label{est:l42m}
 			\begin{aligned}
\no{\dib_bu}^2_{{\mathcal A}_\zeta^{+,s\sigma-r+1}}=&\Re\,(\dib_bu,\dib_bu)_{{\mathcal A}_\zeta^{+,s\sigma-r+1}}\\
\le& \Re\, (\dib_b^*\dib_bu,u)_{{\mathcal A}^{+,s\sigma-r+1}}+\epsilon\delta\left( \no{u}^2_{{\mathcal A}_\zeta^{+,s\sigma-r+2}}+\no{u}^2_{{\mathcal E}_{\zeta,\tilde{\zeta}}^{s,2-r}}
\right)\\
&+c_\epsilon\delta^{-1}\left(\no{\dib_bu}^2_{ {\mathcal A}_\zeta^{+,s\sigma-r}}+\no{\dib_bu}^2_{\ln\T-\di_b\sigma\lrcorner {\mathcal A}_\zeta^{+,s\sigma-r}}  +\no{\dib_bu}^2_{{\mathcal E}_\zeta^{s,-r}}\right).
 			\end{aligned}\end{eqnarray}
Combining \eqref{est:l41m} and \eqref{est:l42m}, we get 
 			\begin{eqnarray}\label{est:l43m}
 			\begin{aligned}
\no{\dib_b^*u}^2_{{\mathcal A}_\zeta^{+,s\sigma-r+1}}\le& \Re\,(\Box_bu,u)_{{\mathcal A}_\zeta^{+,s\sigma-r+1}}+\epsilon\delta \no{u}^2_{{\mathcal A}_\zeta^{+,s\sigma-r+2}}\\
&c_\epsilon\delta^{-1}\left(\no{\dib_bu}^2_{ {\mathcal A}_\zeta^{+,s\sigma-r}}+\no{\dib_bu}^2_{\ln\T-\di_b\sigma\lrcorner {\mathcal A}_\zeta^{+,s\sigma-r}}  
+\no{u}^2_{{\mathcal E}_{\zeta,\tilde{\zeta}}^{s,2-r}}\right).
 			\end{aligned}\end{eqnarray}
Adding $(\delta(\Box_d+I)u,u)_{{\mathcal A}_\zeta^{+,s\sigma+1-r}}$ into both sides of \eqref{est:l43m}  , after that using the Cauchy-Schwarz inequality for $\Re\,(\Box_b^\delta u,u)_{{\mathcal A}_\zeta^{+,s\sigma-r+1}}$, and then multiplying with $\delta$, we obtain
 			\begin{eqnarray}\label{est:l44m}
 			\begin{aligned}
&\delta^2((\Box_d+I)u,u)_{{\mathcal A}_\zeta^{+,s\sigma+1-r}}+\delta\no{\dib_b^*u}^2_{{\mathcal A}_\zeta^{+,s\sigma-r+1}}
\le 2\epsilon\delta ^2\no{u}^2_{{\mathcal A}_\zeta^{+,s\sigma-r+2}}\\
&+c_\epsilon\left(\no{\Box_b^\delta u}^2_{ {\mathcal A}_\zeta^{+,s\sigma-r}}+\no{\dib_bu}^2_{ {\mathcal A}_\zeta^{+,s\sigma-r}}+\no{\dib_bu}^2_{\ln\T-\di_b\sigma\lrcorner {\mathcal A}_\zeta^{+,s\sigma-r}}  
+\no{u}^2_{{\mathcal E}_{\zeta,\tilde{\zeta}}^{s,2-r}}\right).
 			\end{aligned}\end{eqnarray}
 			 Using \eqref{000} to absorb the first term in the second line of \eqref{est:l44m}, we get the estimate for $\delta^2(\Box_d+I)$ in the following inequality
 			 \begin{eqnarray*}\label{est:l45m}
 			 \begin{aligned}
 			 \delta^2\no{(\Box_d+I)u}^2_{{\mathcal A}_\zeta^{+,s\sigma-r}}
 			 \lesssim\no{\Box_b^\delta u}^2_{ {\mathcal A}_\zeta^{+,s\sigma-r}}+\no{\dib_bu}^2_{ {\mathcal A}_\zeta^{+,s\sigma-r}}+\no{\dib_bu}^2_{\ln\T-\di_b\sigma\lrcorner {\mathcal A}_\zeta^{+,s\sigma-r}}  
 			 +\no{u}^2_{{\mathcal E}_{\zeta,\tilde{\zeta}}^{s,2-r}}.
 			 \end{aligned}\end{eqnarray*}
 			The proof of the first part of this lemma is complete by the simple inequality 
 			$$\no{\Box_bu}^2_{{\mathcal A}_\zeta^{+,s\sigma-r}}\le 2 \delta^2\no{(\Box_d+I)u}^2_{{\mathcal A}_\zeta^{+,s\sigma-r}}+\no{\Box_b^\delta u}^2_{ {\mathcal A}_\zeta^{+,s\sigma-r}}.$$	The proof of the second part goes in analogously way.
	\end{proof}
	The estimates for elliptic regularization method follows immediately by Theorem~\ref{t42} and Lemma~\ref{l44}.
 			\begin{theorem}
 				\label{t43} Suppose that  the $\sigma$-superlogarithmic property on $(0,q_o)$-forms holds.  Then, the following holds uniformly in $\delta$.
 				\begin{enumerate}
 					\item[(i)] 
 					For any  $u\in {\mathcal A}^{+,s\sigma+2-r}_{\zeta;0,q}\cap {\mathcal E}^{s,2-r}_{\zeta,\tilde\zeta;0,q}$ with $q\ge q_0$, we have
 					\[\begin{aligned}
 					\no{u}^2_{{\mathcal A}^{+,s\sigma-r}_\zeta}
 					\lesssim&\no{\Box_b^\delta u}^2_{{\mathcal A}^{+,s\sigma-r}_\zeta}+\no{u}^2_{{\mathcal E}_{\zeta,\tilde{\zeta}}^{s,2-r}}.
 					\end{aligned}\]
 					\item[ii.] For any $u\in {\mathcal A}^{-,s\sigma+2-r}_{\zeta;0,q}\cap {\mathcal E}^{s,2-r}_{\zeta,\tilde\zeta;0,q}$ with $q\ge q_0$, we have
 					\[\begin{aligned}
 				\no{u}^2_{{\mathcal A}^{-,s\sigma-r}_\zeta}
 				\lesssim&\no{\Box_b^\delta u}^2_{{\mathcal A}^{+,s\sigma-r}_\zeta}+\no{u}^2_{{\mathcal E}_{\zeta,\tilde{\zeta}}^{s,2-r}}.
 				\end{aligned}\]
 				\end{enumerate}	
 			\end{theorem}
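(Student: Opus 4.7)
The plan is simply to chain Theorem~\ref{t42} with Lemma~\ref{l44} and close with a routine absorption. I work out only the positive microlocalization case (i); case (ii) is identical after exchanging $(\dib_b,\di_b\sigma\lrcorner)$ with $(\dib_b^*,\dib_b\sigma\we)$ and using the ``minus'' parts of the two cited results.

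First I would invoke Theorem~\ref{t42}(i) to bound the full list of weighted norms appearing on its left-hand side in terms of
$\no{\Box_b u}^2_{{\mathcal A}^{+,s\sigma-r}_\zeta}+\no{u}^2_{{\mathcal E}^{s,2-r}_{\zeta,\tilde\zeta}}$; in particular the quantities $\no{u}^2_{f^2\T-{\mathcal A}^{+,s\sigma-r}_\zeta}$, $\no{\dib_b u}^2_{f\T-{\mathcal A}^{+,s\sigma-r}_\zeta}$, and $\no{\dib_b u}^2_{f_0\ln\T-\di_b\sigma\lrcorner {\mathcal A}^{+,s\sigma-r}_\zeta}$ all sit on the left. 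Next I would apply Lemma~\ref{l44}(i) to the $\Box_b$-term on the right, obtaining
$$\no{\Box_b u}^2_{{\mathcal A}^{+,s\sigma-r}_\zeta}\lesssim \no{\Box_b^\delta u}^2_{{\mathcal A}^{+,s\sigma-r}_\zeta}+\no{\dib_b u}^2_{{\mathcal A}^{+,s\sigma-r}_\zeta}+\no{\dib_b u}^2_{\ln\T-\di_b\sigma\lrcorner {\mathcal A}^{+,s\sigma-r}_\zeta}+\no{u}^2_{{\mathcal E}^{s,2-r}_{\zeta,\tilde\zeta}},$$
and I would substitute this bound into the previous inequality.

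The closing step is absorption. Because $f, f_0\gg 1$, for any fixed constant $C$ there is $k_0$ such that $f(e^k), f_0(e^k)\ge C$ for $k\ge k_0$; splitting each series defining $\no{\dib_b u}^2_{f\T-{\mathcal A}^{+,s\sigma-r}_\zeta}$ and $\no{\dib_b u}^2_{f_0\ln\T-\di_b\sigma\lrcorner {\mathcal A}^{+,s\sigma-r}_\zeta}$ at $k=k_0$, the tails with $k\ge k_0$ absorb the corresponding $f$-free quantities $\no{\dib_b u}^2_{{\mathcal A}^{+,s\sigma-r}_\zeta}$ and $\no{\dib_b u}^2_{\ln\T-\di_b\sigma\lrcorner {\mathcal A}^{+,s\sigma-r}_\zeta}$ coming from Lemma~\ref{l44}, while the finitely many initial atoms are uniformly controlled by the error norm $\no{u}^2_{{\mathcal E}^{s,2-r}_{\zeta,\tilde\zeta}}$ using the a priori bound of Remark~\ref{1r1}. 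After absorption the left-hand side still dominates $\no{u}^2_{{\mathcal A}^{+,s\sigma-r}_\zeta}$ (bounded by $\no{u}^2_{f^2\T-{\mathcal A}^{+,s\sigma-r}_\zeta}$ since $f\ge 1$ eventually, with the finite initial segment handled the same way), yielding part (i). Part (ii) follows by the mirror argument combining Theorem~\ref{t42}(ii) with Lemma~\ref{l44}(ii).

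No substantive obstacle arises: the two key a priori estimates are in hand, the only care needed is in the bookkeeping of the absorption of $f$-free norms by their $f$-weighted counterparts, which is automatic from the superlogarithmic rate condition $f, f_0 \gg 1$, and this is precisely why the author notes the result ``follows immediately''.
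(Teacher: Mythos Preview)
Your proposal is correct and matches the paper's approach exactly: the paper states only that Theorem~\ref{t43} ``follows immediately by Theorem~\ref{t42} and Lemma~\ref{l44}'', and you have supplied precisely the chaining and absorption argument that makes this immediate. The only detail worth noting is that your handling of the finitely many initial atoms via Remark~\ref{1r1} implicitly uses that $\no{\zeta\dib_b u}_{H^{-r}}\lesssim \no{\tilde\zeta u}_{H^{1-r}}\le \no{\tilde\zeta u}_{H^{2-r}}$, which is part of the error norm; this is routine and consistent with how the paper absorbs low-$k$ terms elsewhere (cf.\ the end of the proof of Proposition~\ref{p31}).
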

 	We finish this section by the elliptic estimate for $0$-microlocalization. 
 	\begin{proposition}\label{Psi0}
 		Let $\Psi^0_0\prec \Psi^0_1$ be a pair of homogeneous  pseudodifferential operators of order zero whose symbols have support in  in $\mathcal C^0$ and let $\zeta\prec \zeta_1\prec \zeta_2$ be a triple of cutoff functions. Then, for any $s,s_o\ge 0$ and $\delta\ge 0$, we have
 		\begin{eqnarray}	
 		\no{\Psi^0_0\zeta u}^2_{H^{s+1}}&\lesssim&\no{\Psi^0_1\zeta_1 \dib_bu}^2_{H^s}+\no{\Psi^0_1\zeta_1 \dib_b^*u}^2_{H^s}+\no{\zeta_2 u}^2_{H^{-s_o}},\\
 		 		\no{\Psi^0_0\zeta u}^2_{H^{s+2}}&\lesssim& \no{\Psi^0_1\zeta_1 \Box_b u}^2_{H^s}+\no{\zeta_2 u}^2_{H^{-s_o}},\\
 		\no{\Psi^0_0\zeta u}^2_{H^{s+2}}&\lesssim& \no{\Psi^0_1\zeta_1 \Box_b^\delta u}^2_{H^s}+\no{\zeta_2 u}^2_{H^{-s_o}},
 		\end{eqnarray}	
 		for all $u\in C^\infty_{0,q}(\supp(\zeta_1))\cap H^{-s_o}_{0,q}(M)$ with any $0\le q\le n$.
 	\end{proposition}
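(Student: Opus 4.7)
The three estimates are microlocal elliptic regularity statements on the cone $\mathcal{C}^0 = \{|\xi_{2n+1}|\le\tfrac{3}{4}|\xi'|\}\cup\{|\xi|<2\}$ carrying the symbols of $\Psi^0_0,\Psi^0_1$. My plan is to verify that on $\mathcal{C}^0$ the system $\dib_b\oplus\dib_b^*$ is elliptic of order one and $\Box_b$ (and $\Box_b^\delta$ uniformly in $\delta\ge 0$) is elliptic of order two, after which the estimates become classical. Indeed, in the local coordinates of \S\ref{s2.2} at $x_o$, $L_j|_{x_o}=\partial_{z_j}$ and $\bar L_j|_{x_o}=\partial_{\bar z_j}$, so the $(0,1)$-part of a covector $\xi$ satisfies $|\bar\xi^{0,1}|^2=\tfrac{1}{4}|\xi'|^2$ at $x_o$. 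The characteristic variety of $\dib_b\oplus\dib_b^*$ is thus $\{\xi'=0\}$, i.e.\ the $T$-direction; on $\mathcal{C}^0$, $|\xi'|\ge\tfrac{1}{\sqrt{2}}|\xi|$ for $|\xi|\ge 2$, so $\dib_b\oplus\dib_b^*$ is elliptic there, and ellipticity of $\Box_b=\dib_b\dib_b^*+\dib_b^*\dib_b$ of order two follows at once.

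For the first estimate, I would pick intermediate data $\Psi^0_0\prec\Psi^0_*\prec\Psi^0_1$ (symbols in $\mathcal{C}^0$) and $\zeta\prec\zeta_*\prec\zeta_1$, and set $v:=\Lambda^s\Psi^0_*\zeta_*u$. Applying Proposition~\ref{kmh} with $\lambda=0$ to $v$ yields $\sum_j\|\bar L_jv\|_{L^2}^2\lesssim\|\dib_bv\|_{L^2}^2+\|\dib_b^*v\|_{L^2}^2+\|v\|_{L^2}^2$. Since the wavefront of $v$ lies in $\mathcal{C}^0$ and there $\sum_j|\sigma(\bar L_j)(\xi)|^2\gtrsim|\xi|^2$, a microlocal G\aa rding argument applied to the second-order operator $\sum_j\bar L_j^*\bar L_j$ upgrades this to
\[
\|v\|_{H^1}^2\lesssim\|\dib_bv\|_{L^2}^2+\|\dib_b^*v\|_{L^2}^2+\|v\|_{L^2}^2+\|\zeta_2u\|_{H^{-s_o}}^2.
\]
Commuting $\Lambda^s\Psi^0_*\zeta_*$ past $\dib_b,\dib_b^*$ introduces zero-order commutators whose symbols are supported in $\mathrm{supp}(d\sigma(\Psi^0_*))\cup\mathrm{supp}(d\zeta_*)$ or off $\mathcal{C}^0$; by the dominations $\Psi^0_*\prec\Psi^0_1$, $\zeta_*\prec\zeta_1$ these are absorbed into $\|\Psi^0_1\zeta_1\dib_bu\|_{H^s}+\|\Psi^0_1\zeta_1\dib_b^*u\|_{H^s}$ and into $\|\zeta_2u\|_{H^{-s_o}}$ via the commutator estimates of Lemma~\ref{l2} (Appendix) combined with an induction on $s$. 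Since $\|\Psi^0_0\zeta u\|_{H^{s+1}}\lesssim\|v\|_{H^1}+\|\zeta_2u\|_{H^{-s_o}}$, the first estimate follows.

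The second estimate is obtained by iterating the first: apply it to $u$ to bound $\|\Psi^0_0\zeta u\|_{H^{s+2}}$ by first-order data in $H^{s+1}$, then apply it again with $s$ in place of $s+1$ to each of $\dib_bu$ and $\dib_b^*u$. Using $\dib_b^2=(\dib_b^*)^2=0$ collapses the four surviving second-order cross-terms down to $\dib_b\dib_b^*u+\dib_b^*\dib_bu=\Box_bu$. The third estimate is identical, since $\Box_b^\delta=\Box_b+\delta(\Box_d+I)$ retains (and for $\delta>0$ strengthens) ellipticity on $\mathcal{C}^0$ uniformly in $\delta\ge 0$, so the same iteration applies verbatim.

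The main obstacle is purely technical: the bookkeeping of pseudodifferential commutators between $\Lambda^s,\Psi^0_*,\zeta_*$ and the first-order operators $\dib_b,\dib_b^*$, and the verification that each commutator either feeds back into the elliptically-controlled right-hand side (via the cutoff hierarchy $\Psi^0_0\prec\Psi^0_*\prec\Psi^0_1$, $\zeta\prec\zeta_*\prec\zeta_1\prec\zeta_2$) or into the low-Sobolev remainder $\|\zeta_2u\|_{H^{-s_o}}$. The underlying ellipticity on $\mathcal{C}^0$ is immediate; no new analytic input beyond standard pseudodifferential calculus and the commutator estimates of the Appendix is required.
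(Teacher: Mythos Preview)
Your proposal is correct and aligns with the paper's own treatment: the paper omits the proof entirely, stating only that it ``follows straightforwardly from the elliptic estimate for $0$-microlocal.'' You have supplied precisely the standard argument the paper leaves implicit---ellipticity of $\dib_b\oplus\dib_b^*$ (and hence of $\Box_b$, $\Box_b^\delta$) on the cone $\mathcal{C}^0$, followed by microlocal elliptic regularity with commutator bookkeeping through the cutoff hierarchy.

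One small remark on your iteration for the second estimate: after two applications of the first inequality you arrive at $\|\Psi^0_1\zeta_1\dib_b^*\dib_bu\|_{H^s}+\|\Psi^0_1\zeta_1\dib_b\dib_b^*u\|_{H^s}$ on the right, and the passage to $\|\Psi^0_1\zeta_1\Box_bu\|_{H^s}$ requires the orthogonality $(\dib_b\dib_b^*u,\dib_b^*\dib_bu)_{L^2}=0$, which only holds exactly before localization. This is easily circumvented either by the commutator calculus you already invoke, or---more cleanly---by observing directly that the principal symbol of $\Box_b$ on $(0,q)$-forms at $\xi$ is $|\bar\eta(\xi)|^2\cdot I$ with $\bar\eta(\xi)=\sum_j\sigma(\bar L_j)(\xi)\bar\omega_j$, hence $\Box_b$ is itself elliptic of order two on $\mathcal{C}^0$ and the second estimate follows in one step from standard second-order microlocal elliptic regularity rather than by iteration.
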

 	The proof of this proposition is omitted since it   follows straightforwardly from the elliptic estimate for $0$-microlocal.

 				\section{Proof of the main theorems}\label{s4}
 			\subsection{Local regularity of the complex Green operator}\label{s4.1}
 					Let $Q_b$ be defined by 
 					$$Q_b(u,u)=\no{\dib_b u}^2+\no{\dib_b^*u}^2,\quad u\in \T{Dom}(\dib_b)\cap\T{Dom}(\dib_b).$$
 				The closed range of $\dib_b$ in $L^2$-spaces for all degrees of forms implies that 
 				\begin{eqnarray}
 					\label{closed range}
 					\no{u}^2\le c\left(Q_b(u,u)+\no{H_qu}^2\right),
\end{eqnarray} for all $(0,q)$ forms $u\in \T{Dom}(\dib_b)\cap \T{Dom}(\dib_b^*)\cap L^2_{0,q}(M)$ where $H_q$ is the project onto $\H_{0,q}$ and $0\le q\le n$. Here we recall that $\H_{0,q}$ is the space of harmonic forms of degree $(0,q)$. As a consequence of \eqref{closed range},  $\Box_{b}$ is invertible over $\H_{0,q}^\perp$ and its inverse $G_q$ is $L^2$ bounded. as mentioned in Section~1, we extend the operator $G_q$ to a linear operator on $L^2_{0,q}(M)$ be setting it equal to zero on $\H_{0,q}$. Then $G_q$ is bounded and self adjoint in $L^2_{0,q}(M)$, and 
$$\Box_bG_q=I-H_q.$$
As a consequence from the $L^2$ boundedness of $G_q$, the related operators $\dib_b^* G_q$ , $G_q\dib^*$, $\dib_b G_q$, $G_q\dib_b$, $I-\dib_b^* \dib_bG_{q}$, $I-\dib_b^*G_{q}\dib_b$, $I-\dib_b \dib_b^*G_{q}$, $I-\dib_bG_{q}\dib_b^*$, $\dib_bG_q^2\dib_b^*$  and $\dib_b^*G_q^2\dib_b$ are also $L^2$ bounded.

For higher regularity of $G_q$, by Theorem \ref{t42}, Proposition~\ref{Psi0} and the comparison of the $\no{\cdot}_{{\mathcal A}^{s\sigma}_\zeta}$ with $\no{\zeta_0\cdot}_{H^s}$ and $\no{\zeta_1\cdot}_{H^s}$ and the  the choice of cutoff functions $\zeta_0\prec\sigma\prec\zeta\prec\tilde\zeta\prec\zeta_1\prec\zeta_2$ , we obtain the  a-priori estimate in $H^s$: for any $s\ge0$, there exists $c_{s}>0$ so that for every $(0,q)$ form $u$ smooth in $U$,
 				$$\no{\zeta_0u}^2_{H^s}\le c_s\left(\no{\zeta_1\Box_b u}^2_{H^s}+\no{u}^2_{L^2}\right).$$
 			for all $u\in C^\infty_{0,q}(\supp(\zeta_1))\cap L^2_{0,q}(M)$	with $q_0\le q\le n-q_0$. Unlike the $L^2$-estimate  \eqref{closed range}, this  does not imply $\zeta_0 G_q\varphi\in H^s_{0,q}(M)$ for $\varphi\in H^s_{0,q}(\supp(\zeta_1))\cap L^2_{0,q}(M)$ since we do not know whether $u=G_q\varphi\in {\mathcal A}^{s\sigma}_{\zeta;0,q}$ even that $\varphi$ is smooth. We need to work with the family of regularized operators $G_q^\delta$ defined as follows. \\
 			
 				For a small $\delta>0$, we define 
 				$$Q_b^\delta(u,v):=Q_b(u,v)+\delta Q_d(u,v) \quad u,v\in H_{0,q}^1(M),$$
 					where  $Q_d(u,v)=(du,dv)+(d^*u,d^*v)+(u,v)$ the Hermitian inner product associated to the de Rham exterior derivative $d$. 
 						This quadratic form $Q_b^\delta$ gives a unique, self-adjoint in $L^2$, elliptic operator $\Box_b^\delta:=\Box_b+\delta\left(\Box_d+I\right)$ which is mentioned in \S\ref{s3.3}. Since 
 						$\delta\no{u}^2\le Q^\delta_b(u,u)$ for all $u\in H^1_{0,q}(M)$, $\Box_b^\delta$ has an inverse $G_{q}^\delta$ such that $$\Box^\delta_bG^\delta_q=I\quad\T{since $\ker(\Box_b^\delta)=\{0\}$}.$$
 							Furthermore, by the theory of elliptic regularity, it is well-known that $G^\delta_q$ is locally regular. More precisely, if $\varphi\in L^2_{0,q}(M)\cap H^s_{0,q}(V)$ then $G_q^\delta\varphi\in H^{s+2}_{0,q}(V)$ for any open set $V$ of $M$. Consequently, $G^\delta_q$ is globally regular, i.e., its mapping is continuous in $C^\infty_{0,q}(M)$.\\
 							
 				Let $H_{0,q}^{-\infty}(M)=\cup_{s\in\R}H_{0,q}^s(M)$ be the dual space of $C^\infty_{0,q}(M)$. Since $G_q^\delta$ is self-adjoint  and its mapping is continuous in the $C^\infty_{0,q}(M)$ topology, it extends to a  continuous in the space $H_{0,q}^{-\infty}(M)$. Thus, the local regularity property can be replaced as: if $\varphi\in H^{-\infty}_{0,q}(M)\cap H^s_{0,q}(V)$ then $G_q^\delta\varphi\in H^{s+2}_{0,q}(V)$ for any open set $V$ of $M$. The following proposition is to show that $G_q\varphi\in {\mathcal A}^{\pm,s\sigma-r}_\zeta$ by the advantage of $G^\delta_q\varphi.$

 		\begin{theorem}\label{t43b}Suppose that the $\sigma$-superlogarithmic property on $(0,q_0)$-forms holds and $\sigma\prec \zeta\prec\tilde\zeta\prec \zeta_1\prec\zeta_2$.  
 		Let  $\varphi\in H^{-\infty}_{0,q}(M)\cap C^\infty_{0,q}(\supp(\zeta_1))$ such that  $\no{\zeta_2G^\delta_q\varphi}^2_{H^{-s_o}}<\infty $ uniformly in $\delta$ for some $s_0\ge 0$.  Then, for any $r\ge s_o+2$ and $s\ge 0$, we have 
 		\begin{enumerate}
 			\item[(i)] $G_q\varphi\in {\mathcal A}^{+,s\sigma-r}_{\zeta;0,q}$ for  $q_0\le q\le n$; 
 			 \item[(ii)] $G_q\varphi\in {\mathcal A}^{-,s\sigma-r}_{\zeta;0,q}$ for  $0\le q\le n-q_0$;
 			 \item[(iii)] $G_q\varphi\in {\mathcal E}^{s,-r}_{\zeta,\tilde\zeta;0,q}$ for all $q=0,\dots,n$. 
 		\end{enumerate}   
 		\end{theorem}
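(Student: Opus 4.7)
The plan is elliptic regularization. Set $u_\delta:=G_q^\delta\varphi$; since $\Box_b^\delta$ is elliptic and $\varphi\in C^\infty_{0,q}(\supp(\zeta_1))$, $u_\delta$ is globally smooth and every norm appearing in the conclusion is finite for each fixed $\delta>0$. The task is to obtain bounds that are \emph{uniform in $\delta$}, then pass to a weak limit identified with $G_q\varphi$.

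Applying Theorem~\ref{t43} to $u_\delta$, with $\Box_b^\delta u_\delta=\varphi$, gives
$$\no{u_\delta}^2_{{\mathcal A}^{\pm,s\sigma-r}_\zeta}
\;\lesssim\; \no{\varphi}^2_{{\mathcal A}^{\pm,s\sigma-r}_\zeta}+\no{u_\delta}^2_{{\mathcal E}^{s,2-r}_{\zeta,\tilde\zeta}}.$$
The first term is finite independent of $\delta$: since $\varphi$ is smooth and compactly supported in $\supp(\zeta_1)$, the atoms $\varphi^\pm_{\zeta,k}$ decay in $k$ faster than any polynomial (by the argument of Remark~\ref{1r1} with arbitrarily negative Sobolev index), easily beating the weight $e^{ks}$. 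The error term splits into a ``full'' piece $\no{\La^{-r}\tilde\zeta u_\delta}^2_{H^{2-r}}$, which for $r\ge s_o+2$ is dominated by $\no{\zeta_2 u_\delta}^2_{H^{-s_o}}$ (uniformly bounded by hypothesis, since $\tilde\zeta\prec\zeta_1\prec\zeta_2$), and a ``$0$-microlocal'' piece $\no{\La^{-r}\tilde\Psi^0\zeta u_\delta}^2_{H^{s+2-r}}$, controlled uniformly via Proposition~\ref{Psi0} applied to $\Box_b^\delta u_\delta=\varphi$ in terms of Sobolev norms of the smooth $\varphi$ and the same $H^{-s_o}$ quantity.

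Having $\{u_\delta\}$ uniformly bounded in the Hilbert space ${\mathcal A}^{\pm,s\sigma-r}_{\zeta;0,q}$, extract a weakly convergent subsequence $u_{\delta_j}\rightharpoonup u_\infty$. The classical fact that $G_q^\delta\varphi\to G_q\varphi$ strongly in $L^2_{0,q}(M)$ (where $G_q$ is extended by zero on $\H_{0,q}$, so the harmonic component $H_q\varphi$ is absorbed) identifies $u_\infty=G_q\varphi$, and weak lower semicontinuity yields (i) and (ii). Part (iii) follows by the same two-piece bound applied directly to the error norm: the full piece uses $r\ge s_o$ and the $H^{-s_o}$ hypothesis (passed to the limit via weak lower semicontinuity), while the $0$-microlocal piece follows from Proposition~\ref{Psi0} applied to $\Box_b G_q\varphi=\varphi-H_q\varphi$; this part needs no $\sigma$-superlogarithmic input and therefore holds for every $0\le q\le n$. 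The principal technical obstacle is justifying the strong $L^2$-convergence $G_q^\delta\varphi\to G_q\varphi$ (standard from the closed-range property and spectral theory on $\H_{0,q}^\perp$) that permits identification of the weak limit; the bookkeeping of the indices $s,r,s_o$ so that every error term is either absorbed into the $f$-weighted main norm (using $f,f_0\gg 1$) or swallowed by the $H^{-s_o}$ hypothesis is delicate but mechanical.
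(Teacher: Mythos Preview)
Your overall strategy matches the paper's: apply the uniform-in-$\delta$ a-priori estimate of Theorem~\ref{t43} (and Proposition~\ref{Psi0} for the $0$-microlocal piece) to $u_\delta=G_q^\delta\varphi$, extract a weak limit, and identify it with $G_q\varphi$. The bookkeeping you sketch for the error term is exactly what the paper does.

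There are, however, two inaccuracies. First, $u_\delta$ is \emph{not} globally smooth: $\varphi$ is only assumed smooth on $\supp(\zeta_1)$ and lies merely in $H^{-\infty}_{0,q}(M)$ elsewhere, so elliptic regularity gives $G_q^\delta\varphi\in C^\infty_{0,q}(\supp(\zeta_1))$ only. This is harmless, since all the norms in play are localized by $\zeta\prec\zeta_1$, but you should state it correctly.

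Second, and more seriously, your identification of the weak limit via ``strong $L^2$-convergence $G_q^\delta\varphi\to G_q\varphi$'' does not go through as stated. The hypothesis allows $\varphi\in H^{-\infty}_{0,q}(M)\setminus L^2_{0,q}(M)$ (indeed, the application in Theorem~\ref{maintheorem3} takes $\varphi$ to be a Dirac-delta form), so neither the spectral argument on $\H_{0,q}^\perp$ nor the formula $\Box_bG_q\varphi=\varphi-H_q\varphi$ in your treatment of (iii) is available. The paper avoids this by identifying the limit \emph{locally}: for $v\in C^\infty_{0,q}(M)$ supported in $V_0^{\zeta_2}$ one computes
\[
Q_b(G_q^\delta\varphi-G_q\varphi,v)=-\delta\,Q_d(G_q^\delta\varphi,v)=-\delta\,Q_d(\zeta_2G_q^\delta\varphi,v),
\]
and bounds the right side by $\delta\,\no{\zeta_2G_q^\delta\varphi}_{H^{2-r}}\no{v}_{H^r}\lesssim\delta\no{v}_{H^r}$ using only the localized hypothesis $\no{\zeta_2G_q^\delta\varphi}_{H^{-s_o}}\le C$. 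This shows $G_q^\delta\varphi\to G_q\varphi$ weakly in the $Q_b$-form on $V_0^{\zeta_2}\supset\supp(\zeta)$, which suffices since the $\mathcal A$- and $\mathcal E$-norms only see $\zeta u$. You should replace the global $L^2$ argument by this local one.
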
	
 		\begin{proof} By the local regularity of $G^\delta_q$, we have  $G^\delta_q\varphi\in C^\infty_{0,q}(\supp(\zeta_1))$. Consequently, $G^\delta_q\varphi\in {\mathcal A}^{+,s\sigma-r+2}_{\zeta;0,q}\cap {\mathcal E}^{s,-r+2}_{\zeta,\tilde{\zeta};0,q}$ for all $s,r\ge 0$ with $r\ge s_o+2$.
 		So we can apply Theorem~\ref{t43}(i) and Proposition~\ref{Psi0}(iii) for $u=G_q^\delta\varphi$ with $q\ge q_o$ to get the uniform estimate in $\delta$
 		\[\begin{aligned}
 		\no{G^\delta_q\varphi}^2_{{\mathcal A}_\zeta^{+,s\sigma-r}}\lesssim& \no{\varphi}^2_{{\mathcal A}_\zeta^{+,s\sigma-r}}+\no{G^\delta_q\varphi}^2_{{\mathcal E}^{s,-r+2}_{\zeta,\tilde{\zeta}}}\\
 		=&\no{\varphi}^2_{{\mathcal A}_\zeta^{+,s\sigma-r}}+\no{\tilde \Psi^0\zeta G^\delta_q\varphi}^2_{H^{s-r+2}}+\no{\tilde\zeta G^\delta_q\varphi}^2_{H^{-r+2}}\\
 		\lesssim&\no{\zeta_1\varphi}^2_{H^{s-r}}+\no{\zeta_2G_q^\delta \varphi}^2_{H^{-r+2}}.
 		\end{aligned}\]
 		  Thus the assumption that $\no{\zeta_2G^\delta_q\varphi}^2_{H^{-r+2}}$ is bounded uniformly in $\delta$ forces  $\no{G^\delta_q\varphi}^2_{{\mathcal A}_\zeta^{+,s\sigma-r}}$  to be  also uniform bounded in 
 		$\delta$. Consequently, the family $\{ G^{\delta} _{q}\varphi\}_{0<\delta \le 1}$ has a subsequence which converges weakly to $\hat u$ in the norm $\no{\cdot}_{{\mathcal A}^{+,s\sigma-r}_\zeta}$. We claim $\hat u= G_q\varphi$ in $V_0^{\zeta_2}$ the interior of $\{x\in M: \zeta_2(x)=1\}$. 
 		Let $ Q_{b,V_0^{\zeta_2}}$, be the restrictions of  $ Q_b$ to $V_0^{\zeta_2}$. 
 		For any $v\in C^\infty_{0,q}(M)$ supported in $V_0^{\zeta_2}$,
 		we have 
 		\[\begin{aligned}
 		|Q_{b,V_0^{\zeta_2}}(G_q^\delta \varphi-G_q\varphi,v)|=&|Q_{b}(G_q^\delta \varphi-G_q\varphi,v)|=\delta |Q_d(G^\delta_q \varphi, v)|=\delta |Q_d(\zeta_2G^\delta_q \varphi, v)|\\
 		\lesssim& \delta \no{\zeta_2G^\delta_q\varphi}_{H^{2-r}}\no{v}_{H^{r}}\lesssim \delta \no{v}_{H^{r}}\to 0\quad\T{as}\quad \delta\to 0.
 		\end{aligned}\] It means that $G^\delta_q\varphi$ converges to $G_q\varphi$ weakly in the $Q_{b,V_0^{\zeta_2}}$-norm. Therefore, we must have $\hat u=G_q\varphi$ in $V_0^{\zeta_2}\supset\supp(\zeta)$ and hence $$\no{G_q\varphi}_{{\mathcal A}^{+,s\sigma-r}_\zeta}\le \liminf \no{G^\delta_q\varphi}_{{\mathcal A}^{+,s\sigma-r}_\zeta}<\infty.$$
 		Therefore, $G_q\varphi\in {\mathcal A}^{+,s\sigma-r}_{\zeta;0,q}$ for  $q\ge q_0$. Similarly, $G_q\varphi\in {\mathcal A}^{-,s\sigma-r}_{\zeta;0,q}$ for  $q\le n-q_0$; and  $ G_q\varphi\in {\mathcal E}^{s,-r}_{\zeta,\tilde\zeta;0,q}$ for all $0\le q\le n$. 	
 		\end{proof}
 		In the following theorem we give the proof of local hypoellipticity of $\Box_b$ (the first part of Theorem~\ref{maintheorem1}).
\begin{theorem}\label{t44}
	Let $M$ be a pseudoconvex CR manifold of dimension $(2n+1)$ with $n\ge 2$ 
	such that  $\dib_b$ has closed range in $L^2$ spaces for all degrees of forms. 
	Suppose that  the $\sigma$-superlogarithmic property   holds on $(0,q_0)$-forms. 	Then, for all $q_0\le q\le n-q_0$, if $u\in L^2_{0,q}(M)$ such that $\Box_bu\in L^2_{0,q}(M)\cap C^\infty_{0,q}(V_1^\sigma)$ then  $u\in C^\infty_{0,q}(V_0^\sigma) $. 
\end{theorem}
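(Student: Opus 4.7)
The plan is to extract the conclusion from the two main technical pillars already in place: the uniform-in-$\delta$ \emph{a-priori} estimate for $\Box_b^\delta$ (Theorem~\ref{t43}) and the transfer-to-the-limit result (Theorem~\ref{t43b}). The starting move is to set $\varphi := \Box_b u \in L^2_{0,q}(M) \cap C^\infty_{0,q}(V_1^\sigma)$. Since every $L^2$ harmonic $(0,q)$-form is annihilated by $\Box_b$, $\varphi$ lies automatically in $\H_{0,q}(M)^\perp$, so the closed-range hypothesis yields the Hodge decomposition $u = G_q \varphi + H_q u$. It therefore suffices to show separately that each summand lies in $C^\infty_{0,q}(V_0^\sigma)$.

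For $G_q \varphi$, I would pick cutoffs $\sigma \prec \zeta \prec \tilde\zeta \prec \zeta_1 \prec \zeta_2$ with $\supp(\zeta_2) \subset V_1^\sigma$ (so that $\varphi$ is smooth on $\supp(\zeta_1)$), and feed $\varphi$ into Theorem~\ref{t43b}. Its hypothesis demands that $\no{\zeta_2 G_q^\delta \varphi}_{H^{-s_o}}$ be uniformly bounded in $\delta \in (0,1]$ for some $s_o \ge 0$; granting this, the conclusion is $G_q \varphi \in \mathcal{A}^{\pm, s\sigma-r}_{\zeta;0,q} \cap \mathcal{E}^{s,-r}_{\zeta,\tilde\zeta;0,q}$ for every $s \ge 0$ and $r \ge s_o+2$. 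The intermediate-norm comparison of \S\ref{s2.3} then converts this into $\no{g(\Lambda)\zeta_0 G_q \varphi}_{H^s} < \infty$ for any $\zeta_0 \prec \sigma$ with $g(t) = t^{-r}$; letting $s \to \infty$ and exhausting $V_0^\sigma$ by the interiors of $\{\zeta_0 = 1\}$ produces $G_q \varphi \in C^\infty_{0,q}(V_0^\sigma)$. The harmonic summand $h := H_q u$ is handled by a parallel but simpler bootstrap: since $\dib_b h = \dib_b^* h = 0$, Theorem~\ref{t41} reduces $\no{h}_{\mathcal{A}}$ to the error norm, whose $L^2$ part is controlled by $\no{u}_{L^2}$ and whose $0$-microlocal part is controlled by Proposition~\ref{Psi0} applied to $\Box_b h = 0$, with an analogous elliptic-regularization limit (via $\Box_b^\delta$) supplying the needed uniformity.

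The principal technical obstacle is establishing the uniform bound $\no{\zeta_2 G_q^\delta \varphi}_{H^{-s_o}} \le C$ with $C$ independent of $\delta$: a naive energy estimate gives only $\no{G_q^\delta \varphi}_{L^2} \lesssim \delta^{-1}\no{\varphi}_{L^2}$, which blows up. The remedy is to decompose $v_\delta := G_q^\delta \varphi = v_\delta^\perp + v_\delta^{\H}$ along $L^2_{0,q}(M) = \H_{0,q}(M)^\perp \oplus \H_{0,q}(M)$: the identity $Q_b^\delta(v_\delta, v_\delta) = (\varphi, v_\delta^\perp)$ (using $\varphi \perp \H_{0,q}(M)$) combined with the closed-range estimate $Q_b(v_\delta^\perp, v_\delta^\perp) \gtrsim \no{v_\delta^\perp}_{L^2}^2$ yields a $\delta$-independent $L^2$ bound on $v_\delta^\perp$, while projecting $\Box_b^\delta v_\delta = \varphi$ onto $\H_{0,q}(M)$ forces $(\Box_d + I) v_\delta \in \H_{0,q}(M)^\perp$ and, using that $(\Box_d + I)^{-1}$ is bounded on $\H_{0,q}(M)^\perp$, produces uniform control of $v_\delta^{\H}$ in a sufficiently negative Sobolev norm. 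Once this uniformity is secured, every remaining step flows mechanically from Sections~\ref{s2} and \ref{s3}.
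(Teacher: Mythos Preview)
Your overall architecture is right: decompose $u = G_q\varphi + H_q u$ with $\varphi = \Box_b u$, then invoke Theorem~\ref{t43b} for the first summand. But the heart of the argument---the uniform-in-$\delta$ bound on $G_q^\delta\varphi$---has a genuine gap, and in patching it you miss the single fact that drives the whole proof (and explains the hypothesis $n\ge 2$).

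Your bound on $v_\delta^\perp$ is fine: $Q_b(v_\delta,v_\delta)=Q_b(v_\delta^\perp,v_\delta^\perp)$ since $v_\delta^{\H}$ is $\dib_b$- and $\dib_b^*$-closed, and closed range gives $\no{v_\delta^\perp}_{L^2}\lesssim\no{\varphi}_{L^2}$. The trouble is $v_\delta^{\H}$. From $\varphi\perp\H_{0,q}$ you correctly deduce $(\Box_d+I)v_\delta\perp\H_{0,q}$, i.e.\ $v_\delta\perp(\Box_d+I)\H_{0,q}$. But the assertion that ``$(\Box_d+I)^{-1}$ is bounded on $\H_{0,q}^\perp$'' does not yield control of $v_\delta^{\H}$: $(\Box_d+I)^{-1}$ has no reason to preserve $\H_{0,q}^\perp$, and orthogonality to $(\Box_d+I)\H_{0,q}$ says nothing about the size of the $\H_{0,q}$-component of $v_\delta$ without further input. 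As stated, this step does not close.

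The paper's route is both simpler and is where the dimensional restriction enters. For $1\le q\le n-1$ the harmonic space $\H_{0,q}(M)$ is \emph{finite-dimensional}; consequently $\H_{0,q}^\delta=\H_{0,q}\cap\ker(\Box_d+I)=\{0\}$ (cf.\ \cite[Lemma~5.3]{KhRa16a}), and the closed-range estimate~\eqref{closed range} upgrades to $\no{w}_{L^2}^2\lesssim Q_b^\delta(w,w)$ uniformly in $\delta$. This gives $\no{G_q^\delta\varphi}_{L^2}\lesssim\no{\varphi}_{L^2}$ in one stroke---no orthogonal splitting needed---and Theorem~\ref{t43b} applies with $s_o=0$. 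The same finite-dimensionality handles the harmonic summand directly: all Sobolev norms on $\H_{0,q}(M)$ are equivalent, so $\no{H_q u}_{H^s}\lesssim\no{u}_{L^2}$ for every $s$, and $H_q u$ is globally smooth without any bootstrap or regularization. Your proposed treatment of $H_q u$ via Theorem~\ref{t41} plus an ``analogous elliptic-regularization limit'' would again require the very uniformity you have not established.
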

\begin{proof} Set $\varphi:=\Box_bu\in \H^\perp_{0,q}(M)\cap  L^2_{0,q}(M)\cap C^\infty_{0,q}(V_1^\sigma)$. Then $u=G_q\varphi+H_qu$ where we recall that $H_q$ is the project onto $\H_{0,q}(M)$.
In order to apply Theorem~\ref{t43} above for this $G_q\varphi$, we have to show that  that $G^\delta_q\varphi$ is also  bounded in $L_{0,q}^2(M)$ uniformly in $\delta$.
 					Since $\H_{0,q}$	is finite dimensional if $1\le q\le n-1$, the inequality \ref{closed range} implies
 					\begin{eqnarray}
 					\label{L2delta}	\no{u}^2_{L^2}\lesssim Q_b^\delta(u,u)+\no{H^\delta_qu}_{L^2}^2
 					\end{eqnarray}
 					uniformly in $\delta$ where $H_q^\delta$ is the project onto $\H^\delta_{0,q}=\H_{0,q}\cap \{u: Q_d(u,u)=0\}=\{0\}$ (see \cite[Lemma 5.3]{KhRa16a} for the proof of this claim). This implies
 					\begin{eqnarray}
 					\label{L2}
 					\no{G^\delta_q\varphi}_{L^2}\lesssim \no{\varphi}_{L^2}
 					\end{eqnarray}
 					holds uniformly in $\delta$ for any $\varphi\in L^2_{0,q}(M)$ with $1\le q\le n-1$. Thus, $\no{\zeta_2G^\delta_q\varphi}^2_{H^{0}}<\infty $ uniformly in $\delta$. Now, we apply Theorem~\ref{t43b} for $s_o=0$ and $r=2$  to obtain that
 					\begin{enumerate}
 						\item[(i)] $G_q\varphi\in {\mathcal A}_{\zeta;0,q}^{+,s\sigma-2}$ for $q_0\le q\le n-1$;
 						\item[(ii)]  $G_q\varphi\in {\mathcal A}_{\zeta;0,q}^{-,s\sigma-2}$ for $1\le q\le n- q_0$;
 						\item[(iii)] $G_q\varphi\in {\mathcal E}^{s,-2}_{\zeta,\tilde{\zeta};0,q}$ for $1\le q\le n-1$.
 					\end{enumerate}  Subsisting $G_q\varphi\in {\mathcal A}_{\zeta;0,q}^{+,s\sigma-2}\cap {\mathcal A}_{\zeta;0,q}^{-,s\sigma-2}\cap {\mathcal E}^{s,-2}_{\zeta,\tilde{\zeta};0,q} $ with $q_o\le q\le n-q_o$ into the estimates in Theorem~\ref{t41}, we get
 					\begin{eqnarray}\begin{aligned}
 					\no{\zeta_0u}^2_{H^{s-4}}\lesssim&	\no{\zeta_0G_q\varphi}^2_{H^{s-4}}+\no{\zeta_0H_qu}^2_{H^{s-4}}\\
 					\lesssim& \no{G_q\varphi}^2_{{\mathcal A}^{+,s\sigma-4}_{\zeta}}+\no{G_q\varphi}^2_{{\mathcal A}^{-,s\sigma-4}_{\zeta}}+\no{G_q\varphi}^2_{{\mathcal E}^{s-4}_{\zeta,\tilde{\zeta}}}+\no{H_qu}^2_{H^{s-4}}\\
 					\lesssim& \no{\varphi}^2_{{\mathcal A}^{+,s\sigma-4}_{\zeta}}+\no{\varphi}^2_{{\mathcal A}^{-,s\sigma-4}_{\zeta}}+\no{G_q\varphi}^2_{{\mathcal E}^{s-4}_{\zeta,\tilde{\zeta}}}+\no{u}^2_{L^2}\\
 					\lesssim& \no{\zeta_1\varphi}^2_{H^{s-4}}+\no{G_q\varphi}^2_{L^2}+\no{u}^2_{L^2}\\
 					\lesssim&\no{\zeta_1\Box_bu}^2_{H^{s-4}}+\no{u}^2_{L^2}.\\
 				\end{aligned}	\end{eqnarray}
 				Here, we have use $\no{H_qu}^2_{H^{s-4}}\lesssim \no{u}^2_{L^2}$ since $\H_{0,q}(M)$ is finite dimensional. We conclude that $\zeta_0u\in H^{s-4}$ for any $s\ge 0$ and any $\zeta_0\in V_0^\sigma$, this proves Theorem~\ref{t44}. 
 			\end{proof}
 			\begin{remark}As in the proof of this theorem, we see that 
 			even {\it a-priori} estimates hold on the top degrees, we only have the regularity of $G_q$ excluding the top degrees $q=0$ and $q=n$ since 	 $\H_{0,q}$ is finte dimensional only for $1\le q\le n-1$.  
 			\end{remark}
 			The local regularity of $G_q$ is equivalent to the local hypoellipticity of $\Box_b$ and implies the local regularity of $\dib_bG_q$ and other operators but with loss derivatives in $H^s$-spaces. In the following we prove the exact $H^s$ regularity or the $H^s$ regularity with a small gain derivative for the relative operators of $G_q$.	
 			\begin{theorem}
 				\label{t45}  Let $M$ be a pseudoconvex CR manifold of dimension $(2n+1)$, $n\ge 2$
 				such that  $\dib_b$ has closed range in $L^2$ spaces for all degrees of forms. 
 				Suppose that  the $\sigma$-superlogarithmic property with independent rate $f$ holds on $(0,q_0)$-forms. 	Then, for all $s\ge0$, $q_0\le q\le n-q_0$ and $\zeta_0\prec\sigma\prec \zeta_1$, the following holds.
 				\begin{enumerate}
 					\item[(i)] If $\varphi\in L^2_{0,q}(M)\cap H_{0,q}^s(\supp(\zeta_1))$,  then 
 					\begin{eqnarray}
 					\label{main-est10}
 					\begin{aligned}
 					&\no{f^2(\Lambda)\zeta_0G_q\varphi}_{H^{s}}+\no{f(\Lambda)\zeta_0\dib_bG_q\varphi}_{H^{s}}+\no{f(\Lambda)\zeta_0\dib_b^*G_q\varphi}_{H^{s}}\\
 					&+\no{\zeta_0\dib_b\dib_b^*G_q\varphi}_{H^{s}}+\no{\zeta_0\dib_b^*\dib_bG_q\varphi}_{H^{s}}
 					\lesssim \no{ \zeta_1 \varphi}_{H^{s}}+\no{\varphi}_{L^2}.
 					\end{aligned}
 					\end{eqnarray}
 					\item[(ii)] If $\varphi\in L^2_{q-1}(M)\cap H^s_{0,q-1}(\supp(\zeta_1))$ then 
 				
 					\begin{eqnarray}
 					\label{main-est20}
 					\begin{aligned}
 					\no{f^2(\Lambda)\zeta_0\dib_b^*G_{q}^2\dib_b\varphi}_{H^{s}}+&\no{f(\Lambda)\zeta_0G_{q}\dib_b\varphi}_{H^{s}} 				+\no{\zeta_0(I-\dib_b^*G_{q}\dib_b)\varphi}_{H^{s}}\\
 					& \no{ \zeta_1 \varphi}_{H^{s}}+\no{\varphi}_{L^2}.
 					\end{aligned}
 					\end{eqnarray}
 					\item[(iii)] If  $\varphi\in L^2_{0,q+1}(M)\cap H^s_{0,q+1}(\supp(\zeta_1))$
 					\begin{eqnarray}
 					\label{main-est30}
 					\begin{aligned}
 					\no{f^2(\Lambda)\zeta_0\dib_bG_{q}^2\dib^*_b\varphi}_{H^{s}}+&\no{f(\Lambda)\zeta_0G_{q}\dib_b^*\varphi}_{H^{s}} 				+\no{\zeta_0(I-\dib_bG_{q}\dib_b^*)\varphi}_{H^{s}}\\
 					& \no{ \zeta_1 \varphi}_{H^{s}}+\no{\varphi}_{L^2}.
 					\end{aligned}
 					\end{eqnarray}
 				\end{enumerate} 	
 			\end{theorem}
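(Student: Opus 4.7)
The plan is to combine the a-priori estimates from Section~\ref{s3} (Theorems~\ref{t41} and~\ref{t42}) with the elliptic-regularization membership result Theorem~\ref{t43b}, and then translate the resulting bounds in the intermediate norms $\no{\cdot}_{g\T-{\mathcal A}^{\pm,s\sigma}_\zeta}$ into the standard Sobolev norms $\no{g(\Lambda)\zeta_0\cdot}_{H^s}$ by the comparison inequality of \S\ref{s2.3} together with Proposition~\ref{Psi0} for the $0$-microlocal piece.

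For part (i), I would first note that $\no{G_q^\delta\varphi}_{L^2}\lesssim\no{\varphi}_{L^2}$ uniformly in $\delta$ for $1\le q\le n-1$, exactly as in the proof of Theorem~\ref{t44} (using \eqref{closed range} and the finite-dimensionality of $\H_{0,q}$). Theorem~\ref{t43b} with $s_o=0$ and $r=2$ then gives $G_q\varphi\in {\mathcal A}^{+,s\sigma-2}_{\zeta;0,q}\cap{\mathcal A}^{-,s\sigma-2}_{\zeta;0,q}\cap{\mathcal E}^{s,-2}_{\zeta,\tilde\zeta;0,q}$. Applied to $u=G_q\varphi$ with $\Box_bu=\varphi-H_q\varphi$, Theorem~\ref{t42} controls $G_q\varphi$, $\dib_bG_q\varphi$, $\dib_b^*G_q\varphi$, $\dib_b\dib_b^*G_q\varphi$ and $\dib_b^*\dib_bG_q\varphi$ in the appropriate weighted norms by $\no{\varphi}^2_{{\mathcal A}^{\pm,s\sigma-r}_\zeta}\lesssim\no{\zeta_1\varphi}^2_{H^s}$, plus error terms bounded by $\no{\varphi}_{L^2}$ (using that $H_q$ has finite-dimensional smooth range). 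Adding the $0$-microlocal estimate $\no{f^2(\Lambda)\Psi^0\zeta G_q\varphi}_{H^s}\lesssim\no{\Psi^0_1\zeta_1\varphi}_{H^s}+\no{\varphi}_{L^2}$ from Proposition~\ref{Psi0} and summing via the comparison of \S\ref{s2.3} yields \eqref{main-est10}.

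For parts (ii) and (iii), I would exploit the commutation relations $\dib_bG_q=G_{q+1}\dib_b$ and $\dib_b^*G_q=G_{q-1}\dib_b^*$ on $\H^\perp$, which follow from $\Box_{q+1}\dib_b=\dib_b\Box_q$ together with $\dib_b H_q=0$ and the orthogonality $\Imm(\dib_b)\perp\H_{0,q+1}$. Thus $G_q\dib_b\varphi=\dib_b G_{q-1}\varphi$ and $\dib_b^*G_q^2\dib_b\varphi=\dib_b^*G_q\dib_b G_{q-1}\varphi$. When $q\ge q_0+1$, Proposition~\ref{p3.6} gives the $\sigma$-superlogarithmic property on $(0,q-1)$-forms and \eqref{main-est20} is then immediate from \eqref{main-est10} applied to $G_{q-1}\varphi$. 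For the boundary case $q=q_0$, I would instead apply Theorem~\ref{t42} directly to $u=G_q\dib_b\varphi$ (a $(0,q)$-form satisfying $\dib_bu=0$ and $\Box_bu=\dib_b\varphi$) and to $u=G_q^2\dib_b\varphi$ (with $\Box_bu=G_q\dib_b\varphi$), and use the identity $I-\dib_b^*G_q\dib_b=\varphi-\dib_b^*G_q\dib_b\varphi$ together with the estimate for $\dib_b^*G_q\dib_b\varphi$ already obtained. Part (iii) is proved by the same mechanism using the $-$-microlocal version of Theorem~\ref{t42}.

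The main obstacle is a chicken-and-egg issue: the a-priori estimate of Theorem~\ref{t42} requires $u\in{\mathcal A}^{\pm,s\sigma+2-r}_{\zeta;0,q}\cap{\mathcal E}^{s,2-r}_{\zeta,\tilde\zeta;0,q}$, which is not obvious for $u=G_q\varphi$. This is precisely the role of Theorem~\ref{t43b}, whose proof in turn rests on the uniform-in-$\delta$ $L^2$-bound of $G_q^\delta\varphi$ and the weak subsequential convergence $G_q^\delta\varphi\rightharpoonup G_q\varphi$ in the special norm. A secondary technical point is the careful bookkeeping of weights: the factors $f$, $f^2$ and the shifts by $\ln$ and by a derivative in \eqref{main-est10}-\eqref{main-est30} must be matched exactly with those appearing in Theorems~\ref{t41}-\ref{t42}, and the commutator errors $[\zeta\Gamma_k\Psi^\pm\zeta,\dib_b]$ and $[\zeta\Gamma_k\Psi^\pm\zeta,\dib_b^*]$ must be absorbed into the error norm $\no{\cdot}_{g\T-{\mathcal E}^{s,b}_{\zeta,\tilde\zeta}}$ via Lemma~\ref{com1}, before being finally controlled by $\no{\varphi}_{L^2}$.
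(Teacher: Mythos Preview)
Your treatment of part (i) is correct and essentially matches the paper's argument.

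For parts (ii) and (iii) there is a genuine gap in your boundary-case approach. Applying Theorem~\ref{t42} to $u=G_q\dib_b\varphi$ bounds the left side by $\no{\Box_b u}_{{\mathcal A}^{\pm,s\sigma-r}_\zeta}=\no{\dib_b\varphi}_{{\mathcal A}^{\pm,s\sigma-r}_\zeta}$; since $\dib_b$ costs one derivative of $\varphi$, after converting back to Sobolev norms you obtain $\no{\zeta_1\varphi}_{H^{s+1}}$ on the right, not the claimed $\no{\zeta_1\varphi}_{H^s}$. Because the independent rate $f$ can grow arbitrarily slowly (one only has $f\gg1$), the extra factor of $f$ you gain on the left cannot compensate for this lost derivative. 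The paper recovers the derivative by working at the quadratic-form level rather than the norm level: it applies Lemma~\ref{l42}(i) to $u=G_q\dib_b\varphi$, which, using $\dib_bu=0$ and $\dib_b\dib_b^*u=\dib_b\varphi$, produces the bilinear term $\Re(\dib_b\varphi,G_q\dib_b\varphi)_{{\mathcal A}^{+,s\sigma-4}_\zeta}$; then Lemma~\ref{l41}(i), with $v=\varphi$, $w=G_q\dib_b\varphi$, moves the $\dib_b$ across as $\dib_b^*$, yielding $\Re(\varphi,\dib_b^*G_q\dib_b\varphi)_{{\mathcal A}^{+,s\sigma-4}_\zeta}$ plus errors controlled by $\no{\varphi}$ in an ${\mathcal A}$-norm and absorbable lower-order terms of $G_q\dib_b\varphi$ (this is the computation \eqref{a11}--\eqref{a13}). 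This integration by parts in the ${\mathcal A}$-inner product is exactly the device that avoids the derivative loss, and a black-box use of Theorem~\ref{t42} cannot substitute for it.

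Two further remarks on the organization. First, the paper uses this direct Lemma~\ref{l42}/\ref{l41} route for the $+$-microlocal estimate at \emph{every} $q\ge q_0$, not only at the boundary; the commutation $G_q\dib_b=\dib_bG_{q-1}$ is invoked only for the $-$-microlocal piece, and there the relevant case split is $q\ge2$ versus $q=1$ (the issue being whether $q-1$ lies in the range where \eqref{z2} is available), with the case $q=1$ handled directly via two applications of Theorem~\ref{t41}(ii) using that $(I-\dib_b^*G_1\dib_b)\varphi$ is annihilated by both $\dib_b$ and $\dib_b^*$. Second, your reduction of the $f^2$-term $\dib_b^*G_q^2\dib_b\varphi$ to part (i) at degree $q-1$ would require $\no{\zeta_1 G_{q-1}\varphi}_{H^s}$ as input, whereas part (i) only delivers control on $\supp(\zeta_0)$; the paper instead obtains this term from the already-established ${\mathcal A}$-norm estimates for $G_q\dib_b$ and $\dib_b^*G_q$ without leaving the intermediate-norm framework.
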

 			\begin{proof}
 				In the proof of Theorem~\ref{t44} we have already proven that $G_q\varphi\in {\mathcal A}_{\zeta;0,q}^{+,s\sigma-2}$ with $q_0\le q\le n-1$, $G_q\varphi\in {\mathcal A}_{\zeta;0,q}^{-,s\sigma-2}$ with $1\le q\le n- q_0$, and also $G_q\varphi\in {\mathcal E}^{s,-2}_{\zeta,\tilde{\zeta};0,q}$ for $1\le q\le n-1$ if provided  $\varphi\in L^2_{0,q}(M)\cap H_{0,q}^s(\supp(\zeta_1))$.
 		Hence, $G_q\varphi$ satisfies  the hypothesis of Theorem~\ref{t41} for $r=4$, therefore, the estimates
 			\begin{eqnarray}
 			\label{z1}\begin{aligned}
 			\no{G_q\varphi}^2_{f^2\T-{\mathcal A}^{+,s\sigma-4}_\zeta}+&\no{\dib_bG_q\varphi}^2_{f\T-{\mathcal A}^{+,s\sigma-4}_\zeta}+\no{\dib_b^*G_q\varphi}^2_{f\T-{\mathcal A}^{+,s\sigma-4}_\zeta}\\
 			+&\no{\dib_b^*\dib_bG_q\varphi}^2_{{\mathcal A}^{+,s\sigma-4}_\zeta}+\no{\dib_b\dib_b^*G_q\varphi}^2_{{\mathcal A}^{+,s\sigma-4}_\zeta}
 			\lesssim\no{\zeta_1\varphi}^2_{H^{s-4}}+\no{\varphi}^2_{L^2}
 			\end{aligned}
 			\end{eqnarray}
 		holds on degrees $q_0 \le q\le n-1 $;	and dually, 
 		\begin{eqnarray}
 		\label{z2}\begin{aligned}
 			\no{G_q\varphi}^2_{f^2\T-{\mathcal A}^{-,s\sigma-4}_\zeta}+&\no{\dib_bG_q\varphi}^2_{f\T-{\mathcal A}^{-,s\sigma-4}_\zeta}+\no{\dib_b^*G_q\varphi}^2_{f\T-{\mathcal A}^{-,s\sigma-4}_\zeta}\\
 			+&\no{\dib_b^*\dib_bG_q\varphi}^2_{{\mathcal A}^{-,s\sigma-4}_\zeta}+\no{\dib_b\dib_b^*G_q\varphi}^2_{{\mathcal A}^{-,s\sigma-4}_\zeta}
 			\lesssim\no{\zeta_1\varphi}^2_{H^{s-4}}+\no{\varphi}^2_{L^2}\\
 			\end{aligned}	\end{eqnarray}
 			holds on degrees	$1\le q\le n-q_0.$\\
 			\noindent \underline{\it Proof of (i).} Fix $q_0\le q\le n-q_0$. If $\varphi\in L^2_{0,q}(M)\cap H_{0,q}^s(\supp(\zeta_1))$, then both \eqref{z1} and \eqref{z2} hold, so taking summation to get 
 			\begin{eqnarray}
 			\label{z3}\begin{aligned}
 			\no{f^2(\La)\zeta_0G_q\varphi}^2_{H^{s-4}}&+\no{f(\La)\zeta_0\dib_bG_q\varphi}^2_{H^{s-4}}+\no{f(\La)\zeta_0\dib_b^*G_q\varphi}^2_{H^{s-4}}\\
 			&+\no{\zeta_0\dib_b^*\dib_bG_q\varphi}^2_{H^{s-4}}+\no{\zeta_0\dib_b\dib_b^*G_q\varphi}^2_{H^{s-4}}
 			\lesssim\no{\zeta_1\varphi}^2_{H^{s-4}}+\no{\varphi}^2_{L^2}.
 			\end{aligned}		\end{eqnarray}
 				Since $H^s_{0,q}(M)$ is dense in $H^{s-4}_{0,q}(M)$, the estimate \eqref{z3} is still holds for $\varphi\in L^2_{0,q}(M)\cap H_{0,q}^{s-4}(\supp(\zeta_1))$. This proves the first estimate in Theorem~\ref{t45}.\\
 	\noindent \underline{\it Proof of (ii).} 
Now we deal $\varphi\in L^2_{q-1}(M)\cap H^s_{0,q-1}(\supp(\zeta_1))$. By \eqref{L2delta}, $\no{G^\delta_{q}\dib_b\varphi}_{L^2}$ is uniformly bounded and hence, by Theorem~\ref{t44}, $G_{q}\dib_b\varphi\in {\mathcal A}^{\pm,s\sigma-2}_{\zeta;0,q}\cap {\mathcal E}^{s,-2}_{\zeta,\tilde{\zeta};0,q}$. We first give estimate on the positive norm $\no{\cdot}_{{\mathcal A}^{+,s\sigma-4}_\zeta}$. We use Lemma~\ref{l42}.(i) for $u=G_{q}\dib_b\varphi$ and with the choice $g(t)=t^{-4}$,
\begin{eqnarray}\label{a11}
\begin{aligned}
\no{G_{q}\dib_b\varphi}^2_{f\T-{\mathcal A}_\zeta^{+,s\sigma-4}}+&\no{G_{q}\dib_b\varphi}^2_{f_0\ln\T-\di_b\sigma\lrcorner {\mathcal A}_\zeta^{+,s\sigma-4}}+\no{\dib_b^*G_{q}\dib_b\varphi}^2_{{\mathcal A}_\zeta^{+,s\sigma-4}}\\
\lesssim&\Re\,(\dib_b\dib_b^*G_{q}\dib_b\varphi,G_{q}\dib_b\varphi)_{{\mathcal A}_\zeta^{+,s\sigma-4}}+\no{\dib_bG_{q}\dib_b\varphi}+^2_{{\mathcal A}^{+,s\sigma-4}_\zeta}+\no{G_{q}\dib_b\varphi}^2_{{\mathcal E}^{s,-3}_{\zeta,\tilde{\zeta}}}\\
=&\Re\,(\dib_b\varphi,G_{q}\dib_b\varphi)_{{\mathcal A}_\zeta^{+,s\sigma-4}}+\no{\varphi}^2_{{\mathcal E}^{s,-4}}+\no{G_q\dib_b\varphi}^2_{{\mathcal E}^{s,-3}}.
\end{aligned}
\end{eqnarray} 
Using Lemma~\ref{l41}.(i) for $v=\varphi$, $w=G_{q}\dib_b\varphi$ with the choice $g=t^{-4}$ and $a=\eps=1$ 
\begin{eqnarray}\label{a12}
\begin{aligned}\Re\,(\dib_b\varphi,G_q\dib_b\varphi)_{{\mathcal A}_\zeta^{+,s\sigma-4}}\le& \Re\,(\varphi,\dib_b^*G_q\dib_b\varphi)_{{\mathcal A}_\zeta^{+,s\sigma-4}}+\no{\varphi}_{{\mathcal A}_\zeta^{+,s\sigma-4}}
+\no{\varphi}_{{\mathcal E}^{s,-4}_{\zeta,\tilde{\zeta}}}\\
+&c\left(\no{G_{q}\dib_b\varphi}^2_{{\mathcal A}_\zeta^{+,s\sigma-4}}+\no{G_{q}\dib_b\varphi}^2_{\ln\T-\di_b\sigma\lrcorner {\mathcal A}_\zeta^{+,s\sigma-4}}+\no{G_{q}\dib_b\varphi}^2_{{\mathcal E}^{s,-4}_{\zeta,\tilde{\zeta}}}\right)\\
\le&\epsilon\no{\dib_b^*G_q\dib_b\varphi}^2_{{\mathcal A}_\zeta^{+,s\sigma-4}}+c_\eps\no{\varphi}_{{\mathcal A}_\zeta^{+,s\sigma-4}}
+\no{\varphi}_{{\mathcal E}^{s,-4}_{\zeta,\tilde{\zeta}}}\\
+&c\left(\no{G_{q}\dib_b\varphi}^2_{{\mathcal A}_\zeta^{+,s\sigma-4}}+\no{G_{q}\dib_b\varphi}^2_{\ln\T-\di_b\sigma\lrcorner {\mathcal A}_\zeta^{+,s\sigma-4}}+\no{G_{q}\dib_b\varphi}^2_{{\mathcal E}^{s,-4}_{\zeta,\tilde{\zeta}}}\right).
\end{aligned}
\end{eqnarray} 
Combining \eqref{a11} and \eqref{a12} and the elliptic estimate in the error norm, we conclude that  
\begin{eqnarray}\label{a13}
\begin{aligned}
\no{G_{q}\dib_b\varphi}^2_{f\T-{\mathcal A}_\zeta^{+,s\sigma-4}}+\no{\dib_b^*G_{q_o}\dib_b\varphi}^2_{{\mathcal A}_\zeta^{+,s\sigma-4}}
\lesssim\no{\zeta_1\varphi}^2_{H^{s-4}}+\no{\varphi}^2_{L^2}.
\end{aligned}
\end{eqnarray}
Now we  give estimate on the negative norm $\no{\cdot}_{{\mathcal A}^{-,s\sigma-4}_\zeta}$. We consider two case of $q$. If $1\le q-1\le n-q_0$,
 we use the fact that $G_{q}\dib_b=\dib_bG_{q-1}$ and \eqref{z2} holds for all degree between $1$ and $n-q_0$, then
\begin{eqnarray}\label{a14}
\begin{aligned}
\no{G_{q}\dib_b\varphi}^2_{f\T-{\mathcal A}_\zeta^{-,s\sigma-4}}+\no{(I-\dib_b^*G_{q}\dib_b)\varphi}^2_{{\mathcal A}_\zeta^{-,s\sigma-4}}
\lesssim\no{\zeta_1\varphi}^2_{H^{s-4}}+\no{\varphi}^2_{L^2}.
\end{aligned}
\end{eqnarray}
Otherwise, $q=1$, we give a directly proof of \eqref{a14} as the following
\begin{eqnarray}\label{a15}
\begin{aligned}\no{G_{1}\dib_b\varphi}^2_{f\T-{\mathcal A}_\zeta^{-,s\sigma-4}}\lesssim& 
\no{\dib_b^*G_{1}\dib_b\varphi}^2_{{\mathcal A}_\zeta^{-,s\sigma-4}}+\no{G_{1}\dib_b\varphi}^2_{{\mathcal E}^{s,-4}_{\zeta,\tilde{\zeta}}}\\
\lesssim&\no{(I-\dib_b^*G_{1}\dib_b)\varphi}^2_{{\mathcal A}_\zeta^{-,s\sigma-4}}+\no{\varphi}^2_{{\mathcal A}_\zeta^{-,s\sigma-4}}+\no{G_{1}\dib_b\varphi}^2_{{\mathcal E}^{s,-4}_{\zeta,\tilde{\zeta}}}\\
\lesssim&\no{\varphi}^2_{{\mathcal A}_\zeta^{-,s\sigma-4}}+\no{(I-\dib_b^*G_{1}\dib_b)\varphi}^2_{{\mathcal E}^{s,-4}_{\zeta,\tilde{\zeta}}}+\no{G_{1}\dib_b\varphi}^2_{{\mathcal E}^{s,-4}_{\zeta,\tilde{\zeta}}}.
\end{aligned}
 \end{eqnarray}
 Here, we have used Theorem~\ref{t41}.(ii) twice: the first one for the first inequality and the second one for the third inequality with notice that $\dib_b(I-\dib_b^*G_1\dib_b)\varphi=\dib_b^*(I-\dib_b^*G_1\dib_b)\varphi=0$. So \eqref{a14} holds for both case of $q\le n-q_0$. \\

Finally, the estimate of the term $\dib_b^*G_{q}^2\dib_b\varphi$ follows by the estimates of $\dib_bG_{q}$ and $G_{q}\dib_b$. Summarizing, we get  
	\begin{eqnarray}
	\label{m20}
	\begin{aligned}
	\no{f^2(\Lambda)\zeta_0\dib_b^*G_{q}^2\dib_b\varphi}_{H^{s-4}}+&\no{f(\Lambda)\zeta_0G_{q}\dib_b\varphi}_{H^{s-4}} 				+\no{\zeta_0(I-\dib_b^*G_{q}\dib_b)\varphi}_{H^{s-4}}\\
	& \no{ \zeta_1 \varphi}_{H^{s-4}}+\no{\varphi}_{L^2},
	\end{aligned}
	\end{eqnarray}
	for all $\varphi\in L^2_{0,q-1}(M)\cap H^s_{0,q-1}(\supp(\zeta_1))$. By the density of $H^{s}_{0,q}(M)$ in $H^{s-4}_{0,q}(M)$, we can replace the assumption $\varphi\in L^2_{0,q-1}(M)\cap H^s_{0,q-1}(\supp(\zeta_1))$ by $\varphi\in L^2_{0,q-1}(M)\cap H^{s-4}_{0,q-1}(\zeta_1)$. This proves the part (ii) in Theorem~\eqref{t45}. The part (iii) follows analogously.
	\end{proof}
\begin{remark}
	 It is known that on the top degrees, the Green operators $G_0$ and $G_n$ agree with $\dib_b^*G_1^2\dib_b$ and $\dib_bG_{n-1}^2\dib_b^*$, respectively; and also $\dib_bG_0=G_1\dib_b$, $\dib_b^*G_n=G_{n-1}\dib_b^*$. Therefore, if $q_0=1$ then 		\begin{eqnarray}
	 \begin{aligned}
	 \no{f^2(\Lambda)\zeta_0G_{0}\varphi}_{H^{s}}+\no{f(\Lambda)\zeta_0\dib_bG_0\varphi}_{H^{s}}\lesssim
	  \no{ \zeta_1 \varphi}_{H^{s}}+\no{\varphi}_{L^2}
	 \end{aligned}
	 \end{eqnarray}
	 holds 	for all $\varphi\in L^2(M)\cap H^s(\supp(\zeta_1))$; and 
	 \begin{eqnarray}
	 \begin{aligned}
	 \no{f^2(\Lambda)\zeta_0G_{n}\varphi}_{H^{s}}+\no{f(\Lambda)\zeta_0\dib^*_bG_n\varphi}_{H^{s}}\lesssim \no{ \zeta_1 \varphi}_{H^{s}}+\no{\varphi}_{L^2}
	 \end{aligned}
	 \end{eqnarray}
	 holds 	for all $\varphi\in L_{0,n}^2(M)\cap H_{0,n}^s(\supp(\zeta_1))$.
\end{remark}
For the case $n\ge 2$, the local regularity of $G_0$ (resp. $G_n$) is inherited from the local regularity of $G_1$ (resp. $G_{n-1}$). In the case $n=1$, this procedure does not work. In this case, in order to use {a-priori} estimates, we assume that $G_0$ (and hence $G_1$) is globally regular.
	\begin{theorem}\label{t46}
		Let $M$ be a $3$-dimensional pseudoconvex CR manifold such that $\dib_b$ has closed range on functions and $G_0$ is globally regular. Suppose that  $\sigma$-superlogarithmic property with dependent rate $f$ holds on $(0,1)$-forms. Then, for $q=0,1$, if  $u\in L^2_{0,q}(M)$ such that $\Box_b u\in  L^2_{0,q}(M)\cap C^\infty_{0,q}(V_1^\sigma)$ then  $u\in C^\infty_{0,q}(V_0^\sigma) $.
		  Furthermore, 
		\begin{eqnarray}
		\label{main-est40}
		\begin{aligned}
		\no{f^2(\Lambda)\zeta_0G_{0}\varphi}_{H^{s}}+\no{f(\Lambda)\zeta_0\dib_bG_0\varphi}_{H^{s}}+\no{\zeta_0(I-\dib_b^*G_1\dib_b)\varphi}_{H^{s}}\lesssim
		\no{ \zeta_1 \varphi}_{H^{s}}+\no{\varphi}_{L^2};
		\end{aligned}
		\end{eqnarray}
		holds 	for all $\varphi\in L^2(M)\cap H^s(\supp(\zeta_1))$, and 
		\begin{eqnarray}
		\label{main-est50}
		\begin{aligned}
		\no{f^2(\Lambda)\zeta_0G_{1}\varphi}_{H^{s}}+\no{f(\Lambda)\zeta_0\dib^*_bG_1\varphi}_{H^{s}}+\no{\zeta_0(I-\dib_bG_0\dib_b^*)\varphi}_{H^{s}}\lesssim \no{ \zeta_1 \varphi}_{H^{s}}+\no{\varphi}_{L^2},
		\end{aligned}
		\end{eqnarray}
		holds 	for all $\varphi\in L_{0,1}^2(M)\cap H_{0,1}^s(\supp(\zeta_1))$.
	\end{theorem}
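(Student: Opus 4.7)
The plan is to mimic the proof of Theorem~\ref{t45} but adapted to $n=1$, where two new obstacles appear. First, when $q_0 = n = 1$ the range $q_0 \le q \le n - q_0$ collapses to the single degree $q=1$ for the positive direction and $q=0$ for the negative direction, so the finite-dimensionality of $\H_{0,q}(M)$ for $1 \le q \le n-1$ that supplied the uniform-in-$\delta$ bound $\no{G_q^\delta \varphi}_{L^2} \lesssim \no{\varphi}_{L^2}$ in the proof of Theorem~\ref{t44} is no longer available. Second, Theorem~\ref{t41}(i) delivers a positive microlocal estimate only on $(0,q)$-forms with $q \ge q_0 = 1$, while Theorem~\ref{t41}(ii) delivers a negative microlocal estimate only on $(0,q)$-forms with $q \le n - q_0 = 0$. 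Consequently, at $q=0$ we lack a direct positive microlocal bound and at $q=1$ a direct negative microlocal bound.

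My first step is to replace the uniform $L^2$ control of $G_q^\delta$ by the following: the assumed global regularity of $G_0$ implies, via a closed graph argument combined with self-adjointness, that $G_0\colon H^s(M) \cap \H_{0,0}^\perp \to H^s(M)$ is bounded for every $s \ge 0$. In dimension three, $\dib_b$ annihilates $(0,1)$-forms, hence $\Box_1 = \dib_b\dib_b^*$ on $(0,1)$-forms and the intertwining $\dib_b^* G_1 = G_0 \dib_b^*$ on $\H_{0,1}^\perp$ yields analogous Sobolev control for $G_1$. A variant of the convergence argument used in the proof of Theorem~\ref{t43b} -- exploiting that $Q_b^\delta \to Q_b$ as $\delta \to 0$ and that $G_q$ is $L^2$-bounded -- then produces the uniform-in-$\delta$ bound $\no{\zeta_2 G_q^\delta \varphi}_{L^2} \lesssim \no{\varphi}_{L^2}$ for $q\in\{0,1\}$, so that Theorem~\ref{t43b} applies and places $G_q \varphi$ into the appropriate microlocal spaces for each direction actually covered by Theorem~\ref{t41}.

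For the missing microlocal directions, I will lean on the intertwining identities $\dib_b G_0 = G_1 \dib_b$ and $\dib_b^* G_1 = G_0 \dib_b^*$. To obtain \eqref{main-est40}, apply Theorem~\ref{t42}(i) on $(0,1)$-forms to $G_1(\dib_b\varphi) = \dib_b G_0\varphi$, thereby controlling $f(\Lambda)\zeta_0 \dib_b G_0 \varphi$ in the positive microlocalization; apply Theorem~\ref{t42}(ii) on $(0,0)$-forms to $G_0\varphi$ itself to control its negative microlocalization; invoke Proposition~\ref{Psi0} for the $0$-microlocal part; and use the global-regularity Sobolev bound on $G_0\varphi$ to dominate the positive microlocal part of $G_0\varphi$ that Theorem~\ref{t41} cannot reach. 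Summing over the three microlocal pieces, together with Lemma~\ref{l41} to handle the duality terms involving $\dib_b^* G_1 \dib_b \varphi$, yields \eqref{main-est40}. The estimate \eqref{main-est50} follows dually, interchanging the roles of $\dib_b$ and $\dib_b^*$ and using $\dib_b^* G_1 = G_0 \dib_b^*$ together with Theorem~\ref{t42}(ii) on $(0,1)$-forms (after transferring to $(0,0)$-forms via Proposition~\ref{p3.7}). The local hypoellipticity assertion follows from these Sobolev estimates and $u = G_q\varphi + H_q u$ with $\varphi = \Box_b u$, exactly as in the proof of Theorem~\ref{t44}, once the microlocal bounds have been established.

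The main technical obstacle is the passage from global regularity of $G_0$ to the uniform-in-$\delta$ $L^2$-bound on $G_1^\delta\varphi$ needed to feed into Theorem~\ref{t43b}. In the higher-dimensional case this bound was free from finite-dimensionality of $\H_{0,q}$; here one must trace the intertwining identity $\dib_b^* G_1 = G_0 \dib_b^*$ through the elliptic perturbation $\Box_b^\delta$, which does not respect this identity, and extract a clean $\delta$-independent estimate only in the limit. A secondary subtlety is that the positive microlocal part of $G_0\varphi$ cannot be captured by the $\sigma$-superlogarithmic machinery alone on $(0,0)$-forms, which is precisely why the extra hypothesis of global regularity of $G_0$ is essential in dimension three.
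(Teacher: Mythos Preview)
Your proposal takes a different route from the paper and contains a genuine gap that you yourself flag but do not close. You try to stay within the elliptic-regularization framework of Theorem~\ref{t43b}, which requires a uniform-in-$\delta$ bound on $\no{\zeta_2 G_q^\delta\varphi}_{L^2}$ (or in some negative norm). In Theorem~\ref{t44} this came from \eqref{L2delta}, which in turn used that $\H_{0,q}$ is finite-dimensional for $1\le q\le n-1$; in dimension three this fails at $q=0$ and $q=1$. Your claim that ``a variant of the convergence argument used in the proof of Theorem~\ref{t43b}'' supplies the needed bound is backwards: that argument takes uniform boundedness of $G_q^\delta\varphi$ as \emph{input} and outputs weak convergence to $G_q\varphi$, not the other way around. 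The $L^2$-boundedness of $G_q$ does not by itself force uniform $L^2$-boundedness of $G_q^\delta$ (note $\Box_b^\delta$ does not respect the intertwining $\dib_b^*G_1=G_0\dib_b^*$, as you observe). Separately, using the global-regularity Sobolev bound $\no{G_0\varphi}_{H^s}\lesssim\no{\varphi}_{H^{m_s}}$ to cover the positive microlocalization of $G_0\varphi$ would lose derivatives and does not yield an estimate with right-hand side $\no{\zeta_1\varphi}_{H^s}+\no{\varphi}_{L^2}$.

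The paper avoids the regularization entirely here. Since $G_0$ (and hence $\dib_bG_0$, $G_1$, $\dib_b^*G_1$, and the associated Szeg\"o projections) is globally regular, for $\varphi\in C^\infty(M)$ the form $G_q\varphi$ is already smooth, so the a-priori estimates of Theorems~\ref{t41} and~\ref{t42} and the argument of Theorem~\ref{t45}(ii) (which goes through verbatim when $n=1$) apply directly to $G_q\varphi$ itself, giving \eqref{main-est40} and \eqref{main-est50} for globally smooth data with the cutoff $\zeta_1$ replaced by some $\tilde\zeta$ with $\sigma\prec\tilde\zeta\prec\zeta_1$. The passage to $\varphi\in L^2(M)\cap H^s(\supp(\zeta_1))$ is then a density argument (following \cite[p.~70]{Str10}): split $\varphi=\zeta_1\varphi+(1-\zeta_1)\varphi$; the first piece is globally in $H^s$ and the already-proved estimate applies; for the second piece, approximate $\varphi$ in $L^2$ by smooth $\varphi_m$, apply the estimate to the smooth data $(1-\zeta_1)(\varphi_l-\varphi_m)$, and use that $\tilde\zeta(1-\zeta_1)=0$ so only the $L^2$ term survives on the right. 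This makes $\{\zeta_0 G_0(1-\zeta_1)\varphi_m\}$ Cauchy in $H^s$, hence $\zeta_0 G_0(1-\zeta_1)\varphi\in H^s$ with the claimed bound. The key conceptual point is that global regularity is used not as a quantitative Sobolev estimate but simply to guarantee that $G_q\varphi$ is smooth whenever $\varphi$ is, so that a-priori estimates become genuine estimates.
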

The idea of the proof of this theorem follows by the argument in \cite[page 70]{Str10}.
\begin{proof}  We remark that the global regularity property of $G_0$ implies that $\dib_bG_0$, $(I-\dib_bG_0\dib_b^*)$, $G_1$, $\dib_b^*G_1$ and $(I-\dib_b^*G_1\dib_b)$ are also global regularity. Thus, for $\varphi\in C^\infty(M)$,  we can use the {\it a-priori} estimates in Theorem~\ref{t41} and \ref{t42}, and follow the proof in Theorem~\ref{t45}(ii) (for the case $n\ge 2$, but it still hold for $n=1$) together with the closed range of $\dib_b$ to get \eqref{main-est40} and \eqref{main-est50} for this globally smooth  function $\varphi$. So we have to pass the smoothness of $\varphi$ from global to local.  In the following we only give the proof for $G_0$ since other operators follows analogously.  By the fact that the local hypoellipticity of $\Box_b$ on functions follows by the local regularity of $G_0$, we have obtained 
		\begin{eqnarray}
		\label{main-est51}\no{f^2(\La)\zeta_0G_0\varphi}^2_{H^s}\lesssim \no{\tilde \zeta \varphi}^2_{H^s}+\no{\varphi}^2_{L^2}	\end{eqnarray}
		for all $\varphi\in C^\infty(M)$.
		Here we replace $\zeta_1$ by $\tilde{\zeta}$ such that $\sigma\prec\tilde\zeta\prec\zeta_1$ since $\zeta_1$ can be chosen arbitrary (but dominating $\sigma$). \\

 For a general $\varphi\in H^s(\supp(\zeta_1))\cap L^2(M)$, we choose a sequence of functions $\varphi_m\in C^\infty(M)$ such that $\varphi_m\to \varphi$ in $L^2(M)$. Since $\supp(\zeta_1)$ and $\supp((1-\zeta_2))$ are disjoint, we use \eqref{main-est51} for the smooth datum function $(1-\zeta_2)(\varphi_l-\varphi_m)$ and get 
 $$\no{f^2(\La)\zeta_0G_0(1-\zeta_2)(\varphi_l-\varphi_m)}_{H^s}^2\lesssim \no{\varphi_l-\varphi_m}^2.$$
Consequently, the sequence $\{f^2(\La)\zeta_0G_0(1-\zeta_2)\varphi_m\}$ is Cauchy in $H^s(M)$. Since it convergences to $f^2(\La)\zeta_0G_0(1-\zeta_2)\varphi$ in $L^2(M)$, thus $f^2(\La)\zeta_0G_0(1-\zeta_2)\varphi\in H^s(M)$ and 
\begin{eqnarray}
\label{a121}\no{f^2(\La)\zeta_0G_0(1-\zeta_2)\varphi}_{H^s}^2\lesssim \no{\varphi}^2.	\end{eqnarray}
Since $\zeta_1\varphi\in C^\infty(M)$, we used \eqref{main-est51} for the datum $\zeta_1\varphi$,
\begin{eqnarray}
\label{a122}\no{f^2(\La)\zeta_0G_0\zeta_1\varphi}_{H^s}^2\lesssim \no{\zeta_1\tilde\zeta\varphi}^2_{H^s}+\no{\zeta_1\varphi}^2_{L^2}\lesssim\no{\zeta_1、\varphi}^2_{H^s}+\no{\varphi}^2_{L^2} .	\end{eqnarray}
Combining \eqref{a121} and \eqref{a122}, we get the desired estimate:
 $$\no{\zeta_0G_0\varphi}_{H^s}^2\lesssim\no{f^2(\La)\zeta_0G_0\zeta_1\varphi}_{H^s}^2+ \no{f^2(\La)\zeta_0G_0(1-\zeta_2)\varphi}_{H^s}^2\lesssim\no{\zeta_1\varphi}^2_{H^s}+\no{\varphi}^2_{L^2},$$
 for all $\varphi\in H^s(\supp(\zeta_1))\cap L^2(M)$.
 
 \end{proof}

\subsection{Smoothness of the complex Green kernel}\label{s4.2}

 Let $\G_q(\hat x,x)$,  $0\le q\le n$, be the integral kernel of the complex Green operator $G_q$, it means, $G_q$ has the integral representation 
 $$G_q\varphi(x)=\int_M\la\varphi(\hat x),\G_q(\hat x,x) dV(\hat x)=(\varphi( \bullet),\G_q( \bullet, x))_{L^2}$$
 provided the integral exists; this imposes regularity conditions on $\varphi$ and $\G_q$.
 Thus, $\G_q$ is a double distribution form of degree $(0,q;q,0)$ on the product manifold $M\times M$. $\G_q$ can be expressed in local coordinate $\hat U\times  U$ as
 $$\G_q(\hat x,x)=\sum_{\hat I,I\in \I_q}\G_{\hat I,I}(\hat x,x)\bom_{\hat I}(\hat x)\we \om_I(x). $$
 To show the smoothness of $\G_q$, we will prove that functions $\G_{\hat I,I}$ are smooth for all $\hat I,I\in \I_q$. Fix $\hat I\in \I_q$. Let $\delta_{\hat x_0}$ be the delta Dirac ``function" with support in $\hat x_0\in M$. We choose a  $(0,q)$-form datum $$\varphi_{\hat x_0}(\hat x)=(D^{\hat \alpha})^*\delta_{\hat x_0}(\hat x)\bom_{\hat I}(\hat x),$$
 where $(D^{\hat\alpha})^*$ is the $L^2$ adjoint of the differential
 operator  $D^{\hat{\alpha}}$.
 This data is smooth on $M\setminus\{\hat x_0\}$ and bounded in $H^{-(n+1+|\hat\alpha|)}$-norm since
 	\begin{eqnarray}\label{810}
 	\begin{aligned}
\no{\varphi_{\hat x_0}}_{H^{-(n+1+|\hat{\alpha}|)}}^2\lesssim & \no{\delta_{\hat x_0}}_{H^{-(n+1)}}^2\\ =& \int_{\R^{2n+1}}(1+|\xi|^2)^{-(n+1)}\F\{\delta_{\hat x_0}\}(\xi)d\xi\\
=&\int_{\R^{2n+1}}(1+|\xi|^2)^{-(n+1)}d\xi\lesssim 1.
 	\end{aligned}
 	\end{eqnarray}
 Thus, 
 \begin{eqnarray}\label{811a}
 \begin{aligned}G_q\varphi_{\hat x_0}(x)=&
 (\varphi_{\hat x_0}( \bullet),\G_q( \bullet,x))_{L^2}\\
 =&\sum_{I\in \I_q}\left((D^{\hat \alpha})^*\delta_{\hat x_0}( \bullet),\G_{\hat I, I}( \bullet,x)\right)_{L^2}\bom_I(x)\\
 =&\sum_{I\in \I_q}\left(\delta_{\hat x_0}( \bullet),D_{ \bullet}^{\hat\alpha}\G_{\hat I,I}( \bullet,x)\right)_{L^2}\bom_I(x)\\
 =&\sum_{I\in \I_q}\overline{D^{\hat \alpha}_{\hat x_0}\G_{I,\hat I}(\hat x_0,x)}\bom_I(x),
 \end{aligned}
 \end{eqnarray}
 and consequently,  
  \begin{eqnarray}\label{811b}
 \sum_{I\in \I_q}\left|D^\alpha_{x}D^{\hat{\alpha}}_{\hat x_0}\G_{\hat I,I}(\hat x_0,x)\right|^2=\left|\overline{D^\alpha_{x}} G_q\varphi_{\hat x_0}(x)\right|^2. \end{eqnarray}
Let $\zeta_0$ be a cutoff function  in $U$, we choose $x_0\in V_0^{\zeta_0}$ then by the Sobolev lemma
 \begin{eqnarray}\label{811c}\left|D^\alpha_{x_0}D^{\hat{\alpha}}_{\hat x_0}\G_{\hat I,I}(\hat x_0,x_0)\right|\le \sup_{x\in M}\left|\overline{D^\alpha_{x}} \zeta_0G_q\varphi_{\hat x_0}(x)\right|\lesssim\no{\zeta_0 G_q\varphi_{\hat x_0}}_{n+1+|\alpha|}, \end{eqnarray}
 for all $I,\hat I\in \I_q$. 
These equalities \eqref{811a}, \eqref{811b} and inequalities \eqref{811c} hold if provided that $\no{\zeta_0G_q\varphi_{\hat x_0}}_{H^{n+1+|\alpha|}}<\infty$. To show this boundedness, we need the hypothesis of weakly superlogarithmic properties. Theorem~\ref{maintheorem3} is splitted into two following theorems.
\begin{theorem}\label{tm1}
	 Let $M$ be a pseudoconvex, CR manifold of dimension $(2n+1)$ such that $\dib_b$ has closed range in $L^2$ spaces for all degrees of forms. Let $\sigma$ and $\hat\sigma$ be cutoff functions with disjoint supports ans suppose that the $\sigma$- and $\hat \sigma$-superlogarithmic properties on $(0,q_0)$ forms holds. 
	  Then 
	 	$$\mathcal G_{q}\in C^\infty_{0,q;q,0}\left(\left( V_0^\sigma\times V_0^{\hat\sigma}\right)\cup \left( V_0^{\hat\sigma}\times V_0^{\sigma}\right)\right)\quad\T{	 for all $q_0\le q\le n-q_0$}.$$	If $q_0=1$ and $n\ge 2$, or $q_0=n=1$ with the extra assumption that $G_0$ is globally regular, then this conclusion also holds for $q=0$ and $q=n$. 
	\end{theorem}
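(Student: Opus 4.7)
My plan is to reduce the smoothness of the kernel $\mathcal G_q$ at a point $(\hat x_0,x_0) \in V_0^{\hat\sigma} \times V_0^\sigma$ to an $H^s$-bound on $G_q \varphi_{\hat x_0}$ near $x_0$, where
\[
\varphi_{\hat x_0}(\hat x) := (D^{\hat\alpha})^* \delta_{\hat x_0}(\hat x)\,\bar\omega_{\hat I}(\hat x).
\]
By the identities \eqref{811a}--\eqref{811c} preceding the theorem, Sobolev embedding shows it suffices to prove $\|\zeta_0 G_q \varphi_{\hat x_0}\|_{H^{n+1+|\alpha|}} < \infty$ for every multi-index $\alpha$ and every $\zeta_0$ with $\zeta_0 \equiv 1$ near $x_0$ and $\supp(\zeta_0) \subset V_0^\sigma$. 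Since $\varphi_{\hat x_0}$ is only a distribution, I shall work with the regularization $G_q^\delta$ and invoke Theorem~\ref{t43b}, whose hypothesis reduces to the uniform-in-$\delta$ bound $\|\zeta_2 G_q^\delta \varphi_{\hat x_0}\|_{H^{-s_o}} \lesssim 1$ for a fixed $s_o \ge 0$ and a cutoff $\zeta_2$ slightly larger than $\zeta_0$.

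The heart of the argument, and the principal obstacle, is producing that uniform bound by duality against the $\hat\sigma$-superlogarithmic hypothesis. Select cutoff chains $\zeta_0 \prec \sigma \prec \zeta \prec \tilde\zeta \prec \zeta_1 \prec \zeta_2$ near $x_0$ and $\hat\zeta_0 \prec \hat\sigma \prec \hat\zeta_1$ near $\hat x_0$ so that $\supp(\zeta_2) \cap \supp(\hat\zeta_1) = \emptyset$; such a separation is possible precisely because $\supp(\sigma) \cap \supp(\hat\sigma) = \emptyset$. For a test form $\psi$ with $\supp(\psi) \subset \supp(\zeta_2)$, self-adjointness of $G_q^\delta$ gives
\[
\bigl\langle \zeta_2 G_q^\delta \varphi_{\hat x_0},\psi \bigr\rangle_{L^2} = \bigl\langle \varphi_{\hat x_0}, G_q^\delta(\zeta_2 \psi) \bigr\rangle,
\]
and the right-hand side is, up to signs and conjugation, an $|\hat\alpha|$-th order derivative of a component of $G_q^\delta(\zeta_2\psi)$ evaluated at $\hat x_0$, hence controlled by $\|\hat\zeta_0 G_q^\delta(\zeta_2\psi)\|_{H^{n+1+|\hat\alpha|}}$ via Sobolev embedding. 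Since $\zeta_2\psi$ is smooth and its support is disjoint from $\supp(\hat\zeta_1)$, the $\hat\sigma$-superlogarithmic property combined with Theorem~\ref{t43}, Proposition~\ref{Psi0}, and the $L^2$-boundedness of $G_q^\delta$ (absorbing the error term in the $\mathcal E$-norm) bounds this quantity by $\|\psi\|_{L^2}$ uniformly in $\delta$. Taking the supremum over $\|\psi\|_{L^2} \le 1$ supplies the required uniform bound with $s_o=0$.

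With the hypothesis of Theorem~\ref{t43b} verified, $G_q \varphi_{\hat x_0}$ lies in $\mathcal A^{+,s\sigma-r}_{\zeta;0,q} \cap \mathcal A^{-,s\sigma-r}_{\zeta;0,q} \cap \mathcal E^{s,-r}_{\zeta,\tilde\zeta;0,q}$ for every $s \ge 0$ and every $r \ge 2$, provided $q_0 \le q \le n-q_0$; moreover the corresponding norms are bounded by the uniform constant $C$ from the previous step plus $\|\zeta_1\varphi_{\hat x_0}\|_{H^{s-r}}$, and the latter vanishes because $\zeta_1 \equiv 0$ near $\hat x_0$ while $\varphi_{\hat x_0}$ is supported at $\hat x_0$. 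Feeding these bounds into the final chain of inequalities in the proof of Theorem~\ref{t44} (which expresses $\|\zeta_0 G_q \varphi_{\hat x_0}\|_{H^{s-4}}$ in terms of the $\mathcal A$- and $\mathcal E$-norms of $G_q \varphi_{\hat x_0}$) yields $\zeta_0 G_q \varphi_{\hat x_0} \in H^s$ for every $s$. This proves smoothness of $\mathcal G_q$ on $V_0^{\hat\sigma} \times V_0^\sigma$ in the middle range; smoothness on $V_0^\sigma \times V_0^{\hat\sigma}$ then follows from the Hermitian symmetry of the kernel of the self-adjoint operator $G_q$.

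The extreme degrees $q=0$ and $q=n$ are handled separately. If $q_0=1$ and $n \ge 2$, the identities $G_0 = \dib_b^* G_1^2 \dib_b$ and $G_n = \dib_b G_{n-1}^2 \dib_b^*$ let one write $G_0 \varphi_{\hat x_0} = \dib_b^* G_1^2 (\dib_b \varphi_{\hat x_0})$; the new datum $\dib_b \varphi_{\hat x_0}$ is again a distribution of the same type supported at $\hat x_0$, so the middle-degree argument applied to $G_1$ gives smoothness of $G_1^2(\dib_b \varphi_{\hat x_0})$ near $x_0$, which the external $\dib_b^*$ preserves. The case $q=n$ is symmetric. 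When $q_0 = n = 1$ those identities no longer provide a gain, so one repeats the entire argument with Theorem~\ref{maintheorem2} in place of Theorem~\ref{maintheorem1}; the global regularity assumption on $G_0$ is exactly what is needed to carry the a-priori estimates for $G_q^\delta(\zeta_2\psi)$ through the duality step.
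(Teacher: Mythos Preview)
Your proof is correct and follows essentially the same route as the paper: reduce to an $H^s$-bound on $\zeta_0 G_q\varphi_{\hat x_0}$ via \eqref{811c}, verify the hypothesis of Theorem~\ref{t43b} by duality and self-adjointness of $G_q^\delta$ together with the a-priori estimate of Theorem~\ref{t43} applied on the $\hat\sigma$-side (using the uniform $L^2$-bound \eqref{L2} for $G_q^\delta$), and conclude by the Hermitian symmetry of the kernel. Your treatment of the extreme degrees via $G_0=\dib_b^* G_1^2\dib_b$ and $G_n=\dib_b G_{n-1}^2\dib_b^*$ is more explicit than the paper's proof of Theorem~\ref{tm1}, which leaves that case implicit.
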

\begin{proof}
 We choose $x_0\in V_0^\sigma$ and $\hat x_0\in V_0^{\hat{\sigma}}$ and two triplets of cutoff functions $\{\zeta_j\}_{j=0}^2$ and $\{\hat\zeta_j\}_{j=0}^2$ such that $\zeta_0\prec\sigma\prec\zeta_1\prec\zeta_2$, $\hat\zeta_0\prec\hat\sigma\prec\hat\zeta_1\prec\hat\zeta_2$, $\supp(\zeta_2)\cap\supp(\hat\zeta_2)=\emptyset$, and $x_0\in V_0^{\zeta_0}$, $x_0\in V_0^{\hat \zeta_0}$. Since $\zeta_1\varphi_{\hat x_0}=0$, Theorem~\ref{t43b} implies that the desired inequality  $\no{\zeta_0 G_q\varphi_{\hat x_0}}_{n+1+|\alpha|}<\infty$ holds if  $\no{\zeta_2G_q^\delta\varphi_{\hat x_0}}_{L^2}<\infty$ uniformly in $\delta>0$.
By the self-adjoint property of $G^\delta$, the H\"older  inequality and the estimate \eqref{810}, we get 
 						
 					\begin{eqnarray}
 					\label{m200}\begin{aligned}
 					\no{\zeta_2G_q^\delta\varphi_{\hat x_0}}_{L^2}=&\sup\{|(\zeta_2G_q^\delta\varphi_{\hat x_0},v)_{L^2}|: \no{v}_{L^2}\le 1\}\\
 					=&\sup\{|(\varphi_{\hat x_0},\hat\zeta_0G_q^\delta\zeta_2v)_{L^2}|: \no{v}_{L^2}\le 1\}\\
 					\le& \sup\{\no{\varphi_{\hat x_0}}_{H^{-(n+1+|\hat{\alpha}|)}}\no{\hat \zeta_0G_q^\delta\zeta_2v}_{H^{n+1+|\hat\alpha|}}: \no{v}_{L^2}\le 1\}\\
 					\le& \sup\{\no{\hat \zeta_0G_q^\delta\zeta_2v}_{H^{n+1+|\hat\alpha|}}: \no{v}_{L^2}\le 1\}.
 					\end{aligned}
 					\end{eqnarray}
We apply Theorem~\ref{t43} for $u=G^\delta_q\zeta_2v$ with system of cutoff functions $\{\hat \zeta_j\}$ to obtain
 						\begin{eqnarray}
 						\label{m201}\begin{aligned}\no{\hat \zeta_0G_q^\delta\zeta_2v}^2_{H^{n+1+|\hat\alpha|}}\lesssim& \no{\hat \zeta_1\zeta_2v}^2_{H^{n+1+|\hat\alpha|}}+\no{\hat \zeta_2G_q^\delta\zeta_2v}^2_{L^2}\\
 						\le& \no{G_q^\delta\zeta_2v}_{L^2}\lesssim \no{\zeta_2v}^2_{L^2}\le \no{v}^2\le 1.
 						\end{aligned}
 						\end{eqnarray}
 						uniformly in $\delta$. From \eqref{m200} and \eqref{m201}, we get 
 							$\no{\zeta_2G_q^\delta\varphi_{\hat x_0}}_{L^2}$  is uniformly bounded to $\delta$. Therefore, $\no{\zeta_0 G_q\varphi_{\hat x_0}}_{n+1+|\alpha|}<\infty$ and hence by \eqref{811c},
 							$$\G_q\in C^\infty_{0,q;q,0}(V_0^{\hat \sigma}\times V_0^\sigma).$$
 							We also notice that $\G_q$ is smoothness on symmetric sets of $M\times M$ since $G_q$ is self adjoint. That means if $\G_q\in C^\infty_{0,q;q,0}(V_0^{\hat \sigma}\times V_0^\sigma)$ then  $\G_q\in C^\infty_{0,q;q,0}(V_0^{ \sigma}\times V_0^{\hat\sigma})$.
 							This proves Theorem~\ref{tm1}.

 					\end{proof}

 					\begin{theorem}\label{tm2} Let $M$ be a pseudoconvex, CR manifold of dimension $(2n+1)$ such that $\dib_b$ has closed range in $L^2$ spaces for all degrees of forms. Suppose that the $\sigma$-superlogarithmic property holds on $(0,q_0)$-forms and $G_q$ is globally regular for some $q_0\le q\le n-q_0$. Then 
 							$$\mathcal G_{q}\in C^\infty_{0,q;q,0}\left(\left(V^\sigma_0\times (M\setminus \overline{\supp(\sigma)})\right)\cup\left(  (M\setminus \overline{\supp(\sigma)})\times V_0^\sigma \right)\right).$$	If $q_0=q=1$ (resp. $q_0=1, q=n$), this conclusion also holds for $G_0$ and (resp. $G_n$).
 					\end{theorem}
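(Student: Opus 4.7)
The plan is to adapt the argument of Theorem~\ref{tm1}, replacing the role of the second local superlogarithmic property at $\hat x_0$ by global regularity of $G_q$. Fix $x_0\in V_0^\sigma$, $\hat x_0\in M\setminus\overline{\supp(\sigma)}$, multi-indices $\alpha,\hat\alpha$, and $\hat I,I\in\I_q$. Setting $N:=n+1+|\hat\alpha|$ and using the distributional datum $\varphi_{\hat x_0}=(D^{\hat\alpha})^*\delta_{\hat x_0}\,\bom_{\hat I}$, the identities \eqref{811a}--\eqref{811c} reduce the desired smoothness of $\G_q$ at $(\hat x_0,x_0)$ to the estimate $\no{\zeta_0 G_q\varphi_{\hat x_0}}_{H^{n+1+|\alpha|}}<\infty$, where $\zeta_0\prec\sigma$ is a cutoff with $x_0\in V_0^{\zeta_0}$.

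I would establish this via Theorem~\ref{t43b}. Choose a chain $\zeta_0\prec\sigma\prec\zeta\prec\tilde\zeta\prec\zeta_1\prec\zeta_2$ of cutoffs localized near $x_0$ and disjoint from $\hat x_0$; then $\zeta_1\varphi_{\hat x_0}=0$. With $s_o:=N$ and $r:=N+2$, the only hypothesis of Theorem~\ref{t43b} that needs checking is the uniform bound $\no{\zeta_2 G_q^\delta\varphi_{\hat x_0}}_{H^{-N}}\lesssim 1$ in $\delta$. Duality together with the $L^2$-self-adjointness of $G_q^\delta$ gives
\begin{equation*}
\no{\zeta_2 G_q^\delta\varphi_{\hat x_0}}_{H^{-N}}
=\sup_{\no{v}_{H^N}\le 1}\left|(\varphi_{\hat x_0},G_q^\delta\zeta_2 v)\right|
\le \no{\varphi_{\hat x_0}}_{H^{-N}}\sup_{\no{v}_{H^N}\le 1}\no{G_q^\delta\zeta_2 v}_{H^N},
\end{equation*}
and the first factor is bounded by \eqref{810}. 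To control the second, I would invoke that global regularity of $G_q$, as established through the elliptic regularization scheme of \cite{KhRa16a}, delivers uniform-in-$\delta$ Sobolev bounds $\no{G_q^\delta\psi}_{H^N}\lesssim\no{\psi}_{H^N}$. Applied to $\psi=\zeta_2 v$ this yields the required bound. Theorem~\ref{t43b} then produces $G_q\varphi_{\hat x_0}\in {\mathcal A}^{\pm,s\sigma-r}_{\zeta;0,q}\cap {\mathcal E}^{s,-r}_{\zeta,\tilde\zeta;0,q}$ for every $s\ge 0$, which via \S\ref{s2.3} translates to $\zeta_0 G_q\varphi_{\hat x_0}\in H^t$ for every $t$, in particular for $t=n+1+|\alpha|$.

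Smoothness on the symmetric piece $(M\setminus\overline{\supp(\sigma)})\times V_0^\sigma$ follows from the $L^2$-self-adjointness of $G_q$, which forces the corresponding symmetry of $\G_q(\hat x,x)$. The top-degree extensions in case $q_0=1$ come from the factorizations $G_0=\dib_b^*G_1^2\dib_b$ and $G_n=\dib_b G_{n-1}^2\dib_b^*$: these express $\G_0$ (resp.\ $\G_n$) as second-order derivatives in both variables of $\G_1$ (resp.\ $\G_{n-1}$), and such differentiation preserves off-diagonal smoothness. The main difficulty in executing this plan is that global regularity of $G_q$ does not formally imply uniform Sobolev bounds for $G_q^\delta$ by itself; it must be read off from the regularization argument used to establish global regularity in the companion paper \cite{KhRa16a}, a point already implicit in the formulation of Theorem~\ref{t43b}.
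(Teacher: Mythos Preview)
Your approach via Theorem~\ref{t43b} requires the uniform-in-$\delta$ bound $\no{G_q^\delta\psi}_{H^N}\lesssim\no{\psi}_{H^N}$, and you correctly flag this as the crux. But this bound does \emph{not} follow from the stated hypothesis, which is only that $G_q$ (not $G_q^\delta$) is globally regular. Your appeal to \cite{KhRa16a} is not legitimate here: Theorem~\ref{tm2} is formulated for an arbitrary $G_q$ that happens to be globally regular, not for one obtained through any particular construction, so you cannot import properties of the regularizing family $G_q^\delta$ from a specific proof of global regularity elsewhere. As you yourself note, ``global regularity of $G_q$ does not formally imply uniform Sobolev bounds for $G_q^\delta$''; that observation is fatal to your argument as written, not merely a technicality to be patched.

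The paper avoids this gap by not going through $G_q^\delta$ or Theorem~\ref{t43b} at all. Since $G_q$ is globally regular and self-adjoint, it extends to a continuous operator on $H^{-\infty}_{0,q}(M)$, so $G_q\varphi_{\hat x_0}$ is a well-defined distribution. Global regularity gives an integer $m_{\hat\alpha}$ with $\no{G_q v}_{H^{n+1+|\hat\alpha|}}\lesssim\no{v}_{H^{m_{\hat\alpha}}}$; duality (with $G_q$, not $G_q^\delta$) then yields
\[
\no{\zeta_2 G_q\varphi_{\hat x_0}}_{H^{-m_{\hat\alpha}}}
\le \sup_{\no{v}_{H^{m_{\hat\alpha}}}\le 1}\no{\varphi_{\hat x_0}}_{H^{-(n+1+|\hat\alpha|)}}\no{G_q\zeta_2 v}_{H^{n+1+|\hat\alpha|}}\lesssim 1,
\]
with no $\delta$ anywhere. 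The paper then feeds $u=G_q\varphi_{\hat x_0}$, $s_o=m_{\hat\alpha}$, $r=s_o+2$ directly into the a-priori estimate of Theorem~\ref{t42} to obtain $\no{\zeta_0 G_q\varphi_{\hat x_0}}_{H^{n+1+|\alpha|}}<\infty$. This route uses only the assumed global regularity of $G_q$ itself, which is precisely what the hypothesis supplies. Your symmetry argument for the second factor and the top-degree reduction via $G_0=\dib_b^*G_1^2\dib_b$, $G_n=\dib_b G_{n-1}^2\dib_b^*$ are fine.
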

 					\begin{proof}	
 				 We choose $x_0\in V_0^\sigma$ and $\hat x_0\in M\setminus \overline{\supp(\sigma)}$ and  $\{\zeta_j\}_{j=0}^2$  such that $\zeta_0\prec\sigma\prec\zeta_1\prec\zeta_2$, and $\hat x_0\in V_0^{\zeta_0}$, $\hat x_0\not\in\supp(\zeta_1)$.
 				 This means  $\zeta_1\varphi_{\hat x_0}=0$. By the global regularity of  $G_q$, it follows for any multi-indice $\hat\alpha$ there exists $m_{\hat\alpha}$ such that 
 				\begin{eqnarray}
 				\label{m202}\begin{aligned}\no{G_qv}^2_{H^{n+1+|\hat\alpha|}}\lesssim \no{v}^2_{H^{m_{\hat\alpha}}} \quad\T{for all  } v\in H^{m_{\hat\alpha}}_{0,q}(M);	\end{aligned}
 				\end{eqnarray}
 				 and  $G_q$ can be extended to a continuos in the space of $H^{-\infty}_{0,q}(M)$. Thus we can estimate $\zeta_2G_q\varphi_{\hat x_0}$ as follows
 					
 					\[\begin{aligned}
 					\no{\zeta_2G_q\varphi_{\hat x_0}}_{H^{-m_{\hat\alpha}}}=&\sup\{|(\zeta_2G_q\varphi_{\hat x_0},v)_{L^2}|: \no{v}_{H^{m_{\hat\alpha}}}\le 1\}\\
 					=&\sup\{|(\varphi_{\hat x_0},G_q\zeta_2v)_{L^2}|: \no{v}_{L^2}\le 1\}\\
 					\le& \sup\{\no{\varphi_{\hat x_0}}_{H^{-(n+1+|\hat{\alpha}|)}}\no{ G_q\zeta_2v}_{H^{n+1+|\hat\alpha|}}:\no{v}_{H^{m_{\hat\alpha}}}\le 1\}\lesssim 1, 				
 					\end{aligned}\]
 where the last estimate follows by \eqref{m202} and \eqref{810}. Thus we can apply Theorem~\ref{t42} for $u=G_q\zeta_2\varphi_{\hat x_0}$, $s=n+1+|\hat \alpha|$, $s_o=m_{\hat{\alpha}}$ and $r=s_o+2$, it follows
\[\begin{aligned}\no{\zeta_0G_q\varphi_{\hat x_0}}^2_{H^{n+1+|\alpha|}}\lesssim&\no{\zeta_1\varphi_{\hat x_0}}^2_{H^{n+1+|\alpha|}}+ \no{\zeta_2G_q\varphi_{\hat x_0}}^2_{H^{-s_o}}
\lesssim 1.
\end{aligned}\]
By \eqref{811c}, it follows
$$\G_q\in C^\infty_{0,q;q,0}\left(  (M\setminus \overline{\supp(\sigma)})\times V_0^\sigma \right).$$ 
The proof of Theorem~\ref{tm2} follows by the symmetric of $\G_q$.
\end{proof}

 						\section{The $\sigma$-superlogarithmic property on hypersurface models}\label{s5}
 						In this section we show that the $\sigma$-superlogarithmic property admits on the  hypersurfaces model \eqref{model5}  and prove Theorem~\ref{t1.9}. Let $M$ be the  hypersurfaces passing the origin defined by
 \begin{eqnarray}
 \label{e1}
 M=\left\{(z,z_{n+1})\in \C^n\times \C:\Im\,z_{n+1}=\sum_kH_k\left(\sum_{l\in I_k}|h_{l}(z)|^2\right)F_k\left(\sum_{l\in I_k}|Re\,h_{l}(z)|^2\right)\right\}.
 \end{eqnarray}
 Here, \begin{enumerate}
 	\item $H_k,F_k:\R^+\to \R^+$  are increasing and convex functions such that  $H_k(0)F_k(0)=0$ and. If $H_k(0)=0$ (resp. $F_k(0)=0$) it is added an extra assumption that $H_k(\delta)/\delta$ (resp. $F_k(\delta)/\delta$) is increasing. Here, we assume that $F_k(0)=0$, otherwise the calculation is simpler.
 	\item $h_{l}$ with $l=1,\dots,N$ are holomorphic functions in $\C^n$ with an isolated zero at the origin and $I_k$'s are blocks of $(1,\dots,N)$ such that $\cup_{k}I_k\subset \{1,\dots,N\}$. 
 \end{enumerate} 
 \begin{theorem}\label{t51m} Let $M$ be the hypersurface in $\C^{n+1}$ defined by \eqref{e1}. Then, if  
 	\begin{eqnarray}\label{ht51}
 	\lim_{\delta\to 0}\delta\ln\left(F_k(\delta^2)\right)=0\quad\T{ for all } k,
 	\end{eqnarray} then the origin of $\C^{n+1}$ is a weakly superlogarithmic point.
 \end{theorem}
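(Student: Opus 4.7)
My plan is to verify Definition~\ref{d2} at the origin by constructing, for an arbitrarily small neighborhood $W$ of $0$, a cutoff $\sigma$ with $\sigma \equiv 1$ on a subneighborhood of $0$ and $\supp(\sigma) \subset W$, together with a family $\{\lambda_t\}_{t\ge 1}$ of smooth real-valued weights on $W$ satisfying \eqref{definition1} on $(0,1)$-forms with rates $f,f_0$ that are $\gg 1$ and bounded by $t^{1/2}$. Since each $h_l$ depends only on $z\in \C^n$ and the defining equation \eqref{e1} is independent of $x_{2n+1}$, I take $\lambda_t$ independent of $x_{2n+1}$; in the coordinates of \S\ref{s2.2} we have $T=-i\partial/\partial x_{2n+1}$, so $T^m(\lambda_t)\equiv 0$ for all $m\ge 1$ and the derivative bound in \eqref{definition1} is automatic.

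For the weight, I adapt the scale-adapted convex composition scheme of \cite{Koh00, BaPiZa15} to a scale $\delta_t\searrow 0$ tuned to $t$. Set
\begin{equation*}
\lambda_t(z) \;=\; -\chi\!\left(-\log\bigl(\delta_t^{2} + \textstyle\sum_{l}|h_l(z)|^{2}\bigr)\right),
\end{equation*}
with $\chi:\R\to\R$ smooth, convex, increasing, uniformly bounded, and satisfying $\ddot\chi\gtrsim\dot\chi^{2}$. Holomorphicity of the $h_l$ produces, for $(0,1)$-forms $u$,
\begin{equation*}
\la\L_{\lambda_t}\lrcorner u,u\ra \;\gtrsim\; \frac{\ddot\chi}{(\delta_t^{2}+\sum|h_l|^{2})^{2}}\,\Bigl(\textstyle\sum_{l}|h_l|^{2}\Bigr)\,|u|^{2},
\end{equation*}
so in the annular region $\{\sum|h_l|^{2}\simeq\delta_t^{2}\}$ this is $\gtrsim\delta_t^{-2}|u|^{2}$; the remaining scales are handled by a parallel calculation together with $t\,d\theta$. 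A direct differentiation of \eqref{e1} yields
\begin{equation*}
d\theta \;\sim\; \sum_k \bigl[\dot H_k\, F_k\,\L_{\sum|h_l|^{2}} \;+\; H_k\,\dot F_k\,\L_{\sum|\Re h_l|^{2}}\bigr] \;+\; \textrm{lower order},
\end{equation*}
in which the first sum is nonnegative by holomorphicity while the second is sign-indefinite.

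The central step is to choose $\delta_t\sim t^{-1/2}$ and absorb the bad contribution $H_k\dot F_k\,\L_{\sum|\Re h_l|^{2}}$ into $\L_{\lambda_t}+td\theta$. Convexity of $F_k$ gives $\dot F_k(s)\le F_k(s)/s$, and the comparison $F_k\bigl(\sum|\Re h_l|^{2}\bigr)\le F_k\bigl(\sum|h_l|^{2}\bigr)$ combined with a Cauchy--Schwarz on the mixed term shows that the bad piece is pointwise dominated by $\delta_t^{-2}\,\bigl|\log F_k(\delta_t^{2})\bigr|^{-1}|u|^{2}$. The hypothesis \eqref{ht51} is precisely $\delta_t\log F_k(\delta_t^{2})\to 0$, equivalently $\delta_t^{-1}\bigl|\log F_k(\delta_t^{2})\bigr|^{-1}\to\infty$, which leaves a surplus $f^{2}(t)\gg 1$. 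The $\L_\sigma$ and $|\di_b\sigma|^{2}$ terms on the right-hand side of \eqref{definition1} arise from localizing the weight by $\sigma$ near the boundary of $\supp(\sigma)$ and are controlled by the same surplus with the dependent rate $f_0$.

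The main obstacle is precisely this absorption: non-plurisubharmonicity of $\sum|\Re h_l|^{2}$ prevents a direct pointwise positivity argument, so one must exploit the product structure $H_kF_k$ together with convexity and the comparison above to convert the qualitative hypothesis \eqref{ht51} into a quantitative gain uniform on the correct scale. Once established, Corollary~\ref{Cor1}(i) then yields local hypoellipticity of $\Box_b$ on forms of every degree.
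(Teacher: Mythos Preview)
Your proposal has a fundamental gap. The weight $\lambda_t=-\chi\bigl(-\log(\delta_t^{2}+\sum_l|h_l|^{2})\bigr)$ has Levi form that is a combination of $\sum_l\partial_b h_l\wedge\bar\partial_b\bar h_l$ and a rank-one piece $\partial_b(\sum|h_l|^{2})\wedge\bar\partial_b(\sum|h_l|^{2})$. Consequently $\langle\L_{\lambda_t}\lrcorner u,u\rangle$ controls only $\sum_l|\partial_b h_l\lrcorner u|^{2}$, not $|u|^{2}$. Your displayed lower bound with $|u|^{2}$ on the right is therefore unjustified unless $\{dh_l(z)\}$ span the cotangent space at every $z$, which the isolated common-zero hypothesis does not give (take $h_1=z_1^{2}$, $h_2=z_2$ in $\C^{2}$: at $z_1=0$ the direction $d\bar z_1$ sees nothing). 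This is exactly the obstacle the paper overcomes via Kohn's sequence of multiplier ideals (Proposition~\ref{p53}): starting from the $(f_3\text{-}\L_{|h_l|^{2}})$-properties, the algebraic bootstrap through determinants of $\mathcal B(\mathcal J_{p-1})$ and radicals produces, after finitely many steps, an $(f^{1/m}\text{-}\Id)$-property. You have omitted this mechanism entirely, and the ``parallel calculation together with $t\,d\theta$'' cannot substitute for it, since $d\theta$ itself vanishes at the origin.

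There are also two factual errors in your treatment of $d\theta$. First, $\L_{\sum_{l\in I_k}|\Re h_l|^{2}}=\tfrac12\L_{\sum_{l\in I_k}|h_l|^{2}}\ge 0$ by holomorphicity, so the term $H_k\dot F_k\,\L_{\sum|\Re h_l|^{2}}$ is \emph{nonnegative}, not sign-indefinite; there is no ``bad contribution'' to absorb. Second, convexity of $F_k$ with $F_k(0)=0$ gives $\dot F_k(s)\ge F_k(s)/s$, the reverse of what you wrote. In the paper these facts are used constructively: $d\theta\gtrsim\sum_l\frac{H(|h_l|^{2})F(|\Re h_l|^{2})}{|\Re h_l|^{2}}\L_{|h_l|^{2}}$, and then Proposition~\ref{p21} trades this multiplier against a rate $f_2(t)=(F^{*}(t^{-1}))^{-1/2}$, which the hypothesis $\delta\log F(\delta^{2})\to 0$ makes $\gg\ln$. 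Finally, the choice of cutoff matters: the paper takes $\sigma_\eps=\prod_l\chi_\eps(|h_l|^{2})$ so that on $\supp(\dot\chi_\eps(|h_l|^{2}))$ one has $|h_l|\ge\eps$ and hence $H(|h_l|^{2})\ge H(\eps^{2})>0$, which is precisely what allows $\L_{\sigma_\eps}$ and $|\partial_b\sigma_\eps|^{2}$ to be dominated by the $(f_2\text{-}\sum_l H(|h_l|^{2})\L_{|h_l|^{2}})$ term. Your sketch does not indicate any comparable structural link between $\sigma$ and the weight.
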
 
\begin{proof} 
	After the setting, the proof of this theorem is divided into three steps. In Step 1, we give a lower bounded of the Levi form of $M$ by the $(1,1)$-forms $$\M_{l}=\frac{H(|h_{l}|^2)F(|\Re\, h_{l}|^2)}{|\Re\, h_{l}|^2}\L_{|h_{l}|^2},$$
	for all $l=1,\dots, N$ where $H:=\min_k H_k$ and $F:=\min_{k}F_k$. In the language of the $(f\T-\M)$-property, the $(t^{1/2}\T-\M_{l})$-properties hold.  Using Proposition~\ref{p21},  we interchange magnitude of $f$ and $\M$ in the $(t^{1/2}\T-\M_l)$-properties in  Step 2. Finally, in Step 3 we construction a cutoff function $\sigma$ in any small neighborhood in $M$ of the origin $x_o$ so that the $\sigma$-superlogarithmic property holds, and hence $x_o$ is a weakly superlogarithmic point.  \\
	
	For setting, let    $$\rho:=\sum_k\rho_k\quad \T{where }\quad \rho_k=H_k\left(\sum_{l\in I_k}|h_{l}|^2\right)F_k\left(\sum_{l\in I_k}|Re\,h_{l}|^2\right).$$
 We identify $M$ with $\C^n\times \R$ so that the point $(z,\tau+\sqrt{-1}\rho)\in M$ corresponds to the point $(z,\tau )\in \C^n\times\R$. 
 Set $$L_j:=\frac{\di}{\di z_j}+\sqrt{-1}\frac{\di \rho}{\di z_j}(z)\frac{\di}{\di \tau},\quad  T:=\sqrt{-1}\frac{\di}{\di \tau}, \quad
 \theta:=-\sqrt{-1}d\tau-\sum_{j=1}^n\left(\frac{\di \rho}{\di z_j}dz_j-\frac{\di \rho}{\di \bar z_j}d\bar z_j\right).$$		 		
 It is easy to verify that $L_j\in T^{1,0}M$ with its dual $dz_j$ and $\theta$ is a contact form with its dual $T$.\\ 
 
 {\it Step 1.}
 The Levi form of $M$ is given by 
 $$d\theta=\sum_{ij=1}^n\frac{\di^2\rho}{\di z_i\di\bar z_j}dz_i\we d\bar z_j=\frac{1}{2}(\di_b\dib_b-\dib_b\di_b)\rho=\L_\rho=\sum_{k}\L_{\rho_k}.$$

 We use notation $|h^k|^2=\sum_{j\in I_k}|h_{l}|^2$ and $|\Re\, h^k|^2=\sum_{l\in I_k}|\Re\,h_{l}|^2$. We can check that 
 $$ \L_{|h^k|^2}=\sum_{l\in I_k}\L_{|h_k|^2}=\sum_{l\in I_k}\di_b h_{l}\we \dib_b\bar h_{l}=2\L_{|\Re\,h^k|^2}
 $$
 and \[ \begin{aligned} \di_b|h^k|^2\we \dib_b|\Re\,h^k|^2+\di_b|\Re h^k|^2\we \dib_b|h^k|^2=&2\sum_{l,\hat l\in I_k} \Re\,h_{l}\di_b h_{l}\we\Re\,h_{\hat l} \dib_b\bar h_{\hat l}\\=&4\di_b|\Re\, h^k|^2\we \dib_b |\Re\,h^k|^2\ge 0. \end{aligned}\]
 Thus, by direct computation, we get  \begin{eqnarray}\label{b10}
 \begin{aligned}
 \L_{\rho_k}=&F_k\L_{H_k}+H_k\L_{F_k}+\di_b H_k\we\dib_b F_k+\di_b F_k\we\dib_b H_k\\
 =&F_k\left(\dot H_k\L_{|h^k|^2}+\ddot H_k\di_b|h^k|^2\we \dib_b|h^k|^2\right)+H_k\left(\dot F_k\L_{|\Re\,h^k|^2}+\ddot F_k\di_b|\Re\,h^k|^2\we \dib_b|\Re\,h^k|^2\right)\\
 &+\dot F_k\dot H_k\left(\di_b|h^k|^2\we \dib_b|\Re\,h^k|^2+\di_b|\Re h^k|^2\we \dib_b|h^k|^2\right).
  \end{aligned}
  \end{eqnarray}
 From the last line of \eqref{b10}, it follows that $\L_{\rho_k}\ge \di_b\rho_k\we \dib_b\rho_k$; and from the second line  of \eqref{b10}, it follows
 \begin{eqnarray}
 \begin{aligned} \L_{\rho_k}\ge& (F_k\dot H_k+\frac{1}{2}\dot F_kH_k)\L_{|h^k|^2}\\
 \ge&\frac{1}{2}\frac{H_k(|h^k|^2)F_k(|\Re\,h^k|^2)}{|\Re\,h^k|^2}\L_{|h^k|^2}\\\ge& \frac{1}{2}\sum_{l\in I_k}\frac{H_k(|h_{l}|^2)F_k(|\Re h_{l}|^2)}{(|\Re h_{l}|^2)}\L_{|h_l|^2}
 \end{aligned}
 \end{eqnarray}
 where the inequalities follow by $\frac{F_k(\delta)}{\delta}$ is increasing. 
 Consequently,
 \begin{eqnarray}\label{k11}
 \begin{aligned} d\theta\gtrsim\sum_{l=1}^N\frac{H(|h_{l}|^2)F(|\Re h_{l}|^2)}{(|\Re h_{l}|^2)}\L_{|h_{l}|^2}+\di_b\rho\we\dib_b\rho.
 \end{aligned}
 \end{eqnarray}
  where $H:=\min_k H_k$ and $F:=\min_{k}F_k$.\\
  
 {\it Step 2.}
 In another words of the expression in \eqref{k11}, the $(f_{1}\T-\M_{l,1})$-property holds for 
 $$f_{1}(t)=t^{1/2}
 \quad \T{and}\quad  \M_{l,1}=\frac{H(|h_{l}|^2)F(|\Re h_{l}|^2)}{(|\Re h_{l}|^2)}\L_{|h_{l}|^2}.$$ 
Using Proposition~\ref{p21} below, the  
 $(f_{2}\T-\M_{l,2})$-property holds for 
 $$f_{2}(t)=(F^*(t^{-1}))^{-1/2}\quad\T{ and }\quad \M_{l,2}=H(|h_l|^2)\L_{|h_l|^2},$$
 where $F^*$ is denoted the inverse of $F$.
 By the hypothesis of $F_k$ in \eqref{ht51} and $F(\delta)=\min_k F_k(\delta)$, we get $\lim_{\delta\to0}\delta\ln F(\delta^2)=0$. This implies $f_2\gg \ln$.
Denote $\tilde H(\delta)=\delta H(\delta)$. Since $\M_{l,2}\ge \dfrac{\tilde H(|\Re \,h_{l}|^2)}{|\Re\,h_{l}|^2}\L_{|h_{l}|^2}$,  
 we apply Proposition \ref{p21} again and obtain the $(f_{3}\T-\M_{l,3})$-property holds for $$f_{3}(t)=(\tilde H^*(F^*(t^{-1}))^{-1/2}\gg1\quad \T{ and } \quad \M_{l,3}=\L_{|h_{l}|^2}.$$
 Set $$
 \M_2:=\sum_{l=1}^N H(|h_l|^2)\L_{|h_l|^2}\qquad \T{and} \quad \M_3:=\sum_{l=1}^N\L_{|h_{l}|^2}.$$ 
 Thus, the $(f_2\T-\M_2)$- and $(f_3\T-\M_3)$-properties holds. By Proposition~\ref{p53} below, 
 there exists $m\in \mathbb N$ such that $(f\T-Id)$-property holds where $f(t)=(f_3(t))^{1/m}$ and $Id=\sum_{i=1}^ndz_i\we d\bar z_i$ the identity $(1,1)$-form of $M$.  We conclude that in this step we have proven there exists a family of smooth real value function $\{\lambda_t\}$ such that  $|T^m(\lambda_t)|\le c_{m}e^{mt}$ such that 
  \begin{eqnarray}\label{55m}
  \begin{aligned}
  \L_{\lambda_t}+td\theta\gtrsim f^2(t)Id+f_0^2(t)\ln^2t\sum_{l=1}^NH(|h_{l}|^2)\L_{|h_{l}|^2}+t^\frac{1}{2}\di_b\rho\we\dib_b\rho.
  \end{aligned}
  \end{eqnarray}
  where $f,f_0\gg1$ where $f_0=f_2/\ln$.\\

 {\it Step 3}.	
 Let $\chi$ be a cutoff function in $\R$ such that $\chi(a)=\begin{cases}1 & \T{if } |a|\le 1, \\ 0& \T{if } |a|\ge 2.\end{cases}$. Denote by $\chi_\eps(a)=\chi(a/\eps)$. We set $$h_0:=\tau \quad\T{and}\quad\sigma_\eps:=\prod_{l=0}^N\chi_\eps(|h_{l}|^2).$$ Since the common zero of $h_{l}$, for $l=1,\dots,N$,
 is at only the origin of $\C^n$. Thus $\sigma_\eps$ is a cutoff function on $M$. Furthermore, for any small neighborhood $U$ in $M$ of $x_o$ (the origin of $M$) there exists $\eps>0$ such that $\supp(\sigma_\eps)\subset U$ and $\sigma_\eps=1$ in another neighborhood of $x_o$. In order to prove that $x_o$ is the weakly superlogarithmic point, we shall show that the $\sigma_\eps$-superlogarithmic property holds in $U$. Before giving the upper bounds of $\di_b\sigma_\eps\we \dib_b\sigma_\eps$ and $\L_{\sigma_\eps}$, we compute $$\L_{|h_0|^2}=2\di_bh_0\we \dib_bh_0+2h_0\L_{h^0}=2\di_b\tau\we \dib_b\tau+2\tau\L_\tau.$$  We have 
 $$\di_b\tau=\sum_{j=1}^nL_j\tau dz_j=\sqrt{-1}\sum_{j=1}^n\frac{\di\rho}{\di z_j}dz_j=\sqrt{-1}\di_b\rho.$$
 Thus, $\dib_b\tau=-\sqrt{-1}\dib_b\rho$ and hence 
 $$\di_b\tau\we \dib_b\tau =\di_b\rho\we \dib_b\rho\underset{\T{by  \eqref{k11}}}{\lesssim}\L_\rho=d\theta.$$ 
 $$\L_\tau=\frac{1}{2}\left(\di_b\dib_b\tau-\dib_b\di_b\tau\right)=\frac{1}{2}\sum_{ij=1}^n\left(-\sqrt{-1}\frac{\di^2\rho}{\di z_i\di\bar z_j}dz_i\we d\bar z_j-\sqrt{-1}\frac{\di^2\rho}{\di \bar z_i\di  z_j}d\bar z_i\we d z_j \right)\equiv0.$$
 Thus $\L_{|h_0|^2}\lesssim d\theta$. 
 We are ready to estimate $\di_b\sigma_\eps\we \dib_b\sigma_\eps$ and $\L_{\sigma_\eps}$
	$$\di_b\sigma_\eps=\sum_{l=0}^N\Big(\prod_{\hat l\not=l}\chi(|h_{\hat l}|^2)\Big)\Big(\dot \chi(|h_l|^2)\di_b |h_l|^2\Big)$$
	and \[\begin{aligned}
	\L_{\sigma_\eps}=&\sum_{l=0}^N\left\{\Big(\prod_{\hat l\not=l}^N\chi(|h_{\hat l}|^2)\Big)\Big(\dot{\chi}(|h_l|^2)\L_{|h_l|^2}+\ddot{\chi}(|h_l|^2)\di|h_l|^2\we\dib_b|h_l|^2\Big)\right\}\\
	&+\sum_{k,l=0,k\not=l}^N\left\{\Big(\prod_{\hat l\not=k,l}\chi(|h_{\hat l}|^2)\Big)\Big(\dot{\chi}(|h_k|^2)\dot{\chi}(|h_l|^2)\di |h_k|^2\we\dib_b|h_l|^2\Big)\right\}.
	\end{aligned}\]
	Thus,  it follows
	\begin{eqnarray}
	\label{56m}\begin{aligned}
	\ln t|\la\L_\sigma\lrcorner u,u\ra|+\ln^2t|\di_b\sigma\lrcorner u|^2
	\lesssim &\ln^2t\sum_{l=0}^NI_{\supp(\dot\chi_\eps(|h_l|^2))} |\la\L_{|h_l|^2}\lrcorner u,u\ra|\\
	\le &\ln^2t\left(\la d\theta\lrcorner u,u\ra+(H(\eps^2))^{-1}\ln^2 t\sum_{l=1}^NH(|h_l|^2)\la\L_{|h_l|^2}\lrcorner u,u\ra\right)
	\end{aligned}
	\end{eqnarray}
	for any $t\ge 1$ and $u\in C^\infty_{0,1}(M)$. 
	where  the second inequality follows by the fact that 
	$$I_{\supp(\cdot\chi_\eps(|h_l|^2))}\le I_{\{x\in M: \eps\le|h_l(x)|\le 2\eps\}}\le H(|\eps|^2)^{-1}H(|h_l|^2).$$
	Combining \eqref{55m} and \eqref{56m}, we can conclude that the $\sigma_\eps$-superlogarithmic property holds for any $\eps>0$. 
\end{proof}
The proof of Theorem~\ref{t51m} is complete but we have skipped a crucial technical point in Step 2 of the proof that we face now. First, the interchange sizes of the rate $f$ and the multiplier $\M$ is expressed by the following proposition. 
\begin{proposition}\label{p21} Let $M$ be the hypersurface  defined by \eqref{e1} and $h$ be a holomorphic function in $\C^n$. 
	Assume that the $(f_1\T-\M_1)$-property holds with a rate $1\ll f_1\lesssim t^{1/2}$ and  multiplier $$\M_1\ge \frac{H(|h|^2)F(|\Re\,h|^2)}{|\Re\,h|^2}\L_{|h|^2}.$$ Then the $(f_2\T-\M_2)$-property holds for $f_2(t)=(F^*(f_1(t)^{-2}))^{-1/2}$ and $\M_2=H(|h|^2)\L_{|h|^2}$. Here  $F,H:\R^+\to \R^+$ are smooth increasing, convex functions with $F(0)=0$, $\frac{F(\delta)}{\delta}\searrow0$; and $F^*$ is denoted the inverse of $F$.
\end{proposition}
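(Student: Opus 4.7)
The plan is to perform a ``rate--multiplier exchange'': the small factor $F(|\Re h|^2)/|\Re h|^2$ cannot in general stay inside the multiplier $\M_2$, but after a scale-splitting of $M$ based on the size of $|\Re h|^2$ relative to a critical scale it can be transferred into the rate. Set
$$\delta_0 = \delta_0(t) := F^*\bigl(f_1^{-2}(t)\bigr),\qquad\text{so}\qquad F(\delta_0)=f_1^{-2}(t)\quad\text{and}\quad f_2^{2}(t)=1/\delta_0.$$
I will produce a new weight $\tilde\lambda_t=\lambda_t^{(1)}+\mu_t$, where $\lambda_t^{(1)}$ is the weight supplied by the $(f_1\T-\M_1)$-property and $\mu_t$ is a uniformly bounded auxiliary weight supported where $|\Re h|^2\lesssim \delta_0$.

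On the \emph{easy region} $\{|\Re h|^2 \ge \delta_0\}$ the monotonicity of $F(\delta)/\delta$ (which is increasing in $\delta$ by convexity of $F$ with $F(0)=0$) gives
$$f_1^{2}(t)\,\frac{F(|\Re h|^2)}{|\Re h|^2}\;\ge\;f_1^{2}(t)\,\frac{F(\delta_0)}{\delta_0}\;=\;\frac{1}{\delta_0}\;=\;f_2^{2}(t),$$
whence $f_1^{2}\M_1\ge f_2^{2}H(|h|^2)\L_{|h|^2}$ there. Thus $\tilde\lambda_t=\lambda_t^{(1)}$ already works in this region.

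On the \emph{bad region} $\{|\Re h|^2<\delta_0\}$ I propose
$$\mu_t\;:=\;A\,f_2^{2}(t)\,|\Re h|^2\,H(|h|^2)\,\chi\!\bigl(|\Re h|^2/\delta_0\bigr),$$
with $\chi$ a smooth cutoff equal to $1$ on $[0,1]$ and $0$ on $[2,\infty)$, and $A>0$ to be fixed. The resulting weight is uniformly bounded, $|\mu_t|\le 2A\,H(|h|^2)$, and because $h$ is CR-holomorphic (it depends only on $z$) one has $T(|\Re h|^2)=T(H(|h|^2))=0$, so $T^m\mu_t\equiv 0$ and the derivative bound $|T^m\tilde\lambda_t|\le c_m t^{m/2}$ is inherited verbatim from $\lambda_t^{(1)}$. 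For the Levi form, I would use $\dib_b h=\di_b\bar h=0$ to compute $\L_{|\Re h|^2}=\tfrac{1}{2}\L_{|h|^2}$ (modulo pluriharmonic contributions that vanish because $h$ pulls back from $\C^{n+1}$), and then combine with the convexity of $H$ to conclude that, on the inner piece $\{|\Re h|^2\le\delta_0\}$ where $\chi\equiv1$,
$$\L_{\mu_t}\;\ge\;\tfrac{A}{2}\,f_2^{2}\,H(|h|^2)\,\L_{|h|^2}\;+\;(\text{nonnegative terms from }\ddot H\text{ and cross products}),$$
which is the desired $(f_2\T-\M_2)$-bound once $A\ge 2$.

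The main obstacle is the transition strip $\delta_0\le |\Re h|^2\le 2\delta_0$, where $\chi'$ and $\chi''$ produce contributions to $\L_{\mu_t}$ whose sign is uncontrolled and whose size is $O(f_2^{2}H(|h|^2)\L_{|h|^2})$. I would absorb them by observing that on this strip one still has $F(|\Re h|^2)/|\Re h|^2\ge F(\delta_0)/(2\delta_0)=\tfrac{1}{2}f_2^{2}f_1^{-2}$, so
$$f_1^{2}\M_1\;\ge\;\tfrac{1}{2}f_2^{2}H(|h|^2)\L_{|h|^2}\qquad\text{on the strip.}$$
Adding this to $\L_{\mu_t}$ yields a net lower bound $\ge c\,f_2^{2}H(|h|^2)\L_{|h|^2}$ provided $A$ is chosen large enough (independently of $t$). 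A secondary technicality is verifying that the ``pluriharmonic error'' $\L_{|\Re h|^2}-\tfrac{1}{2}\L_{|h|^2}$ on the abstract hypersurface $M$ is genuinely harmless; this reduces to checking that the $\di_b\dib_b\bar h$ and $\dib_b\di_b h$ remainders vanish because of the Cauchy--Riemann equations for a holomorphic function pulled back from $\C^{n+1}$, which is the same calculation used in Step~1 of the proof of Theorem~\ref{t51m}.
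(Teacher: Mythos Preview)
Your overall strategy---split according to the scale $|\Re h|^2\gtrless\delta_0=F^*(f_1^{-2}(t))$, use $f_1^2\M_1\ge f_2^2\M_2$ on the large-$|\Re h|$ side, and build an auxiliary weight on the small-$|\Re h|$ side---is exactly the paper's. The gap is in your specific choice of auxiliary weight.

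With $\mu_t=Af_2^2\,|\Re h|^2\,H(|h|^2)\,\chi(|\Re h|^2/\delta_0)$, the function $a\mapsto a\chi(a/\delta_0)$ is \emph{not monotone}: on the transition strip its $a$-derivative $\chi+(a/\delta_0)\chi'$ becomes negative. Expanding $\L_{\mu_t}$ you find, besides the terms involving $H$ that you identified, a cross term
\[
2a\,\bigl(\chi+(a/\delta_0)\chi'\bigr)\,\dot H(|h|^2)\;\L_{|h|^2}
\]
(times $Af_2^2$), which is of order $-Af_2^2\,a\,\dot H(b)\,\L_{|h|^2}$ on the strip. Your absorbing term there is only $\tfrac12 f_2^2 H(b)\L_{|h|^2}$, so you would need $a\,\dot H(b)\lesssim H(b)$ uniformly. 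For very flat $H$---say $H(b)=e^{-1/b}$, which is allowed by the hypotheses and is precisely the case of interest for models like \eqref{model2} and \eqref{model5}---one has $a\,\dot H(b)/H(b)=a/b^2$, and on the strip (where $a\sim\delta_0$ and $b=|h|^2$ can be as small as $a$) this blows up like $\delta_0^{-1}=f_2^2(t)$. So the absorption fails. Relatedly, taking ``$A$ large enough'' goes the wrong way: the bad strip-terms scale with $A$.

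The paper's remedy is to replace the compactly supported factor by a \emph{bounded increasing} function of $|\Re h|^2$: it takes
\[
\tilde\lambda_t=\bigl(1-e^{-\tfrac12 f_2^2(t)\,|\Re h|^2}\bigr)H(|h|^2).
\]
Because the first factor has nonnegative first derivative everywhere, all cross terms with $\dot H$ come out $\ge 0$ and can be discarded; the only negative contribution is from its second derivative, which multiplies $H$ (not $\dot H$) and is therefore of size $-c\,f_2^2 H\L_{|h|^2}$ with a fixed constant $c<1$, hence absorbable by $f_1^2\M_1\ge f_2^2\M_2$ on the region $f_2|\Re h|\ge 1$. Your argument becomes correct if you replace $a\chi(a/\delta_0)$ by any bounded increasing $g$ with $g(0)=0$ and $g'(0)>0$ (e.g.\ a smoothed $\min(a,\delta_0)$), and then choose $A$ of order one rather than large.
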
	

\begin{proof}Assume that for any $t\ge 1$ there exists a function $\lambda_{t,1}$ such that $$|T^s(\lambda_{t,1})|\le c_{s,1}e^{ts/2},\quad\T{for all $s\in\mathbb N$ and  }
	\L_{\lambda_{t,1}}+td\gamma\ge f_1(t)^2\M_1.$$
	 Now we construct $\lambda_{t,2}$ such that 
	 \begin{eqnarray}
	 \label{new1}
	 T^s(\lambda_{t,2})|\le c_{s,2}e^{ts/2},\quad\T{for all $s\in\mathbb N$ and  }
	 \L_{\lambda_{t,2}}+td\gamma\ge f_2(t)^2\M_2.
	 \end{eqnarray}
	We define $$\tilde\lambda_t(z,\tau)=\lambda_t(|\Re\, h(z)|^2)H(|h(z)|^2) 
	\quad \T{where}\quad \lambda_t(a):=\left(1-e^{-\frac{1}{2}f^2_2(t)a}\right).$$  Since $\dot \lambda_t(a)=\frac{1}{2}f_2^2(t)\lambda_t(a)\ge 0$ and $\ddot\lambda_t(a)=-\frac{1}{4}f_2^4(t)\lambda_t(a)$, it follows \begin{eqnarray}\label{1611}
	\begin{aligned}
	\L_{\tilde \lambda_t}=&H\L_{\lambda_t}+\lambda_t\L_{H}+\di_b(H)\we \dib_b(\lambda_t)+\di_b(\lambda_t)\we \dib_b(H)\\
\ge &H\left(\dot \lambda_t+|\Re\,h|^2\dot \lambda_t)\right)\di_bh\we\dib_b\bar h\\
	\ge&\frac{1}{4} f^2_2(t)He^{-\frac{1}{2}f^2_2(t)|\Re\,h|^2}\left\{\Big(2-f_2^2(t)|\Re\,h|^2\Big)\di_bh\we \dib_b\bar h\right\}
	\end{aligned}
	\end{eqnarray}
	If $f_2|\Re\,h|\le 1$, then $\exp\left(-\frac{1}{2}f^2_2(t)h^2\right)\ge e^{-1/2}$ uniformly in $t$ and hence 
	$$\L_{\tilde\lambda_t}\ge \frac{1}{4e^{1/2}} f^2_2(t)H(|h|^2)\di_bh\we\dib_b\bar h\ge \frac{1}{8e^{1/2}}f^2_2(t)\M_2.$$
	Otherwise, assume that $f_2|\Re\,h|\ge 1$. Using our assumption that $F(\delta)/\delta$ increasing for $|\Re\,h|\ge f_2^{-1} $ and the definition of $f_2(t)=(F^*(f_1(t))^{-2})^{-1/2}$, we get $$f_1^2(t)\frac{F(|\Re\,h|^2)}{|\Re\,h|^2}\ge f_1^2(t)\frac{F((f_2(t))^{-2})}{(f_2(t))^{-2}}=f_2^2(t).$$ In this case, $\L_{\tilde \lambda}$ can get negative; however, by using the fact that 
	$\min_{a\ge 1}\Big\{(2-a)e^{-\frac{1}{2}a}\Big\}=-2e^{-2}$  at $a=4$ for $a=f_2(t)|\Re,h|\ge 1$, we have $$\L_{\tilde\lambda_t}\ge -\frac{1}{2e^{2}}f_2^2(t)\M_2$$
	Thus 
	$$f_1^2(t)\M_1+\L_{\tilde\lambda_t}\ge (1-\frac{1}{2e^{2}})f_2(2)^2\M_2.$$ 
	In both cases, we have proved
	$$f_1^2(t)\M_1+\L_{\tilde\lambda_t}\ge \frac{1}{8e^{1/2}}f_2(t)^2\M_2.$$
	Finally, we set $\lambda_{t,2}=\lambda_{t,1}+\tilde{\lambda}_t$. It is easy to check that both $|T^s(\lambda_{t,2})|\le c_{s,2}e^{ts/2}$ for some $c_{s,2}>0$. Therefore,\ $\lambda_{t,2}$ satisfies \ref{new1} and hence Proposition \ref{p21} is proven.
\end{proof}	
\begin{proposition}\label{p53}Let $M$ be the hypersurface  defined by \eqref{e1} and $h_1,\dots, h_N$ be holomorphic functions defined in a neighborhood of the origin in $\C^n$ such that  and 
	\begin{enumerate}
		\item the common zero of $h_l$, $l=1,\dots,N$ is isolated at the origin of $\C^n$;
		\item the $(f\T-\L_{|h^2_l|})$-properties  hold for all $l=1,\dots, N$ with some $f\gg 1$.
	\end{enumerate} 
	Then, there exist an integer $m$ such that  the property $(\sqrt[m]{f}\T-Id)$ holds. 
\end{proposition}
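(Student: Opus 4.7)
The plan is to reduce from the $(f\T-\L_{|h_l|^2})$-properties to the desired $(f^{1/m}\T-Id)$-property in three stages: aggregation of the given hypotheses, an algebraic reduction via the analytic Nullstellensatz, and a weight construction in the spirit of Proposition~\ref{p21}. The integer $m$ in the conclusion will emerge as a multiple of the Lojasiewicz exponent of the ideal $(h_1,\dots,h_N)$ at the origin.

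First, by summing the weights $\lambda_t^{(l)}$ supplied by each hypothesis I would obtain a single family satisfying the aggregated property $(f\T-\sum_l\L_{|h_l|^2})$, since both the $T$-derivative bounds and the Levi-form lower bounds are additive. Next, since the $h_l$'s have only the origin as a common zero, the analytic Nullstellensatz provides an integer $m_0$ and holomorphic functions $g_{lj}$ defined near $0$ with $z_j^{m_0}=\sum_l g_{lj}(z)\,h_l(z)$ for every $j=1,\dots,n$. Setting $\phi_j:=z_j^{m_0}$, the identity $d\phi_j=\sum_l\bigl(h_l\,dg_{lj}+g_{lj}\,dh_l\bigr)$ together with Cauchy--Schwarz yields
$$\la\L_{|\phi_j|^2}\lrcorner u,u\ra\lesssim\sum_l\la\L_{|h_l|^2}\lrcorner u,u\ra+\sum_l|h_l|^2\,|u|^2,$$
pointwise near the origin for all smooth $(0,1)$-forms $u$, where the error term $\sum_l|h_l|^2$ vanishes to order at least $2m_0$ at the origin by the Lojasiewicz inequality $\sum_l|h_l(z)|^2\gtrsim|z|^{2m_0}$ (itself a consequence of the Nullstellensatz relation by Cauchy--Schwarz).

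Second, following the template of Proposition~\ref{p21}, for each coordinate direction $j=1,\dots,n$ I would add to the aggregated weight a bounded perturbation
$$\tilde\lambda_{t,j}(z):=1-\exp\!\Bigl(-\tfrac{1}{2}\,f^{2/m_0}(t)\,|z_j|^2\Bigr),$$
which depends only on $z_j$, is uniformly bounded by $1$, and is annihilated by $T$, so the $T$-derivative bounds are automatic. A direct computation parallel to the one in Proposition~\ref{p21} shows that $\L_{\tilde\lambda_{t,j}}\gtrsim f^{2/m_0}(t)\,dz_j\we d\bar z_j$ on the region $\{f^{2/m_0}(t)|z_j|^2\le 1\}$, while on the complementary region one has $f^2(t)\,m_0^2\,|z_j|^{2(m_0-1)}\ge f^{2/m_0}(t)$, so the contribution of $\L_{|\phi_j|^2}\approx m_0^2|z_j|^{2(m_0-1)}\,dz_j\we d\bar z_j$ obtained from the aggregated property absorbs the (slightly negative) contribution of $\L_{\tilde\lambda_{t,j}}$. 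Summing over $j$ and adding $\tilde\lambda_{t,j}$ to the aggregated $\lambda_t$ produces a single weight family realizing the $(f^{1/m_0}\T-Id)$-property, yielding the desired conclusion with $m=m_0$.

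The main obstacle will be the careful accounting of the error term $\sum_l|h_l|^2\,|u|^2$ that arises in the algebraic reduction $\L_{|h_l|^2}\leadsto\L_{|\phi_j|^2}$. Because $\sum_l|h_l|^2\lesssim|z|^{2m_0}\lesssim\sum_j|z_j|^{2m_0}$, this error will ultimately be absorbed by the same Proposition~\ref{p21} mechanism applied one more time in the good/bad region split, at the cost of replacing $m_0$ by a slightly larger integer $m$ depending only on $m_0$ and $n$. Verifying that both the gain in the good region and the cancellation in the bad region propagate consistently through this additional absorption is the bookkeeping that has to be executed to complete the proof.
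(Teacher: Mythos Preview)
Your approach contains a genuine error at the crucial step. You claim that the error term $\sum_l|h_l|^2$ ``vanishes to order at least $2m_0$ at the origin by the Lojasiewicz inequality $\sum_l|h_l(z)|^2\gtrsim|z|^{2m_0}$.'' This is self-contradictory: the Lojasiewicz inequality you quote (which is indeed what Cauchy--Schwarz applied to $z_j^{m_0}=\sum_l g_{lj}h_l$ yields) is a \emph{lower} bound, whereas ``vanishes to order at least $2m_0$'' would require the \emph{upper} bound $\sum_l|h_l|^2\lesssim|z|^{2m_0}$. In general the $h_l$'s vanish only to first order, so $\sum_l|h_l|^2$ is merely $O(|z|^2)$. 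Consequently the error term in your Cauchy--Schwarz reduction is $f^2(t)\bigl(\sum_l|h_l|^2\bigr)|u|^2\sim f^2(t)|z|^2|u|^2$, and even in your ``good region'' $|z_j|\le f^{-1/m_0}(t)$ for all $j$ this is of size $f^{2-2/m_0}(t)|u|^2$, which for $m_0>1$ swamps the target $f^{2/m_0}(t)|u|^2$. No further application of the Proposition~\ref{p21} mechanism can repair this, because the error couples all components of $u$ while your positive contribution $\L_{|\phi_j|^2}$ controls only $|u_j|^2$.

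The underlying reason your shortcut fails is that the Nullstellensatz places $z_j^{m_0}$ in the \emph{ideal} $(h_1,\dots,h_N)$, a multiplicative relation, whereas what is needed is control at the level of \emph{differentials}. The paper obtains this via Kohn's subelliptic multiplier ideals $\mathcal J_p$: the key ``determinant step'' uses the linear-algebra inequality $\sum_l|\di_b h_l\lrcorner u|^2=|Au|^2\gtrsim|{\det}\,A|^2|u|^2$ (for any $n\times n$ minor $A$ of the Jacobian) to pass from $\sum_l\L_{|h_l|^2}$ to $|D|^2\,\Id$ with \emph{no} error term, and then iterates inductively, invoking Kohn's termination theorem $1\in\mathcal J_p$ in place of a one-shot Nullstellensatz. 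That inductive structure is precisely what your argument is missing.
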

	  Following the guidelines of Kohn in \cite{Koh79,Koh00}, we consider the ideas of germ of holomorphic function $\mathcal J_p$ defined inductively as follows. If $\G$ is  a $P$-tuple  $(g_1,\dots, g_P)$ is  of germs of holomorphic functions in $\C^n$ let $\mathcal B(\G)$ denote the $n\times (N+P)$ matrix
	  
  \[\mathcal B(\G) = 
  \begin{pmatrix}
  	\frac{\di h_{1}}{\di z_1} & \frac{\di h_{1}}{\di z_2}& \cdots & \frac{\di h_{1}}{\di z_n} \\
  	\vdots &\vdots&\ddots&\vdots\\
  \frac{\di h_{N}}{\di z_1} & \frac{\di h_{N}}{\di z_2}& \cdots & \frac{\di h_{N}}{\di z_n} \\
  \frac{\di g_{1}}{\di z_1} & \frac{\di g_{1}}{\di z_2}& \cdots & \frac{\di g_{1}}{\di z_n} \\
  	\vdots  & \vdots  & \ddots & \vdots  \\
  \frac{\di g_{P}}{\di z_1} & \frac{\di g_{P}}{\di z_2}& \cdots & \frac{\di g_{P}}{\di z_n} \\
  \end{pmatrix}.
 	\]
 Denote by $\det \mathcal{B}(\G)$ the ideal generated by all determinants of all the $n\times n$ submatrices of $\mathcal{B}(\G)$. Then we define 
 $$\mathcal J_0=\sqrt{\det\mathcal B(\{0\})}$$ 
 and for $p\ge 1$
  $$\mathcal J_p=\sqrt{\left(\mathcal J_{p-1},\det\mathcal B(\mathcal J_{p-1})\right)}.$$ 
 Here, $\sqrt{\mathcal J}$ is a ideal of holomorphic functions such that  if $g\in \sqrt{\mathcal J}$ then there exists $\tilde g\in \mathcal J$ and $m\in \N$ such that $|g|^m\le |\tilde g|$, where $J$ is a deal of germs of holomorphic functions.\\

For the proof of Proposition~\ref{p53}, we also need the following lemma.
\begin{lemma}\label{l54}Let $g$ be a holomorphic function and $f\gg 1$. Then
 $$\T{The $\left(f\T-|g|^{2m}\Id\right)$-property}\quad \Longrightarrow\quad  \T{ the $\left(f^{1/m}\T-|g|^2\Id\right)$-property}.$$
\end{lemma}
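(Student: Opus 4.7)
The plan is to exhibit the family of weights witnessing the $(f^{1/m}\T-|g|^2\Id)$-property as a bounded, $t$-independent perturbation of the given family. First, taking advantage of the local coordinates $(z,\tau)\in\C^n\times\R$ for the hypersurface \eqref{e1}, formula \eqref{phiij} together with $T(|z|^2)=0$, $\bar L_j L_i(|z|^2)=\delta_{ij}$, and $L_k(|z|^2)=\bar z_k$ yields $\L_{|z|^2}=\Id$ at the origin, hence $\L_{|z|^2}\ge\tfrac12\Id$ on a sufficiently small neighborhood $U$. Since $|z|^2$ is independent of the transversal variable $\tau$, its iterated $T$-derivatives vanish, and $|z|^2\le\mathrm{diam}(U)^2$ is bounded on $U$.

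Let $\{\lambda_t\}_{t\ge 1}$ be the family witnessing the hypothesis, so that $|T^s\lambda_t|\le c_s t^{s/2}$ and $\L_{\lambda_t}+td\theta\ge f^2(t)|g|^{2m}\Id$ on $U$. The plan is to set $\mu_t := \lambda_t + 2|z|^2$. The conditions $|T^s\mu_t|\le c_s t^{s/2}$ and uniform boundedness of $\mu_t$ are inherited automatically from $\lambda_t$, and the combined Levi form satisfies
\[
\L_{\mu_t}+td\theta \;\ge\; \bigl(f^2(t)|g|^{2m}+1\bigr)\Id.
\]

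The proof then reduces to the scalar inequality $f^2 x^m+1\ge f^{2/m}x$ for every $x\ge 0$ and $f\ge 1$, which I would verify by splitting into the two regions $x\ge f^{-2/m}$ and $x\le f^{-2/m}$: in the first, $x^{m-1}\ge f^{-2(m-1)/m}$ forces $f^2 x^m\ge f^{2/m}x$; in the second, $f^{2/m}x\le 1$ directly. Applying with $x=|g(z)|^2$ yields $\L_{\mu_t}+td\theta\ge f^{2/m}(t)|g|^2\Id$, as required. Since $f\gg 1$ and $f(t)\le t^{1/2}$ give $f^{1/m}\gg 1$ and $f^{1/m}(t)\le t^{1/2}$, the resulting family indeed witnesses the $(f^{1/m}\T-|g|^2\Id)$-property.

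The main obstacle is conceptual rather than technical: one must realize that the missing $\Id$-type strength in the region $|g|^2\le f^{-2/m}(t)$, where the original weight is too weak, can be supplied by a fixed bounded perturbation independent of $t$, and that the identity Levi form is precisely what $|z|^2$ produces (up to lower-order terms) at the reference point. The two-case elementary inequality then does all the work, and the reduction of rate from $f$ to $f^{1/m}$ emerges naturally from the balance point $|g|^2=f^{-2/m}(t)$ between the two contributions.
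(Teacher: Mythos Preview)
Your proposal is correct and follows essentially the same approach as the paper: both use the fact that $\L_{|z|^2}=\Id$ (the $(1\T-\Id)$-property) together with the scalar inequality $f^{2/m}(t)|g|^2\le f^2(t)|g|^{2m}+1$, which the paper records in the one-line form $y\le y^m+1$ with $y=f^{2/m}(t)|g|^2$ and you verify by your equivalent two-region split. A minor sharpening: on the model hypersurface \eqref{e1} one actually has $c_{ij}^k=0$ and $T(|z|^2)=0$, so $\L_{|z|^2}=\Id$ holds exactly on all of $U$, not merely $\ge\tfrac12\Id$ near the origin.
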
\begin{proof}
The proof follows immediately by the fact that $\L_{|z|^2}=\Id$ (i.e., the $(1\T-\Id)$-property holds) and
  $$f^{2/m}(t)|g(z)|^2\le \left[f^{2/m}(t)|g(z)|^2\right]^{m}+1=f^2(t)|g|^{2m}+1.$$
  \end{proof}
We are ready to prove Proposition~\ref{p53}. 
\begin{proof}  It was proved in \cite[Theorem 7.13]{Koh79} that $1\in \mathcal J_p$ for some integer $p$ if and only if the common zero of the functions $h_1,\dots,h_N$ is the origin of $\C^n$. Thus, we only need to prove that if $g\in \mathcal J_p$ then there exists an integer $m_p$ such that $(f^{1/m_p}\T-|g|^2\Id)$-property holds. We prove this statement by the inductive method. The argument for checking the first step $p=0$, by using the hypothesis that the $(f\T-\L_{|h_j|^2})$-property holds for all $j=1,\dots,N$,  is similar for a general step $p$ so we omit here the step $p=0$. Now we assume inductively that the $(f^{\frac{1}{m_{p-1}}}\T-|\hat g|^2\Id)$ holds for all $\hat g\in \mathcal J_{p-1}$. By Proposition~\ref{p21}, the $(f^{\frac1{2m_{p-1}}}\T-\L_{|\hat g|^2})$-property holds for all $\hat g\in \mathcal J_{p-1}$ and hence by the definition of $\det \mathcal B(\mathcal J_{p-1})$, we get $(f^{\frac1{2m_{p-1}}}\T-|\tilde g|^2\Id)$-property holds for all $\tilde g\in \left(J_{p-1},\det\mathcal B(\mathcal J_{p-1}) \right)$. Now taking $ g\in \mathcal J_p $ there exist an integer $\tilde m_q$ and $\tilde g\in \left(\mathcal J_{p-1},\det\mathcal B(\mathcal J_{p-1}) \right)$ such that $| g|^{\tilde m_p}\le |\tilde g|$. By Lemma~\ref{l54}, the $(f^{\frac1{m_p}}\T-|g|^2\Id)$-property holds for $m_p:=2m_{p-1}\tilde m_p$. This proves Proposition~\ref{p53}.

\end{proof}

	Now we are ready for the proof of Theorem~\ref{t1.9}.
	\begin{proof}[Theorem~\ref{t1.9}] By the argument in Theorem~\ref{t51m}, we can prove that any $x\in M$ is a weakly superlogarithmic point. In order to use Theorems~\ref{maintheorem1} and ~\ref{maintheorem2}, we have to show that $\dib_b$ has closed range in $L^2$ for all degree of forms and $G_0$ is globally regular in the case $n=1$. Up the author's knowledge,  the closed range of $\dib_b$ and global regularity of $G_0$ have been established only on compact CR manifolds (cf. \cite{Bar12, Koh86, KoNi06, Nic06, Sha85, StZe15}) nevertheless $M$ is not a compact CR manifold. Since $M$ is a hypersurface in $\C^{n+1}$,  we can compactify $M$  by \cite[Proposition 2.1]{McN92} as follows. Let $V$ be an open set in $M$, there exists a smooth, compact, pseudoconvex hypersurface $\tilde M$ in $\C^{n+1}$ satisfying the following properties:
	\begin{itemize}
		\item $V\subset M\cap \tilde M$,
		\item all points in $\tilde M\setminus M$ are points of strong pseudoconvexity,
		\item the relative boundary $S$ of $\tilde M\cap M$ and $\tilde M\setminus M$ is the intersection of $M$ with a sphere centered at the origin of $\C^{n+1}$.
	\end{itemize}
	Thus, the $\dib_{b,\tilde M}$ has closed range where the subscript $\tilde M$ means the operator of $\tilde M$.  Analogously to \cite[Proposition 4.1]{KhZa12}, we have $G_{q,\tilde M}$ is globally exactly regular for all $0\le q\le n$. Indeed, since the compactness estimate for the system $(\dib_{b,M},\dib^*_{b,M})$ holds on any open set in $M$ (see Step 2 in the proof of Theorem~\ref{t51m}),  similar to the argument in the proof of \cite[Proposition 4.1]{KhZa12}, we can obtain the (global) compactness estimate for the system $(\dib_{b,\tilde M},\dib^*_{b,\tilde M})$ and hence the global regularity of $G_{q,\tilde M}$ follows (see \cite{Rai10, KhPiZa12a, Str12,StZe15}). Now we can apply Theorems~\ref{maintheorem1} and \ref{maintheorem2}, we obtain $\Box_{b,\tilde M}$ is locally hypoelliptic. On local set $V\subset M\cap\tilde M$, the Kohn-Laplacians $\Box_{b,M}$ and $\Box_{b,\tilde M}$ coincide. Therefore, $\Box_{b,M}$ is local hypoelliptic on $V$. This completes the proof of Theorem~\ref{t1.9}.   
		\end{proof}
	 

 								\section*{Appendix}
 								In this appendix, we give estimates of commutators to support the work  in \S\ref{s2}. The following lemma has been used in the proof of Proposition~\ref{Local3.2}
 								\begin{lemma}\label{l1} Let $\lambda:=\lambda_{k,s}$ be a real-valued $C^\infty$ function defined on an open set $U$ of $M$ such that $|\lambda|\le c_{0,s}k$ and $|T^m(\lambda)|\le c_{m,s} e^{\frac{mk}{2}}$  for any $m\in\mathbb N$. Then, for any $p,r\in\mathbb \mathbb N$ and $
 									\alpha\in \R$, there exists a constant $c_{s,r,p,\alpha}$ independent of $k$
 									such that	$$e^{\alpha k}\no{T^p[ e^{-\frac{\lambda}{2}}\zeta,\tilde{\Gamma}_k^+] \Gamma_k\Psi^+ \zeta u}_{L^2}\le c_{s,r,p,\alpha}\no{ \zeta u}_{H^{-r}}$$
 									holds for all $u\in H^{-s_o}(M)$ with $r\ge s_o$.  Here, $\mathcal S(\tilde\Gamma^+_k)\succ \mathcal S(\Gamma_k\Psi^+)$.						
 						\end{lemma}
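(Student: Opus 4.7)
\textbf{The plan is} to combine the fact that $\tilde\Gamma_k^+\Gamma_k\Psi^+=\Gamma_k\Psi^+$ holds exactly (both are Fourier multipliers, and $\tilde\gamma_k^+\equiv 1$ on $\supp(\gamma_k\psi^+)$) with the asymptotic symbolic calculus of the commutator $[e^{-\lambda/2}\zeta,\tilde\Gamma_k^+]$. Writing $v:=\Gamma_k\Psi^+\zeta u$, the identity above gives the clean rewriting
$$
[e^{-\lambda/2}\zeta,\tilde\Gamma_k^+]\Gamma_k\Psi^+\zeta u=(I-\tilde\Gamma_k^+)\bigl(e^{-\lambda/2}\zeta\cdot v\bigr),
$$
and since $\tilde\gamma_k^+$ depends only on $\xi_{2n+1}$, the Schwartz kernel of $\tilde\Gamma_k^+$ factors as $\delta(x'-y')$ times a function of $x_{2n+1}-y_{2n+1}$. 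Consequently, only $T$-derivatives of $e^{-\lambda/2}\zeta$ can enter the expansion of the commutator — precisely the object we have $k$-dependent bounds on.

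Applying the standard symbolic calculus (equivalently, Taylor-expanding in $x_{2n+1}-y_{2n+1}$ inside the kernel representation) yields, for every $N\ge 1$,
$$
[e^{-\lambda/2}\zeta,\tilde\Gamma_k^+]v=\sum_{j=1}^{N-1}\frac{i^j}{j!}\,A_{k,j}\bigl(T^j(e^{-\lambda/2}\zeta)\cdot v\bigr)+R_N(v),
$$
where $A_{k,j}$ is the Fourier multiplier with symbol $\partial_{\xi_{2n+1}}^j\tilde\gamma_k^+(\xi)=e^{-jk}(\tilde\gamma^+)^{(j)}(e^{-k}\xi_{2n+1})$. Four scale estimates then drive the bound: (i) $\no{A_{k,j}}_{L^2\to L^2}\le C_je^{-jk}$ from the $L^\infty$-norm of the symbol; (ii) the Leibniz rule combined with $|\lambda|\le c_{0,s}k$ and $|T^m\lambda|\le c_{m,s}e^{mk/2}$ gives $\no{T^j(e^{-\lambda/2}\zeta)}_{L^\infty}\le C_{j,s}\,e^{(j+c_{0,s})k/2}$; (iii) the localization $\supp(\hat v)\subset\{|\xi|\sim e^k\}$ yields $\no{v}_{L^2}\le Ce^{rk}\no{\zeta u}_{H^{-r}}$; (iv) since the output Fourier support lies in $\xi_{2n+1}\in(e^k,e^{k+1})\cup(e^{k+4},e^{k+5})$, the operator $T^p$ contributes at most the factor $e^{pk}$.

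Multiplying (i)--(iv) gives $\no{T^pA_{k,j}(T^j(e^{-\lambda/2}\zeta)\cdot v)}_{L^2}\le C_{j,s,p}\,e^{(-j/2+c_{0,s}/2+r+p)k}\no{\zeta u}_{H^{-r}}$, so the prefactor $e^{\alpha k}$ is absorbed by choosing the truncation order $N$ so that every surviving index satisfies $j\ge c_{0,s}+2(r+p+\alpha)$. An analogous integration-by-parts argument using the standard remainder formula, together with the same $T$-derivative bounds on $e^{-\lambda/2}\zeta$, disposes of $R_N$.

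\textbf{The main obstacle} is the uniform-in-$k$ control of this remainder: pointwise bounds on $T^N(e^{-\lambda/2}\zeta)$ grow like $e^{(N+c_{0,s})k/2}$, while the symbolic gain from $\tilde\gamma_k^+$ at order $N$ is only $e^{-Nk}$. Balancing these competing exponentials — made explicit on the Fourier side via integration-by-parts in $x_{2n+1}$, which transfers $N$ derivatives of $e^{-\lambda/2}\zeta$ into a factor $|\xi_{2n+1}-\eta_{2n+1}|^{-N}$ in $\widehat{e^{-\lambda/2}\zeta}(\xi-\eta)$ — is precisely what dictates the restriction $j\ge c_{0,s}+2(r+p+\alpha)$ needed to absorb the prefactor $e^{\alpha k}$ into the baseline norm $\no{\zeta u}_{H^{-r}}$.
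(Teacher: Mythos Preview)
Your overall strategy matches the paper's: work on the Fourier side, exploit that $\tilde\Gamma_k^+$ is a multiplier in $\xi_{2n+1}$ only, and balance the $e^{-jk}$ gain from $\partial_{\xi_{2n+1}}^j\tilde\gamma_k^+$ against the $e^{jk/2+c_{0,s}k/2}$ cost of $T^j(e^{-\lambda/2}\zeta)$. But the operator ordering in your expansion creates a genuine gap.

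You write
\[
[e^{-\lambda/2}\zeta,\tilde\Gamma_k^+]v=\sum_{j=1}^{N-1}\frac{i^j}{j!}\,A_{k,j}\bigl(T^j(e^{-\lambda/2}\zeta)\cdot v\bigr)+R_N(v),
\]
with the multiplier $A_{k,j}$ \emph{outside} and the multiplication \emph{inside}. This is the Taylor expansion of $\tilde\gamma_k^+(\eta)-\tilde\gamma_k^+(\xi)$ around $\xi$. In that ordering the low-$j$ terms do not vanish: $T^j(e^{-\lambda/2}\zeta)\cdot v$ has Fourier support spread everywhere, so $A_{k,j}$ picks up a nontrivial piece. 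Your estimates (i)--(iv) then give only $e^{(-j/2+c_{0,s}/2+r+p)k}$, which blows up for $j=1,2,\dots$; the phrase ``every surviving index satisfies $j\ge\dots$'' is not justified by what you wrote down, since the sum runs from $j=1$ regardless of $N$.

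The paper's device --- likely what you intended --- is to Taylor-expand around $\eta$ instead, i.e.\ to order the terms as $T^j(e^{-\lambda/2}\zeta)\cdot A_{k,j}v$. Then $A_{k,j}v=0$ \emph{exactly} for every $j\ge1$, because $(\tilde\gamma_k^+)^{(j)}$ is supported in the transition zone while $\hat v$ sits where $\tilde\gamma_k^+\equiv1$. Only the order-$m$ Lagrange remainder survives, and $m$ is at our disposal. The paper does this directly at the kernel level: it writes the kernel as $\xi_{2n+1}^p[\tilde\gamma_k^+(\xi_{2n+1})-\tilde\gamma_k^+(\eta_{2n+1})]\,\F_{2n+1}\{e^{-\lambda/2}\zeta\}(\xi_{2n+1}-\eta_{2n+1})\,\eta_{2n+1}^r$, expands the bracket around $\eta_{2n+1}$, observes all polynomial terms vanish on $\supp\gamma_k$, and bounds the remaining Lagrange term with exactly the integration-by-parts decay you describe for $R_N$. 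In short, your ``remainder argument'' is the whole argument; the main terms in your expansion should not be there, and as written they are not controlled.
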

						We first remark that the supports of  the symbols of $[ e^{-\frac{\lambda}{2}}\zeta,\tilde{\Gamma}_k^+] $ and $\Gamma_k\Psi^+$ are disjoint, the operator $[ e^{-\frac{\lambda}{2}}\zeta,\tilde{\Gamma}_k^+] \Gamma_k\Psi^+$ is order $-\infty$. This implies $\no{T^p[ e^{-\frac{\lambda}{2}}\zeta,\tilde{\Gamma}_k^+] \Gamma_k\Psi^+ \zeta u}^2$ can be controlled by any negative Sobolev norm with multiplying by a constant. However this constant might depend on $k$. The Lemma \ref{l1} is to verify that the upper bounded of  $e^{\alpha k}\no{T^p[ e^{-\frac{\lambda}{2}}\zeta,\tilde{\Gamma}_k^+] \Gamma_k\Psi^+ \zeta u}^2$ is independent on $k$. 
 						\begin{proof}							
 							Let $$K(\xi_{2n+1},\eta_{2n+1})=\xi^p_{2n+1}[\tilde \gamma(e^{-k}\xi_{2n+1})-\tilde \gamma(e^{-k}\eta_{2n+1})]\F_{2n+1}\{e^{-\frac{\lambda}{2}}\zeta\}(\xi_{2n+1}-\eta_{2n+1})\eta^r_{2n+1}$$
 							on support of $\gamma(e^{-k}\eta_{2n+1})\F_{2n+1}(\Psi^+\zeta v)(\eta_{2n+1})$. From Taylor's theorem, we have for any $m\ge 1$ there exists $a_m\in [0,1]$ such that
 							\begin{eqnarray}\begin{aligned}
 								\tilde\gamma(e^{-k}\xi_{2n+1})-\tilde\gamma(e^{-k}\eta_{2n+1})
 									=&\sum_{j=1}^{m-1}\frac{1}{j!}e^{-jk}(\xi_{2n+1}-\eta_{2n+1})^j\tilde\gamma^{(j)}(e^{-k}\eta_{2n+1})\\
 									&+\frac{1}{m!}e^{-mk}(\xi_{2n+1}-\eta_{2n+1})^m\tilde\gamma^{(m)}\left(a_me^{-k}\xi_{2n+1}+(1-a_m)e^{-k}\eta_{2n+1}\right).
 								\end{aligned}
 							\end{eqnarray}
 							On support of $\gamma(e^{-k}\eta_{2n+1})$, the summation $\sum_{j=1}^{m-1}\bullet$ is zero since $\mathcal S(\tilde\Gamma^+_k)\succ \mathcal S(\Gamma_k\Psi^+)$ and hence
 							\begin{eqnarray}
 								\begin{aligned}
 									|K(\xi_{2n+1},\eta_{2n+1})|\le& c_m e^{-mk}|\eta_{2n+1}^{r}|\left|\xi_{2n+1}^p(\xi_{2n+1}-\eta_{2n+1})^m\F_{2n+1}\{e^{-\frac{\lambda}{2}}\zeta\}(\xi_{2n+1}-\eta_{2n+1})\right|\\
 									\le & c_{m,p} e^{-mk}|\eta_{2n+1}^{r}|\Big[\big|\underbrace{(\xi_{2n+1}-\eta_{2n+1})^{m+p}\F_{2n+1}\{e^{-\frac{\lambda}{2}}\zeta\}(\xi_{2n+1}-\eta_{2n+1})}_{I_{m+p}}\big|\\
 									&+\big|\eta^{p+r}_{2n+1}\underbrace{(\xi_{2n+1}-\eta_{2n+1})^{m}\F_{2n+1}\{e^{-\frac{\lambda}{2}}\zeta\}(\xi_{2n+1}-\eta_{2n+1})}_{I_m}\big|\Big].
 								\end{aligned}
 							\end{eqnarray}
 							For $l=m+p$ or $m$, we rewrite $I_l$ as
 							\begin{eqnarray}
 								\begin{aligned}
 									I_l=&(|\xi_{2n+1}-\eta_{2n+1}|^2+1)^{-1}\F_{2n+1}\{[(\xi_{2n+1}-\eta_{2n+1})^{l+2}+(\xi_{2n+1}-\eta_{2n+1})^{l}]e^{-\frac{\lambda}{2}}\zeta\}(\xi_{2n+1}-\eta_{2n+1})\\
 									=& (|\xi_{2n+1}-\eta_{2n+1}|^2+1)^{-1}\F_{2n+1}\{(e^{-\frac{\lambda}{2}}\zeta)^{(l+2)}+(e^{-\frac{\lambda}{2}}\zeta)^{(l)}\}(\xi_{2n+1}-\eta_{2n+1})\\
 									=& (|\xi_{2n+1}-\eta_{2n+1}|^2+1)^{-1}\left[\F_{2n+1}\{(e^{-\frac{\lambda}{2}}\zeta)^{(l+2)}\}(\xi_{2n+1}-\eta_{2n+1})+\F_{2n+1}\{(e^{-\frac{\lambda}{2}}\zeta)^{(l)}\}(\xi_{2n+1}-\eta_{2n+1})\right].
 								\end{aligned}
 							\end{eqnarray}
 							On other hand, 
 							\begin{eqnarray}
 								\begin{aligned}
 									|\F_{2n+1}\{(e^{-\frac{\lambda}{2}}\zeta)^{l}\}(\xi_{2n+1}-\eta_{2n+1})|=&\left|\int_\R e^{-i(\xi_{2n+1}-\eta_{2n+1})x_{2n+1}}T^{l}(e^{-\frac{\lambda}{2}}\zeta)dx_{2n+1}\right|\\
 									\le&\int_{\R} \left|T^{l}(e^{-\frac{\lambda}{2}}\zeta)\right|dx_{2n+1}\\
 									\le& c_{l,s} e^{\frac{lk}{2}+c_{0,s}k},
 								\end{aligned}
 							\end{eqnarray}
 							where the last inequality follows by the assumption on $\lambda:=\lambda_{k,s}$.
 							Thus, $|I_l|\le c_{l,s} e^{\frac{k(l+2)}{2}+c_{0,s}k}$.  Note that on support of $\gamma(e^{-k}\eta_{2n+1})$, $|\eta_{2n+1}|\le e^k$. 
 							Therefore, 
 							$$|K(\xi_{2n+1},\eta_{2n+1})|\le c_{m,p,s}(|\xi_{2n+1}-\eta_{2n+1}|^2+1)^{-1}e^{-mk+\frac{(m+2)k}{2}+(r+p+c_{0,s})k},$$
 						 holds for any $m\in\mathbb N$. Fix $\alpha\in \R$ choose $m$ sufficiently large, we got
 								$$|K(\xi_{2n+1},\eta_{2n+1})|\le c_{\alpha, p, r,s}e^{-\alpha k}(|\xi_{2n+1}-\eta_{2n+1}|^2+1)^{-1}.$$
 					This boundedness of the kernel $K(\xi_{2n+1},\eta_{2n+1})$ implies that 
 					$$e^{\alpha k}\no{T^p[ e^{-\frac{\lambda_k}{2}}\zeta,\tilde{\Gamma}_k^+] \Gamma_k\Psi^+ \zeta u}_{L^2}\le c_{\alpha,p,r,s}\no{ T^{-r}\Gamma_k\Psi^+ \zeta u}_{L^2}\le c_{\alpha,p,r,s}\no{  \zeta u}_{H^{-r}}. $$
 					This completes the proof of this lemma.
 						\end{proof}
 					The following lemma has been used to estimate the commutators
 					$$\sum_k\no{e^{ks\sigma}[\dib_b,\zeta\Gamma_k\Psi^+\zeta]u}^2_{L^2}+\no{e^{ks\sigma}[\dib_b^*,\zeta\Gamma_k\Psi^+\zeta]u}^2_{L^2}$$ in Lemma~\ref{com1}  in \S\ref{s2}.
 						
 						\begin{lemma}\label{l2} Let  $X$ be a differential operator of order one with smooth coefficients and $u\in H^{-s_o}(M)$. Then, the following holds uniformly in $k$.
 							\begin{eqnarray*}
 							(i)\quad\no{e^{ks\sigma}[X,\zeta\Gamma_k\Psi^\pm\zeta]u}^2_{L^2}\le & c_s\left( \no{e^{(k-1)s\sigma}\zeta\Gamma_{k-1}\Psi^\pm\zeta u}^2_{L^2} +\no{e^{(k+1)s\sigma}\zeta\Gamma_{k+1}\Psi^\pm\zeta u}^2_{L^2}\right)\\ &+c\left(\no{\tilde \Gamma_k\tilde{\zeta}u}^2_{L^2}+e^{2ks}\no{\tilde \Gamma_k\tilde \Psi^0\zeta u}^2_{L^2}\right);
 							\end{eqnarray*}
 						$\quad(ii)\quad\no{Xe^{ks\sigma}\zeta\Gamma_k\Psi^\pm\zeta u}^2_{L^2}\le c\left( \no{e^{ks\sigma+k}\zeta\Gamma_k\Psi^\pm\zeta u}^2_{L^2}+e^{2k}\no{\Gamma_k\zeta u}^2_{L^2}\right).$\\\
 					Here  we recall that $\zeta$ is a cutoff function such that $\zeta\prec \tilde\zeta$; $\tilde\Psi^0$ is defined in \S\ref{s2.2} such that $\mathcal S([\Psi^\pm,\alpha])\prec \mathcal S(\tilde \Psi^0)$; and $\tilde\Gamma_k$ is a pseudodifferential operator such that $ \mathcal S(\Gamma_k) \prec \mathcal S(\tilde\Gamma_k)$.
 						\end{lemma}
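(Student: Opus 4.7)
The plan is pseudodifferential commutator analysis with careful tracking of symbol supports against the cutoff hierarchy $\sigma\prec\zeta\prec\tilde\zeta$. Both estimates rest on two structural facts: first, $X(\zeta)$ vanishes on $\{\zeta=1\}$ and therefore on $\supp(\sigma)$, so the weight $e^{ks\sigma}$ trivializes on $\supp(X(\zeta))$; second, since $\Gamma_k\Psi^\pm$ is a Fourier multiplier whose symbol $\gamma_k\psi^\pm$ is supported in $\{|\xi'|\lesssim |\xi_{2n+1}|\sim e^k\}$, any first-order differential operator $X$ acting on $\Gamma_k\Psi^\pm(\,\cdot\,)$ gains a factor of $e^k$ in $L^2$.

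For part (i), I would expand
\begin{equation*}
[X,\zeta\Gamma_k\Psi^\pm\zeta] \;=\; [X,\zeta]\,\Gamma_k\Psi^\pm\zeta \;+\; \zeta\,[X,\Gamma_k\Psi^\pm]\,\zeta \;+\; \zeta\,\Gamma_k\Psi^\pm\,[X,\zeta].
\end{equation*}
The outer commutators are multiplication by $X(\zeta)$; since $\sigma=0$ on $\supp(X(\zeta))$, the weight $e^{ks\sigma}X(\zeta)=X(\zeta)$ is uniformly bounded in $k$, and the disjoint-symbol identity $\Gamma_k\Psi^\pm(I-\tilde\Gamma_k)=O(\Lambda^{-\infty})$ converts $\|\Gamma_k\Psi^\pm\zeta u\|_{L^2}$ into the error term $\|\tilde\Gamma_k\tilde\zeta u\|_{L^2}$. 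For the middle commutator, symbolic calculus yields, to leading order,
\begin{equation*}
\sigma_{\mathrm{princ}}\bigl([X,\Gamma_k\Psi^\pm]\bigr) \;=\; -i\sum_{l,j}\xi_j\,(\partial_{x_l}a_j)\Bigl[(\partial_{\xi_l}\gamma_k)\psi^\pm + \gamma_k\,\partial_{\xi_l}\psi^\pm\Bigr],
\end{equation*}
because $\gamma_k\psi^\pm$ has no $x$-dependence. The $\partial_{\xi_{2n+1}}\gamma_k$ piece lives in the $\gamma$-transition annulus, which overlaps exactly $\supp(\gamma_{k\pm 1})$, and is pointwise bounded since $|\xi_j|e^{-k}\lesssim 1$ on $\supp(\gamma_k\psi^\pm)$; this gives the main terms involving $\zeta\Gamma_{k\pm1}\Psi^\pm\zeta u$. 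The $\partial_{\xi_l}\psi^\pm$ piece is homogeneous of degree zero and supported in the transition cone of $\psi^\pm$, which lies inside $\supp(\tilde\psi^0)$, producing the $\tilde\Psi^0$ error. The weight shift from $e^{ks\sigma}$ to $e^{(k\pm 1)s\sigma}$ costs only the factor $e^{\mp s\sigma}$, uniformly bounded since $|\sigma|$ is bounded, so it is absorbed into the $k$-independent constant $c_s$.

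For part (ii), apply Leibniz:
\begin{equation*}
X\bigl(e^{ks\sigma}\zeta\,\Gamma_k\Psi^\pm\zeta u\bigr) \;=\; X\bigl(e^{ks\sigma}\zeta\bigr)\cdot \Gamma_k\Psi^\pm\zeta u \;+\; e^{ks\sigma}\zeta\cdot X\Gamma_k\Psi^\pm\zeta u.
\end{equation*}
The weight derivative splits as $X(e^{ks\sigma}\zeta)=ks\,X(\sigma)\,e^{ks\sigma}\zeta+e^{ks\sigma}X(\zeta)$: the first term is pointwise dominated by $ks\,e^{ks\sigma}\zeta\lesssim e^k\cdot e^{ks\sigma}\zeta$, yielding the $\|e^{ks\sigma+k}\zeta\Gamma_k\Psi^\pm\zeta u\|^2$ term; the second equals $X(\zeta)$ on its support (where $\sigma=0$), uniformly bounded, and feeds into the error $e^{2k}\|\Gamma_k\zeta u\|^2$. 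For the second summand, since $\Gamma_k\Psi^\pm\zeta u$ has Fourier support where $|\xi|\lesssim e^k$, the first-order operator $X$ acting on it satisfies $\|X\Gamma_k\Psi^\pm\zeta u\|_{L^2}\lesssim e^k\|\Gamma_k\Psi^\pm\zeta u\|_{L^2}\leq e^k\|\Gamma_k\zeta u\|_{L^2}$ after using $\Gamma_k\Psi^\pm=\Psi^\pm\Gamma_k$ and the $L^2$-boundedness of $\Psi^\pm$; multiplying by $e^{ks\sigma}\zeta$ and absorbing any $s$-dependent multiplicative constant into $c$ produces the remaining $e^{2k}\|\Gamma_k\zeta u\|^2$ contribution.

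The main technical obstacle is the delicate symbolic-calculus bookkeeping in part (i): one must verify that the lower-order remainders in the symbol expansion of $[X,\Gamma_k\Psi^\pm]$, as well as the smoothing remainders from $\Gamma_k(I-\tilde\Gamma_k)$, decay in $k$ fast enough to be absorbed into the error norms $\|\tilde\Gamma_k\tilde\zeta u\|^2$ and $e^{2ks}\|\tilde\Gamma_k\tilde\Psi^0\zeta u\|^2$. The subtle point throughout is that the spatial weight $e^{ks\sigma}$ can reach magnitude $e^{ks}$, so it must never multiply a commutator-with-cutoff piece; this is guaranteed by the hierarchy $\sigma\prec\zeta$, which forces every occurrence of $X(\zeta)$ (or $\tilde\Gamma_k$-smoothing-remainder) to live in a region where $\sigma=0$ and the weight disappears.
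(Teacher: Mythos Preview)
Your treatment of part (i) is essentially the paper's argument: the same Leibniz/Jacobi decomposition into $[X,\zeta]$-pieces (killed by $\sigma\prec\zeta$), a $[X,\Psi^\pm]$-piece (landing in the $\tilde\Psi^0$ cone), and a $[X,\Gamma_k]$-piece (supported in the $\dot\gamma_k$-annulus, hence in $\supp(\gamma_{k-1}+\gamma_{k+1})$). The paper makes the last step explicit by inserting the identity $(\Gamma_{k-1}+\Gamma_{k+1})=1$ on $\supp(\dot\gamma_k)$ and then commuting $e^{ks\sigma}\zeta$ through the order-zero operator $[X,\Gamma_k]\tilde\Psi^\pm$, but your sketch captures the mechanism.

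Part (ii), however, has a genuine gap. After your Leibniz split the second summand is $e^{ks\sigma}\zeta\cdot X\Gamma_k\Psi^\pm\zeta u$, and you argue: since $\Gamma_k\Psi^\pm\zeta u$ has Fourier support in $|\xi|\lesssim e^k$, one gets $\|X\Gamma_k\Psi^\pm\zeta u\|_{L^2}\lesssim e^k\|\Gamma_k\zeta u\|_{L^2}$, and then ``multiplying by $e^{ks\sigma}\zeta$'' yields the $e^{2k}\|\Gamma_k\zeta u\|^2$ term. But one cannot multiply an $L^2$-norm inequality by a spatial weight; the only honest bound is $\|e^{ks\sigma}\zeta\cdot Xv\|\le\|e^{ks\sigma}\zeta\|_\infty\|Xv\|\le e^{ks}\cdot e^k\|\Gamma_k\zeta u\|$, which after squaring gives $e^{2k(s+1)}\|\Gamma_k\zeta u\|^2$ --- too large by a factor $e^{2ks}$. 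Nor can you route this into the first target term $e^{2k}\|e^{ks\sigma}\zeta\Gamma_k\Psi^\pm\zeta u\|^2$, because $e^{ks\sigma}\zeta v$ no longer has Fourier support in $|\xi|\lesssim e^k$ once the spatial weight is applied.

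The paper avoids this by a different route: it bounds $X$ by $\Lambda$, then \emph{inserts} a dominating Fourier cutoff $\tilde\Gamma_k\tilde\Psi^\pm$ (equal to $1$ on $\supp(\gamma_k\psi^\pm)$) so that
\[
\|Xe^{ks\sigma}\zeta\Gamma_k\Psi^\pm\zeta u\|^2\lesssim\|\Lambda\tilde\Gamma_k\tilde\Psi^\pm(e^{ks\sigma}\zeta\Gamma_k\Psi^\pm\zeta u)\|^2+\|\Lambda[\tilde\zeta e^{ks\sigma}\zeta,\tilde\Gamma_k\tilde\Psi^\pm]\Gamma_k\Psi^\pm\zeta u\|^2.
\]
In the first term the symbol of $\Lambda\tilde\Gamma_k\tilde\Psi^\pm$ is genuinely bounded by $ce^k$, giving the target $e^{2k}\|e^{ks\sigma}\zeta\Gamma_k\Psi^\pm\zeta u\|^2$; the commutator term is then handled as in part (i). This insertion trick --- restoring compact Fourier support \emph{after} the weight has been applied --- is exactly the missing ingredient in your argument.
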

 						\begin{proof}
 						
 						For the proof of (i),	we first use the Jacobi identity to get 
\begin{eqnarray}\label{n1}
\begin{aligned}
	\no{e^{ks\sigma}[X,\zeta\Gamma_k\Psi^\pm\zeta]u}^2_{L^2}\le& 4\Big(\no{e^{ks\sigma}\zeta[X,\Gamma_k]\Psi^\pm\zeta u}^2_{L^2}+\no{e^{ks\sigma}\zeta\Gamma_k[X,\Psi^\pm]\zeta u}^2_{L^2}\\
	+&\no{e^{ks\sigma}[X,\zeta]\Gamma_k\Psi^\pm\zeta u}^2_{L^2}+\no{e^{ks\sigma}\zeta\Gamma_k\Psi^\pm[X,\zeta]u}^2_{L^2}\Big).
\end{aligned}
\end{eqnarray} 												
 							Since $\supp(\dot{\zeta})\cap \supp(\sigma)=\emptyset$, the two terms in the second line of \eqref{n1} are bounded by $\no{\tilde \Gamma_k\tilde\zeta u}^2_{L^2}$. By the choice of $\tilde\Psi^0$ and $\tilde\Gamma_k$, it follows $\mathcal S(\Gamma_k[X,\Psi^\pm])\prec\mathcal S(\tilde{\Gamma}_k\tilde\Psi^0)$, thus the last term in the first line of \eqref{n1} can be bounded as follows 
 					\begin{eqnarray}\label{n2}
 					\begin{aligned}\no{e^{ks\sigma}\zeta\Gamma_k[X,\Psi^\pm]\zeta u}^2_{L^2}\le e^{2ks}\no{\Gamma_k[X,\Psi^\pm]\tilde \Psi^0\zeta u}^2_{L^2}
 						\le	c e^{2ks} \no{\tilde{\Gamma}_k\tilde \Psi^0\zeta u}^2_{L^2}.
 							 \end{aligned}
 							 \end{eqnarray}  
 					 The first term in the RHS. of the first line of \eqref{n1} is treated as follows. Since $(\gamma_{k+1}+\gamma_{k-1})=1$ on the support of $\dot\gamma_k$, we have
 					\begin{eqnarray}\label{n3}
 					\begin{aligned}\no{e^{ks\sigma}\zeta[X,\Gamma_k]\Psi^\pm\zeta u}^2_{L^2}=&
 					\no{e^{ks\sigma}\zeta[X,\Gamma_k]\tilde\Psi^\pm\left(\Gamma_{k-1}+\Gamma_{k+1}\right)\Psi^\pm\zeta u}^2_{L^2}\\
 				\le& 						\no{[X,\Gamma_k]\tilde\Psi^\pm e^{ks\sigma}\zeta\left(\Gamma_{k-1}+\Gamma_{k+1}\right)\Psi^\pm\zeta u}^2_{L^2}\\
 				&\no{[e^{ks\sigma},[X,\Gamma_k]\tilde\Psi^\pm]\zeta\left(\Gamma_{k-1}+\Gamma_{k+1}\right)\Psi^\pm\zeta u}^2_{L^2}\\
 				&+\no{e^{ks\sigma}[\zeta,[X,\Gamma_k]\tilde\Psi^\pm]\left(\Gamma_{k-1}+\Gamma_{k+1}\right)\Psi^\pm\zeta u}^2_{L^2}.
 					\end{aligned}
 					\end{eqnarray}  
 					The term in the last line of \eqref{n3} is bounded by $\no{\tilde \Gamma_k\tilde\zeta u}^2_{L^2}$ since $\supp(\dot{\zeta})\cap \supp(\sigma)=\emptyset$. 
 		Sine the symbol of $[X,\Gamma_k]\tilde \Psi^\pm$ is uniformly bounded by a constant, the term in second line of \eqref{n3} is bounded by $\no{ e^{ks\sigma}\zeta\left(\Gamma_{k-1}+\Gamma_{k+1}\right)\Psi^\pm\zeta u}^2_{L^2}$. We also notice that the support of  $\mathcal S([X,\Gamma_k]\tilde \Psi^\pm)$ is a subset of  $\supp(\dot\gamma_k\tilde \psi^+)$ and hence
 			$$\no{[e^{ks\sigma},[X,\Gamma_k]\tilde\Psi^\pm]v}^2_{L^2}\le c_s k^2e^{-2k}\no{e^{ks\sigma}v}^2_{L^2}\le c_s\no{e^{ks\sigma}v}^2_{L^2}. $$ Thus, the term in the third line of \eqref{n3} is also bounded by $\no{ e^{ks\sigma}\zeta\left(\Gamma_{k-1}+\Gamma_{k+1}\right)\Psi^\pm\zeta u}^2_{L^2}$. This proves the first part of Lemma~\ref{l2}.
 			
 			For the proof of (ii), we have 
 			\begin{eqnarray}\label{n4}
 			\begin{aligned}\no{Xe^{ks\sigma}\zeta\Gamma_k\Psi^\pm\zeta u}^2_{L^2}\lesssim& \no{\La \tilde \zeta e^{ks\sigma}\zeta\tilde \Gamma_k\tilde \Psi^\pm\Gamma_k\Psi^\pm\zeta u}_{L^2}^2\\
 			\lesssim& \no{\La \tilde \Gamma_k\tilde \Psi^\pm\tilde \zeta e^{ks\sigma }\zeta\Gamma_k\Psi^\pm\zeta u}^2_{L^2}+\no{\La [\tilde \zeta e^{ks\sigma}\zeta,\tilde \Gamma_k\tilde \Psi^\pm]\Gamma_k\Psi^\pm\zeta u}^2_{L^2}.
 				\end{aligned}
 				\end{eqnarray}
 			The symbol $\mathcal S(\La \tilde \Gamma_k\tilde \Psi^\pm)$ is bounded by $e^k$ since, so the first term in the second line of \eqref{n4} is bounded by $\no{e^{ks\sigma+1}\zeta\Gamma_k\Psi^\pm\zeta u}^2$. Similar to the argument in the previous part, the second term in the second line of \eqref{n4} is bounded by 
 			$k^2\no{e^{ks\sigma}\zeta\Gamma_k\Psi^\pm\zeta u}^2_{L^2}+e^{2k}\no{ \Gamma_k \zeta u}^2_{L^2}$. This proves (ii) and then the proof of Lemma~\ref{l2} is complete.
 						\end{proof}

 								\bibliographystyle{alpha}
 							
 							\end{document}